\numberwithin{equation}{section}
\theoremstyle{plain}
\newtheorem{theorem}[equation]{Theorem}
\newtheorem{lemma}[equation]{Lemma}
\newtheorem{corollary}[equation]{Corollary}
\theoremstyle{definition}
\theoremstyle{remark}
\newcommand{\fiint}{\operatornamewithlimits{\fint\!\!\!\!\fint}}
\newcommand{\dv}{\operatorname{div}}
\newcommand{\supp}{\operatorname{supp}}
\newcommand{\dist}{\operatorname{dist}}
\newcommand{\loc}{\operatorname{loc}}
\newcommand{\ok}{\operatorname{``OK"}}
\newcommand{\tr}{\operatorname{tr}}
\begin{document}

\title[Analyticity of layer potentials]{Analyticity of 
layer potentials and $L^{2}$ solvability of boundary value problems for divergence form elliptic
equations with complex $L^{\infty}$ coefficients}

\author[M. Alfonseca]{M. Angeles Alfonseca}
\address[M. Alfonseca]{Department of Mathematics, 
North Dakota State University, Fargo, ND 58105-5075}
\email{maria.alfonseca@ndsu.edu}

\author[P. Auscher]{Pascal Auscher}
\address[P. Auscher]{Universit\'e de Paris-Sud, UMR du CNRS 8628, 91405 Orsay Cedex, France}
\email{pascal.auscher@math.u-psud.fr}

\author[A. Axelsson]{Andreas Axelsson}
\address[A. Axelsson]{Matematiska Institutionen, Stockholms Universitet, 106 91 Stockholm, Sweden}
\email{andax@math.su.se}

\author[S. Hofmann]{Steve Hofmann}
\address[S. Hofmann]{Department of Mathematics, 
University of Missouri, Columbia, Missouri 65211, USA}
\email{hofmann@math.missouri.edu}
\thanks{S. Hofmann was supported by the National Science Foundation}

\author[S. Kim]{Seick Kim}
\address[S. Kim]{Centre for Mathematics and its Applications,
The Australian National University, ACT 0200, Australia}
\email{seick.kim@maths.anu.edu.au}

\begin{abstract} We consider 
divergence form elliptic operators of the form $L=-\dv A(x)\nabla$, defined in
$\mathbb{R}^{n+1}=\{(x,t)\in\mathbb{R}^{n}\times\mathbb{R}\}$, $n \geq 2$, where the 
$L^{\infty}$ coefficient matrix $A$ is
$(n+1)\times(n+1)$, uniformly elliptic, complex and $t$-independent. We show 
that for such operators, boundedness and
invertibility of the corresponding layer potential 
operators on $L^2(\mathbb{R}^{n})=L^2(\partial\mathbb{R}_{+}^{n+1})$, is stable
under complex, $L^{\infty}$ perturbations of the coefficient matrix. Using a variant of the $Tb$ Theorem, we also prove that the layer potentials
are bounded and invertible on $L^2(\mathbb{R}^n)$
whenever $A(x)$ is real and symmetric (and thus, by our stability result, also when $A$ is complex, $\Vert
A-A^0\Vert_{\infty}$ is small enough and $A^0$ is real,  symmetric,
$L^{\infty}$ and elliptic). 
In particular, we establish solvability of the Dirichlet and Neumann (and
Regularity) problems, with $L^2$ (resp. $\dot{L}^2_1)$ data, for small complex 
perturbations of
a real symmetric matrix.  Previously, $L^2$ solvability results for 
complex (or even
real but non-symmetric) coefficients were known to hold only for perturbations of constant matrices
(and then only for the Dirichlet problem), or in the special
case that the coefficients $A_{j,n+1}=0=A_{n+1,j}$, $1\leq j\leq
n$, which corresponds to the Kato square root problem. 
\end{abstract}

\maketitle

\tableofcontents

\section{Introduction, statement of results, history\label{s1}}

In this paper, we consider the solvability of boundary value problems for divergence form complex coefficient equations
$Lu=0$, where \begin{equation*} L=-\dv A\nabla\equiv-\sum_{i,j=1}^{n+1}\frac{\partial}{\partial
x_{i}}\left(A_{i,j} \,\frac{\partial}{\partial x_{j}}\right)\end{equation*}
is defined in $\mathbb{R}^{n+1}=\{(x,t)\in\mathbb{R}^{n}\times\mathbb{R}\}, n\geq 2$ (we use 
the notational convention that
$x_{n+1}=t$), and where $A=A(x)$ is an $(n+1)\times(n+1)$ matrix of complex-valued $L^{\infty}$ coefficients, defined on
$\mathbb{R}^{n}$ (i.e., independent of the $t$ variable) and satisfying the 
uniform ellipticity condition
\begin{equation}
\label{eq1.1} \lambda|\xi|^{2}\leq\Re e\,\langle A(x)\xi,\xi\rangle
\equiv \Re e\sum_{i,j=1}^{n+1}A_{ij}(x)\xi_{j}\bar{\xi_{i}}, \quad
  \Vert A\Vert_{L^{\infty}(\mathbb{R}^{n})}\leq\Lambda,
\end{equation}
 for some $\lambda>0$, $\Lambda<\infty$, and for all $\xi\in\mathbb{C}^{n+1}$, $x\in\mathbb{R}^{n}$. 
The divergence form equation is interpreted in the weak sense, i.e., we say that $Lu=0$
in a domain $\Omega$ if $u\in W^{1,2}_{loc}(\Omega)$ and 
$$\int A \nabla u \cdot \overline{\nabla \Psi} = 0$$ for all complex valued $\Psi \in C_0^\infty(\Omega)$.
 
The boundary value problems that we consider are classical.  To state them, we first recall
the definitions of the non-tangential maximal operators $N_{\ast},\widetilde{N}_{\ast}$. 
Given $x_0\in\mathbb{R}^{n}$, define
the cone $\gamma(x_0)=\{(x,t)\in\mathbb{R}_{+}^{n+1}:|x_0-x|<t\}$. Then
for $U $ defined in $\mathbb{R}_{+}^{n+1}$,
\begin{equation*} N_{\ast} U(x_0)  \equiv\sup_{(x,t)\in\gamma(x_0)}|U(x,t)|,\quad
\widetilde{N}_{\ast} U(x_0)  \equiv\sup_{(x,t)\in \gamma(x_0)}\left(\fint\!\!\fint_{\substack{|x-y|<t\\ |t-s|<t/2}
}|U(y,s)|^{2}dyds\right)^{\frac{1}{2}}.\end{equation*}
Here, and in the sequel, the symbol $\fint$ denotes the mean value, i.e.,
$\fint_E f \equiv |E|^{-1} \int_E f .$
We use the notation 
$u \to f \, n.t.$ to mean that
for $a.e. \, x \in \mathbb{R}^n$, $\lim_{(y,t) \to (x,0)} u(y,t) = f(x),$
where the limit runs over $(y,t)\, \in \gamma(x).$

We shall consider the Dirichlet problem\footnote{Our uniform $L^2$ estimate for solutions of (D2) can be improved to an $L^2$ bound for $N_*u$, given certain $L^p$ 
estimates for the layer potentials.  The fourth named author and M. Mitrea will present 
the $L^p$ theory in a forthcoming publication.  In the present paper, we shall be content with a weak-$L^2$ bound for
$N_*u$.} \begin{equation}
\begin{cases} Lu=0\text{ in }\mathbb{R}_{+}^{n+1}=\{(x,t)\in\mathbb{R}^{n}\times(0,\infty)\}\\ 
\lim_{t\to 0}u(\cdot,t)=f\text{ in }
L^{2}(\mathbb{R}^{n}) \text{ and } n.t.\\ 
\sup_{t>0}\|u(\cdot,t)\|_{L^2(\mathbb{R}^n)}<\infty,
\end{cases}\tag{D2}\label{D2}\end{equation}
 the Neumann problem\footnote{We shall elaborate in section \ref{s4nt} the precise nature
by which the co-normal derivative assumes the prescribed data.} \begin{equation}
\tag{N2}\begin{cases} Lu=0\text{ in }\mathbb{R}_{+}^{n+1}\\
\frac{\partial u}{\partial v}(x,0)\equiv-\sum_{j=1}^{n+1}A_{n+1,j}(x)
\frac{\partial u}{\partial
x_{j}}(x,0)=g(x)\in L^{2}(\mathbb{R}^{n})\\ 
\widetilde{N}_{\ast}(\nabla u)\in L^{2}(\mathbb{R}^{n}),\end{cases}\label{N2}\end{equation}
 and the Regularity problem \begin{equation}
\tag{R2}\begin{cases} Lu=0\text{ in }\mathbb{R}_{+}^{n+1}\\ u(\cdot,t)\to 
f\in\dot{L}_{1}^{2}(\mathbb{R}^{n}) \, n.t.\\
\widetilde{N}_{\ast}(\nabla u)\in L^{2}(\mathbb{R}^{n}).\end{cases}\label{R2}\end{equation}
Our solutions will be unique among the class of solutions satisfying the stated $L^2$ bounds (in the case of (N2) and (R2), this uniqueness will hold modulo constants).  The homogeneous Sobolev space $\dot{L}_{1}^{2}$ is defined as the completion of $C_0^{\infty}$ with respect to 
the seminorm
$\|\nabla F\|_2.$  For $n \geq 3$ this space can be identified (modulo constants) with the space 
$I_1(L^2) \equiv \Delta^{-1/2}(L^2) \subset L^{2^*},$ where $2^* \equiv 2n/(n-2);$ for $n=2$, the identification with
$I_1(L^2)$ is still valid, but in that case
the fractional integral $I_1 f$ must itself be defined modulo constants for $f \in L^2$, and the space embeds
in $BMO$.

We remark that for the class of operators that we consider, solvability of these boundary value problems in the
half-space may readily be generalized to the case of domains given by the region above a Lipschitz graph, and even to
the case of star-like Lipschitz domains. We shall return to this point later. We shall also discuss later the
significance of our assumption that the coefficients are $t$-independent.

In order to state our main results, we shall need to recall a few definitions and facts. We say that $u$ is locally
H\"{o}lder continuous in a domain $\Omega$ if there is a constant $C$
and an exponent $\alpha>0$ such that for any ball
$B=B(X,R)$, of radius $R$, whose concentric double $2B \equiv 
B(X,2R)$ is contained in $\Omega$, we have that \begin{equation}
|u(Y)-u(Z)|\leq
C\left(\frac{|Y-Z|}{R}\right)^\alpha\left(\fint_{2B}|u|^{2}\right)^{\frac{1}{2}},\label{eq1.2}\end{equation}
whenever $Y,Z\in B$. Observe that any $u$ satisfying \eqref{eq1.2} also 
satisfies Moser's ``local boundedness" estimate \cite{M}
\begin{equation}
\sup_{Y\in B}|u(Y)|\leq C \left(\fint_{2B}|u|^{2}\right)^{\frac{1}{2}}.\label{eq1.3}\end{equation}
By the classical De Giorgi-Nash Theorem~\cite{DeG,N}, \eqref{eq1.2} and hence 
also \eqref{eq1.3} hold, with $C$ and $\alpha$ depending only on dimension and
the ellipticity parameters, whenever $u$ is a
solution of $Lu=0$ in $\Omega\subseteq\mathbb{R}^{n+1}$, if {\it in addition} the coefficient matrix $A$ is real
(for this result, it need not be $t$-independent). Moreover, it is shown in
\cite{A} (see also~\cite{AT,HK}), that property \eqref{eq1.2} is 
stable under complex, $L^{\infty}$ perturbations.

We now recall the method of layer potentials. For $L$ as above, 
let $\Gamma,\,\Gamma^*$ denote the fundamental solutions\footnote{See \cite {HK2} for a construction of the fundamental solution.}
for $L$ and $L^*$ respectively, in $\mathbb{R}^{n+1}$, so that $$L_{x,t} \,\Gamma (x,t,y,s) = \delta_{(y,s)},\,\,\,
L^*_{y,s}\, \Gamma^*(y,s,x,t) \equiv L^*_{y,s} \,\overline{\Gamma (x,t,y,s)} = \delta_{(x,t)},$$
where $\delta_X$ denotes the Dirac mass at the point $X$, and
$L^*$ is the hermitian adjoint of $L$.  By the $t$-independence of our 
coefficients, we have that \begin{equation}
\Gamma(x,t,y,s)=\Gamma(x,t-s,y,0).\label{eq1.4}\end{equation}
 We define the single and double layer potential operators, by 
 \begin{equation}
\begin{split}\label{eq1.5}S_{t}f(x) & \equiv\int_{\mathbb{R}^{n}}\Gamma(x,t,y,0)\,f(y)\,dy, \,\,\, t\in \mathbb{R}\\
\mathcal{D}_{t}f(x) & 
\equiv\int_{\mathbb{R}^{n}}\overline{\partial_{\nu^*} \Gamma^*
(y,0,x,t)}\,f(y)\,dy,\,\,\, t \neq 0,\end{split}
\end{equation}
 where $\partial_{\nu^*}$ is the adjoint exterior
 conormal derivative; i.e., if $A^{\ast}$ denotes the hermitian adjoint of $A$, then
\begin{equation*}
\partial_{\nu^*} \Gamma^* (y,0,x,t)
=-\sum^{n+1}_{j=1}A_{n+1,j}^{\ast}(y)\frac{\partial \Gamma^*}{\partial
y_{j}}(y,0,x,t)=-e_{n+1}\cdot A^{\ast}(y)
\nabla_{y,s}\Gamma^*(y,s,x,t) \mid_{s=0}\end{equation*}
 (recall that $y_{n+1}=s$). We define (loosely\footnote{For non-smooth coefficients, 
 some care should be taken to define the ``principal value"
operators on the boundary - see Section \ref{s4nt}.}, for the moment) 
boundary singular integrals
\begin{equation}
\begin{split}\label{eq1.6}Kf(x) & \equiv
  ``p.v."\int_{\mathbb{R}^{n}}\overline{\partial_{\nu^*} \Gamma^* (y,0,x,0)}\,f(y)\,dy\\
\widetilde{K}f(x) & \equiv ``p.v."\int_{\mathbb{R}^{n}}\frac{\partial\Gamma}{\partial\nu}(x,0,y,0)\,f(y)\,dy\end{split}
\end{equation}
 where $\frac{\partial}{\partial\nu}$ denotes the exterior conormal 
 derivative in the $(x,t)$ variables.  Classically,
$\widetilde{K}$ is often denoted $K^{\ast}$, but we avoid this notation here as $\widetilde{K}$ need not be the adjoint of $K$ unless
$L$ is self-adjoint. Rather, for us, $K^*, S^*$ and $\mathcal{D}^*$ will denote the analogues of $K,S$ and $\mathcal{D}$ corresponding to $L^*$ (although sometimes we shall write $K^{L^*}$, etc., when we wish to emphasize the dependence on a particular operator), and we use the notation $ad\!j\,(T)$ to denote the Hermitian adjoint of an operator $T$ acting in $\mathbb{R}^n$.  With these conventions, we have that $\widetilde{K} = ad\!j\, (K^*)$, as the 
reader may verify. We apologize for this departure from tradition, but the context of
complex coefficients seems to require it.

For sufficiently smooth coefficients, the following ``jump relation" formulae
have been established in \cite{MMT}.   We defer to 
Section \ref{s4nt} our discussion of the jump formulae, and the nature of their ``non-tangential" realization, in the non-smooth case.  We have
\begin{eqnarray}
\label{eq1.7}\mathcal{D}_{\pm s} f
\!& \to & \! \left(\mp\frac{1}{2}I+K\right)f \\\label{eq1.7a} 
\left(\nabla S_{t}\right)|_{t=\pm s} f
\!&\to & \!\mp\frac{1}{2}\cdot\frac{f(x)}{A_{n+1,n+1}(x)}e_{n+1}+\mathcal{T}f  ,
\end{eqnarray}
(these convergence statements must be interpreted properly - see Section \ref{s4nt}) where \begin{equation} \mathcal{T}f(x)\equiv ``p.v."\int_{\mathbb{R}^{n}}
 \nabla\Gamma(x,0,y,0)f(y)dy.\label{eq1.8}\end{equation}
Then, as usual\footnote{In the setting of non-smooth coefficients, 
some rather extensive preliminaries
are required in order to apply the layer potential method to obtain solvability;
see Section \ref{s4nt}.}, 
one obtains solvability of \eqref{D2} in the upper (resp. lower) half 
space by establishing 
boundedness on $L^{2}(\mathbb{R}^n)$ of 
$f\to \mathcal{D}_{\pm t} f$, uniformly in $t$,
and invertibility of $-\frac{1}{2}I+K$ (resp. $\frac{1}{2}I+K$). Similarly, solvability of \eqref{N2} and
\eqref{R2} follows from $L^{2}$ boundedness of  $f \rightarrow
\widetilde{N}_{\ast}(\nabla S_{\pm t}f),$ and (for \eqref{N2}) invertibility on
$L^{2}$ of $\pm\frac{1}{2}I+\widetilde{K}$, and (for \eqref{R2}) invertibility 
of the mapping
$S_{0}=S_{t}\!\mid_{t=0}\,:L^{2}(\mathbb{R}^{n})\rightarrow\dot{L}_{1}^{2}(\mathbb{R}^{n})$. We now set some convenient
terminology: we shall say that an operator $L$ for which all of the above hold has 
``Bounded and Invertible Layer
Potentials".  If {\it in addition} we have 
the square function estimate \begin{equation}
\int_{-\infty}^{\infty}\int_{\mathbb{R}^{n}}\left|t\partial_{t}^{2}S_{t}f(x)\right|^{2}\frac{dx dt}{|t|}\leq C\Vert
f\Vert_{2}^{2},\label{eq1.9}\end{equation}
then we shall say that $L$ has ``Good Layer Potentials".  Finally, we shall refer to the
constant in \eqref{eq1.9}, together with all of the constants arising 
in the estimates for the boundedness and invertibility of the layer potentials,
collectively as the ``Layer Potentials Constants" for $L$.

In this paper, we prove the following theorems.  In the sequel we assume always that
our $(n+1) \times (n+1)$ coefficient matrices are $t-$independent, complex,
and satisfy the ellipticity condition \eqref{eq1.1} and the De Giorgi-Nash-Moser estimates
\eqref{eq1.2} and \eqref{eq1.3}.

\begin{theorem}\label{t1.10} Suppose that $L_0=-\dv A^0\nabla $ and $L_1 =-\dv A^1\nabla$ are operators of the 
type described above, and that solutions $u_0,\,w_0$ of $L_0u_0=0,\,
L^*_0 w_0 = 0$ satisfy the De Giorgi-Nash-Moser 
estimates \eqref{eq1.2} and \eqref{eq1.3}. Suppose also that $L_0$ and $L^\ast_0$ have
``Good Layer Potentials". Then $L_1$ and $L^*_1$ have Good Layer Potentials, provided that
\begin{equation*} \|A^0-A^1\|_{L^\infty (\mathbb{R}^n)}\leq \epsilon_0,\end{equation*} where 
$\epsilon _0$ is
sufficiently small depending only on dimension and on the various constants associated to 
$L_0$ and $L^*_0$, specifically: the ellipticity parameters, 
the De Giorgi-Nash-Moser constants \eqref{eq1.2} and \eqref{eq1.3}, 
and the Layer Potential Constants.\end{theorem}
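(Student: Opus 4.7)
My plan is a Duhamel-type perturbation argument combined with a Neumann series at the boundary. The algebraic starting point is the identity $L_1-L_0=-\dv((A^1-A^0)\nabla\cdot)$, which, via the resolvent identity $\Gamma^1-\Gamma^0=-\Gamma^1(L_1-L_0)\Gamma^0$ and an integration by parts, gives formally
\[
\Gamma^1(X,Z)-\Gamma^0(X,Z)=\iint_{\mathbb{R}^{n+1}}\nabla_Y\Gamma^1(X,Y)\cdot(A^1-A^0)(Y)\,\nabla_Y\Gamma^0(Y,Z)\,dY.
\]
Differentiating in the appropriate variables produces, for each of the relevant layer potential objects $T^1\in\{\mathcal{D}^1_t,\,S^1_t,\,\nabla S^1_t,\,\partial_t^2 S^1_t\}$, a representation
\[
T^1 f \;=\; T^0 f \;+\; \mathcal{E}\!\left[A^1-A^0,\,T^1 f,\,T^0 f\right],
\]
in which the error $\mathcal{E}$ is bilinear in $(A^1-A^0)$ and in gradients of $L_0$- and $L_1$-solutions, and carries an explicit factor $\|A^1-A^0\|_\infty\le\epsilon_0$. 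All subsequent estimates amount to making the appropriate norms of $\mathcal{E}$ absorbable into the corresponding norms of $T^1f$.

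I would first establish the bulk estimates for $L_1$, beginning with the square function \eqref{eq1.9}. Applying the representation above to $\partial_t^2 S^1_t$, one uses Cauchy--Schwarz against the $dxdt/|t|$ weight and Fubini to bound the $L_1$-square function norm by $C\epsilon_0$ times the product of the $L_0$-square function (a hypothesis) and the $L_1$-square function itself; for $\epsilon_0$ small, absorption yields \eqref{eq1.9} for $L_1$. Once this is in hand, uniform $L^2$-boundedness of $\mathcal{D}^1_t$ and the non-tangential bound on $\nabla S^1_t f$ can be recovered by Dahlberg-type square-function-to-$\widetilde{N}_\ast$ techniques, using also that the De Giorgi--Nash--Moser property \eqref{eq1.2}--\eqref{eq1.3} for $L_1$-solutions is preserved under small complex perturbations by the cited result of \cite{A}.

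The boundary operators are handled next. Passing to the non-tangential limits $t\to 0^\pm$ in the jump relations \eqref{eq1.7}--\eqref{eq1.7a}, and feeding in the bulk estimates just established, one obtains
\[
K^1 = K^0 + \mathcal{E}^K,\qquad \widetilde{K}^1=\widetilde{K}^0+\mathcal{E}^{\widetilde K},\qquad S^1_0 = S^0_0+\mathcal{E}^S,
\]
as bounded operators on $L^2(\mathbb{R}^n)$ (respectively $L^2\to\dot{L}_1^2$), with operator norms $O(\epsilon_0)$ controlled by the Layer Potentials Constants of $L_0$. Because by hypothesis $\pm\tfrac{1}{2}I+K^0$ and $\pm\tfrac{1}{2}I+\widetilde{K}^0$ are invertible on $L^2$, and $S^0_0$ is invertible as a map $L^2\to\dot{L}^2_1$, a Neumann series transfers invertibility to the $L_1$-operators for $\epsilon_0$ small. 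Together with the bulk bounds, this gives Good Layer Potentials for $L_1$, and the same argument applied to $L_1^\ast$ (whose hypotheses are provided in parallel by the statement) handles the adjoint.

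The main technical obstacle, in my view, lies in controlling the off-diagonal kernel behavior of $\nabla_Y\nabla_X\Gamma^1$ that appears in $\mathcal{E}$: this is what is needed both to produce a genuine $L^2$-bounded perturbation of the boundary operators and to justify the passage from the interior identity to a non-tangential boundary limit in the rough-coefficient setting, where the p.v.\ definition of $K$ and $\widetilde{K}$ must be interpreted through the machinery of Section \ref{s4nt}. Extracting a clean operator-norm bound of size $O(\epsilon_0)$ requires sharp Gaffney-type off-diagonal decay for the gradients of the kernels associated to $L_1$, which itself must be bootstrapped from the assumed good behavior of $L_0$. Closing this loop --- establishing simultaneous bulk and boundary estimates for $L_1$ uniformly in $\epsilon_0$, using only the corresponding estimates for $L_0$ together with the stability of De Giorgi--Nash--Moser --- is where the real work will lie.
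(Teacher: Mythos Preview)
Your high-level architecture is exactly what the paper does: start from the resolvent identity, absorb the $O(\epsilon_0)$ error into the $L_1$ quantities themselves, and then transfer invertibility of the boundary operators by a Neumann series / method of continuity. The place where your proposal has a genuine gap is in how the absorption step is carried out; the naive Cauchy--Schwarz you describe does not close.

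Concretely, after the resolvent identity you are looking at a pairing of the form
\[
\iint_{\mathbb{R}^{n+1}}\epsilon(y)\,\nabla\partial_s S^{1}_{s}f(y)\cdot\overline{\nabla(L_0^*)^{-1}\bigl(D_{n+1}\Psi\bigr)(y,s)}\,dy\,ds,
\]
where $\Psi$ is a test function with $|\|\Psi\||\le 1$. Straight Cauchy--Schwarz in $(y,s)$ produces $\epsilon_0\,\|\nabla\partial_s S^1_s f\|_{L^2(\mathbb{R}^{n+1})}$ times $\|\nabla(L_0^*)^{-1}D_{n+1}\Psi\|_{L^2(\mathbb{R}^{n+1})}$. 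The second factor is fine (this is just $\nabla L_0^{-1}\dv:L^2\to L^2$), but the first is \emph{not} the $L_1$ square function: it is $\bigl(\iint|\nabla\partial_s S^1_s f|^2\,dy\,ds\bigr)^{1/2}$, with no $|s|$ weight, and there is no reason for this to be controlled by $\||s\nabla\partial_s S^1_s f\||$ or by $\|f\|_2$. So the ``product of the $L_0$-square function and the $L_1$-square function'' you describe is not what the integral produces. The paper resolves this by an FJK-style decomposition of $\nabla(L_0^*)^{-1}D_{n+1}\Psi$ into five pieces; four are handled by Hardy-type inequalities or $\nabla L_0^{-1}\dv$ boundedness, but the principal piece (their term $II$, roughly the boundary trace $\nabla D_{n+1}S^{L_0^*}_{-t}$ integrated over $|s|<t/2$) is genuinely hard and requires the full Kato square root machinery: one has to bound $\||t(\partial_t S^0_t\nabla)\cdot\epsilon\nabla S^{1}_{t}f\||$, and this is done via a $T(b)$-type Carleson measure argument with the mappings $F_Q$ from \cite{AHLMcT}. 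There is no route through this step by elementary Cauchy--Schwarz and Fubini.

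A second gap is the claim that once the square function \eqref{eq1.9} is in hand, ``Dahlberg-type square-function-to-$\widetilde{N}_*$ techniques'' recover $\sup_t\|\nabla S^1_t\|_{2\to 2}$ and the maximal function bounds. Those techniques rely on positivity, comparison principles, and harmonic measure, none of which are available for complex (or even real non-symmetric) coefficients. In the paper the logic actually runs in the opposite direction: Lemma~\ref{l3.3} shows that $\sup_{t}\|\nabla_\|S^1_{t}f\|_2$ is controlled by the square function \emph{plus} $\|N_*(P_t\partial_t S^1_t f)\|_2$, and a separate singular-integral estimate (their \eqref{eq3.7}) is needed for $\sup_t\|\partial_t S^1_t\|_{2\to2}$. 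All three quantities --- square function, $\sup_t\|\nabla S^1_t\|$, and the maximal function of $\partial_t S^1_t$ --- appear simultaneously on the right-hand sides of the two key inequalities \eqref{eq3.6}--\eqref{eq3.7} and have to be closed together, with an auxiliary regularization $S^{1,\eta}_t$ and a localization $L^2(Q)\to L^2(\mathbb{R}^n)$ to ensure the quantities being hidden are a priori finite. Your last paragraph correctly senses that this bootstrap is the crux, but the specific mechanism you sketch (Cauchy--Schwarz plus Dahlberg) will not carry it.
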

We observe that it is not clear whether the property that $L$ has ``Good Layer Potentials"
is preserved under regularization of the coefficients.  For this reason, we shall be forced to prove 
Theorem \ref{t1.10} without recourse to the usual device of making an {\it a priori} assumption of smooth coefficients.  We also note that we shall use the invertibility of the layer potentials
associated to $L_0$ and $L_0^*$ even to establish the boundedness of
the layer potentials associated to $L_1$ (see Section \ref{s5} below).

\begin{theorem}\label{t1.11} Suppose that $L=-\dv A\nabla$ is an operator of the type defined above, and in addition,
suppose that $A$ is real and symmetric. Then $L$ has Good Layer Potentials, and its Layer Potential Constants depend only on dimension and on the ellipticity parameters in \eqref{eq1.1}.\end{theorem}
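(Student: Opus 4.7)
The plan is to establish the square function bound \eqref{eq1.9} for the single layer potential by a $Tb$-theorem for square functions, and then to deduce the remaining ingredients of ``Good Layer Potentials'' using the Rellich identities made available by the real symmetric assumption $A=A^{T}$. Three facts specific to the real symmetric case are available: the De Giorgi--Nash--Moser estimates \eqref{eq1.2}, \eqref{eq1.3} hold automatically for both $L$ and $L^{*}=L$; the Rellich identity (with vector field $e_{n+1}$, whose solid remainder involves only $\partial_{t}A\equiv 0$) yields a boundary equivalence between normal and tangential derivatives of any solution; and $L^{2}$-solvability of the Dirichlet problem with $t$-independent real symmetric coefficients is classical (Jerison--Kenig, adapted via Kenig--Pipher).

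For the square function estimate, set $\Theta_{t}f(x):=t\,\partial_{t}^{2}S_{t}f(x)$. Standard interior Caccioppoli together with De Giorgi--Nash--Moser bounds on $(x,t)\mapsto\partial_{t}^{2}\Gamma(x,t,y,0)$ give off-diagonal decay and H\"older continuity for the Schwartz kernel of $\Theta_{t}$. I would then invoke a $Tb$-theorem for square functions (in the Christ/McIntosh style, as developed in the solution of the Kato problem), which reduces \eqref{eq1.9} to constructing, for each dyadic cube $Q\subset\mathbb{R}^{n}$, a testing function $b_{Q}$ satisfying
\[
\|b_{Q}\|_{L^{2}(\mathbb{R}^{n})}\lesssim|Q|^{1/2},\qquad \Bigl|\int_{Q}b_{Q}\Bigr|\gtrsim|Q|,\qquad \iint_{T(Q)}|\Theta_{t}b_{Q}(x)|^{2}\,\frac{dx\,dt}{t}\lesssim|Q|,
\]
where $T(Q):=Q\times(0,\ell(Q))$ is the Carleson tent over $Q$. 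The construction of $b_{Q}$ is where real symmetry enters: one chooses $b_{Q}$ so that $u(x,t):=S_{t}b_{Q}(x)$ is a solution of $Lu=0$ in $\mathbb{R}^{n+1}_{+}$ whose Rellich identity on $T(Q)$ (obtained by multiplying $Lu=0$ by $\partial_{t}u$ and integrating, using $A=A^{T}$ and $\partial_{t}A=0$ to collapse the solid remainder to a clean boundary expression) produces the target Carleson bound, after an application of interior Caccioppoli to pass from $\iint|\nabla u|^{2}$ to $\iint|t\,\partial_{t}^{2}u|^{2}\,dt/t$.

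With \eqref{eq1.9} established, the remaining items of ``Good Layer Potentials'' follow by familiar routes: $L^{2}$ boundedness of $\mathcal{D}_{t}$ and of $\widetilde{N}_{\ast}(\nabla S_{t})$ follow from the square function together with tent-space duality and standard Carleson-measure arguments; invertibility of $\pm\tfrac{1}{2}I+K$, of $\pm\tfrac{1}{2}I+\widetilde{K}$, and of $S_{0}:L^{2}\to\dot{L}_{1}^{2}$ follows from the Rellich coercivity $\|\partial_{\nu}u|_{t=0}\|_{2}\sim\|\nabla_{x}u|_{t=0}\|_{2}$ combined with a Verchota-type continuity argument from the harmonic case $A=I$, which interpolates safely through the convex class of real symmetric matrices. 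The principal obstacle is the Carleson bound on $\Theta_{t}b_{Q}$ in the $Tb$ step: since $A$ lies only in $L^{\infty}$ no pointwise coefficient derivatives are available, $b_{Q}$ must be defined intrinsically through a solution-based construction, and the required bound has to be extracted through divergence-form integrations by parts and the Rellich identity alone, while simultaneously respecting the normalization $\|b_{Q}\|_{2}\lesssim|Q|^{1/2}$ and the non-degeneracy $|\int_{Q}b_{Q}|\gtrsim|Q|$. Managing these three competing demands simultaneously, without recourse to \emph{a priori} smoothness of coefficients, is the delicate core of the argument.
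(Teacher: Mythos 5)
Your overall strategy---a local $Tb$ theorem for square functions to prove \eqref{eq1.9}, then Rellich identities plus a Verchota--type method of continuity for invertibility---is the right skeleton and matches the paper's Sections~\ref{s6} and~\ref{s7}. But there are two genuine gaps in the middle of the argument.

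First, the square function estimate \eqref{eq1.9} by itself does not deliver ``Good Layer Potentials.'' From Lemma~\ref{l3.3}, the bound on $\nabla_{\|}S_t$ requires \emph{both} $\||t\nabla\partial_t S_t\||$ \emph{and} $\|N_*(P_t\partial_t S_t f)\|_2$, and the latter comes from the uniform $L^2$ bound $\sup_t\|\partial_t S_t f\|_2\lesssim\|f\|_2$, which is a Calder\'{o}n--Zygmund singular-integral estimate, not a consequence of the square function and ``tent-space duality.'' The paper handles this with a \emph{separate} local $Tb$ theorem for singular integrals (Theorem~\ref{t6.6}), applied to the truncated testing functions $b_Q^1=b_Q 1_Q$, $b_Q^2$, with the $L^{2+\varepsilon}$ integrability $q>2$ coming from the Jerison--Kenig $B_p$ property \eqref{eq6.9}. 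Your proposal omits this entirely and so would leave \eqref{eq6.1} unproven.

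Second, and more centrally, your construction of $b_Q$ does not work as stated. You propose to choose $b_Q$ so that $u=S_tb_Q$ ``is a solution of $Lu=0$ in $\mathbb{R}^{n+1}_+$'' (which is automatic for any $b_Q\in L^2$) and then to extract the Carleson bound $\iint_{T(Q)}|\Theta_t b_Q|^2\,dx\,dt/t\lesssim|Q|$ from a Rellich identity applied on $T(Q)$. A Rellich identity gives $L^2(\partial)$ equivalence of normal and tangential derivatives on the boundary of $T(Q)$; it does not directly control the interior Littlewood--Paley integral of $t\partial_t^2 u$, nor does it interact with the pseudoaccretivity constraint $|\int_Q b_Q|\gtrsim|Q|$. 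The paper's key idea is orthogonal to Rellich: it takes $b_Q=|Q|\,k_-^{A_Q^-}$, the normalized Poisson kernel for $L$ in the \emph{lower} half-space at the corkscrew point $A_Q^-=(x_Q,-\ell(Q))$. Because $\partial_t^2\Gamma(x,t,\cdot,\cdot)$ solves $L^*v=0=Lv$ in $\mathbb{R}^{n+1}_-$ (for fixed $(x,t)\in\mathbb{R}^{n+1}_+$), the reproducing property of the Poisson kernel collapses
\[
\theta_t b_Q(x)=|Q|\,t\int \partial_t^2\Gamma(x,t,y,0)\,k_-^{A_Q^-}(y)\,dy
=|Q|\,t\,\partial_t^2\Gamma\bigl(x,t,A_Q^-\bigr),
\]
whence Lemma~\ref{l2.1} gives the \emph{pointwise} bound $|\theta_t b_Q(x)|\leq C\,t/\ell(Q)$ on $Q\times(0,\ell(Q))$, and the Carleson estimate is immediate. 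Pseudoaccretivity is CFMS \eqref{eq6.10}, and the $L^2$ (indeed $L^{2+\varepsilon}$) normalization is Jerison--Kenig \eqref{eq6.9}. The role of real symmetry is therefore not to furnish a Rellich inequality inside the $Tb$ step, but to guarantee that the Poisson kernel exists as an $L^{2+\varepsilon}$ function with the CFMS lower bound. Your proposal correctly identifies that this construction is the delicate core, but the mechanism you suggest (Rellich on $T(Q)$) is not the one that closes the argument, and no alternative mechanism is supplied.

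One further point: in the continuity argument of Section~\ref{s7} the paper does not merely ``interpolate through the convex class of real symmetric matrices''---it crucially invokes Theorem~\ref{t1.10} at each step to preserve boundedness (not just invertibility) of the layer potentials along the path $A_\sigma=(1-\sigma)\mathbb{I}+\sigma A$; the Rellich-based a priori estimates \eqref{eq9.3}--\eqref{eq9.4} alone provide only the uniform invertibility bound, not the step size $\epsilon_0$ by which one may advance.
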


We remark that while Theorem~\ref{t1.11} yields in particular the solvability of
(D2), (N2) and (R2) in the case that $A$ is real and symmetric, it is only the 
fact that this solvability is obtainable 
via layer potentials that is new here,
the solvability of \eqref{D2} having been previously obtained by
Jerison and Kenig~\cite{JK1}, and that of \eqref{N2} and \eqref{R2} by Kenig and Pipher~\cite{KP}, without the use of layer potentials.  The essential missing ingredient had been
the boundedness of the layer potentials.

The previous two theorems are our main results.
As corollaries, we obtain

\begin{theorem}\label{t1.12} Suppose that $L_1=-\dv A^1\nabla$ is an operator of the type defined above, and that
$\|A^1-A^0\|_{L^\infty (\mathbb{R}^n)}\leq \epsilon_0$, for some real, symmetric, $t$-independent uniformly elliptic
matrix $A^0\in L^\infty (\mathbb{R}^n)$. Then \eqref{D2}, \eqref{N2} and \eqref{R2} are all solvable for $L_1$, provided
that $\epsilon_0$ is sufficiently small, depending only on dimension and the ellipticity parameters for
$A^0$.  The solution of \eqref{D2} is unique among the class of solutions $u$ for which
$\sup_{t>0}\|u(\cdot,t)\|_{L^2(\mathbb{R}^n)} < \infty$, and the solutions of \eqref{N2} and \eqref{R2} are unique modulo constants among the class of solutions 
for which $\widetilde{N}_*(\nabla u) \in L^2$.\end{theorem}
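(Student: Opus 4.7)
The plan is to chain together Theorem \ref{t1.11} and Theorem \ref{t1.10} to produce Good Layer Potentials for $L_1$, and then read off solvability from the standard layer potential representations. First, set $L_0 = -\dv A^0 \nabla$. Since $A^0$ is real, symmetric, $t$-independent and uniformly elliptic, the classical De Giorgi-Nash-Moser theorem gives \eqref{eq1.2}-\eqref{eq1.3} for solutions of $L_0 u = 0$, and because $A^0$ is symmetric we have $L_0^* = L_0$, so the same estimates hold for $L_0^*$. Theorem \ref{t1.11} then applies and yields that $L_0$ (hence also $L_0^*$) has Good Layer Potentials, with Layer Potential Constants depending only on dimension and the ellipticity parameters of $A^0$.

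Next, since $\|A^1 - A^0\|_\infty \leq \epsilon_0$, the stability of De Giorgi-Nash-Moser estimates under small complex $L^\infty$ perturbations (cited from \cite{A}, see also \cite{AT,HK}) guarantees that solutions of $L_1 u = 0$ and $L_1^* u = 0$ also satisfy \eqref{eq1.2}-\eqref{eq1.3}, with quantitative constants depending on those for $L_0$ and on $\epsilon_0$. Thus the hypotheses of Theorem \ref{t1.10} are met with $L_0$ real symmetric as above and $L_1$ as given: choosing $\epsilon_0$ smaller than the threshold furnished by Theorem \ref{t1.10} (which depends only on dimension and on the constants attached to $L_0$, and hence ultimately only on dimension and the ellipticity parameters of $A^0$), we conclude that $L_1$ and $L_1^*$ have Good Layer Potentials.

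With Good Layer Potentials in hand, solvability is obtained by the classical layer potential recipe, as described in the paragraphs preceding Theorem \ref{t1.10}. Specifically, for $f \in L^2(\mathbb{R}^n)$ one solves \eqref{D2} by writing $u = \mathcal{D}_t \bigl(-\tfrac{1}{2}I + K\bigr)^{-1} f$; for $g \in L^2(\mathbb{R}^n)$ one solves \eqref{N2} by $u = S_t \bigl(-\tfrac{1}{2}I + \widetilde{K}\bigr)^{-1} g$; and for $f \in \dot{L}^2_1(\mathbb{R}^n)$ one solves \eqref{R2} by $u = S_t (S_0)^{-1} f$. The $L^2$ boundedness of $\mathcal{D}_t$ uniformly in $t$, and of $\widetilde{N}_*(\nabla S_t)$, together with the jump relations \eqref{eq1.7}-\eqref{eq1.7a}, yield the desired control of $u$ and the prescribed boundary behavior; the invertibility of $-\tfrac{1}{2}I + K$, $-\tfrac{1}{2}I + \widetilde{K}$ and $S_0$ (components of ``Bounded and Invertible Layer Potentials") ensures that every datum is realized. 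Uniqueness of the solution to \eqref{D2} in the stated class, and uniqueness modulo constants for \eqref{N2} and \eqref{R2}, will follow from the corresponding representation/uniqueness arguments to be developed in Section \ref{s4nt}.

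The substantive content of the corollary is entirely in Theorems \ref{t1.10} and \ref{t1.11}, so the only real work left is the passage from Good Layer Potentials to solvability with non-tangential limits for genuinely non-smooth coefficients. This is the step where the usual smooth-coefficient machinery (jump relations, trace theorems, interior regularity of layer potentials) must be replaced by the more delicate non-tangential framework described in Section \ref{s4nt}; I expect this to be the principal technical hurdle, rather than the logical assembly itself.
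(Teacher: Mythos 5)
Your proposal is correct and follows exactly the route the paper takes: Theorem \ref{t1.11} gives Good Layer Potentials for the real symmetric operator $L_0$; stability of the De Giorgi--Nash--Moser estimates under small complex perturbation brings $L_1,\,L_1^*$ into the standing hypotheses; Theorem \ref{t1.10} upgrades $L_1$ to Good Layer Potentials; and solvability and uniqueness are then read off from the layer potential representations via the trace, jump, and uniqueness machinery of Section \ref{s4nt}. One immaterial slip: by Lemma \ref{l4.ntjump} $(i)$, the conormal derivative of $u^+ = S_t f$ from the upper half-space is $\bigl(+\tfrac{1}{2}I + \widetilde{K}\bigr)f$, so the Neumann solution in $\mathbb{R}^{n+1}_+$ should be $u = S_t\bigl(\tfrac{1}{2}I+\widetilde{K}\bigr)^{-1}g$ rather than the $-\tfrac{1}{2}$ variant you wrote; both operators are invertible under the hypotheses, so the logic is unaffected.
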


\begin{theorem}\label{t1.13}  The conclusion of Theorem~\ref{t1.12} 
holds also in the case that $\|A^1-A^0\|_\infty$ is
sufficiently small, where $A^0$ is now a 
constant, elliptic complex matrix.\end{theorem}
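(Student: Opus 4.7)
The plan is to deduce Theorem~\ref{t1.13} as a direct application of Theorem~\ref{t1.10}, taking $L_0 = -\dv A^0 \nabla$ and $L_1 = -\dv A^1\nabla$. To invoke Theorem~\ref{t1.10} I must establish, with constants depending only on $\lambda,\Lambda,n$, that (a) solutions of $L_0 u = 0$ and of $L_0^* w = 0$ satisfy the De Giorgi-Nash-Moser bounds \eqref{eq1.2}-\eqref{eq1.3}; and (b) $L_0$ and $L_0^*$ have Good Layer Potentials. Once (a) and (b) are in hand, Theorem~\ref{t1.10} produces Good Layer Potentials for $L_1$ and $L_1^*$ as soon as $\|A^1 - A^0\|_\infty$ is small enough, and solvability of \eqref{D2}, \eqref{N2}, \eqref{R2}, with the stated uniqueness classes exactly as in Theorem~\ref{t1.12}, then follows by the layer potential recipe sketched in the introduction.

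Item (a) is immediate: constant coefficient elliptic operators are hypoelliptic, so every solution of $L_0 u = 0$ (or of $L_0^* w = 0$) is real analytic in its domain, and a standard scale-invariant Caccioppoli/Sobolev argument supplies interior H\"older and pointwise estimates with constants depending only on $\lambda,\Lambda,n$. For item (b), the fundamental solution $\Gamma^{A^0}$ admits an explicit Fourier representation, and all of the kernels appearing in $\mathcal{D}_t$, $\nabla S_t$, $K$, $\widetilde K$, and in the integrand of \eqref{eq1.9} are smooth, homogeneous Calder\'on-Zygmund kernels whose Fourier symbols are bounded rational functions of $\xi/|\xi|$; $L^2$-boundedness (uniform in $t$) and the vertical square-function estimate are therefore routine multiplier calculations. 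The delicate part of (b) is the invertibility of $\pm\tfrac12 I + K$, $\pm\tfrac12 I + \widetilde K$, and the isomorphism $S_0:L^2\to\dot L^2_1$. I would obtain this by a method-of-continuity argument along the straight-line path of constant elliptic matrices $A_\tau = (1-\tau)A^0 + \tau I$, $\tau \in [0,1]$: at $\tau = 1$ invertibility is provided by Theorem~\ref{t1.11} (applied to the real symmetric constant matrix $I$, for which it is in any case entirely classical via Fourier transform); the ellipticity, De Giorgi-Nash-Moser, and boundedness constants are uniform along the path since $A_\tau$ is constant and its ellipticity varies continuously; and Theorem~\ref{t1.10} asserts that the set of $\tau \in [0,1]$ at which Good Layer Potentials hold is open, with a quantitative neighborhood. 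Compactness of $[0,1]$ then carries invertibility from $\tau = 1$ back to $\tau = 0$.

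The principal obstacle is this last propagation step, which uses Theorem~\ref{t1.10} in an essential way to move the invertibility along the path of constant matrices; boundedness, the square-function estimate, and the De Giorgi-Nash-Moser estimates for constant complex coefficients are all essentially classical Fourier-analytic calculations. Once (a) and (b) have been verified for $L_0 = -\dv A^0 \nabla$, Theorem~\ref{t1.13} follows by a single invocation of Theorem~\ref{t1.10}.
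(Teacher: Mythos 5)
Your overall plan---deduce the theorem by showing that constant coefficient operators have Good Layer Potentials and then invoking Theorem~\ref{t1.10}---is exactly the paper's route, and your treatment of the De Giorgi--Nash--Moser estimates and of the $L^2$/square-function bounds via Plancherel is fine. The gap is in the invertibility step, where you depart from the paper. Your proposed method-of-continuity argument along $A_\tau=(1-\tau)A^0+\tau I$ does not close: Theorem~\ref{t1.10} gives an open neighborhood of each good $\tau_0$, but the radius $\epsilon_0$ of that neighborhood depends on the \emph{Layer Potential Constants} of $L_{\tau_0}$, and in particular on the norms of the inverses $(\pm\tfrac12 I+\widetilde K^{\tau_0})^{-1}$ and $S_0^{-1}$---precisely the quantities you are trying to control. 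Without an a priori uniform bound on these constants along the path, the step size can degenerate and "compactness of $[0,1]$" proves nothing (openness without closedness or a uniform radius is insufficient). Contrast this with the paper's use of the method of continuity in Section~\ref{s7} for real symmetric $A$: there the Rellich identity \eqref{eq9.3}--\eqref{eq9.4} gives invertibility bounds depending \emph{only} on ellipticity, which is exactly what makes the step size uniform. No such identity is available for general complex constant matrices.

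The paper instead proves invertibility for constant coefficients directly in the appendix (Section~\ref{sappend}) by computing Fourier symbols: $S_0$ has symbol $\approx |\xi|^{-1}$, hence $S_0:L^2\to\dot L^2_1$ is an isomorphism by Plancherel; and $\tfrac12 I+\widetilde K$ has symbol $\bigl(a_{n+1,n+1}\tau_+(\xi)+\sum_j a_{n+1,j}\xi_j\bigr)/\bigl(a_{n+1,n+1}(\tau_+(\xi)-\tau_-(\xi))\bigr)$, whose numerator and denominator both have modulus comparable to $|\xi|$ (the numerator via Lemma 4 of [AQ], the denominator via \eqref{eq10.1} and accretivity of $a_{n+1,n+1}$). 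These bounds depend only on ellipticity, so the Layer Potential Constants for constant coefficient operators are controlled purely by the ellipticity parameters---but establishing this is essentially the Fourier computation itself, which makes your intermediate continuity argument superfluous once it is carried out. To repair your write-up, either adopt the paper's direct symbol calculation, or first prove the uniform-in-ellipticity invertibility bound for constant matrices (which amounts to the same thing) and only then, if you wish, phrase matters as a one-step application of Theorem~\ref{t1.10}.
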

The last theorem follows from Theorem~\ref{t1.10}, and the fact that constant coefficient
operators have Good Layer Potentials (see the appendix, Section \ref{sappend}).

We note that by a standard device, Theorems~\ref{t1.10}, \ref{t1.11} and \ref{t1.12} all extend readily to the case
where $\Omega=\{(x,t):t>F(x)\}$, with $F$ Lipschitz. Indeed, by {}``pulling back\char`\"{} under the mapping
$\rho:\mathbb{R}_{+}^{n+1}\rightarrow\Omega$ defined by \begin{equation*}
\rho(x,t)=(x,F(x)+t),\end{equation*}
 we may reduce to the case of the half-space. The pull-back operators
 are of the same type, and, in particular, the coefficients remain
$t$-independent. Moreover, if the original coefficients are real and symmetric, then so are those of the pull-back
operator. In this setting, the parameter $\epsilon_{0}$ will also depend on $\Vert\nabla F\Vert_{\infty}$. In addition,
our results may be further extended to the setting of star-like
Lipschitz domains (which would seem to be the most
general setting in which the notion of {}``radial independence\char`\"{} 
of the coefficients makes sense). The idea is to use a partition of unity
argument, as in \cite{MMT}, to reduce to the case of a Lipschitz graph.
We omit the details.

Let us now briefly review the history of work in this area, which falls broadly
into two categories, depending on whether or not the $t$-independent
coefficient matrix is
self-adjoint. We discuss the former category first, and we mention only the
case of a single equation, although results for certain constant
coefficient self-adjoint
systems in a Lipschitz domain are known, see e.g. \cite{K,K2} for 
further references.  (Moreover, the present setting of complex coefficients
may be viewed in the context of $2\times 2$ systems, and indeed this provides part of
our motivation to consider the complex case).
For Laplace's equation in a Lipschitz domain, the 
solvability of \eqref{D2} was obtained by Dahlberg~\cite{D}, and
that of \eqref{N2} and \eqref{R2} by Jerison and Kenig~\cite{JK2}; solvability of the same problems
via harmonic layer potentials is due to
Verchota~\cite{V}, using the deep result of Coifman, McIntosh and 
Meyer~\cite{CMcM} concerning the $L^{2}$ boundedness
of the Cauchy integral operator on a Lipschitz curve. The results of 
\cite{V} and \cite{CMcM} are subsumed in our
Theorem~\ref{t1.11} via the pull-back mechanism discussed above.
Moreover, as mentioned above, for $A$ real, symmetric and t-independent, 
the solvability
of (D2) was obtained in \cite{JK1}, and that of (N2) and 
(R2) in \cite{KP}, but those authors did not use layer
potentials.  The case of real symmetric coefficients with some smoothness 
has been treated via layer potentials in \cite{MMT}.

In the ``non self-adjoint" setting, previous results had been obtained in 
three special cases.
First, it was known that \eqref{D2} is solvable for 
small, complex perturbations of {\it constant} elliptic matrices. This is
due to Fabes, Jerison and Kenig~\cite{FJK} via the method of 
multilinear expansions.  To our knowledge, (R2) and (N2) had not been treated in this setting.

Second, one has solvability of \eqref{D2}, \eqref{N2} and \eqref{R2} in the special 
case that the matrix $A$ is of the
``block " form
\begin{equation}
\left[\begin{array}{c|c}
 & 0\\ B & \vdots\\
 & 0\\
\hline 0\cdots0 & 1\end{array}\right]\label{eq1.14}\end{equation}
 where $B=B(x)$ is a $n\times n$ matrix. In this case, (D2) is an easy 
consequence of the semigroup theory, while \eqref{R2} amounts to solving the Kato square root problem for
the $n$-dimensional operator \begin{equation*} J=-\dv_{x}B(x)\nabla_{x},\end{equation*}
 and \eqref{N2} amounts to $L^{2}$ boundedness of the Riesz transforms $\nabla J^{-\frac{1}{2}}$ (equivalently, to
solving the Kato problem for the adjoint operator $ad\!j \,(J)$). Moreover, the boundedness of the Riesz transform
$\nabla J^{-\frac{1}{2}}$ can also be interpreted as the statement that the single 
layer potential is bounded from $L^2$ into $\dot{L}^2_1.$
These results were obtained in \cite{CMcM} $(n=1),$
\cite{HMc} $(n=2)$, \cite{AHLT} (when $B$ is a perturbation of a 
real, symmetric matrix), \cite{HLMc} (when the
kernel of the heat semi-group $e^{-tJ}$ has a Gaussian upper bound) 
and \cite{AHLMcT} in general\footnote{We remark that Theorem \ref{t1.10} may be combined with these results for block matrices \eqref{eq1.14} to allow perturbations of the block case, but we do not pursue this point here;
see, however, \cite{AAH}, where this is done without imposing De Giorgi-Nash-Moser bounds,
and where also extensions of Theorems \ref{t1.12} and \ref{t1.13} will be presented,
via the development of a functional calculus for certain Dirac type operators.}.  

Third, Kenig, Koch, Pipher and Toro~\cite{KKPT} have obtained 
solvability of (Dp) (the problems (Dp), (Np) and (Rp) are defined analogously to (D2), (N2) and (R2), 
but with $L^2$ bounds replaced by $L^p$) in the case $n=1$ (that is, in $\mathbb{R}^2_+$),
for $p$ sufficiently large
depending on $L$, in the case that $A(x)$ is real, but non-symmetric. Moreover, they construct a family of examples in
$\mathbb{R}_{+}^{2}$ in which solvability of
(Dp) may be destroyed for any specified $p$ by taking $A(x)$ to be an 
appropriate perturbation of the $2\times 2$ identity matrix.  Very recently, in the same setting of real, non-symmetric 
coefficients in two dimensions (that is, in $\mathbb{R}^2_+$), Kenig and Rule \cite{KR} have obtained
solvability of (Nq) and (Rq), where q is dual to the \cite{KKPT} exponent.  
Their result uses boundedness, but not invertibility, of the layer potentials.

The main purpose, then, of the present paper is to develop, to the extent possible, an $L^{2}$ theory of boundary value problems
for \textit{full} coefficient matrices with complex (including also real, not necessarily symmetric) entries. In fact,
in the setting of $L^{2}$ solvability with $t$-independent coefficients, the counter-example of \cite{KKPT} shows that
our perturbation results are in the nature of best possible.

A word about $t$-independence is in order. It has been observed by 
Caffarelli, Fabes and Kenig \cite{CFK} that some regularity in
the transverse direction is necessary, in order to deduce solvability of \eqref{D2}. More precisely, they show that
given any function $\omega(\tau)$ with $\int_{0}^{1}(\omega(\tau))^{2}d\tau/\tau=+\infty$, there exists a real, symmetric
elliptic matrix $A(x,t)$, whose modulus of continuity in the $t$ direction is controlled by $\omega$, but for which the
corresponding elliptic-harmonic measure and the Lebesque measure on the boundary are mutually singular. On the other
hand, it is shown in \cite{FJK} that (D2) does hold, assuming that the transverse modulus of continuity
$\omega(\tau) \equiv\sup_{x\in\mathbb{R}^{n},\,0<t<\tau} |A(x,t)-A(x,0)|$ satisfies
the square Dini condition $\int_{0}^{1}(\omega(\tau))^{2}d\tau/\tau<\infty$,
provided that $A(x,0)$ is sufficiently close to a constant matrix $A_{const}$. It seems likely that the methods of the present paper would allow us to obtain a
similar result, but with the constant matrix $A_{const}$ replaced by an $L^{\infty}$ matrix $A^{0}(x)$ satisfying the
hypotheses of Theorem~\ref{t1.10} (in particular, real, symmetric). However, we have not pursued this variant here, in
part because we conjecture that somewhat sharper estimates should be true. To explain this point of view, we recall that
a more refined, scale invariant version of the square Dini condition has been introduced by R. Fefferman, Kenig and
Pipher~\cite{FKP}, and Kenig and Pipher~\cite{KP,KP2}, to prove 
perturbation results in which one
assumes (roughly) that $|A^{1}(x,t)-A^{0}(x,t)|^{2}\frac{dxdt}{t}$ is a 
Carleson measure (actually, their condition is slightly
stronger, but in the same spirit). Note that this condition 
requires that $A^{1}=A^{0}$ on the boundary. Our work
provides a complement to \cite{FKP} and \cite{KP,KP2}, in 
that we allow the coefficients to differ at the boundary. At
present, the results of \cite{FKP} and \cite{KP,KP2} apply only to the 
case of real coefficients. It is an interesting open problem to extend the 
theorems of \cite{FKP} and \cite{KP,KP2} 
to the case of complex coefficients, even in the case of small Carleson norm. Given
such an extension, along with our results here, one could specialize to the case $A^{1}(x,t)=A(x,t)$,
$A^{0}(x,t)=A(x,0)$, with $A(x,0)$ close enough to a 
{}``good\char`\"{} (e.g., real, symmetric) matrix, to obtain a
rather complete picture of the situation for $L^{2}$ solvability.

Let us now set some notation that will be used throughout the paper. We shall use $\dv$ and $\nabla$ to denote the full
$n+1$ dimensional divergence and gradient, respectively. At times, we shall need to consider the $n$-dimensional
gradient and divergence, acting only in $x$, and these we denote either by $\nabla_{\|}$ and $\dv_{\|}$,
or by $\nabla_x$ and $\dv_x$; i.e.
\begin{equation*}
\nabla_{\|}=\left(\frac{\partial}{\partial x_{1}},\frac{\partial}{\partial x_{2}},\dots,\frac{\partial}{\partial
x_{n}}\right) = \nabla_x \end{equation*}
and for $\mathbb{R}^n$-valued $\vec{w}$, $\dv_{\|}\vec{w}\equiv\nabla_{\|}\cdot \vec{w}$. 
Similarly, given an $(n+1)\times (n+1)$ matrix $A$, we shall let $A_{\|}$ denote the $n \times n$ sub-matrix with entries $(A_{\|})_{i,j} \equiv A_{i,j}, \,\,\,\, 1 \leq i,j \leq n,$
and we define the corresponding elliptic operator acting in $\mathbb{R}^n$ by
$$L_{\|} \equiv -\dv_x A_{\|} \nabla_x .$$
We shall also use
the notation 
\begin{equation*} D_{j}\equiv \frac{\partial}{\partial x_{j}} = \partial_{x_j}\,,
\quad1\leq j\leq n+1\end{equation*}
 bearing in mind that $x_{n+1}=t$. Points in $\mathbb{R}^{n+1}$ 
 may sometimes be denoted by capital letters, e.g.
$X=(x,t)$, $Y=(y,s)$.  Balls in $\mathbb{R}^{n+1}$ and $\mathbb{R}^n$ will be denoted respectively by 
$B(X,r) \equiv \{Y:|X-Y|<r\}$ and $\Delta_r(x) \equiv \{y: |x-y|<r\}.$
We shall often encounter operators whose 
kernels involve derivatives applied to the second set of
variables in the fundamental solution $\Gamma(x,t,y,s)$. We shall 
indicate this by grouping the operators with
appropriate parentheses, thus: \begin{equation*}\left(S_{t}\nabla\right)f(x) 
\equiv\int_{\mathbb{R}^{n}}\nabla_{y,s}\Gamma(x,t,y,s)\mid_{s=0}f(y)\,dy.\end{equation*}
Hence,  one then has
$$\left(S_t \nabla_\|\right)\cdot \vec{f} = 
- S_t \left(\dv_\| \vec{f}\right), \quad (S_t D_{n+1}) = - \partial_t \,S_t,$$
where in the second identity we have used \eqref{eq1.4}

Given a cube $Q$, we denote the side length of $Q$ by $\ell(Q)$. Furthermore, given a positive number $r$, we let
$rQ$ denote the concentric cube with side length $r\ell(Q)$.  

We shall use $P_t$ to denote a nice approximate identity, acting on 
functions defined on $\mathbb{R}^n$;  i.e. 
$P_tf(x)=\phi_t \ast f,$ where $\phi_t(x)=t^{-n}\phi \left(x/t\right)$, $\phi \in
C^\infty_0 (\{ |x|<1\})$, $0\leq \phi$ and $\int_{\mathbb{R}^n}\phi=1$.

Following~\cite{FJK}, we introduce a convenient norm for dealing with square functions (although we warn the reader that
our measure differs from that used in \cite{FJK}): \begin{equation*}
\Vert|F\Vert|_{\pm} \equiv
\left(\iint_{\mathbb{R}_{\pm}^{n+1}}|F(x,t)|^{2}\frac{dxdt}{|t|}\right)^{\frac{1}{2}},
\quad \Vert|F|\Vert_{all} 
\equiv\left(\iint_{\mathbb{R}^{n+1}}|F(x,t)|^{2}\frac{dxdt}{|t|}\right)^{\frac{1}{2}}.\end{equation*}
For a family of operators $U_{t}$, we write
\begin{equation*}
\Vert|U_{t}|\Vert_{+,op}\equiv\sup_{\Vert f\Vert_{L^{2}(\mathbb{R}^{n})}=1}\Vert|U_{t}f|\Vert_{+},\end{equation*}
and similarly for $|\|\cdot\||_{-,op}$ and $|\|\cdot\||_{all,op}$. Sometimes, we may drop the {}``$+$\char`\"{} sign when it is clear that we are working in the upper
$\frac{1}{2}$-space. As usual, we allow generic constants $C$ to depend upon dimension and ellipticity, and, in the proof of the perturbation result, upon the constants associated to the ``good" operator $L_0$. Specific
constants, still depending on the same parameters, will be denoted $C_{1}$, $C_{2}$, etc..

The paper is organized as follows. In sections~\ref{s2} and \ref{s3offdiag}, we prove some useful technical estimates.  In section \ref{s4nt} we discuss the boundary behavior  and uniqueness of our solutions. 
The next five sections are the heart of the matter, in which we prove Theorem~\ref{t1.10}  
(sections~\ref{s3},
\ref{s4} and \ref{s5}), and Theorem~\ref{t1.11} (sections~\ref{s6} and \ref{s7}).  
Section \ref{sappend} is an appendix, in which we briefly discuss the constant coefficient case.

\medskip
\noindent {\bf Acknowledgements}.  The fourth named author thanks M. Mitrea for
helpful conversations concerning several of the topics treated in this work, including constant coefficient operators, the boundary behavior of layer potentials, and in particular,
for suggesting the approach used here in Lemma \ref{l4.ntjump} to obtain the analogue of the classical jump relation formulae.

\section{Some consequences of De Giorgi-Nash-Moser bounds\label{s2}}

Throughout this section, and throughout the rest of the paper, we suppose always that our differential
operators satisfy our ``standard assumptions": that is, divergence 
form elliptic, with ellipticity parameters
$\lambda$ and $\Lambda$, defined in $\mathbb{R}^{n+1}, n\geq 2$, with complex coefficients that are bounded, measurable and $t-$independent;  moreover, we suppose that solutions of $Lu=0$ satisfy the
De Giorgi-Nash-Moser estimates \eqref{eq1.2} and \eqref{eq1.3}.  We now prove some technical estimates using rather familiar arguments.   In the sequel,
$\Gamma$ will denote the 
fundamental solution of $L$, and we set 
\begin{equation}\label{eq2.0}K_{m,t}(x,y) \equiv (\partial_t)^{m+1} \Gamma(x,t,y,0)\end{equation}  
\begin{lemma}\label{l2.1} Suppose that $L$ and $L^*$ satisfy the
``standard assumptions" as above. Then there exists a constant $C_1$ depending only on
dimension, ellipticity and \eqref{eq1.2} and \eqref{eq1.3}, such that for every integer $m \geq -1$,
for all $t\in \mathbb{R}$, and $x,y\in \mathbb{R}^n$, we have
\begin{equation}\label{eq2.5} \left|K_{m,t} (x,y)\right| \leq
CC^{m^2}_1 (|t|+|x-y|)^{-n-m}\end{equation}
\begin{equation}\label{eq2.6} \left|\left(\mathbb{D}^hK_{m,t}(\cdot,y)\right)(x)\right| + 
\left|\left(\mathbb{D}^hK_{m,t}(x,\cdot)\right)(y)\right|
\leq CC^{m^2}_1 \frac{|h|^\alpha}{(|t|+|x-y|)^{n+m+\alpha}},\end{equation} whenever 
$2|h|\leq |x-y|$ or $|h|<20|t|$, for some $\alpha >0$, where $\left(\mathbb{D}^h f\right)(x) 
\equiv f(x+h) - f(x).$
\end{lemma}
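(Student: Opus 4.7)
The plan is to reduce both \eqref{eq2.5} and \eqref{eq2.6} to the classical fundamental-solution bound by a Caccioppoli/Moser bootstrap that exploits the $t$-independence of $A$. The base case $m=-1$ of \eqref{eq2.5} is the standard estimate $|\Gamma(x,t,y,0)|\leq C(|t|+|x-y|)^{1-n}$, which is available under our standing De Giorgi-Nash-Moser hypothesis and is proved, for instance, in \cite{HK2}. For $m\geq 0$ the key observation is that, since $A$ does not depend on $t$, every $\partial_t^{j}\Gamma(\cdot,\cdot,y,0)$ is itself an $L$-solution on $\mathbb{R}^{n+1}\setminus\{(y,0)\}$.

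The central technical step is the following iterated interior estimate: for any $L$-solution $u$ on $B(X_0,r)$ and any integer $k\geq 0$,
\[
|\partial_t^{k} u(X_0)| \;\leq\; \left(\frac{C(k+1)}{r}\right)^{k}\,\sup_{B(X_0,r/2)}|u|.
\]
I would prove this by introducing nested balls $B_j:=B(X_0,j\eta)$, $j=1,\dots,k+1$, with the \emph{arithmetic} choice $\eta:=r/(2(k+1))$, applying Moser's bound \eqref{eq1.3} to the $L$-solution $\partial_t^{k}u$ on $B_1$, and then iterating Caccioppoli's inequality on the shells $B_{j+1}\setminus B_j$, each step trading one $t$-derivative for a factor $C\eta^{-1}$ while enlarging the ball by the uniform width $\eta$. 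After $k$ iterations the ball has grown to $B_{k+1}=B(X_0,r/2)$, and the accumulated prefactor is $(C/\eta)^{k}\simeq (C(k+1)/r)^{k}$. Applying the estimate with $u=\Gamma(\cdot,\cdot,y,0)$, $k=m+1$, and $r:=\tfrac12(|t|+|x-y|)$ so that $B((x,t),r)$ avoids the pole $(y,0)$, and combining with the base case, one obtains $|K_{m,t}(x,y)|\leq (C(m+2))^{m+1}(|t|+|x-y|)^{-n-m}$; absorbing this factor into $CC_1^{m^2}$ for $C_1$ chosen sufficiently large yields \eqref{eq2.5}.

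The Hölder bound \eqref{eq2.6} follows from the same scheme, with the outermost Moser step replaced by the De Giorgi-Nash estimate \eqref{eq1.2} applied to the $L$-solution $\partial_t^{m+1}\Gamma(\cdot,\cdot,y,0)$; the two hypotheses $2|h|\leq |x-y|$ and $|h|<20|t|$ are precisely what is needed so that $(x,t)$ and $(x+h,t)$ both lie in a common ball whose concentric double stays away from the pole $(y,0)$. For differences in the $y$-variable one uses the duality $\Gamma(x,t,y,0)=\overline{\Gamma^*(y,0,x,t)}$ together with $t$-independence, which shows that the relevant function is an $L^*$-solution in $(y,s)$ away from $(x,t)$, so that De Giorgi-Nash for $L^*$ (likewise part of our standing hypotheses) supplies the required Hölder bound in $y$. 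The main obstacle throughout is the constant tracking in the bootstrap: the \emph{arithmetic} choice $\eta=r/(2(k+1))$ is what delivers the clean growth $(Ck)^{k}\leq C_1^{k^2}$, whereas a geometric shrinkage of the shells would yield factorial blowup incompatible with the stated bound.
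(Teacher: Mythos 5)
Your proof follows the paper's approach exactly: exploit $t$-independence so that $\partial_t^j\Gamma(\cdot,\cdot,y,0)$ is an $L$-solution away from the pole $(y,0)$, apply Moser's estimate \eqref{eq1.3} on an inner ball and iterate Caccioppoli on nested shells to reduce to the known $m=-1$ bound, and use $\Gamma(x,t,y,s)=\overline{\Gamma^*(y,s,x,t)}$ together with De Giorgi--Nash for $L^*$ to obtain the $y$-H\"older estimate in \eqref{eq2.6}. The paper gives only a sketch, and your explicit constant tracking via the arithmetic shell width is a clean way to fill it in (your displayed estimate is missing a harmless factor $(k+1)^{(n+1)/2}$ from the $L^2$-to-$L^\infty$ normalizations, but this is absorbed into $C_1^{k^2}$).

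One remark is worth correcting. Your final sentence is backwards: taking shells of \emph{geometrically} decreasing width $\eta_j\sim\rho^j r$ gives a prefactor $\prod_{j\le m}\eta_j^{-1}\sim C^{m^2}r^{-(m+1)}$, which is precisely the stated $C_1^{m^2}$ growth and hence not ``incompatible'' with it --- indeed that is essentially what the paper's sketch, with a fixed ball of radius $R/2$, amounts to. It is the \emph{arithmetic} choice $\eta = r/(2(k+1))$ that produces the sharper growth $\sim(Cm)^m\approx m!\,C^m$, which is exactly what the remark immediately following the lemma says one obtains ``by taking more care.'' Both choices suffice for the lemma; arithmetic is simply better.
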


\begin{proof}[Sketch of proof] The case $m=-1$ of \eqref{eq2.5} follows from its 
parabolic analogue in \cite{AT}, Section 1.4; alternatively, the reader may consult \cite{HK2}
for a direct proof in the elliptic case. The case $m = 0$ may be
treated by applying \eqref{eq1.3} to the solution $u(x,t)=\partial_t \Gamma (x,t,y,0)$  in the ball $B((x,t),R/2)$, with $R=\sqrt{|t|^2+|x-y|^2}$, and then using Caccioppoli's inequality to reduce to the case
$m=-1$. The case $m>0$ is obtained
by iterating the previous argument, and \eqref{eq2.6} follows from \eqref{eq1.2} and \eqref{eq2.5}.\end{proof}

We remark that, by taking more care with the Caccioppoli argument, 
using a ball of appropriately chosen radius
$c_m R$ rather than $R/2$, one may obtain the natural growth bound $m! C_1^m$ in \eqref{eq2.5} and
\eqref{eq2.6}.  We leave the details to the interested reader.

\begin{lemma}\label{l2.7} Suppose that $L,L^*$ satisfy the standard assumptions.  Then,
there exists a constant $C_2$, and for each $\rho>1$ a constant $C_\rho$, such that  for every cube $Q\subseteq \mathbb{R}^n$, for
all $x\in Q$, and for all integers $k\geq 1$ and $m\geq -1$, we have
\begin{enumerate}\item[(i)]$\int_{2^{k+1}Q\backslash 2 ^kQ}\left| (2^k\ell
(Q))^m\left(\partial_t\right)^{m+1}\nabla_y \Gamma (x,t,y,0)\right|^2 dy 
\leq CC_2^{m^2} (2^k\ell(Q))^{-n-2}, \,\,\,\,\, \forall t\in \mathbb{R}$
\item[(ii)] $\int_{2Q}\left| \ell
(Q)^m\left(\partial_t\right)^{m+1}\nabla_y \Gamma (x,t,y,0)\right|^2 dy 
\leq C_\rho^{m^2+1} \ell(Q)^{-n-2}, \,\,\,\, \frac{\ell (Q)}{\rho}<|t|< \rho\,\ell (Q).$\end{enumerate}\end{lemma}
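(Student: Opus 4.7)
The plan is to extend $K_{m,t}(x,y)$ to an $(n+1)$-dimensional function of $(y,s)$ solving an elliptic equation associated with $L^*$, apply Caccioppoli's inequality on $(n+1)$-dimensional balls, and then descend to the slice $s=0$ using that $t$-independence forces $\partial_s u$ to be a solution as well. Fix $t$ and set $u(y,s):=K_{m,t-s}(x,y)$; by $t$-independence $\Gamma(x,t,y,s)=\Gamma(x,t-s,y,0)$, hence $u(y,s)=(-1)^{m+1}(\partial_s)^{m+1}\Gamma(x,t,y,s)$. Since $\overline{\Gamma(x,t,\cdot,\cdot)}$ solves $L^*$ away from $(x,t)$ and $\partial_s$ commutes with $L^*$ (coefficients being $s$-independent), $\overline{u}$ solves $L^*$ in any region avoiding $(x,t)$, with $u(y,0)=K_{m,t}(x,y)$. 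By the same reasoning, $\partial_s u(y,s)=-K_{m+1,t-s}(x,y)$ is also an $L^*$-solution.

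For (i), fix $y_0\in A_k:=2^{k+1}Q\setminus 2^kQ$. Since $x\in Q$ and $k\geq 1$, $|y_0-x|\geq c\,2^k\ell(Q)$, so with $r=c'\,2^k\ell(Q)$ suitably small the ball $B:=B((y_0,0),r)\subset\mathbb{R}^{n+1}$ is disjoint from $(x,t)$ and satisfies $|y-x|+|t-s|\geq c\,2^k\ell(Q)$ throughout. Lemma~\ref{l2.1} then yields $|u|\leq CC_1^{m^2}(2^k\ell(Q))^{-n-m}$ and $|\partial_s u|\leq CC_1^{(m+1)^2}(2^k\ell(Q))^{-n-m-1}$ on $B$, and Caccioppoli applied to $\overline{u}$ and to $\overline{\partial_s u}$, together with a covering of $A_k$ by $O(1)$ such balls, gives
\begin{align*}
\iint_{A_k\times(-r/2,r/2)}|\nabla_{y,s}u|^2\,dy\,ds &\leq CC_1^{2m^2}(2^k\ell(Q))^{-n-1-2m},\\
\iint_{A_k\times(-r/2,r/2)}|\nabla_{y,s}\partial_s u|^2\,dy\,ds &\leq CC_1^{2(m+1)^2}(2^k\ell(Q))^{-n-3-2m}.
\end{align*}

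To descend to the slice $s=0$, observe that $\partial_s u$ being a solution yields $\partial_s\nabla_y u=\nabla_y\partial_s u\in L^2_{\loc}$, so for a.e.\ $y\in A_k$ the map $s\mapsto\nabla_y u(y,s)$ is absolutely continuous. Writing $\nabla_y u(y,0)=\nabla_y u(y,s_*)-\int_0^{s_*}\partial_s\nabla_y u(y,s)\,ds$, squaring (using $|a-b|^2\leq 2|a|^2+2|b|^2$ and Cauchy--Schwarz on the integral), integrating in $y\in A_k$, and averaging in $s_*\in(r/4,r/2)$, one obtains
$$\int_{A_k}|\nabla_y u(y,0)|^2\,dy\lesssim \frac{1}{r}\iint_{A_k\times(0,r/2)}|\nabla_{y,s}u|^2 + r\iint_{A_k\times(0,r/2)}|\nabla_{y,s}\partial_s u|^2,$$
and inserting the two Caccioppoli bounds with $r\sim 2^k\ell(Q)$ shows that both terms are of size $CC_1^{2(m+1)^2}(2^k\ell(Q))^{-n-2-2m}$. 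This proves (i) with any $C_2$ satisfying $C_2^{m^2}\geq CC_1^{2(m+1)^2}$ for all $m\geq-1$.

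Part (ii) follows by the same scheme, except that for $y_0\in 2Q$ the ball radius is dictated by the vertical separation: one takes $r\sim|t|/2$, so that $B((y_0,0),r)$ still avoids $(x,t)$. Under the hypothesis $\ell(Q)/\rho<|t|<\rho\ell(Q)$, $r\sim\ell(Q)$ with $\rho$-dependent constants, and covering $2Q$ requires $O(\rho^n)$ such balls, producing the $\rho$-dependent prefactor $C_\rho^{m^2+1}$. The principal obstacle throughout is the passage from the naturally arising $(n+1)$-dimensional Caccioppoli bounds to the $n$-dimensional slice at the specific height $s=0$; this is the sole step requiring $t$-independence of the coefficients, since only that hypothesis makes $\partial_s u$ a solution of $L^*$ and thereby supplies the regularity $\partial_s\nabla_y u\in L^2_{\loc}$ needed to justify the slicing.
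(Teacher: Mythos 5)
Your proposal is correct, but it takes a genuinely different route from the paper's. The paper works entirely on the boundary slice $\{s=0\}$: it tests $|(\partial_t)^{m+1}\nabla_y\Gamma(x,t,\cdot,0)|^2\varphi_k^2$ against the ellipticity form of $A_\|^*$, integrates by parts in $y$ only, and then uses $t$-independence and the interior equation (in the weak sense of Lemma \ref{l2.identity}) to trade $(L_\|^*)_y\Gamma$ for extra $\partial_t$-derivatives; this produces the recursion $a_m^2 \le C\delta a_{m+1}^2 + CC_1^{2(m+2)^2}\delta^{-1}(2^k\ell(Q))^{-n-2}$, which is closed only after summing $\sum_m C_2^{-m^2}a_m^2$ with a suitable choice of $\delta_m$. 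That summation demands a priori finiteness, which is why the paper first assumes $A\in C^\infty$ and later removes the smoothness via a separate duality approximation with $L_\varepsilon$. You instead lift the problem off the slice using $t$-independence, $u(y,s)=K_{m,t-s}(x,y)=(-1)^{m+1}(\partial_s)^{m+1}\Gamma(x,t,y,s)$, run the ordinary $(n+1)$-dimensional Caccioppoli on $u$ and $\partial_s u$ with the pointwise input from Lemma \ref{l2.1} (eq.\ \eqref{eq2.5}), and descend to $s=0$ by a one-dimensional trace/FTC argument. This eliminates the recursion and the smoothness detour, and the $C_2^{m^2}$ growth emerges directly from \eqref{eq2.5} (your closing remark should read: choose $C_2$ so that $C_1^{2(m+1)^2}\le CC_2^{m^2}$ with the lemma's absolute constant $C$ absorbing the finitely many small $m$, e.g.\ $C_2=C_1^4$ suffices). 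The one point that needs to be made explicit in your write-up is the regularity you invoke tacitly: to apply Caccioppoli to $u$ and $\partial_s u$ you need $(\partial_s)^{m+1}\Gamma(x,t,\cdot,\cdot)\in W^{1,2}_{loc}$ away from $(x,t)$. This is not free; it is exactly the issue the paper's approximation argument is engineered to circumvent. In your scheme it should be established directly by the standard difference-quotient argument (valid because the coefficients are $s$-independent), iterated in $m$; once that is in place, Fubini shows $s\mapsto\nabla_y u(y,s)\in W^{1,2}(0,r/2)$ for a.e.\ $y$ and the slicing is legitimate. Net assessment: your route is shorter, conceptually cleaner, and avoids both the recursion bookkeeping and the smooth-coefficient reduction, at the cost of needing the difference-quotient regularity lemma up front; the paper's route is more self-contained on the slice and defers all regularity concerns to a single approximation step at the end.
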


\begin{proof} We first suppose that $A \in C^{\infty}$; we shall remove this restriction at the end of the proof.  Of course, our quantitative bounds will not depend on smoothness.  Let us consider estimate (i) first.  We shall actually prove that for $C_2$ large enough we have
\begin{equation}\label{eq2.8} \sum^\infty_{m=0}C_2 ^{-m^2} \left\| (2^k\ell(Q))^m\left(
\partial_t \right)^{m+1} \nabla_y \Gamma (x,t,\cdot,0 )\right\|^2 _{L^2(2^{k+1}Q\backslash 2^kQ)}
\leq C(2^k\ell (Q))^{-n-2}.\end{equation} Fix $x\in Q$. Let $\varphi_k\in C^\infty_0$, $\varphi_k\equiv 1$
on $2^{k+1}Q\backslash 2^kQ$, $\supp
\varphi_k\subset \frac{3}{2}2^{k+1}Q\backslash \frac{3}{2}2^{k-1}Q$, with
$$\|\nabla \varphi_k\|_\infty \leq C(2^k \ell(Q))^{-1}.$$  We observe that
\begin{equation*}\begin{split}I_m&\equiv \int \left|\left( \partial_t\right)^{m+1} \nabla_y\Gamma
(x,t,y,0)\right|^2 \varphi^2_k (y)dy\\ &\leq C\, \Re e \int A^*_{\|} \,\nabla_y \left( \partial_t\right)^{m+1}
\Gamma (x,t,y,0)\cdot
\overline{\nabla_y \left(\partial_t\right)^{m+1} \Gamma (x,t,y,0)}\varphi^2_k (y)\,dy\\
\intertext{(where $A^*_{\|}$ is the adjoint of the $n\times n$ matrix $A_{\|}$ 
defined by $(A_{\|})_{ij}=A_{ij}$, $1\leq i,j\leq n$)}\end{split}\end{equation*} 
\begin{equation*}\begin{split}&=C\, \Re e \int (L^*_{\|})_y
\left( \partial_t\right)^{m+1}\Gamma (x,t,y,0)\,\overline{\left(
\partial_t\right)^{m+1}\Gamma (x,t,y,0)}\,\varphi^2_k (y)\,dy\\ &\qquad- C\,\Re e \int A^*_{\|} \,\nabla_y \left(
\partial_t\right)^{m+1}\Gamma (x,t,y,0)\,\overline{\left(
\partial_t\right)^{m+1}\Gamma (x,t,y,0)} \cdot \nabla_y\varphi^2_k(y)\,dy\\ &=I'_m+I''_m,\end{split}\end{equation*}
where $L^*_{\|} \equiv -\dv_{x}A^*_{\|} \nabla_{x}$. For each integer $m\geq -1$, define
\begin{equation*}a_m=a_m(x)\equiv \|(2^k \ell (Q))^m\left(D_{n+1}\right)^{m+1} 
\nabla_y\Gamma (x,t,\cdot,0 )\varphi_k\|_2 = \left(2^k \ell (Q) \right)^m I_m^{1/2}.\end{equation*} 
Since $\Gamma (x,t,\cdot ,\cdot )$ is a solution of
$L^\ast$ away from $x,t$, we have that
\begin{equation*}(L^*_{\|})_y \,\Gamma (x,t,y,0)=\sum^n_{i=1}D_iA^\ast _{i,n+1}D_{n+1}
\Gamma +\sum^{n+1}_{j=1}
A^\ast_{n+1,j}\cdot D_{n+1}D_j\Gamma ,\end{equation*} where in the second term we have used $t$-independence. We designate
the respective contribution of these two terms to $I'_m$ by $I'_{m,1}$
and $I'_{m,2}$. Now,
\begin{equation*}\begin{split} |I'_{m,2}|&\leq C\int |\nabla_{y ,s}\left( D_{n+1}\right)^{m+2} 
\Gamma \,| \,\,|(D_{n+1})^{m+1} \Gamma \,|\,\varphi^2_k\\ &\leq C \left( \| \nabla_y\left(
D_{n+1}\right)^{m+2} \Gamma \,\varphi_k \|_2+\|
\left( D_{n+1}\right) ^{m+3}\Gamma \,\varphi_k\|_2\right) \|\left( D_{n+1}\right)^{m+1}\Gamma \,\varphi_k\|_2\\ &\leq CC^{m^2}_1 \left((2^k \ell (Q))^{-(m+1)}a_{m+1}+C_1 ^{(m+2)^2}
(2^k \ell (Q))^{-(m+2)-\frac{n}{2}}\right)(2^k\ell(Q))^{-m-\frac{n}{2}}\\
\intertext{(where we have used \eqref{eq2.5})} &\leq CC^{m^2}_1 \left(a_{m+1}(2 ^k\ell
(Q))^{-2m-1-\frac{n}{2}}+C_1^{(m+2)^2} (2 ^k\ell (Q))^{-2m-2-n}\right)\\ &\leq C\delta a^2_{m+1}(2^k \ell
(Q))^{-2m}+CC_1^{m^2+(m+2)^2}\left(\delta^{-1}+1\right) (2^k\ell (Q))^{-2m-2-n},\end{split}\end{equation*}
where $\delta >0$ is at our disposal. Also, after integrating by parts
\begin{equation*}\begin{split} I'_{m,1}&=-C\Re e \sum^n_{i=1} \int A^\ast_{i,n+1}\left(  \partial_t\right)^{m+2}\!\Gamma (x,t,y,0) \,\left(\partial_t\right)^{m+1}\overline{D_i\Gamma (x,t,y,0)}\,\varphi^2_k (y)dy\\
&-C\Re e \sum^n_{i=1} \int A^\ast _{i,n+1}\left( \partial_t\right)^{m+2}\!\Gamma (x,t,y,0) \, 
\left(
\partial_t\right)^{m+1}\overline{\Gamma (x,t,y,0)}\, D_i\varphi^2_kdy.\end{split}\end{equation*} By Cauchy's inequality,
\eqref{eq2.5} and the bound for $\| \nabla \varphi_k\|_\infty$, we obtain
\begin{equation*}|I'_{m,1}|\leq C\delta I_m +CC_1^{2 (m+1)^2}\left( \delta^{-1}+1\right)
(2^kQ)^{-2m-n-2}.\end{equation*}
Similarly, \begin{equation*}|I''_m|\leq C\delta I_m+CC_1^{2m^2}\delta^{-1}(2^k\ell (Q))^{-2m-n-2}.\end{equation*}
Collecting our estimates for $I'_{m,1}$, $I'_{m,2}$, and $I''_m$, we obtain  for $\delta$ small enough that
\begin{equation*}\left(2^k\ell (Q)\right)^{2m}I_m=a_m^2\leq C\delta a^2_{m+1}+CC_1^{2 (m+2)^2}\delta^{-1}(2^k\ell
(Q))^{-n-2}.\end{equation*} Thus,
\begin{equation*}\sum^\infty_{m=-1}C_2^{-m^2} a^2_m\leq \sum^\infty_{m=-1} C^{-m^2}_2 C\delta a^2_{m+1} +
\sum^\infty_{m=-1}C_2^{-m^2}CC^{2 (m+2)^2}_1 \delta^{-1} (2^k \ell (Q))^{-n-2}.\end{equation*} We now choose
$\delta=\delta_m=\frac{1}{2 C}C^{-2m-1}_2$, so that the right side of the last inequality equals
\begin{equation*}\frac{1}{2}\sum^\infty_{m=-1}C_2^{-(m+1)^2}a^2_{m+1}+2C\sum^\infty_{m=-1}C_2^{-m^2+2m+1}C^{
2(m+2)^2}_1 (2^k\ell (Q))^{-n-2}.\end{equation*}
Choosing now $C_2=C^3_1$, we obtain \eqref{eq2.8}, under the a priori assumption that $$\sum^\infty_{m=-1}C^{-m^2}_2
a^2_m<\infty.$$ The latter holds if $A(x)\in C^\infty$, for in that case 
$\left(\partial_t\right)^{m+1}\nabla_y\Gamma (x,t,y)$ satisfies point-wise bounds 
analogous to \eqref{eq2.5}, possibly depending
on the regularization of the coefficients. The constants in \eqref{eq2.8} and in the conclusion of Lemma~\ref{l2.7} are independent of this regularization.

The proof of estimate (ii) is similar, except that we replace the cut-off function $\varphi_k$
by $\varphi \in C_0^\infty (3Q)$, with $\varphi \equiv 1$ on $2Q$.  We omit the details.

To finish the proof of the lemma, it remains to remove the a priori assumption of smoothness of the coefficients. To this end, fix a cube $Q$, and let $g \in C^\infty_0 (Q), \, \vec{f} \in C^\infty_0 (R_k(Q),\mathbb{C}^n),$
where $R_0(Q)\equiv 2Q,$ and $R_k(Q) \equiv 2^{k+1}Q \setminus 2^kQ, \, k \in \mathbb{N}$. 
It is enough to prove the estimate
$$|\langle g, (D_{n+1})^{m+1} S_t\,(\dv_\| \vec{f}) \rangle| \leq CC_2^{m^2/2} (2^k \ell (Q) )^{-\frac{n}{2} - m - 1} \|g\|_1 \|\vec{f}\|_2,$$
with $t>0,$ and, when $k=0$, $\rho^{-1} \ell (Q) \leq t \leq \rho \ell (Q),$ with the constants depending upon $\rho$ in the latter situation.  The case $t<0$ may be handled by an identical argument, which we omit.  We define $$A_\varepsilon \equiv P_\varepsilon A \equiv \phi_\varepsilon \ast A,$$
where $\phi_\varepsilon (x) \equiv \varepsilon^{-n} \phi (x/\varepsilon ),$ and
$ \phi \in C_0^\infty ( \{ |x|<1\})$ is non-negative and even, with $\int_{\mathbb{R}^n} \phi = 1$.  
Then $A_\varepsilon \to A $ a.e..
Set $$L_\varepsilon \equiv - \dv A_\varepsilon \nabla,$$
and let $\Gamma_\varepsilon$ denote the corresponding fundamental solution.  We note that
\begin{equation*} L_\varepsilon^{-1} - L^{-1}=  L_\varepsilon^{-1} L \,L^{-1} - L_\varepsilon^{-1}
L_\varepsilon L^{-1}= L_\varepsilon^{-1} div ( A_\varepsilon -A) \nabla L^{-1}.
\end{equation*}
We choose a non-negative even cut-off function $\varphi \in C_0^\infty (-1,1)$, with $\int_{\mathbb{R}} \varphi = 1.$
Fix $t>0$ (or $t \in (\rho^{-1} \ell (Q), \rho \ell (Q) )$ if $k=0$).  For $\delta >0$, set 
$\varphi_\delta(s)\equiv \delta^{-1} \varphi(s/\delta)$, and define  
\begin{equation*}\vec{f}_\delta (y,s)
\equiv \vec{f}(y)\, \varphi_\delta(s),\,\quad
g_{t,\delta} (x,\tau)  \equiv g(x) \,\varphi_\delta(t-\tau)
\end{equation*}

Now, fix $\varepsilon > 0$ and suppose that $0<\delta<t/8$.  Then for
$|t-\tau|< \delta$, we have 
\begin{eqnarray*}\left(D_{n+1}\right)^{m+1} L_\varepsilon^{-1} \dv_\| \vec{f}_\delta(x,\tau) &=&
\iint (\partial_\tau)^{m+1} \Gamma_\varepsilon (x,\tau,y,s) \dv_\| \vec{f}(y) \,\varphi_\delta(s) \, dy ds\\
&=& \int \varphi_\delta(s) \left(D_{n+1}\right)^{m+1} 
\left(S_{\tau -s}^{L_\varepsilon} \dv_\| \vec{f}\right) (x) \,ds,
\end{eqnarray*}
where $S^{L_\varepsilon}_t$ denotes the single layer potential operator associated to $L_\varepsilon$.
Thus, 
\begin{eqnarray*} \left|\langle g_{t,\delta}, (D_{n+1})^{m+1} 
L_\varepsilon^{-1} \dv_\| \vec{f}_\delta \rangle \right| \!\!&=& \!\!
\left|\iint \! \varphi_\delta (\tau)\varphi_\delta (s) \,\langle g,(D_{n+1})^{m+1}\!
\left(S^{L_\varepsilon}_{t-(\tau +s)} \nabla_\|\right) \cdot \vec{f} \,\rangle \,ds d\tau \right|\\ \!\!&\leq &\!\! \,CC_2^{m^2/2} \, \|g\|_1 \|\vec{f}\|_2 \,(2^k \ell (Q) )^{-\frac{n}{2}-m-1}
\end{eqnarray*}
by the a priori bound obtained for smooth coefficients, since $|\tau + s | < 2\delta \leq t/4$
and $\|\varphi\|_1 = 1$.
Moreover, \begin{eqnarray*}|\langle g_{t,\delta} , (D_{n+1})^{m+1} \!\left(L_\varepsilon^{-1}-L^{-1}\right) \dv_\| \vec{f}_\delta \rangle|\!\!&=&\!\!\!\langle (D_{n+1})^{m+1}g_{t,\delta} ,  L_\varepsilon^{-1} \dv (A_\varepsilon -A) \nabla L^{-1}  \dv_\| \vec{f}_\delta \rangle| \\ \!\!&=&\! \!\!\langle \nabla \!
\left(L_\varepsilon^*\right)^{-1}\!(D_{n+1})^{m+1}g_{t,\delta} ,    
(A_\varepsilon -A) \nabla L^{-1} \! \dv_\| \vec{f}_\delta \rangle|,
\end{eqnarray*}
which converges to $0$ as $\varepsilon \to 0$, for each fixed $\delta >0$, by dominated convergence, since
$$\nabla \left(L_\varepsilon^*\right)^{-1}(D_{n+1})^{m+1}g_{t,\delta}, \,\, \nabla L^{-1}  \dv_\| \vec{f}_\delta 
\,\in L^2(\mathbb{R}^{n+1}).$$
(For the first term, the case $m=-1$ uses that $C_0^\infty \subset L^{2_*} \hookrightarrow L^2_{-1},$ where
$2_* = 2(n+1)/(n+3)$ is the lower Sobolev exponent in $n+1 \geq 3$ dimensions.)
Thus, $$|\langle g_{t,\delta} , (D_{n+1})^{m+1} L^{-1} \dv_\| \vec{f}_\delta \rangle|\leq C
C_2^{m^2/2} \, \|g\|_1 \|\vec{f}\|_2 \,(2^k \ell (Q) )^{-\frac{n}{2}-m-1}.$$
The conclusion of the lemma now follows from the observation that 
\begin{eqnarray*}\langle g_{t,\delta} , (D_{n+1})^{m+1} L^{-1} \dv_\| \vec{f}_\delta \rangle &=& \!\!\iint 
\varphi_\delta (\tau) \varphi_\delta (s)\,\langle g , (D_{n+1})^{m+1} S_{t-(\tau+s)} \dv_\| 
\vec{f} \rangle \,ds d\tau \\ & \to & \langle g , (D_{n+1})^{m+1} S_t \dv_\| \vec{f} \rangle,
\end{eqnarray*}
as $\delta \to 0$, since
$h(t) \equiv \langle g , (D_{n+1})^{m+1} S_t \dv_\| \vec{f} \rangle$
is continuous (even $C^\infty$) in $(0,\infty)$. 
\end{proof}

As a Corollary of the previous two Lemmata we deduce

\begin{lemma}\label{l2.9} Suppose that $L,L^*$ satisfy the standard assumptions, 
and let ${\bf f}: \mathbb{R}^n \to \mathbb{C}^{n+1}.$ Then for every cube $Q$ and for all integers $k\geq
1$ and $m \geq -1 ,$ we have 
\begin{enumerate}\item[(i)]$\|\partial^{m+1}_t (S_t\nabla )\cdot({\bf f}
1_{2^{k+1}Q\setminus 2 ^kQ})\|^2_{L^2(Q)} 
\leq CC_2^{m^2} 2^{-nk} (2 ^k\ell (Q))^{-2m-2} \|{\bf f}\|^2_{L^2(2^{k+1}Q\backslash 2^k
Q)}, \,\,\,\, t\in \mathbb{R}$
\item[(ii)] $\|\partial^{m+1}_t (S_t\nabla )\cdot({\bf f}
1_{2Q})\|^2_{L^2(Q)} 
\leq C_\rho^{m^2+1}  \ell (Q)^{-2m-2} \| {\bf f}\|^2_{L^2(2
Q)}, \,\,\,\, \frac{\ell (Q)}{\rho}<|t|< \rho\,\ell (Q).$\end{enumerate}
\end{lemma}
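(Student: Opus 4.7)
The plan is to obtain both estimates by a direct application of the Cauchy--Schwarz inequality in the $y$-variable, which extracts the factor $\|{\bf f}\|_{L^2(E)}$ (where $E$ is either $2^{k+1}Q\setminus 2^kQ$ or $2Q$) and reduces the matter to an $L^2_y$-bound on the kernel
$H(x,y) \equiv \partial_t^{m+1}\nabla_{y,s}\Gamma(x,t,y,s)|_{s=0}$,
which is almost exactly the content of Lemma~\ref{l2.7}. No operator-theoretic input beyond the pointwise/integrated kernel bounds of Lemmas~\ref{l2.1} and \ref{l2.7} is needed.

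First I would decompose $H(x,y)$ into its first $n$ components $\partial_t^{m+1}\nabla_y\Gamma(x,t,y,0)$ and its last component. Using the $t$-independence identity \eqref{eq1.4}, one has $\partial_s\Gamma(x,t,y,s)|_{s=0} = -\partial_t\Gamma(x,t,y,0)$, and hence the last component equals $-K_{m+1,t}(x,y)$ in the notation of \eqref{eq2.0}. Therefore
\begin{equation*}
|H(x,y)|^2 \leq |\partial_t^{m+1}\nabla_y \Gamma(x,t,y,0)|^2 + |K_{m+1,t}(x,y)|^2.
\end{equation*}
The first term is integrated over $E$ by direct appeal to Lemma~\ref{l2.7}(i) or (ii), respectively. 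For the second term I would use the pointwise bound of Lemma~\ref{l2.1}, namely $|K_{m+1,t}(x,y)| \leq CC_1^{(m+1)^2}(|t|+|x-y|)^{-n-m-1}$, and integrate over $E$. In the annular case, $|x-y|\asymp 2^k\ell(Q)$ for $x\in Q$, so this produces an $L^2_y$-bound of the form $CC_1^{2(m+1)^2}(2^k\ell(Q))^{-n-2m-2}$, of exactly the same shape as the one given by Lemma~\ref{l2.7}(i); after enlarging $C_2$ (still only a power of $C_1$) both contributions are absorbed into $CC_2^{m^2}(2^k\ell(Q))^{-n-2m-2}$. The analogous verification handles the $2Q$ case, using $|t|\geq \ell(Q)/\rho$ to bound $|K_{m+1,t}|$ and absorbing everything into $C_\rho^{m^2+1}\ell(Q)^{-n-2m-2}$.

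Having established the pointwise-in-$x$ bound
\begin{equation*}
|\partial_t^{m+1}(S_t\nabla)\cdot({\bf f}1_E)(x)|^2 \leq \|{\bf f}\|^2_{L^2(E)} \int_E |H(x,y)|^2\,dy,
\end{equation*}
I would simply integrate in $x$ over $Q$ (volume $\ell(Q)^n$). For part (i) the arithmetic
$\ell(Q)^n \cdot (2^k\ell(Q))^{-n-2m-2} = 2^{-nk}(2^k\ell(Q))^{-2m-2}$
matches the stated right-hand side. For part (ii) one gets $\ell(Q)^n\cdot\ell(Q)^{-n-2m-2} = \ell(Q)^{-2m-2}$, again as claimed.

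I do not anticipate any substantive obstacle: the only minor subtlety is that the $s$-component of $\nabla_{y,s}\Gamma|_{s=0}$ is not literally covered by Lemma~\ref{l2.7}, but as noted above this is handled by the pointwise estimate of Lemma~\ref{l2.1} at essentially no cost, since integration of the pointwise bound over $E$ produces a bound of the same order as that of Lemma~\ref{l2.7} (with a factor of type $C_1^{(m+1)^2}$ that fits inside $C_2^{m^2}$, respectively $C_\rho^{m^2+1}$).
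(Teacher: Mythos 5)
Your proposal is correct and takes essentially the same approach as the paper: Cauchy--Schwarz in $y$, then bound the $L^2_y$-norm of the kernel $\partial_t^{m+1}\nabla_{y,s}\Gamma|_{s=0}$ using Lemma~\ref{l2.7} for the $\nabla_y$-part and \eqref{eq2.5} (i.e.\ Lemma~\ref{l2.1}) for the $\partial_s$-part (where $t$-independence converts $\partial_s$ to $-\partial_t$), and integrate over $Q$. The paper cites exactly "Lemma~\ref{l2.7}$(i)$ and \eqref{eq2.5}" without spelling out the tangential/normal split of $\nabla_{y,s}$, which you make explicit; otherwise the arguments coincide.
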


\begin{proof} We consider estimate $(i)$.  Let $x\in Q$. Then
\begin{equation*}\begin{split} &|\partial^{m+1}_t (S_t\nabla )\cdot({\bf f}1_{2^{k+1}Q\backslash 2^kQ}) (x)|^2=
\left| \int_{2^{k+1}Q\backslash 2^kQ}\partial^{m+1}_t \nabla _{y,s}\Gamma (x,t,y,s)\mid_{s=0}\cdot \,{\bf f}(y)dy\right|^2\\ &\quad
\leq \|\partial^{m+1}_t \nabla_{y,s}\Gamma (x,t,y,s)\mid_{s=0}\|^2_{L^2(2^{k+1}Q\backslash 2^k Q)}
\|{\bf f}\|^2_{L^2(2^{k+1}Q\backslash 2^kQ)}\\ &\quad \leq CC_2^{m^2} \left(2^k\ell
(Q)\right)^{-n-2m-2}\|{\bf f}\|^2_{L^2(2^{k+1}Q\backslash
2^kQ)},\end{split}\end{equation*} where in the last step we have used  Lemma~\ref{l2.7}$(i)$ and
\eqref{eq2.5}. The bound $(i)$ now follows from an integration over $Q$.
The proof of $(ii)$ is similar, and is omitted.\end{proof}

\begin{lemma}\label{l2.10} Suppose that $L,L^*$ satisfy the standard assumptions, 
and let ${\bf f}: \mathbb{R}^n \to \mathbb{C}^{n+1},\,f:\mathbb{R}^n \to \mathbb{C}.$  
Then for every $t\in \mathbb{R}$,
and for every integer $m \geq 0$, we have 
\begin{enumerate}\item[(i)] $\quad \qquad \| t^{m+1}\partial^{m+1}_t (S_t\nabla )\cdot {\bf f}\|_{L^2(\mathbb{R}^n)}\leq 
C C_2^{m^2/2}\,\|
{\bf f}\|_2$\item[(ii)]$\quad \qquad \| t^{m+1}\partial^{m+1}_t \nabla S_t  f\|_{L^2(\mathbb{R}^n)}\leq 
C C_2^{m^2/2}\,\|
f\|_2.$\end{enumerate}\end{lemma}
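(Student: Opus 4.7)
The plan is to reduce both estimates to the localized bounds of Lemma~\ref{l2.9} by decomposing $\mathbb{R}^n$ into a grid of cubes at the natural scale $|t|$, and to obtain the tangential component in (ii) by a duality argument that converts it to a statement about $L^*$.

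For (i), assume $t>0$; the case $t<0$ is identical. Partition $\mathbb{R}^n$ into a disjoint grid of cubes $\{Q_j\}$ of common side length $\ell(Q_j)=t$, and for each $j$ decompose $\mathbf{f}=\mathbf{f}\mathbf{1}_{2Q_j}+\sum_{k\geq 1}\mathbf{f}\mathbf{1}_{R_k(Q_j)}$ with $R_k(Q_j)\equiv 2^{k+1}Q_j\setminus 2^kQ_j$. Apply Lemma~\ref{l2.9}(ii) with a fixed $\rho>1$ to the local piece and Lemma~\ref{l2.9}(i) to each annular piece. Since $\ell(Q_j)=t$, multiplication by $t^{2m+2}$ absorbs the scale $\ell(Q_j)^{-2m-2}$ and, after enlarging $C_2$ to dominate the $k=0$ constant $C_\rho^{m^2+1}$, one obtains
\begin{equation*}
\|t^{m+1}\partial_t^{m+1}(S_t\nabla)\cdot(\mathbf{f}\mathbf{1}_{R_k(Q_j)})\|_{L^2(Q_j)}\leq CC_2^{m^2/2}\,2^{-k(n/2+m+1)}\|\mathbf{f}\|_{L^2(R_k(Q_j))}
\end{equation*}
uniformly in $k\geq 0$. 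Summing in $k$ by Minkowski, squaring, and applying weighted Cauchy--Schwarz with weights $w_k=2^{-k(n+1)}$ so that the dual sum $\sum_k 2^{-k(2m+1)}\leq 2$ is bounded uniformly in $m\geq 0$, gives
\begin{equation*}
\|t^{m+1}\partial_t^{m+1}(S_t\nabla)\cdot\mathbf{f}\|^2_{L^2(Q_j)}\leq CC_2^{m^2}\sum_{k\geq 0}2^{-k(n+1)}\|\mathbf{f}\|^2_{L^2(R_k(Q_j))}.
\end{equation*}
Summing in $j$ and using the finite overlap bound $\sum_j\mathbf{1}_{R_k(Q_j)}(y)\leq C\,2^{kn}$, the factor $2^{kn}$ cancels against $2^{-k(n+1)}$ to leave a summable series $\sum_k 2^{-k}$; since the cubes $Q_j$ partition $\mathbb{R}^n$, the left side reassembles to $\|t^{m+1}\partial_t^{m+1}(S_t\nabla)\cdot\mathbf{f}\|^2_{L^2(\mathbb{R}^n)}$, proving (i).

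For (ii), decompose $\nabla=(\nabla_\|,\partial_t)$. The normal component $\partial_t^{m+2}S_tf$ equals $-\partial_t^{m+1}(S_t\nabla)\cdot\mathbf{f}$ for $\mathbf{f}=(0,\ldots,0,f)$, so its bound is immediate from (i). For the tangential component, pair against $\mathbf{g}\in C_0^\infty(\mathbb{R}^n;\mathbb{C}^n)$ and apply Fubini together with the identity $\Gamma(x,t,y,0)=\overline{\Gamma^*(y,-t,x,0)}$ (a consequence of $\Gamma(x,t,y,s)=\overline{\Gamma^*(y,s,x,t)}$ and the $t$-independence of $L^*$) to obtain
\begin{equation*}
\langle \partial_t^{m+1}\nabla_\|S_tf,\mathbf{g}\rangle = (-1)^{m+1}\bigl\langle f,\,(\partial_s^{m+1}(S^*_s\nabla_\|)\cdot\mathbf{g})|_{s=-t}\bigr\rangle.
\end{equation*}
Since $L^*$ satisfies the same standard assumptions as $L$, part (i) applied to the adjoint operator yields $\|(\partial_s^{m+1}(S^*_s\nabla_\|)\cdot\mathbf{g})|_{s=-t}\|_{L^2}\leq CC_2^{m^2/2}t^{-m-1}\|\mathbf{g}\|_{L^2}$; Cauchy--Schwarz in the pairing and taking the supremum over unit $\mathbf{g}$ then deliver (ii).

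The principal technical point is the choice of weights in the Cauchy--Schwarz step of (i): $w_k$ must be small enough that $\sum_k 2^{-k(n+2m+2)}/w_k$ stays bounded uniformly in $m\geq 0$, yet large enough that the product $w_k\cdot 2^{kn}$ still yields a summable series in $k$ after the finite overlap count over the grid. The choice $w_k=2^{-k(n+1)}$ is the simplest meeting both constraints while preserving the desired constant $CC_2^{m^2/2}$.
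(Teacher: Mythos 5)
Your proposal is correct and follows essentially the same route as the paper: the paper also fixes $t$, decomposes $\mathbb{R}^n$ into a grid of cubes at scale $|t|$, splits $\mathbf{f}$ into local plus annular pieces, and invokes Lemma~\ref{l2.9}, leaving the weighted Cauchy--Schwarz and finite-overlap bookkeeping to the reader, which you carry out explicitly. Your treatment of (ii), separating the normal component (reduced directly to (i) via $(S_tD_{n+1})=-\partial_tS_t$) from the tangential one (handled by duality via $\operatorname{\mathit{ad\!j}}\,S_t=S^*_{-t}$ and $t$-independence), is exactly the content of the paper's one-line reduction ``(ii) follows by duality,'' spelled out.
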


\begin{proof} Fix $t\in \mathbb{R}$ and $m \geq 0$. It is enough to prove $(i)$, since $(ii)$ follows by duality and the fact that $ad\!j \, S_t = S^*_{-t}$, where $S^*_t$ is the single layer potential operator associated to $L^*$.  We may further suppose that $t\neq 0$, since otherwise the left hand side of the inequality vanishes.  Set $\theta_{t}=t^{m+1}\partial^{m+1}_t (S_t\nabla)$. We write
\begin{equation*}\|\theta _{t}\, {\bf f}\|_{L^2} = \left( \sum_Q  \int_Q 
|\theta_{t}\,{\bf f}|^2\right)^{1/2} 
 =\left( \sum_Q  \fint_Q  \int_Q |\theta_{t}\,{\bf f}|^2 \right)^{1/2},
\end{equation*} where the sum runs over the dyadic grid of cubes with $\ell (Q)\approx |t|$.
With $Q$ fixed, we decompose ${\bf f}$ into ${\bf f}1_{2Q}$ plus a sum of dyadic ``annular" pieces 
$({\bf f}1_{2^{k+1}Q\backslash 2^kQ}).$  The bound $(i)$ now follows from Lemma~\ref{l2.9}.
We omit the details.
\end{proof}

The next lemma says that $$L = L_\| - \sum_{j=1}^{n+1} A_{n+1,j} \,D_{n+1} D_j - 
\sum_{i=1}^n D_i A_{i,n+1} D_{n+1}$$ in an appropriate weak sense on each ``horizontal" cross-section.

\begin{lemma}\label{l2.identity}  Let $L$ satisfy the standard assumptions of this paper.
Suppose that $Lu = g$ in the strip $a<t<b$,
where $g\in C_0^\infty(\mathbb{R}^{n+1})$.
Suppose also that $\nabla u, \nabla \partial_t u \in L^2 
(\mathbb{R}^n)$, uniformly in $t\in (a,b)$, with norms depending continuously on $t \in (a,b)$.
Then for every $F \in L^2 (\mathbb{R}^n) \cap \dot{L}^2_1(\mathbb{R}^n),$
and for all $ t \in (a,b),$ we have that
\begin{equation}\label{eqdiffidentity}\begin{split} & 
\int_{\mathbb{R}^n} A_\|(x) \nabla_x \,u(x,t) \,\nabla_x F(x) \,dx \, 
=\, \sum_{j=1}^{n+1} \int_{\mathbb{R}^n} A_{n+1,j}\,(x)\, \partial_{x_j} \partial_t \,u(x,t) \,F(x)\,dx
\\ & \qquad \qquad- \sum_{i=1}^n \int_{\mathbb{R}^n}A_{i,n+1}(x)\,\partial_t u(x,t) \,\partial_{x_i} F(x) \,dx 
 \,+\,\int_{\mathbb{R}^n} g(x,t) \,F(x) dx.\end{split}\end{equation}
\end{lemma}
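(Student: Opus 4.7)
The plan is to derive \eqref{eqdiffidentity} by testing the weak formulation of $Lu=g$ against a tensor-product test function that localizes sharply in the $s$-variable near $s=t$. Fix a real, even bump $\phi\in C_0^\infty((-1,1))$ with $\int\phi=1$, set $\phi_\epsilon(s)\equiv \epsilon^{-1}\phi(s/\epsilon)$, and for $F\in C_0^\infty(\mathbb{R}^n)$ (the general case $F\in L^2\cap\dot L^2_1$ will follow by density), put $\Psi_\epsilon(y,s)\equiv\overline{F(y)}\,\phi_\epsilon(s-t)$. For $\epsilon$ sufficiently small, $\Psi_\epsilon\in C_0^\infty(\mathbb{R}^n\times(a,b))$, so the weak formulation yields
\begin{equation*}
\iint A(y)\nabla u\cdot\overline{\nabla\Psi_\epsilon}\,dy\,ds \;=\; \iint g\,\overline{\Psi_\epsilon}\,dy\,ds.
\end{equation*}
Because $\phi_\epsilon$ is real, $\overline{\partial_{y_i}\Psi_\epsilon}=\partial_{y_i}F\cdot \phi_\epsilon(s-t)$ for $1\le i\le n$, and $\overline{\partial_s\Psi_\epsilon}=F\cdot\phi_\epsilon'(s-t)$.

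Next I would split the $(i,j)$-sum from $A\nabla u\cdot\overline{\nabla\Psi_\epsilon}$ according to whether $i\le n$ or $i=n+1$. The $i\le n$ contribution gives
\begin{equation*}
\iint A_\|\nabla_y u\cdot\nabla_y F\,\phi_\epsilon\,dy\,ds \;+\;\sum_{i=1}^n\iint A_{i,n+1}\,\partial_s u\,\partial_{y_i}F\,\phi_\epsilon\,dy\,ds,
\end{equation*}
while the $i=n+1$ contribution is $\sum_{j=1}^{n+1}\iint A_{n+1,j}(y)\,\partial_{y_j}u\cdot F\cdot\phi_\epsilon'(s-t)\,dy\,ds$. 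This is the key block: since $A_{n+1,j}$ and $F$ are $s$-independent, I would integrate by parts in $s$, converting $\phi_\epsilon'$ to $-\phi_\epsilon$ and placing $\partial_s$ on $\partial_{y_j}u$, producing $-\sum_j\iint A_{n+1,j}\,\partial_s\partial_{y_j}u\cdot F\cdot\phi_\epsilon(s-t)\,dy\,ds$. The hypothesis $\nabla u,\,\nabla\partial_t u\in L^2(\mathbb{R}^n)$ uniformly in $s\in(a,b)$, with continuous $s$-dependence, supplies Fubini and justifies the integration by parts.

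Now every surviving integral has the form $\int \Phi(s)\,\phi_\epsilon(s-t)\,ds$ with $\Phi$ continuous at $s=t$: the continuity in $s$ (in the $L^2(\mathbb{R}^n)$ sense) of $\nabla u$ and $\nabla\partial_t u$ is what makes the dual pairing with the fixed $F\in C_0^\infty$ continuous in $s$, and $g\in C_0^\infty$ is trivially continuous. Letting $\epsilon\to 0$ picks off the value at $s=t$, and rearranging yields \eqref{eqdiffidentity} for $F\in C_0^\infty(\mathbb{R}^n)$. Finally, each side of \eqref{eqdiffidentity} is a linear functional of $F$ bounded on the norm $\|F\|_2+\|\nabla F\|_2$, with bound controlled by $\|\nabla u(\cdot,t)\|_2+\|\nabla\partial_t u(\cdot,t)\|_2+\|g(\cdot,t)\|_2$; the density of $C_0^\infty(\mathbb{R}^n)$ in $L^2\cap\dot L^2_1$ then extends the identity to all admissible $F$. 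The only genuine technical point is the $\epsilon\to 0$ limit together with the $s$-integration by parts, both of which hinge on the continuity-in-$t$ hypothesis.
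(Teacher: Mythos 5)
Your proposal is correct and takes essentially the same route as the paper: test the weak formulation of $Lu=g$ against the tensor product $\overline{F(x)}\varphi_\eta(t-s)$, integrate by parts in $s$ in the $i=n+1$ block using $t$-independence of $A$ and $F$, and pass to the limit $\eta\to 0$ using the assumed $L^2$-continuity in $t$. The paper's proof is terser and leaves the $s$-integration by parts and the density reduction implicit under the phrase ``by the definition of weak solutions, and $t$-independence,'' whereas you spell these out; the mathematical content is the same.
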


\begin{proof}
Let $t\in (a,b)$, and let $\eta < \min (t-a,b-t).$  Set $\varphi_\eta (s) = \eta^{-1} \varphi \left(
s/\eta\right),$ where $\varphi \in C_0^\infty (\frac{-1}{2},\frac{1}{2}), \, 0 \leq \varphi, \, \int \varphi = 1.$  Define $$F_{t,\eta} (x,s) \equiv F(x) \varphi_\eta (t-s).$$
Then by the definition of weak solutions, and $t$-independence, we have
\begin{equation*}\begin{split} & \iint A_\|(x) \nabla_x \,u(x,s) \,\nabla_x F_{t,\eta} (x,s) \,dx ds \, 
=\, \sum_{j=1}^{n+1} \iint A_{n+1,j}\,(x)\, \partial_{x_j} \partial_t \,u(x,s) \,F_{t,\eta}(x,s)\,dx ds
\\ & \qquad \qquad- \sum_{i=1}^n \iint A_{i,n+1}(x)\,\partial_s u(x,s) \,\partial_{x_i} F_{t,\eta}(x,s) \,dx ds
 \,+\,\iint g(x,s) \,F_{t,\eta} (x,s) dx ds.\end{split}\end{equation*} 
By our hypotheses, the functions of $t$
defined by the four integrals in 
\eqref{eqdiffidentity}, are all continuous in $(a,b).$  
The conclusion of the lemma then follows if we let $\eta \to 0.$
\end{proof}

We may now prove a ``2-sided" version of Lemma \ref{l2.10}.
\begin{lemma}\label{l2.2sidegrad}  Suppose that $L,L^*$ satisfy the standard assumptions, 
and let ${\bf f}: \mathbb{R}^n \to \mathbb{C}^{n+1}.$  Then for every $t \in \mathbb{R}$, and for every integer
$m \geq 0$, we have 
$$\|t^{m+2} \nabla_\| \partial_t^{m+1} \left(S_t \nabla \right) \cdot {\bf f} \|_2 \leq C_m \|{\bf f}\|_2.$$
\end{lemma}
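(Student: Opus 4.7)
The plan is to combine Lemma~\ref{l2.10}(i) with an interior Caccioppoli estimate in order to transfer $L^2$ control from the vertical derivative $\partial_t$ to the tangential gradient $\nabla_{\|}$. By density we may reduce to ${\bf f}\in C_0^\infty(\mathbb{R}^n,\mathbb{C}^{n+1})$, so that every quantity below is smooth and finite. Set $u(x,s)\equiv(S_s\nabla)\cdot{\bf f}(x)$ and $\tilde w(x,s)\equiv \partial_s^{m+1}u(x,s)$. Since the coefficients are $t$-independent and $\Gamma(\cdot,\cdot,y,0)$ is a solution of $L\Gamma=0$ away from $(y,0)$, differentiation under the integral shows that $\tilde w$ is a classical solution of $L\tilde w=0$ in $\mathbb{R}^n\times(0,\infty)$. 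We treat only $t>0$; the case $t<0$ is symmetric and $t=0$ is trivial.

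The first step is a Caccioppoli-plus-Fubini slab estimate. Apply the interior Caccioppoli inequality to $\tilde w$ on the balls $B((x_0,t),t/4)\subset\mathbb{R}^{n+1}_+$, integrate in $x_0\in\mathbb{R}^n$, and exchange the order of integration. After bounding the measure of the cylindrical caps $\{x_0:(y,s)\in B((x_0,t),t/8)\}$ above and below by constant multiples of $t^n$ on the appropriate subintervals of $s$, this produces
\[\int_{|s-t|<t/8}\|\nabla_{\|}\tilde w(\cdot,s)\|_2^2\,ds\;\le\;\frac{C}{t^2}\int_{|s-t|<t/4}\|\tilde w(\cdot,s)\|_2^2\,ds.\]
Inserting $\|\tilde w(\cdot,s)\|_2\le C_m s^{-m-1}\|{\bf f}\|_2$ from Lemma~\ref{l2.10}(i), the right side is at most $C_m t^{-2m-3}\|{\bf f}\|_2^2$. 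Writing $\phi(s)\equiv\|\nabla_{\|}\tilde w(\cdot,s)\|_2$ and $\psi(s)\equiv\|\nabla_{\|}\partial_s\tilde w(\cdot,s)\|_2$, and repeating the same argument with $m$ replaced by $m+1$ (so that $\tilde w$ is replaced by $\partial_s\tilde w$), we obtain on the interval $I\equiv(t-t/8,t+t/8)$ the two slab bounds
\[\int_I\phi(s)^2\,ds\le C_m t^{-2m-3}\|{\bf f}\|_2^2,\qquad\int_I\psi(s)^2\,ds\le C_{m+1}t^{-2m-5}\|{\bf f}\|_2^2.\]

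The second step upgrades these averaged-in-$s$ bounds to the pointwise bound at $s=t$ via the fundamental theorem of calculus. Differentiation in $s$, justified by the smoothness and decay furnished by Lemma~\ref{l2.1} when ${\bf f}\in C_0^\infty$, gives
\[\phi(t)^2-\phi(s)^2=2\,\operatorname{Re}\int_s^t\langle\nabla_{\|}\tilde w(\cdot,\sigma),\,\nabla_{\|}\partial_\sigma\tilde w(\cdot,\sigma)\rangle\,d\sigma,\]
so Cauchy-Schwarz on the right, together with $s\in I$, yields $\phi(t)^2\le\phi(s)^2+2\bigl(\int_I\phi^2\bigr)^{1/2}\bigl(\int_I\psi^2\bigr)^{1/2}$. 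Averaging in $s$ over $I$ and inserting the two slab bounds,
\[\phi(t)^2\le\frac{C}{t}\int_I\phi^2\,ds+2\left(\int_I\phi^2\right)^{1/2}\left(\int_I\psi^2\right)^{1/2}\le C_m t^{-2m-4}\|{\bf f}\|_2^2,\]
which is the claimed inequality. A density argument removes the smoothness assumption on ${\bf f}$.

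I expect the main technical obstacle to be the bookkeeping in the Caccioppoli-plus-Fubini step: one must verify that the measures of the cylindrical caps have a lower bound comparable to $t^n$ on a subinterval of $s$ of length proportional to $t$, so that the slab integral of $\|\nabla_{\|}\tilde w(\cdot,s)\|_2^2$ may be extracted cleanly from the Caccioppoli inequality after the $x_0$-integration. Once the slab bounds are available at both exponents $m$ and $m+1$, the fundamental-theorem-of-calculus step upgrades the slab average to the pointwise-in-$t$ bound essentially for free.
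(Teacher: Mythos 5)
Your argument is correct, but it takes a genuinely different route from the paper's. The paper proves this by first reducing to $(S_t\nabla_\|)\cdot\vec f$ via Lemma~\ref{l2.10}(ii), then bounding $\beta_m(t)$ using ellipticity of $A_\|$ together with the weak identity of Lemma~\ref{l2.identity}; this produces a term proportional to $\beta_{m+1}(t)$, so the authors must run a self-improvement scheme (choosing $\delta=\delta_m$ and summing a weighted geometric series, with the \emph{a priori} bound \eqref{eq2.apriori} ensuring the tail may be hidden). You avoid the self-improvement entirely: Caccioppoli applied on balls $B((x_0,t),r)$ and integrated in $x_0$, followed by Fubini, transfers the $\nabla_\|$-control directly to $\|\tilde w(\cdot,s)\|_2$ on a slab of thickness $\sim t$, and that quantity is already controlled at the single level $m$ by Lemma~\ref{l2.10}(i). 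The only place two levels interact in your argument is the fundamental-theorem-of-calculus upgrade from the slab average to the pointwise value at $s=t$, which uses the slab bound at levels $m$ and $m+1$ --- but this is a finite, non-iterative use, so no geometric series is needed. You also do not need the preliminary reduction to $\nabla_\|$ via Lemma~\ref{l2.10}(ii), since Caccioppoli controls the full gradient of $\tilde w$ at once. This is, in my view, the more elementary route, and it is very much in the spirit of a Caccioppoli-in-Whitney-boxes trick the paper itself uses elsewhere (e.g., in the proof of \eqref{eq3.5}). Two small points of care: (i) the precise radii in your first display are not quite internally consistent --- the cylindrical caps $\{x_0 : (y,s)\in B((x_0,t),t/8)\}$ only have measure $\gtrsim t^n$ for, say, $|s-t|<t/16$, so the left-hand interval must be strictly smaller than the inner Caccioppoli radius, as you yourself flag; (ii) the statement that $\tilde w$ is a ``classical'' solution is too strong for $L^\infty$ coefficients, but you need only that it is a weak solution (for Caccioppoli) and that $s\mapsto\nabla_\|\tilde w(\cdot,s)$ is an absolutely continuous $L^2(\mathbb R^n)$-valued map on intervals away from $s=0$ (for the FTC step); the latter is best justified by noting that $\nabla_\|\tilde w$ and $\nabla_\|\partial_s\tilde w$ both lie in $L^2$ of the slab, which your own slab estimates at levels $m$ and $m+1$ provide, rather than by citing Lemma~\ref{l2.1}, which gives pointwise kernel bounds but not directly the $L^2$-in-$x$ continuity in $s$.
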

\begin{proof}  Fix $t \in \mathbb{R}$.  We may suppose that $t \neq 0$, since otherwise the left hand side vanishes.  By Lemma \ref{l2.10} $(ii)$ and $t-$independence, we may replace 
$\left(S_t \nabla \right) \cdot {\bf f}$ by $\left(S_t \nabla_\| \right) \cdot \vec{f}
=  - S_t \dv_\| \vec{f},$
where $\vec{f} \in C_0^\infty (\mathbb{R}^n, \mathbb{C}^n).$  It then follows from 
Lemma \ref{l2.10} $(ii)$ that
\begin{equation}\label{eq2.apriori} \beta_m(t) \equiv 
\|t^{m+2} \nabla_\| \partial_t^{m+1} \left(S_t \nabla_\| \right) \cdot \vec{f} \|^2_2
\leq C t^2 C_2^{m^2} \|\dv_\| \vec{f}\|_2^2.
\end{equation}
This last bound will not appear in our final quantitative estimates. Rather, the point is
that the left hand side is {\it a priori} finite with some (non-optimal) quantitative control.

By ellipticity, Lemma \ref{l2.identity} and Lemma \ref{l2.10} $(i)$, we have that
\begin{eqnarray*} \beta_m(t) &  \leq & C  t^{2m+4}\,\Re e\,\langle A_\| \nabla_\| \partial_t^{m+1} S_t \dv_\|
\vec{f},\nabla_\| \partial_t^{m+1} S_t \dv_\| \vec{f} \,\rangle \\
&=& C\Re e\,\sum_{j=1}^{n+1}\langle A_{n+1,j}\, t^{m+3} D_j \partial_t^{m+2} S_t \dv_\|
\vec{f},t^{m+1} \partial_t^{m+1} S_t \dv_\| \vec{f} \,\rangle \\ && \qquad \quad - \,C\Re e\,
\sum_{i=1}^{n}\langle A_{i,n+1}\, t^{m+2}  \partial_t^{m+2} S_t \dv_\|
\vec{f},t^{m+2} D_i\partial_t^{m+1} S_t \dv_\| \vec{f} \,\rangle \\
& \leq & C \delta^{-1} C_2^{m^2} \|\vec{f}\|_2^2 + C \delta C_2^{(m+2)^2} \|\vec{f}\|_2^2 + C \delta \beta_{m+1}(t) + C \delta^{-1} C_2^{(m+1)^2} \|\vec{f}\|_2 + C \delta \beta_m(t),\end{eqnarray*}
where $\delta$ is at our disposal.  Choosing $\delta$ small enough, we may hide the last term, 
so that $$\beta_m(t) \leq C \delta^{-1} C_2^{(m+2)^2}\|\vec{f}\|_2^2 + C \delta \beta_{m+1}(t).$$
Thus, taking $\delta = \delta_m = \delta_0 C_2^{-2m}$, with $\delta_0$ small, we have
\begin{eqnarray*} \sum_{m=0}^\infty C_2^{-3m} C_2^{-(m+2)^2} \beta_m(t) &\leq & C \sum_{m=0}^\infty C_2^{-3m}
\left(\delta_m^{-1} \|\vec{f}\|^2_2 +C_2^{-(m+2)^2} \delta_m \,\beta_{m+1}(t)\right) \\
&\leq & C \sum_{m=0}^\infty 
\left(\delta_0^{-1} C_2^{-m} \|\vec{f}\|^2_2 
+ \delta_0 C_2^{-3(m+1)}C_2^{-(m+3)^2} \beta_{m+1}(t)\right) \\
&\leq & C \|\vec{f}\|_2^2 + \frac{1}{2}\sum_{m=1}^\infty C_2^{-3m} C_2^{-(m+2)^2} \beta_m(t),
\end{eqnarray*}
by choice of $\delta_0$ small enough.  By \eqref{eq2.apriori}, the series converges, so the last term may be hidden on the left side of the inequality.  In particular, we conclude that
$$\beta_m(t) \leq C C_2^{(m+2)^2 + 3m} \, \|\vec{f}\|_2^2.$$
\end{proof}

\begin{lemma}\label{l2.holder} Suppose that $L,L^*$ satisfy the standard assumptions.
Fix a cube $Q \subset \mathbb{R}^n$, and suppose that $y,y' \in Q$.  For $(x,t) \in \mathbb{R}^{n+1}$, set
$$u(x,t) \equiv \Gamma(x,t,y,0) - \Gamma(x,t,y',0).$$  If $\alpha$ is the H\"{o}lder exponent in 
\eqref{eq2.6}, then for every integer $k\geq 4,$ we have
\begin{equation}\label{eq2.averageholder}\int_{2^{k+1}Q\setminus 2^kQ} |\nabla u(x,t)|^2 dx
\leq C 2^{-k\alpha} \left(2^k \ell (Q) \right)^{-n}.
\end{equation} 
\end{lemma}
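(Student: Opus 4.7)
The plan is to combine the pointwise H\"older estimates of Lemma \ref{l2.1} with Caccioppoli's inequality applied both to $u$ and to $\partial_{t}u$, and then transfer an $(n+1)$-dimensional slab estimate to the prescribed $t$-slice via a Bochner-valued fundamental theorem of calculus. Write $R_k\equiv 2^{k+1}Q\setminus 2^{k}Q$ and let $\tilde R_k$ be a slight thickening of $R_k$ that is still disjoint from $Q$. For $x\in\tilde R_k$, $y,y'\in Q$ and $k\geq 4$, the geometry gives $2|y-y'|\lesssim \ell(Q)\lesssim |x-y|$, so \eqref{eq2.6} applied with $h=y'-y$ (and $m=-1$, resp.\ $m=0$) produces the uniform pointwise bounds
\begin{equation*}
|u(x,s)|\lesssim \ell(Q)^{\alpha}(2^{k}\ell(Q))^{-(n-1+\alpha)},\qquad
|\partial_s u(x,s)|\lesssim \ell(Q)^{\alpha}(2^{k}\ell(Q))^{-(n+\alpha)},
\end{equation*}
valid for all $x\in\tilde R_k$ and all $s\in\mathbb{R}$.

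Since $\tilde R_k\cap Q=\emptyset$, the function $u$ is a weak solution of $Lu=0$ in the cylinder $\tilde R_k\times\mathbb{R}$; by $t$-independence of the coefficients, $\partial_t u$ is also a weak solution of $L(\partial_t u)=0$ there. Fix $t\in\mathbb{R}$ and set $r\equiv c\cdot 2^{k}\ell(Q)$, with $c$ small enough that a standard cutoff equal to $1$ on $R_k\times(t-r,t+r)$ and supported in $\tilde R_k\times(t-2r,t+2r)$ has derivative $O(1/r)$. Caccioppoli's inequality applied in turn to $u$ and to $\partial_t u$, combined with the pointwise bounds above, then gives
\begin{equation*}
\iint_{R_k\times(t-r,t+r)}|\nabla u|^{2}\lesssim 2^{-2k\alpha}(2^{k}\ell(Q))^{1-n},
\end{equation*}
\begin{equation*}
\iint_{R_k\times(t-r,t+r)}|\nabla\partial_t u|^{2}\lesssim 2^{-2k\alpha}(2^{k}\ell(Q))^{-n-1}.
\end{equation*}

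To convert these $(n+1)$-dimensional estimates into a bound at the fixed slice $\{t\}$, I would use that, because $\partial_t u$ is a weak solution and hence satisfies the same sort of Caccioppoli control, the map $t\mapsto \nabla u(\cdot,t)$ is absolutely continuous into $L^{2}(R_k)$ with derivative $\nabla\partial_t u$. The elementary inequality (from the fundamental theorem of calculus and Cauchy--Schwarz)
\begin{equation*}
|\nabla u(x,t)|^{2}\leq 2|\nabla u(x,t_0)|^{2}+4r\int_{t-r}^{t+r}|\partial_s\nabla u(x,s)|^{2}\,ds,\qquad t_0\in(t-r,t+r),
\end{equation*}
averaged in $t_0$ and integrated in $x$ over $R_k$, together with the two displays above and $r\sim 2^{k}\ell(Q)$, yields
\begin{equation*}
\int_{R_k}|\nabla u(x,t)|^{2}\,dx\lesssim 2^{-2k\alpha}(2^{k}\ell(Q))^{-n},
\end{equation*}
which is strictly stronger than the desired bound \eqref{eq2.averageholder}.

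The main obstacle is the last step: rigorously passing from the $(n+1)$-dimensional slab estimate to the fixed-$t$ slice. This requires Bochner differentiability of $t\mapsto \nabla u(\cdot,t)$ with derivative $\nabla\partial_t u$, a property that we gain precisely because the $t$-independent structure of the coefficients forces $\partial_t u$ to be a weak solution itself, placing it in the same regularity class as $u$ and giving the necessary interior control on $\partial_s\nabla u$.
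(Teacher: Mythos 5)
Your argument is correct, and it follows a genuinely different route from the paper's. The paper reduces to estimating $\nabla_x u(\cdot,t)$ on the fixed slice and proceeds with an $n$-dimensional energy argument: it tests the ellipticity of the sub-matrix $A_\|$ against a cutoff $\varphi_k$ in $x$, integrates by parts in $x$ only, and invokes Lemma \ref{l2.identity} to rewrite $(L_\|)_y u$ in terms of $t$-derivatives of $u$, which are then controlled pointwise by \eqref{eq2.5}--\eqref{eq2.6} and by Lemma \ref{l2.7}$(i)$. You instead stay in the full $(n+1)$-dimensional cylinder: apply the standard Caccioppoli inequality for the operator $L$ to both $u$ and $\partial_t u$ (legitimate because $t$-independence makes $\partial_t u$ a solution too), and transfer the resulting slab estimates to the fixed slice $\{t\}$ via a Bochner-valued fundamental theorem of calculus. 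What each buys: the paper's route needs the slicewise weak identity of Lemma \ref{l2.identity} but lands directly on the $t$-slice; yours avoids that identity entirely, using only interior Caccioppoli and $t$-independence, at the cost of the extra FTC/averaging step and the a priori fact that $t\mapsto\nabla u(\cdot,t)$ is a $W^{1,2}$ map into $L^2(R_k)$ with derivative $\nabla\partial_t u$. That last fact is legitimate here (both $\nabla u$ and $\nabla\partial_t u$ lie in $L^2$ of the slab by your two Caccioppoli estimates, so the Sobolev embedding $W^{1,2}\bigl((t-r,t+r),L^2(R_k)\bigr)\hookrightarrow C\bigl([t-r,t+r],L^2(R_k)\bigr)$ applies and the $L^2$-valued FTC inequality is valid), but in a final write-up that Bochner step deserves a sentence rather than being deferred to a remark, since it is the only place where the slice value $\nabla u(\cdot,t)$ is actually pinned down.
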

\begin{proof} By \eqref{eq2.6}, it is enough to prove \eqref{eq2.averageholder} with $\nabla_x$
in place of $\nabla$.  Let $\varphi_k \in C^\infty_0(3\cdot 2^k Q \setminus 3\cdot2^{k-2} Q)$,
with $\varphi_k \equiv 1$ on $2^{k+1} Q \setminus 2^k Q$ and $ \|\nabla_x \varphi_k\|_\infty 
\leq C (2^k \ell (Q))^{-1}$.  Then the left hand side of \eqref{eq2.averageholder} is bounded by
an acceptable term involving a $t$ derivative, plus
\begin{multline*}\int |\nabla_x u(x,t)|^2 \left(\varphi_k(x)\right)^2 dx \leq   C \Re e\int A_\| \nabla_xu \cdot 
\overline{\nabla_x u } \varphi_k^2\\ =  \,C \Re e\int A_\| \nabla_xu \cdot 
\nabla_x\left(\overline{ u } \varphi^2_k\right)\, - \,C\Re e
\int A_\| \nabla_xu \cdot \nabla_x\left(\varphi_k^2\right)
\overline{ u }\, \equiv I + II.
\end{multline*}
By Lemma \ref{l2.1}, for $y,y' \in Q$ and $ x \in (2^{k-1} Q)^c$, we have
\begin{equation}\label{eq2.size}|u(x,t)| \leq C 2^{-k\alpha} \left(2^k \ell (Q) \right)^{1-n}.\end{equation}
and also
\begin{equation}\label{eq2.size2}|\partial_tu(x,t)| \leq C 2^{-k\alpha} \left(2^k \ell (Q) \right)^{-n}.
\end{equation}
Using the first of these,  we 
obtain 
\begin{multline*} |II|  \leq  C 2^{-k\alpha} \left(2^k \ell (Q) \right)^{1-n}
\|\nabla_x \varphi_k\|_\infty \int|\nabla_x u|\,\varphi_k\\
\leq C 2^{-k\alpha} \left(2^k \ell (Q) \right)^{-n/2}
 \left(\int|\nabla_x u|^2\varphi^2_k\right)^{1/2} \leq \,C \varepsilon^{-1} 2^{-2k\alpha} 
\left(2^k \ell (Q) \right)^{-n} + \varepsilon \int|\nabla_x u|^2\varphi^2_k,
\end{multline*}
where $\varepsilon$ is at our disposal.
Moreover, by Lemma \ref{l2.identity},
\begin{multline*} I =- C \,\Re e \sum_{i=1}^n \left\{ \int A_{i,n+1} \partial_tu \,\overline{D_i u} 
\varphi_k^2 + \int A_{i,n+1} \partial_t u \,D_i\left(\varphi_k^2\right) \overline{ u}\right\}\\\!\!\!\!\!\!\!\!
+  C\,\Re e \sum_{j=1}^{n+1} \int A_{n+1,j}\, D_j \partial_t u \, 
\overline{u} \,\varphi_k^2
\equiv  I_1 + I_2 + I_3. 
\end{multline*}
Now, $I_1$ satisfies exactly the same bound as term $II$, and by essentially the same argument, except that
we use \eqref{eq2.size2} in place of \eqref{eq2.size}.
Moreover, using \eqref{eq2.size}, \eqref{eq2.size2}, and the properties of $\varphi_k$, we see that
$$|I_2| \leq C 2^{-2k\alpha} \left(2^k \ell (Q) \right)^{-n}. $$
To handle $I_3$, we note first that the case $m=0$ of Lemma \ref{l2.7}$(i)$ (with the roles of $x$ and $y$
reversed), applied separately for $y$ and $y'$, implies that
$$\int|\partial_t \nabla_x u(x,t)|^2 \varphi_k^2 dx \leq C \left(2^k \ell (Q) \right)^{-n-2}.$$
Thus, using also \eqref{eq2.size}, we have
$$|I_3| \leq C 2^{-k\alpha} \left(2^k \ell (Q) \right)^{-n}. $$
Collecting these estimates, choosing $\varepsilon$ sufficiently small,
and hiding the small term on the left hand side of the inequality,
we obtain the desired bound.
\end{proof}

In the sequel, we shall find it useful to
consider approximations of the single layer potential.  The bounds in the following lemma
will not be used quantitatively, but will serve rather 
to justify certain formal manipulations.
For $\eta > 0,$ set
\begin{equation}\label{eq2.def} S_t^\eta \equiv \int_{\mathbb{R}} \varphi_\eta(t-s) \, S_s \, ds,
\end{equation}
where $\varphi_\eta \equiv \tilde{\varphi}_\eta \ast \tilde{\varphi}_\eta$, $\tilde{\varphi}_\eta
\in C_0^\infty ( -\eta/2,\eta/2)$ is non-negative and even, with $\int \tilde{\varphi}_\eta = 1$ and
$\tilde{\varphi}_\eta (t) \equiv \eta^{-1} \tilde{\varphi} (t/\eta).$

\begin{lemma}\label{l2.approx}
Suppose that $L,L^*$ satisfy the standard assumptions, and let $S_t$ denote the single layer potential operator associated to $L$.  Then for each $\eta>0$ and for every $f \in L^2(\mathbb{R}^n)$
with compact support, we have
\begin{enumerate}\item[(i)] $\quad \| \partial_t S_t^\eta f\|_{L^2(\mathbb{R}^n)}\leq 
C_{\beta,\eta} \|f\|_{L^{2n/(n+2\beta)}(\mathbb{R}^n)}\, , \,\,\, 0 < \beta < 1.$
\item[(ii)]$\quad \| \nabla_x S_t^\eta f\|_{L^2(\mathbb{R}^n)}\leq 
C_{\eta} \|f\|_{L^{(2n+2)/(n+3)}(\mathbb{R}^n)}$
\item[(iii)] $\quad \|| t \partial_t^2 S_t^\eta f\||\leq 
C_{\beta,\eta} \|f\|_{L^{2n/(n+2\beta)}(\mathbb{R}^n)}\, , \,\,\, 0 < \beta < 1.$
\item[(iv)]$\quad  \|\nabla\left(S_t^\eta - S_t \right) f \|_{L^2(\mathbb{R}^n)} \leq C \frac{\eta}{|t|}\|f\|_2,\,\,\,\,\eta < |t|/2.$
\item[(v)] $\quad \lim_{\eta \to 0} \int_\varepsilon^\infty \! \int_{\mathbb{R}^n} 
|t\nabla \partial_t\left(S_t^\eta - S_t \right) f|^2 \,\frac{dx\, dt}{t} = 0, \,\,\,\, 
0 < \varepsilon < 1.$
\item[(vi)] $\quad \text{For each cube } Q \subset \mathbb{R}^n, \,\,
\|\partial_t S_t^\eta \|_{L^2(Q) \to L^2(\mathbb{R}^n)} \leq C_{\eta,\ell(Q)}$.
\end{enumerate}\end{lemma}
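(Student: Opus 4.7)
The plan is to derive all six bounds from the pointwise kernel estimates of Lemma \ref{l2.1}, the quantitative $L^{2}$ bounds of Lemma \ref{l2.10}, and the Hardy--Littlewood--Sobolev fractional-integration inequality. The basic device for (i)--(iii) is the elementary splitting
\[
(|s|+|x-y|)^{-\alpha}\leq |s|^{-\gamma}\,|x-y|^{-(\alpha-\gamma)},\qquad 0<\gamma<\alpha,
\]
which combined with the bound $|\partial_{s}^{k}\Gamma(x,s,y,0)|\leq C(|s|+|x-y|)^{-(n-1+k)}$ from Lemma \ref{l2.1} (and the analogous bound on $|\nabla_{x}\Gamma|$) converts each derivative of $S_{s}f$ into a factor $|s|^{-\gamma}$ times a Riesz-potential kernel of order $\gamma$. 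After HLS this yields the pointwise-in-$s$ estimates
\[
\|\partial_{s}^{k}S_{s}f\|_{L^{2}}\leq C|s|^{-(k-1+\beta)}\|f\|_{L^{2n/(n+2\beta)}}\quad(k\geq 1),\qquad \|\nabla_{x}S_{s}f\|_{L^{2}}\leq C|s|^{-\beta'}\|f\|_{L^{2n/(n+2\beta')}}.
\]

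For (i), I would integrate by parts once in the definition \eqref{eq2.def} to obtain $\partial_{t}S_{t}^{\eta}f=\int\varphi_\eta(t-s)\partial_{s}S_{s}f\,ds$, whence
\[
\|\partial_{t}S_{t}^{\eta}f\|_{L^{2}}\leq C\|f\|_{L^{2n/(n+2\beta)}}\sup_{t}\int\varphi_\eta(t-s)|s|^{-\beta}\,ds\leq C_\beta\eta^{-\beta}\|f\|_{L^{2n/(n+2\beta)}},
\]
where the last step works precisely because $\beta<1$ renders $|s|^{-\beta}$ locally integrable. Part (ii) is the direct (no-IBP) analogue with $\beta'=n/(n+1)$, the fractional-integration order that embeds $L^{(2n+2)/(n+3)}(\mathbb{R}^{n})$ into $L^{2}(\mathbb{R}^{n})$. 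For (vi) one uses $|\partial_{t}S_{t}^{\eta}f(x)|\leq\|\varphi_\eta'\|_{L^{1}}\sup_{s}|S_{s}f(x)|\leq C\eta^{-1}I_{1}|f|(x)$, HLS, and H\"older's inequality $\|f\|_{L^{2n/(n+2)}}\leq C\ell(Q)\|f\|_{L^{2}(Q)}$ for $f$ supported in $Q$.

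The main obstacle is (iii). The naive two-IBP identity $\partial_{t}^{2}S_{t}^{\eta}f=\int\varphi_\eta(t-s)\partial_{s}^{2}S_{s}f\,ds$ would lead to $\int\varphi_\eta(t-s)|s|^{-(1+\beta)}\,ds$, which is non-integrable at $s=0$. My proposed remedy is to split the $t$-integration into $0<t\leq 2\eta$ and $t>2\eta$: for small $t$, perform only a single IBP (using $\|\varphi_\eta'\|_{\infty}\leq C\eta^{-2}$ and the fact that $|s|^{-\beta}$ \emph{is} integrable because $\beta<1$) to get $\|\partial_{t}^{2}S_{t}^{\eta}f\|_{L^{2}}\leq C\eta^{-(1+\beta)}\|f\|_{L^{2n/(n+2\beta)}}$; for large $t$, two IBPs are harmless because $|s|\geq t/2$, producing $\|\partial_{t}^{2}S_{t}^{\eta}f\|_{L^{2}}\leq C|t|^{-(1+\beta)}\|f\|_{L^{2n/(n+2\beta)}}$. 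Each contribution to $\int_{0}^{\infty}t\|\partial_{t}^{2}S_{t}^{\eta}f\|_{L^{2}}^{2}\,dt$ is of order $\eta^{-2\beta}\|f\|_{L^{2n/(n+2\beta)}}^{2}$, yielding (iii).

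For (iv) and (v), I would write $\nabla\partial_{t}^{k}(S_{t}^{\eta}-S_{t})f=\int\varphi_\eta(t-s)\int_{t}^{s}\nabla\partial_{\tau}^{k+1}S_{\tau}f\,d\tau\,ds$ and invoke Lemma \ref{l2.10}(ii) (with $m=0$ for (iv), $m=1$ for (v)) to get $\|\nabla\partial_{\tau}^{k+1}S_{\tau}f\|_{L^{2}}\leq C|\tau|^{-(k+1)}\|f\|_{2}$. The hypothesis $\eta<|t|/2$ of (iv) (and the fact that we may restrict to $\eta<\varepsilon/2$ in (v)) forces $\tau\sim t$ throughout the inner integral, so (iv) is immediate and (v) follows from $\int_{\varepsilon}^{\infty}t(\eta/t^{2})^{2}\,dt\leq C\eta^{2}\varepsilon^{-2}\to 0$.
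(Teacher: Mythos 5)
Your treatment of parts (i), (iii), (iv), (v), and (vi) is sound. In (i), (iii), (vi) you pass through $L^{2}$-in-$x$ bounds on $\partial_{s}^{k}S_{s}f$ and then integrate against $\varphi_{\eta}$ (or $\varphi_{\eta}'$), whereas the paper estimates the kernel $\partial_{t}^{k}(\varphi_{\eta}\ast\Gamma(x,\cdot,y,0))(t)$ directly and then applies fractional integration once; these are interchangeable, and your split at $t=2\eta$ in (iii) is exactly the one the paper uses. Your (iv) and (v) are the same argument as the paper's, phrased with a double integral instead of the mean value theorem.

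Part (ii), however, has a genuine gap. Your argument rests on a pointwise kernel bound of the form $|\nabla_{x}\Gamma(x,s,y,0)|\leq C(|s|+|x-y|)^{-n}$, which you parenthetically describe as ``the analogous bound on $|\nabla_{x}\Gamma|$'' from Lemma \ref{l2.1}. No such bound is available. Lemma \ref{l2.1} controls only the $t$-derivatives $\partial_{t}^{m+1}\Gamma$, and those estimates rely on the fact that $\partial_{t}u$ is again a solution when the coefficients are $t$-independent, so Moser's local boundedness \eqref{eq1.3} applies to it. A spatial derivative $\nabla_{x}u$ (or $\nabla_{y}\Gamma$) is \emph{not} a solution, and under the standard assumptions (De Giorgi--Nash--Moser with merely bounded measurable $x$-dependent coefficients) solutions are locally H\"older continuous but not Lipschitz: pointwise bounds on $\nabla_{x}\Gamma$ simply fail. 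The only gradient estimates the paper has are the averaged $L^{2}$ bounds of Lemma \ref{l2.7} and the operator bounds of Lemma \ref{l2.10}, none of which produce a Riesz-potential kernel on which HLS could act. The paper sidesteps this by dualizing: writing $S_{t}^{\eta}f(x)=\int(L^{-1}f_{\eta})(x,t-\sigma)\tilde{\varphi}_{\eta}(\sigma)\,d\sigma$ with $f_{\eta}(y,s)=f(y)\tilde{\varphi}_{\eta}(s)$, testing $\nabla_{x}S_{t}^{\eta}f$ against $\vec{g}\in C_{0}^{\infty}(\mathbb{R}^{n},\mathbb{C}^{n})$, and using the boundedness of $\nabla L^{-1}\dv$ on $L^{2}(\mathbb{R}^{n+1})$ together with the $(n+1)$-dimensional Sobolev embedding $L^{2_{*}}(\mathbb{R}^{n+1})\hookrightarrow\dot{L}^{2}_{-1}(\mathbb{R}^{n+1})$, where $2_{*}=(2n+2)/(n+3)$. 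That route uses only $L^{2}$ information and is the one you should adopt for (ii).
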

\begin{proof}
$(i).$  We observe that $$\partial_t S_t^\eta f (x) = \int_{\mathbb{R}^n} k_t(x,y) f(y) dy,$$
where $k_t(x,y) \equiv \partial_t\left(\varphi_\eta \ast \Gamma(x,\cdot,y,0)\right)(t).$
Thus, by Lemma \ref{l2.1} 
\begin{equation*}|k_t(x,y)| \leq C \min \left(|x-y|^{-n}, \eta^{-1} |x-y|^{1-n} \right)
\,\leq \,C \eta^{-\beta} |x-y|^{\beta - n}, \,\,\,\, 0 < \beta < 1.
\end{equation*}
Estimate $(i)$ now follows by the fractional integral theorem.

\smallskip
\noindent $(ii).$  We first note that
\begin{eqnarray*}S_t^\eta f(x) 
&=& \iint \!\! \int \Gamma (x,t-s-\sigma ,y,0) f(y) dy \tilde{\varphi}_\eta (s) 
\tilde{\varphi}_\eta (\sigma)ds d\sigma \\
&=& \int \left(L^{-1} f_\eta \right)(x,t-\sigma) \tilde{\varphi}_\eta (\sigma) d\sigma,
\end{eqnarray*}
where $f_\eta(y,s) \equiv f(y) \tilde{\varphi}_\eta (s).$  
Let $\vec{g} \in C_0^\infty ( \mathbb{R}^n, \mathbb{C}^n),$
with $\|\vec{g}\|_2 = 1,$ and set $\vec{g}_\eta (x,\sigma ) \equiv \vec{g}(x) \tilde{\varphi}_\eta (\sigma).$  Then
\begin{multline*}
|\langle \vec{g}, \nabla_x S_t^\eta f \rangle | 
=\left| \iint \dv_x \vec{g}_\eta (x,\sigma ) \overline{\left(L^{-1} f_\eta \right)}(x,t-\sigma) dx d\sigma 
\right| \\
\leq \|\vec{g}_\eta\|_{L^2(\mathbb{R}^{n+1})}\|\nabla L^{-1} f_\eta\|_{L^2(\mathbb{R}^{n+1})}
\, \leq \, C \eta^{-1/2} \|f_\eta\|_{L^{2_\ast}(\mathbb{R}^{n+1})}\equiv  C \eta^{-1/2}
\|\varphi_\eta\|_{L^{2_\ast}(\mathbb{R})}\|f\|_{L^{2_\ast}(\mathbb{R}^{n})},
\end{multline*}
where $2_\ast = (2n+2)/(n+3),$ since $L^{2_\ast}(\mathbb{R}^{n+1})\hookrightarrow \dot{L}^2_{-1}(\mathbb{R}^{n+1}) \equiv \left(\dot{L}^2_{1}(\mathbb{R}^{n+1})\right)^*,$
and $\nabla L^{-1} \dv : L^2(\mathbb{R}^{n+1}) \to L^2(\mathbb{R}^{n+1}).$  Estimate $(ii)$ now follows.

\smallskip
\noindent
$(iii).$  We proceed as for estimate $(i)$, and write
$$t\partial_t^2 S_t^\eta f (x) = \int_{\mathbb{R}^n} h_t(x,y) f(y) dy,$$
where $h_t(x,y) \equiv t\partial_t^2\left(\varphi_\eta \ast \Gamma(x,\cdot,y,0)\right)(t),$
so that, by Lemma \ref{l2.1},
\begin{equation*}|h_t(x,y)| \leq  C t\min \left(|x-y|^{-n-1}, \eta^{-2} |x-y|^{1-n} \right)
\,\leq \,C t\, \eta^{-1-\beta} |x-y|^{\beta - n}, \,\,\,\, 0 < \beta < 1.
\end{equation*}
Moreover, if $t > 2\eta,$ we have the sharper estimate
$$|h_t(x,y)| \leq C \frac{t}{\left(t+|x-y|\right)^{n+1}} \leq C t^{-\beta} |x-y|^{\beta-n}, 
\,\,\,\,0<\beta <1.$$  Thus,
$$\||t\partial_t^2 S_t^\eta f \||^2 \leq C \left(\int_0^{2\eta}\eta^{-2-2\beta} t dt + \int_{2\eta}^\infty t^{-1-2\beta} dt\right) \|f\|^2_{L^{2n/(n+2\beta)}(\mathbb{R}^n)},$$
and $(iii)$ follows.

\smallskip
\noindent
$(iv).$  We suppose that $\eta <|t|/2$.  Then $$\|\nabla\left(S_t^\eta - S_t \right) f \|_{L^2(\mathbb{R}^n)}
\leq \varphi_\eta \ast \|\nabla\left(S_{(\cdot)} - S_t \right) f \|_{L^2(\mathbb{R}^n)}.$$
But for $|s-t|<\eta<|t|/2,$ we have by the mean value theorem and
Lemma \ref{l2.10}$(ii)$ that
$$\|\nabla\left(S_s - S_t \right) f \|_{L^2(\mathbb{R}^n)} \leq \frac{\eta}{|t|} \sup_{|\tau-t|<|t|/2} 
\|\tau\nabla\partial_\tau S_\tau f \|_{L^2(\mathbb{R}^n)}\leq C\frac{\eta}{|t|} \|f\|_2.$$

\smallskip
\noindent
$(v).$  We take $\eta<\varepsilon /2$,
and write
\begin{eqnarray}\nonumber \int_\varepsilon^\infty \! \int_{\mathbb{R}^n} 
|t\nabla \partial_t\left(S_t^\eta - S_t \right) f|^2 \,\frac{dx\, dt}{t}\!\!&=& \!\!
\int_\varepsilon^\infty \! \int_{\mathbb{R}^n} 
|\varphi_\eta \ast t\nabla D_{n+1}\left(S_{(\cdot)} - S_t \right) f|^2 \,\frac{dx\, dt}{t}\\
\!\!& \leq &  \!\!\int_\varepsilon^\infty \varphi_\eta \ast \| 
t\nabla D_{n+1}\left(S_{(\cdot)} - S_t \right) f\|_2^2 \,\frac{dt}{t}
\label{eq2.last}
\end{eqnarray}
We claim that the last expression converges to $0$, as $\eta \to 0$.
Indeed, for $|s-t|<\eta<t/2,$ we have that
$$
\|t\nabla D_{n+1}\left(S_s - S_t \right) f \|_{L^2(\mathbb{R}^n)} \leq  \eta \sup_{|\tau-t|<t/2} 
\|\tau\nabla\partial^2_\tau S_\tau f \|_{L^2(\mathbb{R}^n)} \leq  C\frac{\eta}{t} \|f\|_2 $$
by Lemma \ref{l2.10}$(ii)$.  Thus, for $\eta < \varepsilon/2$, 
\eqref{eq2.last} is bounded by $C \eta^2 \varepsilon^{-2} \|f\|_2^2$, and the claim follows.

\smallskip
\noindent $(vi)$.  Estimate $(vi)$ follows from $(i)$ and H\"{o}lder's inequality.
\end{proof}

\section{Some consequences of ``off-diagonal" decay estimates \label{s3offdiag}}

Here, we prove some estimates that hold in general for operators satisfying the conclusions of Lemmas~\ref{l2.9} and
\ref{l2.10}. For the sake of notational convenience, we observe that part (i) of 
the former conclusion can be reformulated as
\begin{equation}
\Vert\theta_{t}(f1_{2^{k+1}Q\backslash 2^{k}Q})\Vert_{L^{2}(Q)}^{2}\leq
C_{m}2^{-nk}\left(\frac{|t|}{2^{k}\ell(Q)}\right)^{2m+2}\Vert
f\Vert^2_{L^{2}(2^{k+1}Q\backslash 2^{k}Q)}\label{eq2.11}\end{equation}
 where $\theta_{t}=t^{m+1}\partial_{t}^{m+1}(S_{t}\nabla).$ We now consider generic operators $\theta_{t}$ which
satisfy~\eqref{eq2.11} for some integer $m\geq 0$.  The next lemma is essentially due to 
Fefferman and Stein~\cite{FS}. We
omit the well known proof.

\begin{lemma}\label{l2.12} Suppose that $\{\theta_{t}\}_{t\in \mathbb{R}}$ is a family of
operators which satisfies \eqref{eq2.11} for some integer $m \geq 0$ and
in every cube $Q$, whenever $|t|\leq C\ell (Q)$. If $\| |
\theta_{t}|\|_{op}\leq C$, then
\begin{equation*}|\theta_{t}b(x)|^2\frac{dxdt}{|t|}\end{equation*} is a Carleson measure for every $b\in
L^\infty$.\end{lemma}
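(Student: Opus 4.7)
The plan is to prove that for every cube $Q \subset \mathbb{R}^n$,
\begin{equation*}
\iint_{T(Q)} |\theta_t b(x)|^2\,\frac{dx\,dt}{|t|} \leq C\,\|b\|_\infty^2\,|Q|,
\end{equation*}
where $T(Q) \equiv Q \times \{t : |t| < \ell(Q)\}$ is the Carleson box over $Q$. By symmetry I will handle only the part with $0 < t < \ell(Q)$; the $t<0$ half is identical. I would fix $Q$, set $b_1 \equiv b\,\mathbf{1}_{4Q}$ and $b_2 \equiv b - b_1$, and treat the two pieces by different mechanisms.

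For the local part $b_1$, I would invoke the hypothesis $\||\theta_t|\|_{op} \leq C$ directly, which by definition of the square-function norm gives
\begin{equation*}
\iint_{T(Q)} |\theta_t b_1|^2 \,\frac{dx\,dt}{|t|}
\leq \||\theta_t b_1|\|^2 \leq C\,\|b_1\|_2^2
\leq C\,\|b\|_\infty^2\,|4Q|
\leq C\,\|b\|_\infty^2\,|Q|.
\end{equation*}
This is the easy half and costs only the global $L^2$ bound for the square function.

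For the far piece $b_2$, I would use the off-diagonal decay \eqref{eq2.11} to control the tail. Decompose $b_2 = \sum_{k\geq 1} b\,\mathbf{1}_{R_k}$, with $R_k \equiv 2^{k+1}Q \setminus 2^kQ$. Since $0 < t < \ell(Q) \leq 2^k\ell(Q)$, \eqref{eq2.11} applies to give
\begin{equation*}
\|\theta_t(b\,\mathbf{1}_{R_k})\|_{L^2(Q)}^2
\leq C\,2^{-nk}\!\left(\tfrac{t}{2^k\ell(Q)}\right)^{\!2m+2}\!
\|b\|_\infty^2\,|2^{k+1}Q|
\leq C\,\|b\|_\infty^2\,|Q|\left(\tfrac{t}{2^k\ell(Q)}\right)^{\!2m+2}\!,
\end{equation*}
since $|2^{k+1}Q|=C\,2^{nk}|Q|$. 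Taking square roots, summing in $k$ using that the geometric series $\sum_{k\geq 1} 2^{-k(m+1)}$ converges (this is where we use the crucial gain $m \geq 0$, so that $m+1 \geq 1$ beats the counting of the annuli), gives
\begin{equation*}
\|\theta_t b_2\|_{L^2(Q)} \leq C\,\|b\|_\infty\,|Q|^{1/2}\left(\tfrac{t}{\ell(Q)}\right)^{m+1}.
\end{equation*}
Squaring and integrating against $dt/t$ over $(0,\ell(Q))$ then yields
\begin{equation*}
\int_0^{\ell(Q)}\!\!\int_Q |\theta_t b_2|^2\,\frac{dx\,dt}{t}
\leq C\,\|b\|_\infty^2\,|Q|\int_0^{\ell(Q)}\!\left(\tfrac{t}{\ell(Q)}\right)^{\!2m+2}\frac{dt}{t}
\leq C\,\|b\|_\infty^2\,|Q|,
\end{equation*}
since $2m+2 \geq 2 > 0$ makes the $t$-integral finite.

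Combining the two estimates finishes the Carleson bound. There is no real obstacle here: the only delicate point is that the exponent $m+1$ produced by \eqref{eq2.11} must be strictly greater than $0$ (i.e., $m \geq 0$) in order for the geometric sum in $k$ to converge and for the $t$-integral near $0$ to be finite, and this is exactly the hypothesis of the lemma. The organization of the splitting $b = b_1 + b_2$ is the standard Fefferman--Stein device, and having it together with the decay \eqref{eq2.11} and the global bound $\||\theta_t|\|_{op}\leq C$ makes the argument essentially mechanical.
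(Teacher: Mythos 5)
Your argument is correct and is exactly the standard Fefferman--Stein device that the paper cites (it attributes the lemma to \cite{FS}) and declines to reproduce: split $b$ into a near part, handled by the global square-function bound $\||\theta_t|\|_{op}\le C$, and a far part, handled by the off-diagonal decay \eqref{eq2.11} summed over dyadic annuli, with the gain $(t/2^k\ell(Q))^{m+1}$ giving both the convergent geometric sum in $k$ and the finite $dt/t$ integral near $t=0$. One small bookkeeping slip: with $b_1\equiv b\,1_{4Q}$ you have $b_2 = b\,1_{(4Q)^c} = \sum_{k\ge 2} b\,1_{R_k}$, whereas your sum starts at $k=1$ (the annulus $R_1=4Q\setminus 2Q$ lies inside $4Q$); either take $b_1\equiv b\,1_{2Q}$ or start the sum at $k=2$ — the estimates are unchanged either way.
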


\begin{lemma}\label{l2.13} Suppose that 
$\{\theta_{t}\}_{t\in \mathbb{R}}$ is a
family of operators satisfying \eqref{eq2.11} for some integer $m \geq 0$, as well as the bound
\begin{equation*}\sup_{t\in \mathbb{R}}\| \theta_{t}f\|_{L^2(\mathbb{R}^n)}\leq C\|f\|_2.\end{equation*} Suppose
that $\{ \Lambda_t\}_{t\in \mathbb{R}}$ is a family of operators satisfying the bounds
\begin{equation*} \sup_{t\in \mathbb{R}}\| \Lambda_t f\|_2\leq C\| f\|_2,\qquad
 \|\Lambda_t f\|_{L^2(E)}\leq C\exp  \left\{ \frac{-\dist (E,E')}{C|t|}\right\} \| f\|_{L^2(E')}
 \end{equation*} whenever
(in the latter estimate) support $f\subseteq E'$. Then $\theta_{t}\Lambda_t$ 
also satisfies \eqref{eq2.11}, whenever $|t|\leq
C\ell(Q)$.\end{lemma}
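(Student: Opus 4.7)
The approach is to decompose $\Lambda_t f$ spatially and invoke the hypotheses on $\theta_t$ and $\Lambda_t$ one factor at a time.  Fix a cube $Q$, a scale $|t|\leq C\ell(Q)$, and a function $f$ supported in the annulus $A_k\equiv 2^{k+1}Q\setminus 2^kQ$ ($k\geq 1$).  Set $D_0\equiv 2Q$ and $D_j\equiv 2^{j+1}Q\setminus 2^jQ$ for $j\geq 1$, and let $g_j\equiv(\Lambda_t f)\mathbf{1}_{D_j}$, so that $\Lambda_t f=\sum_{j\geq 0}g_j$.  By the triangle inequality, the hypothesis \eqref{eq2.11} on $\theta_t$ applied to each $g_j$ with $j\geq 1$ (these are supported in $D_j$ and $|t|\leq C\ell(Q)$), and the uniform $L^2$-bound on $\theta_t$ for the $j=0$ piece,
\begin{equation*}
\|\theta_t\Lambda_t f\|_{L^2(Q)}\leq C\|g_0\|_2 + C\sum_{j\geq 1}2^{-nj/2}\Bigl(\frac{|t|}{2^j\ell(Q)}\Bigr)^{m+1}\|g_j\|_2.
\end{equation*}

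To bound $\|g_j\|_2=\|\Lambda_t f\|_{L^2(D_j)}$, I distinguish two regimes.  When $|j-k|\leq 2$, the $L^2$-boundedness of $\Lambda_t$ gives $\|g_j\|_2\leq C\|f\|_2$.  Otherwise $\dist(D_j,A_k)\gtrsim 2^{\max(j,k)}\ell(Q)$, so the Gaffney-type hypothesis yields
\begin{equation*}
\|g_j\|_2\leq C\exp\bigl(-c\,2^{\max(j,k)}\ell(Q)/|t|\bigr)\|f\|_2.
\end{equation*}
Let $\tau\equiv|t|/\ell(Q)\leq C$; the target bound reads $2^{-nk/2}(\tau/2^k)^{m+1}\|f\|_2$.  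The diagonal terms $|j-k|\leq 2$ immediately contribute this quantity.  For $j>k+2$, the polynomial factor $2^{-(n/2+m+1)j}$ from \eqref{eq2.11} already beats the target's $2^{-(n/2+m+1)k}$ and the series converges geometrically.

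The delicate cases are $1\leq j<k-2$ and $j=0$.  For $1\leq j<k-2$, the ratio of our bound to the target is $2^{(n/2+m+1)(k-j)}e^{-c\,2^k/\tau}\leq 2^{(n/2+m+1)k}e^{-c\,2^k/C}$ (using $\tau\leq C$), which is uniformly bounded in $k$ and $j$, and summing $O(k)$ such terms is still absorbed by the exponential.  For $j=0$, the target contains an extra factor $\tau^{m+1}$ that our bound lacks, so the ratio is $2^{(n/2+m+1)k}\tau^{-(m+1)}e^{-c\,2^k/\tau}$; I split the exponential in two equal pieces and apply $u^{m+1}e^{-au}\leq C_m a^{-(m+1)}$ (with $u=1/\tau$, $a=c\,2^{k-1}$) to absorb the $\tau^{-(m+1)}$, reserving the other piece, bounded by $e^{-c\,2^{k-1}/C}$, to defeat $2^{(n/2+m+1)k}$.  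Summing and squaring then produces \eqref{eq2.11} for $\theta_t\Lambda_t$.  The main obstacle is precisely this $j\ll k$ regime, where $\theta_t$ provides no off-diagonal gain and one must extract everything from the Gaffney decay of $\Lambda_t$; the restriction $|t|\leq C\ell(Q)$ is essential, since it guarantees that the exponent $2^k\ell(Q)/|t|$ is large enough to overpower both the $k$-amplification and any loss due to small $\tau$.
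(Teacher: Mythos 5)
Your proof is correct in substance and invokes the same two mechanisms as the paper: near $Q$, the uniform $L^2$ bound on $\theta_t$ combined with the Gaffney decay of $\Lambda_t$ across the gap of size $\gtrsim 2^k\ell(Q)$ (made effective by $|t|\lesssim\ell(Q)$); far from $Q$, the off-diagonal bound \eqref{eq2.11} for $\theta_t$ combined with the uniform $L^2$ bound on $\Lambda_t$. The difference is the decomposition. The paper sets $\tilde{Q}\equiv 2^{k-2}Q$ and splits $\theta_t\Lambda_t = \theta_t 1_{\tilde{Q}}\Lambda_t + \theta_t 1_{\mathbb{R}^n\setminus\tilde{Q}}\Lambda_t$, treating the entire inner region as a single block via
\begin{equation*}
\|\theta_t 1_{\tilde{Q}}\Lambda_t(f1_{2^{k+1}Q\setminus 2^kQ})\|_{L^2(Q)}\leq \|\theta_t\|_{2\to 2}\,\|\Lambda_t(f1_{2^{k+1}Q\setminus 2^kQ})\|_{L^2(\tilde{Q})}\leq Ce^{-2^k\ell(Q)/(C|t|)}\|f\|_2,
\end{equation*}
and the annular sub-decomposition is applied only on $\mathbb{R}^n\setminus\tilde{Q}$, where $j\geq k-2$ and a single geometric series finishes the estimate. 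You instead apply the full annular decomposition everywhere, which creates the extra regime $1\leq j<k-2$ in which you must combine \eqref{eq2.11} for $\theta_t g_j$ with Gaffney decay of $g_j$ and beat a $2^{(n/2+m+1)(k-j)}$ amplification, and the $j=0$ case where you need the elementary bound $u^{m+1}e^{-au}\leq C_m a^{-(m+1)}$. Your handling of these regimes is correct, but the paper's single-block grouping renders them unnecessary and shortens the proof considerably. One small omission: your argument, like the paper's, requires $k$ sufficiently large (say $k\geq 4$) so that $\dist(D_0,A_k)\gtrsim 2^k\ell(Q)$; for $k=1$, $D_0=2Q$ and $A_1=4Q\setminus 2Q$ are adjacent and Gaffney gives no decay for $g_0$, so your $j=0$ estimate fails. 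The paper disposes of small $k$ by subdividing $Q$ dyadically to reduce to the large-$k$ case, and you should include the same reduction.
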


\begin{proof} We may suppose that $k\geq 4$, otherwise, subdivide $Q$ dyadically to reduce to this case. Given $Q$, set
$\tilde{Q} \equiv 2^{k-2}Q$. Then
\begin{equation}\label{eq2.14} \theta_{t}\Lambda_t=\theta_{t} 1_{\tilde{Q}}\Lambda_t
+\theta_{t}1_{\mathbb{R}^n\backslash \tilde{Q}}\Lambda_t .\end{equation} For the first term, 
we have the bound
\begin{equation*}\begin{split} \| \theta_{t} 1_{\tilde{Q}} \Lambda_t \,(f1_{2^{k+1}Q\backslash
2^kQ})\|_{L^2(Q)} & \leq \, \| \theta_{t}\|_{2\to 2} \| \Lambda_t \,(f1_{2^{k+1}Q\backslash 2^kQ})\|
_{L^2(\tilde{Q})}\\ &\leq \, \| \theta_{t}\|_{2\to 2}\exp 
\left\{ \frac{-2^k\ell (Q)}{C|t|}\right\} \| f\|_{L^2
(2^{k+1}Q\backslash 2^kQ)}\end{split}\end{equation*} which in particular yields \eqref{eq2.11} for this
term, if $|t|\leq C\ell(Q)$. Next, we consider the second term in
\eqref{eq2.14}, which equals
\begin{equation*}\sum_{j\geq k-2} \theta_{t}
1_{2^{j+1}Q\backslash 2^jQ}\Lambda_t.\end{equation*} The
desired bound follows for this term by applying \eqref{eq2.11} for each $j$ fixed, and summing the resulting geometric series.\end{proof}

\begin{lemma}\label{l2.15} {\bf (i)}. Suppose that $\{ R_t\}_{t\in \mathbb{R}}$ is a 
family of operators satisfying  \eqref{eq2.11}, for some $m\geq 1$, and for 
all $|t|\leq C\ell (Q)$, and suppose also that $\sup_{t}
\|R_t\|_{2\to2}\leq C$, and that $R_t1=0$ for all $t\in \mathbb{R}$ (our hypotheses allow $R_t1$ to be defined as an
element of $L^2_{\loc }$). Then for $h\in \dot{L}^2_1 (\mathbb{R}^n)$,
\begin{equation}\label{eq2.15a}\int_{\mathbb{R}^n}|R_th|^2\leq Ct^2\int_{\mathbb{R}^n}|\nabla_x h|^2.\end{equation}
{\bf (ii)}. If, in addition, $\|R_t \dv_x\|_{2 \to 2} \leq C/|t|,$ then also
\begin{equation}\label{eq2.15b} \||R_t f\|| \leq C \|f\|_2.\end{equation}\end{lemma}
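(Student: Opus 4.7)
The plan for (i) is to exploit $R_t 1 = 0$ to subtract local averages of $h$, reducing matters to Poincar\'e's inequality on dyadic annuli, while using the off-diagonal decay \eqref{eq2.11} to control the interaction of $R_t$ with distant pieces of the argument. Fix $t \neq 0$ and tile $\mathbb{R}^n$ by disjoint cubes $Q$ of side length $\approx |t|$. Setting $A_0(Q) \equiv 2Q$ and $A_k(Q) \equiv 2^{k+1}Q \setminus 2^k Q$ for $k\geq 1$, the identity $R_t 1 = 0$ gives, on each $Q$,
$$R_t h \,=\, R_t(h - h_{2Q}) \,=\, \sum_{k \geq 0} R_t\bigl((h-h_{2Q})\,1_{A_k(Q)}\bigr),$$
so by Cauchy--Schwarz with weights $2^k$, $\|R_t h\|_{L^2(Q)}^2 \lesssim \sum_k 2^k\|R_t((h-h_{2Q})1_{A_k(Q)})\|_{L^2(Q)}^2$. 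The local piece ($k=0$) is handled by $\sup_t\|R_t\|_{2\to 2}\leq C$ together with Poincar\'e on $2Q$, yielding $C|t|^2\|\nabla h\|_{L^2(2Q)}^2$. For $k\geq 1$, \eqref{eq2.11} supplies the off-diagonal gain $2^{-nk-2k(m+1)}$, while $\|h-h_{2Q}\|_{L^2(A_k(Q))}^2$ is controlled by the triangle inequality, Poincar\'e on $2^{k+1}Q$, and the telescoping estimate $|h_{2^{k+1}Q} - h_{2Q}|\leq \sum_{j=1}^k|h_{2^{j+1}Q}-h_{2^j Q}|$ combined with Cauchy--Schwarz. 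Summing first in $k$ and then over $Q$ (using the bounded overlap $\lesssim 2^{jn}$ of $2^{j+1}Q$) produces a geometric series in $j$ that converges precisely because $m\geq 1$, yielding \eqref{eq2.15a}.

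For (ii), the plan is a Calder\'on reproducing formula combined with Schur's test. Choose a radial $\psi\in C_0^\infty(\mathbb{R}^n)$ with $\int\psi=0$, normalized so that $f=c\int_0^\infty Q_s^2 f\,ds/s$ in $L^2$, where $Q_s f\equiv \psi_s*f$. Since $\psi$ has mean zero, one may write $\psi_s=\dv \vec{H}_s$ with $\|\vec H_s\|_1 \lesssim s$; hence $Q_s f = \dv(\vec H_s * f)$, and the hypothesis $\|R_t\dv_x\|_{2\to 2}\leq C/|t|$ yields $\|R_t Q_s f\|_2 \lesssim (s/|t|)\|f\|_2$. On the other hand, $\|\nabla Q_s f\|_2 \lesssim s^{-1}\|f\|_2$, so part (i) applied with $h=Q_s f$ gives $\|R_t Q_s f\|_2 \lesssim (|t|/s)\|f\|_2$. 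Combining,
$$\|R_t Q_s\|_{2\to 2} \,\lesssim\, \min\!\left(\tfrac{s}{|t|},\tfrac{|t|}{s}\right).$$
Plugging this into the reproducing formula, and noting that $\int_0^\infty \min(s/|t|,\,|t|/s)\,ds/s = 2$ uniformly in $t$, Schur's lemma on $L^2(dt/|t|)$ reduces \eqref{eq2.15b} to the standard Littlewood--Paley bound $\int_0^\infty \|Q_s f\|_2^2\,ds/s \lesssim \|f\|_2^2$, which is immediate from Plancherel for our choice of $\psi$. The cases $t>0$ and $t<0$ are treated identically.

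The main obstacle is the combinatorial bookkeeping in (i): the off-diagonal gain $2^{-nk-2k(m+1)}$ from \eqref{eq2.11} must absorb not only the overlap factor $2^{nk}$ from summing over $Q$, but also the Poincar\'e blow-up $(2^k|t|)^2$ on the annulus $2^{k+1}Q$, an ambient-dimension factor $2^{j(2-n)}$ (harmless for $n\geq 2$), and a linear factor $k$ from the telescoping sum. The resulting series converges exactly when $m\geq 1$, consistent with the fact that the single cancellation $R_t 1 = 0$ provides only first-order vanishing.
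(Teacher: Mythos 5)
Your proof is correct and takes essentially the same route as the paper: for (i) it tiles by cubes of side $\approx|t|$, subtracts local averages via $R_t1=0$, telescopes, and applies Poincar\'{e} together with the off-diagonal decay (with $m\geq1$ exactly balancing the Poincar\'{e} blow-up and the overlap factor); for (ii) it establishes the Schur-type estimate $\|R_t\Lambda_s\|_{2\to2}\lesssim\min(s/|t|,|t|/s)$ and integrates, the only cosmetic difference being that the paper takes $\Lambda_s=s^2\Delta e^{s^2\Delta}$ (where the factorization $\Delta=\dv_x\nabla_x$ is immediate) while you use a mean-zero convolution operator $Q_s=\psi_s\ast$, invoking the standard fact $\psi=\dv\vec H$ to get the same factorization.
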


\begin{proof} We suppose that $t>0$, and show that \eqref{eq2.15a} implies \eqref{eq2.15b}.  
The latter follows from 
\begin{equation}\label{eq3.orthog}\|R_t \left(s^2 \Delta \,e^{s^2 \Delta}\right) \|_{2\to 2} 
\leq C \, \min \left(\frac{s}{t},\frac{t}{s}\right),\end{equation}
by a standard orthogonality
argument.  
In turn, \eqref{eq3.orthog} is easy to prove:  
the case $t<s$ is just \eqref{eq2.15a}, and the case
$s<t$ follows by hypothesis 
from the factorization $\Delta = \dv_x \nabla_x$.

We now turn to the proof of \eqref{eq2.15a}.  Let $\mathbb{D}(t)$ denote the grid of dyadic cubes with $\ell (Q)\leq |t|<2 \ell (Q)$.   For convenience of notation we set $m_Qh\equiv \fint_Q h$.  Then 
\begin{multline*} \left( \int_{\mathbb{R}^n}|R_th|^2\right)^{\frac{1}{2}} =\left(\sum_{Q\in
\mathbb{D}(t)}\int_Q |R_th|^2\right)^{\frac{1}{2}}=\left( \sum_{Q\in \mathbb{D}(t)}\int_Q |R_t
(h-m_{2Q}h)|^2\right)^{\frac{1}{2}}\\
\leq\left(\sum_{Q\in \mathbb{D}(t)}\int_Q |R_t[(h-m_{2 Q}h) 1_{2 Q}]|^2\right)^{\frac{1}{2}} 
+\left(\sum_{Q\in \mathbb{D}(t)}\int_Q |R_t[(h-m_{2 Q}h)1_{(2
Q)^c}]|^2\right)^{\frac{1}{2}}\,\equiv I + II.\end{multline*} Since
$R_t:L^2\to L^2$, we have by Poincar\'e's inequality that
\begin{equation*}I\leq C\left( \sum_{Q\in \mathbb{D}(t)}\int_{2 Q} |h-m_{2
Q}h|^2\right)^{\frac{1}{2}} \leq C|t|\left(
\sum_{Q\in \mathbb{D}(t)}\int_{2 Q}|\nabla_x h|^2\right)^{\frac{1}{2}}\leq C|t|\,\| \nabla_xh\|_2.\end{equation*}Moreover, we are given that $R_t$ satisfies \eqref{eq2.11}.   Thus,
\begin{multline*}II\leq \sum^\infty_{k=1} \left( \sum_{Q\in \mathbb{D}(t)} \int_Q
|R_t[(h-m_{2 Q}h)1_{2^{k+1}Q\backslash 2 ^kQ}]|^2\right)^{\frac{1}{2}}\\\leq 
C\sum^\infty_{k=1} \left(\sum_{Q\in \mathbb{D}(t)} \! 2^{-k(n+4)} \!\int_{2^{k+1}Q}\!|h-m_{2
Q}h|^2\right)^{\frac{1}{2}}\\\leq C\sum^\infty_{k=1}\sum^k_{j=1} 
\left( \sum_{Q\in \mathbb{D}(t)}2^{-4k}2^{-jn}
\int_{2^{j+1}Q}|h-m_{2^{j+1}Q}h|^2\right)^{\frac{1}{2}},\end{multline*}
where in the last step we have used that
\begin{equation*}h-m_{2 Q} h=h-m_{2^{k+1}Q}h + m_{2^{k+1}Q}h-m_{2^kQ}h+\dots -\dots +
m_{4Q}h-m_{2Q}h.\end{equation*}   
By Poincar\'{e}'s inequality, since $j\leq k$ we
obtain in turn the bound
\begin{multline*}C|t|\sum^\infty_{k=1} 2^{-k}\sum^k_{j=1}\left( \sum_{Q\in \mathbb{D}(t)}
2^{-jn}\int_{2^{j+1}Q} |\nabla_x h|^2\right)^{\frac{1}{2}}\leq
C|t|\sum^\infty_{k=1}2^{-k}\sum^k_{j=1}\left( \sum_{Q\in \mathbb{D}(t)}\int_Q
\fint_{2^{j+1}Q}|\nabla_x h|^2\right)^{\frac{1}{2}}\\  \leq
C|t|\sum^\infty_{k=1}2^{-k}\sum^k_{j=1}\left( \int_{\mathbb{R}^n}\fint_{|x-y|\leq C2^jt}|\nabla_x
h(x)|^2dxdy\right)^{\frac{1}{2}}  = C|t|\,\|\nabla_x h\|_2 .\end{multline*} 
\end{proof}

\begin{lemma}\label{l2.17} Given $\{R_t\}_{t\in \mathbb{R}_+}$ as in part {\bf (i)} of
the previous lemma, we have that
\begin{equation*}\| | t^{-1} R_tF|\|\leq C\| \nabla_x F\|_{L^2(\mathbb{R}^n)},\end{equation*} provided that
$\left|\frac{1}{t}R_t\Phi (x)\right|^2\frac{dxdt}{|t|}$ is a Carleson measure, where $\Phi (x)\equiv x$.\end{lemma}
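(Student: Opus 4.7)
I would prove this by a $T1$-style cancellation argument: subtract from $F$ an affine approximation at each scale and use $R_t 1 = 0$ to isolate a ``main'' term controlled by the Carleson hypothesis on $R_t\Phi$, leaving an ``error'' term controlled by \eqref{eq2.11} and Poincar\'e.

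For each $(x,t)$ with $t>0$, let $Q = Q(x,t)$ be the dyadic cube containing $x$ with side-length in $[t,2t)$, and set
\[
c_Q := \fint_{2Q} F,\qquad \vec a_Q := \fint_{2Q}\nabla F.
\]
Using $R_t 1 = 0$ together with the linearity of $R_t$ (noting that $R_t[\vec a_Q\cdot\Phi](x) = \vec a_Q\cdot R_t\Phi(x)$ and that $R_t$ kills the constants $c_Q$ and $\vec a_Q\cdot x_Q$),
\[
R_t F(x)\;=\; R_t h_Q(x)\;+\;\vec a_Q\cdot R_t\Phi(x),\qquad
h_Q(y):= F(y) - c_Q - \vec a_Q\cdot (y - x_Q),
\]
where $x_Q$ is the center of $Q$.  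For the ``main'' term, the pointwise majorization $|\vec a_{Q(x,t)}|\le M(\nabla F)(x)$ combined with the Carleson embedding theorem applied to the hypothesized Carleson measure $|t^{-1}R_t\Phi|^{2}\,dx\,dt/t$, followed by Hardy--Littlewood, yields
\[
\iint\bigl|\vec a_Q\cdot R_t\Phi(x)/t\bigr|^{2}\,\frac{dx\,dt}{t}
\;\le\; C\,\|M(\nabla F)\|_{L^{2}}^{2}\;\le\; C\,\|\nabla F\|_{L^{2}}^{2}.
\]

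For the ``error'' term $R_t h_Q/t$, I would decompose $h_Q = h_Q\mathbf 1_{2Q}+\sum_{k\ge 1}h_Q\mathbf 1_{A_k(Q)}$, where $A_k(Q) = 2^{k+1}Q\setminus 2^k Q$.  The $2Q$-piece is controlled by the uniform $L^2$-boundedness of $R_t$ and the Poincar\'e inequality, exploiting that $\nabla h_Q = \nabla F - \vec a_Q$ has mean zero on $2Q$, giving $\|R_t(h_Q\mathbf 1_{2Q})\|_{L^{2}(Q)}\le C\ell(Q)\|\nabla F - \vec a_Q\|_{L^{2}(2Q)}$.  On each annular piece, \eqref{eq2.11} (with $m\ge 1$) gives decay $\le C\,2^{-(n+2m+2)k}\|h_Q\|_{L^{2}(2^{k+1}Q)}^{2}$, and I would estimate $\|h_Q\|_{L^{2}(2^{k+1}Q)}$ by comparing $h_Q$ with its analogue $h_{Q,k}$ at scale $2^{k+1}Q$ (bounded by Poincar\'e on $2^{k+1}Q$) plus an affine correction whose coefficients $c_Q - c_{Q,k}$ and $\vec a_Q - \vec a_{Q,k}$ telescope through intermediate dyadic scales.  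After dividing by $t^{2}\asymp\ell(Q)^{2}$ and summing over $(Q,t)$, the geometric decay $2^{-(n+4)k}$ (since $m\ge 1$) absorbs the annular enlargement factors, and the resulting aggregated bound is folded back into $C\|\nabla F\|_{L^{2}}^{2}$ by a Carleson/maximal argument analogous to the one used for the main term.

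\textbf{Main obstacle.}  The most delicate step is the telescoping estimate for $|\vec a_Q - \vec a_{Q,k}|$: because $\nabla F$ has no further weak derivatives available in $L^{2}$, this difference cannot be bounded via a Poincar\'e inequality applied one more time, and must instead be handled by the pointwise majorization $|\vec a_Q - \vec a_{Q,k}|\le 2M(\nabla F)(x)$ combined with the off-diagonal decay factor.  The strict inequality $m\ge 1$ (giving decay $2^{-(n+4)k}$ or better) is precisely what allows this contribution to absorb the enlargement factor $(2^{k}\ell(Q))^{n+2}$ and sum geometrically over $k$, with no logarithmic losses in the number of dyadic scales---thereby producing the clean bound $C\|\nabla F\|_{L^{2}}^{2}$ rather than a divergent Littlewood--Paley-type quantity.
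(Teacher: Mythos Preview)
Your overall strategy --- subtract an affine approximation at each scale, handle the main term $\vec a_Q\cdot R_t\Phi$ via the Carleson hypothesis, and decompose the remainder into local and annular pieces --- is the same route the paper takes. However, there is a genuine gap in how you close the estimate on the error term $R_t h_Q$.

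For the local piece, your Poincar\'e step yields
\[
\frac{1}{t^2}\,\|R_t(h_Q\mathbf 1_{2Q})\|_{L^2(Q)}^2
\;\le\; C\,\|\nabla F - \vec a_Q\|_{L^2(2Q)}^2.
\]
But summing the right side over all dyadic scales gives essentially $\sum_j\|(I-E_j)\nabla F\|_2^2$, where $E_j$ denotes averaging at scale $2^j$; this diverges for generic $\nabla F\in L^2$ (at scales large compared to the essential support of $\nabla F$, the average $E_j\nabla F\approx 0$, so each such scale contributes $\approx\|\nabla F\|_2^2$). Your annular terms face the same difficulty: the factor $2^{-(n+4)k}$ controls the sum in $k$, but after summing in $k$ you are still left with a quantity comparable to $\|\nabla F\|_2^2$ (or $\|M(\nabla F)\|_2^2$) \emph{per dyadic scale} $j$, and there is no Carleson structure to absorb that sum. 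The issue you flag as the ``main obstacle'' (the bound on $|\vec a_Q-\vec a_{Q,k}|$) is in fact harmless; the real trouble is the sum over $j$, not over $k$.

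What the paper does instead is to avoid the Poincar\'e step altogether and bound the local error directly by the affine-deviation coefficients
\[
\beta(x,t)^2\;=\;\fint_{|y-x|<Ct}\frac{|F(y)-F(x)-(y-x)\cdot P_t(\nabla F)(x)|^2}{t^2}\,dy,
\]
for which one has the Littlewood--Paley bound $\int_0^\infty\!\int_{\mathbb R^n}\beta(x,t)^2\,dx\,dt/t\le C\|\nabla F\|_2^2$, proved via Plancherel. The annular pieces are handled by the analogous coefficients on dilated balls, for which Plancherel yields only a $\sqrt{k}$ loss, summable against $2^{-k}$. The point is that $t^{-2}\fint|h_Q|^2$ already carries genuine Littlewood--Paley decay across scales; passing through Poincar\'e to $\|\nabla F-\vec a_Q\|_{L^2(2Q)}$ discards this gain and leaves a quantity that is merely bounded, not summable, in $j$.
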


\begin{proof} We may assume that $F \in C_0^\infty $, and that $t>0$.  Let $\mathbb{D}_j$ denote the dyadic grid of cubes of side length $2^{-j}$. Then
\begin{equation}\begin{split}\label{eq2.18} \| | t^{-1}R_t F| \|^2&= \sum^\infty_{j=-\infty}
\sum_{Q\in \mathbb{D}_{-j}}\int^{2^{j+1}}_{2^j}\int_Q | t^{-1} R_tF(y)|^2 dy\frac{dt}{t}\\
&=\sum^\infty_{j=-\infty} \sum_{Q\in \mathbb{D}_{-j}}\int^{2^{j+1}}_{2^j} \int_Q \fint_Q |
t^{-1}R_t F(y)|^2 dydx\frac{dt}{t}.\end{split}\end{equation} 
We now use an idea taken from \cite{J} and \cite[pp. 32-33]{Ch2}. 
For $(x,t)$ fixed, set $$G_{x,t}(z) \equiv F(z) - F(x) - (z-x)\cdot P_t\left(\nabla_\|F\right)(x),$$
where as usual $P_t$ is an approximate identity.
Since $R_t 1=0$, we have, for any fixed $x$,
\begin{equation*}\frac{1}{t}R_t F(y)=\frac{1}{t}R_t\left(G_{x,t}\right)(y) + 
\frac{1}{t} R_t\Phi (y)\cdot P_t \left(\nabla_{\|}F\right)(x) \equiv I+II.\end{equation*}
The contribution of $II$ to \eqref{eq2.18} is bounded by
\begin{multline*}  \sum^\infty_{j=-\infty} \sum_{Q\in \mathbb{D}_{-j}}\int^{2^{j+1}}_{2
^j}\int_Q |P_t\left(\nabla_{\|}F\right)(x)|^2\fint_Q \left|\frac{1}{t}R_t\Phi (y)\right|^2dydx\frac{dt}{t}\\  \leq C
\int^\infty_0 \int_{\mathbb{R}^n} |P_t\left(\nabla_{\|}F\right)(x)|^2\left\{ \fint_{B(x,Ct)} \left|
\frac{1}{t}R_t\Phi (y)\right|^2 dy\right\} dx\frac{dt}{t} \leq C\| \nabla_{\|}F\|_{L^2(\mathbb{R}^n)}^2\|
\mu\|_{\mathcal{C}},\end{multline*} 
by Carleson's Lemma, where
\begin{multline*} 
\| \mu\|_{\mathcal{C}}\equiv \sup_Q \int^{\ell (Q)}_0 \fint_Q \left\{ \fint_{B(x,Ct)} \left|
\frac{1}{t} R_t \Phi (y)\right|^2 dy\right\} dx\frac{dt}{t}\\ \leq C\sup_Q \int^{\ell (Q)}_0 \fint_{CQ} \left|
\frac{1}{t}R_t\Phi (y)\right|^2 \fint_{|x-y|\leq Ct}dx dy\frac{dt}{t}\leq C\sup_Q \int^{\ell (Q)}_0 \fint_Q
\left|\frac{1}{t} R_t\Phi\right|^2
\frac{dxdt}{t}.\end{multline*}

Next we consider the contribution of $I$ to \eqref{eq2.18}. For $Q\in \mathbb{D}_{-j}$, and $x\in Q$, we have 
\begin{equation*}I=
R_t\left(t^{-1}G_{x,t}1_{2Q}\right)(y) + \sum_{k=1}^\infty R_t\left(t^{-1}G_{x,t}1_{2^{k+1}Q\setminus
2^kQ}\right)(y) 
\equiv I_0 +\sum^\infty_{k=1}I_k.\end{equation*} 
Since $R_t:L^2\to L^2$, we
obtain the bound
\begin{equation*}\begin{split}\| |I_0|\|^2&\leq C\sum^\infty_{j=-\infty} \sum_{Q\in \mathbb{D}_{-j}}
\int_{2^j}^{2^{j+1}}\int_Q
\fint_{2 Q} \frac{|G_{x,t}(y)|^2}{t^2} dydx\frac{dt}{t}\\ &\leq C\int^\infty_0
\int_{\mathbb{R}^n} (\beta (x,t))^2\frac{dxdt}{t}\,\leq \,C\| \nabla_{\|}F\|_{L^2
(\mathbb{R}^n)},\end{split}\end{equation*} where
$(\beta (x,t))^2=\fint_{|x-y|<Ct}t^{-2}|G_{x,t}(y)|^2
dy,$ and where the last step 
is a well known consequence of Plancherel's Theorem, see, e.g.
\cite[pp. 32-33]{Ch2} or \cite[pp. 249-250]{H}. 
Furthermore, since $R_t$ satisfies \eqref{eq2.11} for some $m\geq 1$, whenever $t\approx \ell (Q)$, we have that
\begin{multline*} C^{-1} \sum^\infty_{k=1} \| | I_k|\| \leq
 \sum^\infty_{k=1}\left( \sum^\infty_{j=-\infty}
\sum_{Q\in \mathbb{D}_j} \int_{2^j}^{2^{j+1}}\!\!\!\int_Q \frac{1}{t^n 2^{k(n+4)}} \int_{|x-y|\leq
C2^kt} \!\! \frac{ |G_{x,t}(y)|^2}{t^2} \frac{dydxdt}{t}\right)^{\frac{1}{2}}\\
=\sum^\infty_{k=1}2^{-k}\left( \int^\infty_0\int_{\mathbb{R}^n} \fint_{|x-y|\leq C 2^k t}
\frac{|G_{x,t}(y)|^2}{(2^kt)^2} dydx\frac{dt}{t}\right)^{\frac{1}{2}} 
\equiv \sum^\infty_{k=1} 2^{-k}\| |\beta_k |\|,\end{multline*} where,
after making the change of variable $t\to t/ 2^k$, 
\begin{equation*}\beta_k (x,t)=\left(\fint_{|x-y |\leq Ct} \frac{|F(y)-F(x)-(y-x)\cdot P_{2^{-k}t}\left(\nabla_\|
F\right)(x)|^2}{t^2} dy\right)^{1/2}.\end{equation*}
We now claim that $\| |\beta _k|\|\leq C \sqrt{k} \|
\nabla_{\|}F\|_2,$ from which the conclusion of the lemma trivially follows. 
By Plancherel's Theorem, the definiton of $P_t$ and the change of variable $x-y=h$ we have
\begin{equation*}\| | \beta_k |\|^2=\int^\infty_0 \fint_{|h|<Ct} 
\int_{\mathbb{R}^n}\frac{|e^{i\xi \cdot h}-1-(ih\cdot \xi)\,
\hat{\phi} (2^{-k}t\xi )|^2}{t^2|\xi |^2} |\xi|^2 |\hat{F}(\xi )|^2 d\xi dh\frac{dt}{t},\end{equation*} 
where $\phi \in
C^\infty_0 \{ |x|<1\}$ and $\int \phi \equiv 1$. By the change of variable $h\to th$, we have
\begin{equation*}\| | \beta_k|\|^2 = \int^\infty_0 \fint_{|h|<C} 
\int_{\mathbb{R}^n}\frac{|e^{i\xi \cdot ht}-1-(iht\cdot
\xi)\,
\hat{\phi } (2^{-k}t\xi )|^2}{t^2|\xi |^2} |\xi|^2|\hat{F}(\xi )|^2\frac{d\xi dh dt}{t}.\end{equation*} Since $\hat{\phi
}\in \mathcal{S}$ and $\hat{\phi} (0)=1$, we have that 
\begin{equation*}\frac{|e^{it\xi \cdot h}-1-(ih\cdot t\xi)\,\hat{\phi} (2^{-k}t\xi )|}{t|\xi |} \leq C \min \left(
t|\xi |,1,
\frac{2^k}{t|\xi |}\right).\end{equation*}  Indeed, if $t|\xi | \leq 1,$ then
\begin{equation*}\begin{split}\frac{|e^{it\xi \cdot h}-1-(ih\cdot t\xi)\,\hat{\phi} (2^{-k}t\xi )|}{t|\xi |}&\leq \frac{|e^{it\xi \cdot h}-1-ih\cdot t\xi|}{t|\xi |}\, + \,\frac{|ih\cdot t\xi\,\left(1-\hat{\phi} (2^{-k}t\xi )\right)|}{t|\xi |}\\&\leq C( t|\xi| + 2^{-k} t|\xi|)\leq
Ct|\xi|.
\end{split}\end{equation*}
On the other hand, if $t|\xi| > 1,$ then
$$\frac{|e^{it\xi \cdot h}-1|}{t|\xi |}\leq \frac{2}{t|\xi|},$$
and $$\frac{|(ih\cdot t\xi)\,\hat{\phi} (2^{-k}t\xi )|}{t|\xi |}
\leq C |\hat{\varphi} (2^{-k}t\xi )| \leq \frac{C}{1 + 2^{-k}t|\xi|}\leq C \min\left(1,\frac{2^k}{t|\xi|}\right).$$
We then obtain
the bound $\| | \beta_k|\|^2  \leq C k \|
\nabla_{\|}F\|^2_2$ as claimed. \end{proof}

\begin{lemma}\label{l2.19} Suppose that $\theta_t$ satisfies \eqref{eq2.11} for some $m\geq 0$, whenever $0<t\leq C\ell
(Q)$ and that $\|\theta_t\|_{2 \to 2}\leq C$. Let $b\in L^\infty (\mathbb{R}^n)$, and let 
$\mathcal{A}_t$ denote a
self-adjoint averaging operator whose kernel $\varphi_t(x,y)$ satisfies 
$|\varphi_t(x,y)|
\leq Ct^{-n}  \, 1_{\{|x-y|<Ct\}}, \, \,\varphi_t \geq 0, \, \,\int \varphi_t(x,y) dy =1. $  Then 
\begin{equation*} \sup_{t\in \mathbb{R}_+}\| (\theta_t b)\mathcal{A}_t f\|_2\leq C\| b\|_\infty \| f\|_2.\end{equation*}\end{lemma}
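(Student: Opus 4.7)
Fix $t>0$. The plan is to partition $\mathbb{R}^n$ into the grid $\mathbb{D}(t)$ of disjoint dyadic cubes $Q$ with $\ell(Q) \in [t,2t)$, and prove, for each such $Q$, two pointwise/localized bounds that together give
\[
\|(\theta_t b)\mathcal{A}_t f\|_{L^2(Q)}^{2} \leq C \|b\|_\infty^{2}\int_{3Q}|Mf|^{2},
\]
where $M$ denotes the Hardy--Littlewood maximal operator. Summing over $Q \in \mathbb{D}(t)$, using bounded overlap of the enlarged cubes $\{3Q\}$, and invoking $\|Mf\|_{2} \leq C \|f\|_{2}$ will complete the proof.

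The two key ingredients are as follows. First, since $\varphi_t(x,\cdot)$ is supported in $\{|x-y|<Ct\}$ with $\|\varphi_t\|_\infty \lesssim t^{-n}$, we have $|\mathcal{A}_t f(x)| \leq C \fint_{B(x,Ct)}|f|$, and for any $x \in Q$ and any $y \in 3Q$, the ball $B(x,Ct)$ is contained in $B(y,C't)$ with $C'$ depending only on $C$ and dimension. Hence
\[
\sup_{x \in Q}|\mathcal{A}_t f(x)| \leq C \inf_{y \in 3Q} Mf(y).
\]
Second, I will show that
\[
\|\theta_t b\|_{L^{2}(Q)}^{2} \leq C\, t^{n}\|b\|_\infty^{2}.
\]
This follows by decomposing $b = b\,1_{2Q} + \sum_{k\geq 1} b\,1_{2^{k+1}Q\setminus 2^{k}Q}$; the local term is controlled by $\|\theta_t\|_{2\to 2}\|b\,1_{2Q}\|_{2} \leq C\|b\|_\infty t^{n/2}$, and for each annular term, since $t \leq \ell(Q)$ we may apply the hypothesis \eqref{eq2.11} to obtain
\[
\|\theta_t(b\,1_{2^{k+1}Q\setminus 2^{k}Q})\|_{L^{2}(Q)}^{2}
\leq C\, 2^{-nk}\!\left(\tfrac{t}{2^{k}\ell(Q)}\right)^{\!2m+2}\!\|b\,1_{2^{k+1}Q\setminus 2^{k}Q}\|_{2}^{2}
\leq C\, 2^{-k(2m+2)}\,t^{n}\|b\|_\infty^{2},
\]
which is summable in $k$ (here we used $|2^{k+1}Q \setminus 2^{k}Q| \leq C(2^{k}t)^{n}$).

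Combining the two bounds on each $Q$ gives
\[
\|(\theta_t b)\mathcal{A}_t f\|_{L^{2}(Q)}^{2}
 \leq (\sup_{Q}|\mathcal{A}_t f|)^{2}\,\|\theta_t b\|_{L^{2}(Q)}^{2}
 \leq C \|b\|_\infty^{2}\, t^{n}(\inf_{y\in 3Q}Mf(y))^{2}
 \leq C \|b\|_\infty^{2}\int_{3Q}|Mf|^{2},
\]
since $(\inf_{3Q}Mf)^{2}\,|Q| \leq C\int_{3Q}|Mf|^{2}$. Summing in $Q$ and using the $L^{2}$-boundedness of $M$ yields the conclusion. There is no substantial obstacle here: everything reduces to combining the off-diagonal decay \eqref{eq2.11} of $\theta_t$ at scale $\ell(Q) \approx t$ with the trivial pointwise domination of $\mathcal{A}_t f$ by $Mf$.
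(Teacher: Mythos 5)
Your proof is correct, and it takes a genuinely different route from the paper. The paper first uses self-adjointness of $\mathcal{A}_t$ to write $\|(\theta_t b)\mathcal{A}_t f\|_2^2 = \langle \mathcal{A}_t(|\theta_t b|^2\mathcal{A}_t f), f\rangle \leq \|f\|_2\,\|\mathcal{A}_t(|\theta_t b|^2\mathcal{A}_t f)\|_2$, then applies Schur's test to the composite operator $f\mapsto\mathcal{A}_t(|\theta_t b|^2\mathcal{A}_t f)$, reducing matters to an $L^1$ bound on the kernel $\mathcal{K}_t(x,y)=\int\varphi_t(x,z)|\theta_t b(z)|^2\varphi_t(z,y)\,dz$; the kernel estimate in turn rests on the same local $L^2$ control of $\theta_t b$ that you prove on each cube. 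You bypass the duality and Schur steps entirely by partitioning $\mathbb{R}^n$ into dyadic cubes of side comparable to $t$, using the two observations that $\mathcal{A}_t f$ is essentially constant on such cubes (dominated by $\inf_{3Q}Mf$) and that the off-diagonal bound \eqref{eq2.11}, combined with $\|\theta_t\|_{2\to 2}\leq C$, gives $\|\theta_t b\|_{L^2(Q)}^2\lesssim t^n\|b\|_\infty^2$. Summing with bounded overlap and $\|Mf\|_2\lesssim\|f\|_2$ finishes. One small gain in your argument: it never invokes the self-adjointness of $\mathcal{A}_t$ nor the normalization $\int\varphi_t(x,y)\,dy=1$ (only the size and support of the kernel), whereas the paper uses both explicitly; so your proof holds under slightly weaker hypotheses on $\mathcal{A}_t$. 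Both arguments need $m\geq 0$ exactly to make the geometric series over the annuli converge, and you correctly note this.
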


\begin{proof} Since we do not assume that $\theta_t:L^\infty \to L^\infty$, this requires a bit of an argument. Observe that 
\begin{equation*}\| (\theta_t b)\mathcal{A}_t f\|^2_2\,\leq
 \,\| f\|_2\| \mathcal{A}_t(|\theta_t b|^2\mathcal{A}_t f)\|_2\,\leq\, \| f\|^2_2
\| \mathcal{K}_t (x,\cdot )\| _{L^1(\mathbb{R}^n)},\end{equation*} where 
$\mathcal{K}_t(x,y)$ is the kernel 
of the self-adjoint operator $f \to \mathcal{A}_t(|\theta_t b|^2 \mathcal{A}_t f)$, i.e., 
$$\mathcal{K}_t(x,y) = \int_{\mathbb{R}^n} \varphi_t(x,z) |\theta_t b (z)|^2 \varphi_t(z,y) dz.$$
Consequently,
$$\| \mathcal{K}_t (x,\cdot )\| _{L^1} = \int_{\mathbb{R}^n} \varphi_t(x,z) |\theta_t b (z)|^2  dz \leq Ct^{-n}
\int_{|x-z|<Ct} |\theta_t b (z)|^2  dz.$$
Thus, by \eqref{eq2.11} and the fact that $\theta_t$ is bounded on $L^2$ uniformly in $t$, we have that
$$\| \mathcal{K}_t (x,\cdot )\|^{1/2} _{L^1} \leq 
C \left\{\left(\fint_{Q(x,4Ct)} |b|^2\right)^{1/2} + \sum_{k=2}^{\infty}
2^{-k}\left(\fint_{Q(x,2^{k+1}Ct)\setminus Q(x,2^{k}Ct)} |b|^2 \right)^{1/2}\right\} \leq C \|b\|_{\infty} ,$$
where $Q(x,Rt)$ is the cube centered at $x$ with side
length $Rt$. This proves the lemma.\end{proof}

\begin{lemma}\label{l2.20} Suppose that
\begin{equation*}\Omega _t=\int^t_0 \left( \frac{s}{t}\right)^\delta 
W_{t,s}\,\theta_s \frac{ds}{s},\end{equation*}
for some $\delta >0$, where $\sup_{t,s}\| W_{t,s}\|_{2\to 2}\leq C$. Then
\begin{equation*}\| |\Omega _t| \|_{op}\leq C\| |\theta_s |\|_{op}.\end{equation*}\end{lemma}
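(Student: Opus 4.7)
The plan is a standard Schur-type argument using Minkowski's integral inequality in $L^2(\mathbb{R}^n)$ followed by Fubini, with the weight $(s/t)^\delta$ providing the decay that is integrable on both sides of the diagonal $s=t$.

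First, I would apply Minkowski's integral inequality in the $x$ variable together with the uniform bound on $W_{t,s}$ to obtain
\begin{equation*}
\|\Omega_t f\|_{L^2(\mathbb{R}^n)} \leq C \int_0^t \Bigl(\frac{s}{t}\Bigr)^{\!\delta}\,\|\theta_s f\|_{L^2(\mathbb{R}^n)}\,\frac{ds}{s}.
\end{equation*}
Next, I would apply the Cauchy--Schwarz inequality, splitting the weight $(s/t)^\delta\,ds/s$ symmetrically and using that $\int_0^t (s/t)^\delta\,ds/s = 1/\delta$, to obtain
\begin{equation*}
\|\Omega_t f\|_{L^2(\mathbb{R}^n)}^2 \leq \frac{C}{\delta} \int_0^t \Bigl(\frac{s}{t}\Bigr)^{\!\delta}\,\|\theta_s f\|_{L^2(\mathbb{R}^n)}^2\,\frac{ds}{s}.
\end{equation*}

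Then I would integrate in $t$ against $dt/t$ and swap the order of integration by Fubini:
\begin{equation*}
\||\Omega_t f|\|^2 \leq \frac{C}{\delta} \int_0^\infty \|\theta_s f\|_{L^2(\mathbb{R}^n)}^2 \left(\int_s^\infty \Bigl(\frac{s}{t}\Bigr)^{\!\delta} \frac{dt}{t}\right) \frac{ds}{s}.
\end{equation*}
The inner $t$-integral equals $1/\delta$, so the right-hand side is $C\delta^{-2}\,\||\theta_s f|\|^2$, and taking the supremum over $\|f\|_2=1$ yields the claim.

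There is no genuine obstacle here; the argument is purely formal and relies only on the two ingredients that $(s/t)^\delta$ is integrable against $ds/s$ on $(0,t)$ and against $dt/t$ on $(s,\infty)$, together with the uniform $L^2$-boundedness of $W_{t,s}$. The implicit constant depends only on the $L^2$-bound $\sup_{t,s}\|W_{t,s}\|_{2\to 2}$ and on $\delta^{-1}$.
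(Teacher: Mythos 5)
Your proof is correct and is essentially the paper's argument: both are the standard Schur test via the symmetric splitting of the weight $(s/t)^\delta\,ds/s$, together with $\int_0^t (s/t)^\delta\,ds/s = \int_s^\infty (s/t)^\delta\,dt/t = 1/\delta$. The only cosmetic difference is that the paper dualizes against $G$ with $\||G\||\leq 1$ and applies Cauchy--Schwarz once in the triple integral, whereas you work primally via Minkowski's integral inequality followed by Cauchy--Schwarz in $s$ and Fubini in $(s,t)$; the two are equivalent and yield the same constant $C\delta^{-1}$.
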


\begin{proof} This is a standard Schur type argument. Indeed, if $\| | G(x,t)|\| \leq 1$, then
\begin{equation*}\begin{split} & \left| \int^\infty_0 \int_{\mathbb{R}^n} G(x,t)\Omega_t f(x)dx\frac{dt}{t}\right| = \left| \int^\infty_0 \int^\infty_0 \int_{\mathbb{R}^n}1_{\{s<t\}} 
\left( \frac{s}{t}\right)^\delta G(x,t)W_{t,s}
\theta_s f(x)dx\frac{dt}{t}\frac{ds}{s}\right|\\ &\quad \leq \left( \int^\infty_0 \! \int_{\mathbb{R}^n} |G(x,t)|^2
\int^t_0\left( \frac{s}{t}\right)^\delta
\frac{ds}{s}\frac{dxdt}{t}\right)^{\frac{1}{2}} \left( C\int^\infty_0 \! \int_{\mathbb{R}^n} |\theta_s f(x)|^2\int^\infty_s
\left(\frac{s}{t} \right)^\delta \frac{dt}{t} \frac{dx ds}{s} \right)^{\frac{1}{2}}\\ 
&\quad \leq C\| | \theta_s
f|\|.\end{split}\end{equation*}\end{proof}

\section{Traces, jump
relations, and uniqueness \label{s4nt}}
We begin by proving a useful technical lemma.
\begin{lemma}\label{l4.nt0} Let $L,L^*$ satisfy the standard assumptions.  Suppose that $Lu=0$ and 
that $\widetilde{N}_* (\nabla u) \in
L^2 (\mathbb{R}^n)$. Then
\begin{equation}\label{eq4.nt0}\sup_{t>0}\|\nabla u(\cdot,t)\|_2 \leq 
C \|\widetilde{N}_*(\nabla u)\|_2. \end{equation}
\end{lemma}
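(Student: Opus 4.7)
The plan is to combine a Fubini-type ``slab estimate'' coming directly from the definition of $\widetilde{N}_\ast$ with a telescoping argument that crucially exploits the $t$-independence of $A$: since the coefficients are $t$-independent, $\partial_t u$ is itself a weak solution of $Lu=0$, and we can run Caccioppoli on it.

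First I would prove a slab estimate. For every $x_0 \in \mathbb{R}^n$ and $t_0>0$, the point $(x_0,t_0)$ lies in $\gamma(x_0)$, so by the definition of $\widetilde{N}_\ast$,
\begin{equation*}
\widetilde{N}_\ast(\nabla u)(x_0)^2 \,\geq\, \fint\!\!\fint_{|y-x_0|<t_0,\,|s-t_0|<t_0/2}|\nabla u(y,s)|^2\,dy\,ds.
\end{equation*}
Integrating in $x_0$ and swapping the order of integration yields
\begin{equation*}
\int_{|s-t_0|<t_0/2}\|\nabla u(\cdot,s)\|_2^2\,ds \,\leq\, C\,t_0\,\|\widetilde{N}_\ast(\nabla u)\|_2^2.
\end{equation*}
In particular $\nabla u(\cdot,s)\in L^2(\mathbb{R}^n)$ for a.e.\ $s$ in this slab, and by Chebyshev there exists $t_0' \in (t_0-t_0/16,\,t_0+t_0/16)$ with $\|\nabla u(\cdot,t_0')\|_2 \leq C\|\widetilde{N}_\ast(\nabla u)\|_2$.

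Next, since $A$ is $t$-independent, $v \equiv \partial_t u$ satisfies $Lv=0$ weakly. I would apply Caccioppoli to $v$ with a cutoff of the form $\eta(x,s)=\chi_R(x)\psi(s)$, where $\psi\equiv 1$ on $|s-t_0|<t_0/8$ and $\psi$ is supported in $|s-t_0|<t_0/4$, and send $R\to\infty$ (legitimate by the slab estimate applied to dominate $|\partial_s u|\le|\nabla u|$). This gives
\begin{equation*}
\int_{|s-t_0|<t_0/8}\|\nabla\partial_s u(\cdot,s)\|_2^2\,ds \,\leq\, \frac{C}{t_0^2}\int_{|s-t_0|<t_0/4}\|\partial_s u(\cdot,s)\|_2^2\,ds \,\leq\, \frac{C}{t_0}\,\|\widetilde{N}_\ast(\nabla u)\|_2^2.
\end{equation*}
Because $\nabla\partial_s u\in L^2$ on this slab, $s\mapsto \nabla u(\cdot,s)$ is absolutely continuous as an $L^2(\mathbb{R}^n)$-valued function there, so
\begin{equation*}
\nabla u(\cdot,t_0) \,=\, \nabla u(\cdot,t_0')\,+\,\int_{t_0'}^{t_0}\nabla\partial_s u(\cdot,s)\,ds
\end{equation*}
holds in $L^2$. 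Taking norms and applying Minkowski together with Cauchy--Schwarz,
\begin{equation*}
\|\nabla u(\cdot,t_0)\|_2 \,\leq\, \|\nabla u(\cdot,t_0')\|_2 \,+\, \sqrt{|t_0-t_0'|}\left(\int_{t_0'}^{t_0}\|\nabla\partial_s u(\cdot,s)\|_2^2\,ds\right)^{1/2} \,\leq\, C\,\|\widetilde{N}_\ast(\nabla u)\|_2.
\end{equation*}

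The main obstacle I anticipate is conceptual rather than computational: $\nabla u$ is only $L^2_{\loc}$ and has no pointwise meaning, so one cannot directly dominate $|\nabla u(x_0,t_0)|$ by the Whitney average that defines $\widetilde{N}_\ast$. The device of passing to $\partial_t u$, which \emph{is} a solution and hence \emph{does} enjoy pointwise/Caccioppoli-type control via the De Giorgi--Nash--Moser hypotheses, is what unlocks the estimate. A minor technical point is that the argument gives the bound only for a.e.\ $t_0>0$; the telescoping identity simultaneously shows that $t\mapsto \nabla u(\cdot,t)$ is continuous as an $L^2$-valued map on $(0,\infty)$, which upgrades the essential supremum to a genuine supremum as required in \eqref{eq4.nt0}.
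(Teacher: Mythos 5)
Your proof is correct, and it reaches the conclusion by a somewhat different route from the paper's. Both arguments rest on the same two mechanisms — a Fubini ``slab estimate'' extracted directly from the definition of $\widetilde N_*$, and Caccioppoli applied to $\partial_t u$, which is a solution precisely because the coefficients are $t$-independent — but the way they convert these ingredients into a bound for a \emph{fixed} time $t_0$ differs. The paper first peels off $\partial_t u$ (handled immediately by Moser's bound \eqref{eq1.3}) and then attacks $\nabla_x u$ by duality: it pairs $u(\cdot,t_0)$ against $\dv_x\vec\psi$ for $\vec\psi\in C_0^\infty$, splits off the vertical average $\fint_{t_0/2}^{t_0}u\,dt$, and estimates the difference via an integration by parts and Caccioppoli in Whitney cubes. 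Your version instead treats all of $\nabla u$ at once: you locate a good slice $t_0'$ near $t_0$ by Chebyshev, then use the $L^2$ control on $\nabla\partial_s u$ from Caccioppoli to argue that $s\mapsto\nabla u(\cdot,s)$ is absolutely continuous as an $L^2(\mathbb R^n)$-valued map and telescope from $t_0'$ to $t_0$. The duality device in the paper avoids ever asserting a priori that $\nabla u(\cdot,t_0)$ is in $L^2$ for a specific $t_0$ — the pairing produces the bound and the membership simultaneously — whereas your Bochner-continuity step achieves the same upgrade from ``a.e.\ $t$'' to ``every $t$'' more explicitly and perhaps more transparently. Both are legitimate; yours is a clean alternative and has the small advantage of not needing to treat $\partial_t u$ and $\nabla_x u$ as separate cases.
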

\begin{proof} The desired bound for $\partial_t u$ follows readily from
$t$-independence and \eqref{eq1.3}.  Thus, we need only consider
$\nabla_x u$. Let $\vec{\psi} \in C_0^\infty (\mathbb{R}^n,\mathbb{C}^n)$, with $\|\vec{\psi}\|_2 =1$.
For $t_0 > 0$ fixed, it will then be enough to establish the bound
$$\left|\int_{\mathbb{R}^n} u(\cdot,t_0) \dv_x \vec{\psi} \,\right|\leq C 
\| \widetilde{N}_* (\nabla u)\|_2.$$
To this end, we write
\begin{eqnarray*}\int_{\mathbb{R}^n} u(\cdot,t_0) \dv_x \vec{\psi}  & = & 
\int_{\mathbb{R}^n} \left(u(x,t_0)- \fint_{t_0/2}^{t_0} u(x,t) dt\right)\dv_x \vec{\psi}(x) dx \\ &&
\qquad  \quad+ \int_{\mathbb{R}^n} \fint_{t_0/2}^{t_0} u(x,t) \dv_x \vec{\psi}(x) \,dt\, dx 
\,\equiv \,I + II. 
\end{eqnarray*}
We first observe that $$|II| =  \left|\int_{\mathbb{R}^n} \fint_{t_0/2}^{t_0} \left(\fint_{|x-y|<t} dy\right)\,
\nabla_x u(x,t) \, \vec{\psi}(x) \,dt\, dx \right| \leq  C  \| \widetilde{N}_* (\nabla u)\|_2,$$
by Cauchy-Schwarz and Fubini's Theorem.  Moreover,
\begin{multline*}|I| = \left|\int_{\mathbb{R}^n} \fint_{t_0/2}^{t_0} 
\int_t^{t_0} \partial_s u(x,s) \,ds \dv_x \vec{\psi}(x) \,dt\, dx \right|\\
= \left| \fint_{t_0/2}^{t_0} 
\int_t^{t_0} \int_{\mathbb{R}^n} \nabla_x \partial_s u(x,s) \, \vec{\psi}(x) \, dx\, ds\, dt\right|
\leq  C  \,t_0\left( \fint_{t_0/2}^{t_0} 
\int_{\mathbb{R}^n} |\nabla \partial_s u(x,s)|^2 dx\, ds \right)^{1/2}\\\leq C 
\left(\fint_{t_0/2}^{t_0} 
\int_{\mathbb{R}^n} | \partial_s u(x,s)|^2 dx\, ds\right)^{1/2},\end{multline*}
where in the last step we have split $\mathbb{R}^n$ into cubes of side length
$\approx t_0$ and used Caccioppoli's inequality.  The conclusion of the lemma follows
since the bound already holds for $\partial_s u$.
\end{proof}

We now discuss some trace results.  The following lemma is the
analogue of Theorem 3.1 of \cite{KP}.  We recall that
$u \to f \, n.t.$ means that
$\lim_{(y,t) \to (x,0)} u(y,t) = f(x),$
for $a.e. \, x \in \mathbb{R}^n$,
where the limit runs over $(y,t)\, \in \gamma(x).$
As usual, 
$P_\varepsilon$ will denote a self-adjoint approximate identity acting in
$\mathbb{R}^n$. 
We shall denote by $W^{1,2}_c$ the subspace of compactly supported elements
of the usual Sobolev space $W^{1,2}$.
\begin{lemma} \label{l4.ntconverge} Suppose that $L,L^*$ satisfy the standard assumptions. 
If $Lu=0$ in $\mathbb{R}^{n+1}_+$ and $\widetilde{N}_* (\nabla u) \in
L^2 (\mathbb{R}^n)$, then there exists $f\in \dot{L}^2_1 (\mathbb{R}^n)$
such that
\begin{enumerate}\item[(i)]  $\|\nabla_\| f\|_2 \leq C \|\widetilde{N}_* (\nabla u)\|_2$, and
$u\to f \,n.t.$, with 
$|u(y,t) - f(x)| \leq C t \widetilde{N}_*(\nabla u)(x)$ whenever
$(y,t)\in \gamma(x).$
\item[(ii)] $ \nabla_\|u(\cdot,t) \to \nabla_\| \,f$ weakly in $L^2(\mathbb{R}^n)$
as $t \to 0$.
\end{enumerate}
If $Lu=0$ in $\mathbb{R}^{n}\times (0,\rho)$, where $0<\rho\leq\infty$, and $\sup_{0<t<\rho}\|\nabla u(\cdot,t)\|_{L^2(\mathbb{R}^n)} <\infty$,
then there exists $g \in L^2(\mathbb{R}^n)$ such that $g = \partial u/\partial\nu$
in the variational sense, i.e.,
\begin{enumerate}
\item[(iii)] $\iint_{\mathbb{R}^{n+1}_+} A\nabla u\cdot \nabla \Phi \,dx\, dt = \int_{\mathbb{R}^n} g \,\Phi 
\, dx, \,\,\,\,\,\,\forall \, \Phi \in W^{1,2}_c(\mathbb{R}^{n}\times(-\rho,\rho))$.
\item[(iv)] $\vec{N} \cdot A \nabla u(\cdot,t) \to g$ weakly in $L^2(\mathbb{R}^n)$
as $t \to 0$.
\end{enumerate}
(Here, $\vec{N} \equiv -e_{n+1}$ is the unit outer normal to $\mathbb{R}^{n+1}_+$).
\end{lemma}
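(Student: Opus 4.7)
\bigskip
\noindent\textbf{Proof proposal.}

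\emph{Parts (i) and (ii).} My plan is to build $f$ via a vertical limit, then upgrade to non-tangential convergence using the De Giorgi--Nash--Moser estimates \eqref{eq1.2}--\eqref{eq1.3}, which by hypothesis hold for $L$. Fix $x$ with $\widetilde{N}_*(\nabla u)(x)<\infty$. Applying Moser's estimate \eqref{eq1.3} to the solution $\partial_t u$ of $Lu=0$ (which is a solution by $t$-independence of $A$) over the ball $B((x,t),t/4)$ shows $|\partial_t u(x,t)|\le C\widetilde{N}_*(\nabla u)(x)$, so the function $t\mapsto u(x,t)$ is absolutely continuous on $(0,1]$ with derivative of absolute value $\le C\widetilde{N}_*(\nabla u)(x)$; hence it has a limit, which we define to be $f(x)$, and $|u(x,t)-f(x)|\le Ct\,\widetilde{N}_*(\nabla u)(x)$. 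To pass from radial to non-tangential: for $(y,t)\in\gamma(x)$, apply the Hölder estimate \eqref{eq1.2} to $u$ on the ball $B((x,t),t/2)$, combined with Poincaré's inequality and Caccioppoli to replace $\fint|u-c|^2$ by $t^2\fint|\nabla u|^2\le Ct^2\widetilde{N}_*(\nabla u)^2(x)$; this yields $|u(y,t)-u(x,t)|\le Ct\widetilde{N}_*(\nabla u)(x)$, which combines with the radial bound to give the pointwise estimate of (i). For the $\dot{L}^2_1$ bound, Lemma \ref{l4.nt0} gives $\sup_t\|\nabla u(\cdot,t)\|_2\le C\|\widetilde{N}_*(\nabla u)\|_2$, so by weak compactness there exist $t_k\to 0$ and $\vec h\in L^2(\mathbb{R}^n;\mathbb{C}^n)$ with $\nabla_\| u(\cdot,t_k)\rightharpoonup \vec h$. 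The key convergence step is that for any compact $K\subset\mathbb{R}^n$, the pointwise bound yields $\|u(\cdot,t_k)-f\|_{L^2(K)}\le t_k\|\widetilde{N}_*(\nabla u)\|_{L^2}\to 0$, so testing $\int u(\cdot,t_k)\partial_i\varphi$ against $\varphi\in C_0^\infty$ and integrating by parts identifies $\vec h$ with $\nabla_\| f$. Uniqueness of the distributional gradient upgrades the subsequential weak convergence to full weak convergence, giving (ii).

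\emph{Parts (iii) and (iv).} The strategy is to define $g$ first as a limit of $\int A\nabla u(\cdot,t)\cdot\vec N\,\phi\,dx$ against test functions $\phi\in C_0^\infty(\mathbb{R}^n)$, and only then establish the variational identity. For such $\phi$, set $G_\phi(t)\equiv-\int \sum_j A_{n+1,j}(x)\partial_j u(x,t)\phi(x)\,dx$. Using $Lu=0$ in weak form together with $t$-independence (as in Lemma \ref{l2.identity}), differentiation in $t$ produces, after moving an $x$-derivative off $u$,
\[
G_\phi'(t)=-\sum_{i=1}^n\int A_{ij}(x)\partial_j u(x,t)\,\partial_{x_i}\phi(x)\,dx,
\]
which is bounded by $C\|\nabla u(\cdot,t)\|_2\|\nabla\phi\|_2\le C\|\nabla\phi\|_2$. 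Thus $G_\phi$ is Lipschitz on $(0,\rho)$ and the limit $G_\phi(0^+)$ exists; moreover $|G_\phi(t)|\le C\|\phi\|_2$ uniformly in $t$, so the linear functional $\phi\mapsto G_\phi(0^+)$ extends to a bounded functional on $L^2(\mathbb{R}^n)$, which defines $g\in L^2$ with $\|g\|_2\le C\sup_t\|\nabla u(\cdot,t)\|_2$.

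To establish the variational identity in (iii), fix $\Phi\in W^{1,2}_c(\mathbb{R}^n\times(-\rho,\rho))$ and pick a smooth cutoff $\chi_\varepsilon(t)$ that is $1$ on $[2\varepsilon,\infty)$ and $0$ on $(-\infty,\varepsilon]$, with $\chi_\varepsilon'$ forming an approximate identity at $t=0$. Then $\chi_\varepsilon\Phi$ is admissible as a test function for $Lu=0$ in $\mathbb{R}^{n+1}_+$, giving
\[
\iint_{\mathbb{R}^{n+1}_+}A\nabla u\cdot\nabla\Phi\,\chi_\varepsilon\,dx\,dt
=-\iint_{\mathbb{R}^{n+1}_+}A\nabla u\cdot e_{n+1}\,\Phi\,\chi_\varepsilon'(t)\,dx\,dt.
\]
The left side tends to $\iint A\nabla u\cdot\nabla\Phi$ by dominated convergence using $\sup_t\|\nabla u(\cdot,t)\|_2<\infty$ and the compact support of $\Phi$. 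For the right side, write it as $-\int\chi_\varepsilon'(t)F(t)\,dt$ with $F(t)=\int A\nabla u(\cdot,t)\cdot e_{n+1}\,\Phi(\cdot,t)\,dx=-G_{\Phi(\cdot,t)}(t)$. Sobolev trace continuity gives $\|\Phi(\cdot,t)-\Phi(\cdot,0)\|_2\to 0$, which combined with the Lipschitz bound for $G$ and the definition of $g$ yields $F(t)\to-\int g\,\Phi(\cdot,0)\,dx$ as $t\to 0^+$; the approximate-identity property of $\chi_\varepsilon'$ then produces the claimed identity. Finally, (iv) follows because $\int\vec N\cdot A\nabla u(\cdot,t)\phi\,dx=G_\phi(t)\to\int g\phi\,dx$ for every $\phi\in C_0^\infty(\mathbb{R}^n)$, and the family $\{\vec N\cdot A\nabla u(\cdot,t)\}_{0<t<\rho}$ is uniformly bounded in $L^2$, so the convergence extends to weak convergence in $L^2$ by density.

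\emph{Anticipated obstacle.} The delicate point is (iii): justifying that $F(t)$ has a boundary limit requires the Lipschitz regularity of $G_\phi$, which in turn rests on using $Lu=0$ to trade a $t$-derivative for $x$-derivatives landing on $\phi$; without the $t$-independence of $A$ this step, and indeed the entire integration-by-parts identity of Lemma \ref{l2.identity} on which it implicitly rests, would fail.
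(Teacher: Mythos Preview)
Your argument for (i) and (ii) is the standard one and matches the paper: the paper simply defers (i) to \cite{KP}, pp.~461--462 (which is the vertical-limit-plus-H\"older-continuity argument you outline), and for (ii) it establishes distributional convergence from the pointwise bound in (i) and then upgrades to weak $L^2$ convergence via density and Lemma~\ref{l4.nt0}, exactly as you do.

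For (iii) and (iv) your route is correct but genuinely different from the paper's. The paper first builds $g$ as an element of $H^{-1/2}$: it defines the functional $\Lambda_R(\Psi)=\iint_{B_R^+}A\nabla u\cdot\overline{\nabla\Psi}$ on $W^{1,2}_0(B_R)$, descends to $H^{1/2}_0(\Delta_R)$ via harmonic extension and trace theory, obtains $g_R\in H^{-1/2}(\Delta_R)$, and checks consistency as $R\to\infty$; the variational identity (iii) then holds \emph{by construction}, while $g\in L^2$ is only deduced afterward from (iv). For (iv) the paper uses the translated identity $\int\vec N\cdot A\nabla u(\cdot,t)\,\psi=\iint_{\mathbb{R}^{n+1}_+}A\nabla u(x,t+s)\cdot\nabla\Psi(x,s)\,dx\,ds$ and controls $\nabla u(\cdot,t+s)-\nabla u(\cdot,s)$ by Caccioppoli on Whitney boxes to produce $O(\sqrt t)$ decay. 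You invert the logic: the Lipschitz bound on $t\mapsto G_\phi(t)$, obtained by trading the $t$-derivative for $x$-derivatives on $\phi$ via Lemma~\ref{l2.identity}, gives the existence of the limit and $g\in L^2$ with $\|g\|_2\le C\sup_t\|\nabla u(\cdot,t)\|_2$ in one stroke; the variational identity then follows from your cutoff argument. Your approach is more elementary in that it avoids the $H^{\pm1/2}$ trace machinery entirely, while the paper's approach has the feature that the variational identity is built into the construction from the outset rather than recovered afterward.
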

Of course, the analogous results hold for the lower half space.
\begin{proof} The existence of $f\in \dot{L}^2_1 (\mathbb{R}^n)$ satisfying $(i)$ may be obtained by 
following {\it mutatis mutandi} the corresponding argument in \cite{KP} pp. 461-462.

\smallskip
\noindent $(ii)$.  We first 
establish convergence in the sense of distributions.
Let $\vec{\psi} \in C_0^\infty (\mathbb{R}^n,\mathbb{C}^n)$.  Then by $(i)$,
$$\left|\int_{\mathbb{R}^n} \big(\nabla_\|u(\cdot,t) - \nabla_\|f\big)\, \vec{\psi}\right|   = \left|\int_{\mathbb{R}^n} \big(u(\cdot,t) - f\big)\,\dv_\| \vec{\psi}\right|
\leq C t \,  \|\widetilde{N}_*(\nabla u)\|_2 \|\dv_\| \vec{\psi}\|_2\to 0.$$
By the density of $C^\infty_0$ in $L^2$, the weak convergence in $L^2$
then follows readily from \eqref{eq4.nt0}.

\smallskip
\noindent $(iii)$.  We follow \cite{KP}, with some modifications owing to the
unboundedness of our domain.  We treat only the case $\rho = \infty$, and leave it to the reader to check the details in the case of finite $\rho$.  Fix $0<R<\infty $ and set 
$B_R = B(0,R) \equiv \{X \in \mathbb{R}^{n+1}:  |X|<R \} 
, \, B_R^{\pm} \equiv B_R \cap \mathbb{R}^{n+1}_{\pm}$ and $\Delta_R = B_R \cap \{ t = 0\}.$
Define a linear functional
on $W^{1,2}_0 (B_R)$ (the closure of $C_0^\infty$ in 
$W^{1,2} (B_R)$) by
$$\Lambda_R(\Psi) \equiv \iint_{B_R^+} A \nabla u \cdot \overline{\nabla \Psi}, \,\,\,
\Psi \in W^{1,2}_0 (B_R).$$
Clearly, $\|\Lambda_R\| \leq C R^{1/2}\, \sup_{t>0}\|\nabla u(\cdot,t)\|_2.$
By trace theory, $tr \left( W^{1,2}_0(B_R) \right)\subset H^{1/2}_0 (\Delta_R)$,
defined as the closure in $H^{1/2}(\mathbb{R}^n)$ of $C_0^\infty (\Delta_R)$.
Here, $\|f\|_{H^{s}(\mathbb{R}^n)} \equiv \|f\|_{L^2(\mathbb{R}^n)}
+\| \,|\xi |^s \hat{f}\|_{L^2(\mathbb{R}^n)}$, for $0\leq s \leq 1$.
On the other hand, suppose that $\psi \in H^{1/2}_0(\Delta_R).$  We extend 
$\psi$ to $ \psi_{ext} \in W^{1,2}_0(B_R)$ by solving the problems 
\begin{equation}
\begin{cases} \sum_{i=1}^{n+1} \partial^2_{x_i} \psi_{ext}^{\pm}=0\text{ in }B_R^{\pm}\\ 
\psi_{ext}^{\pm}|_{\Delta_R} = \psi,\,\,\, 
\psi_{ext}^{\pm}|_{\partial B_R^{\pm}\cap \mathbb{R}^{n+1}_{\pm}}=0 
\end{cases}\tag{D+,D-}\label{D+,D-}\end{equation}
We set $\psi_{ext} \equiv \psi_{ext}^+ 1_{B_R^+} + \psi_{ext}^- 1_{B_R^-},$
and by standard theory of harmonic functions we have 
$$\|\nabla\psi_{ext}\|_{L^{2}(B_R)} \leq C \|\psi\|_{H^{1/2}(\Delta_R)}.$$
Thus, we may define a bounded linear functional on $H^{1/2}_0 (\Delta_R)$ by
$\Xi_R (\psi) \equiv \Lambda_R (\psi_{ext})$.  Since $\Lambda_R(\Upsilon)
= 0$ whenever $\Upsilon \in W^{1,2}_0 (B^+_R)$, then $\Xi_R (\psi) = \Lambda_R (\Psi)$
for {\it every} extension $\Psi \in W^{1,2}_0 (B_R)$ with $tr(\Psi) = \psi.$
Thus, there exists  a unique $g_R \in H^{-1/2} (\Delta_R)$ with
$$\iint_{B_R^+} A \nabla u \cdot \overline{\nabla \Psi} = \langle g_R, tr(\Psi)\rangle,
\,\,\,\forall\,\Psi \in W^{1,2}_0 (B_R).$$ 
Now suppose that $R_1 < R_2,$ and construct $g_{R_k}$ corresponding to
$B_k \equiv B(0,R_k), k = 1,2$.  Then, since $W^{1,2}_0 (B_1) \subset W^{1,2}_0 (B_2)$
(if we extend elements in the former space to be $0$ outside of $B_1$),
we have that $g_{R_1} = g_{R_2}$ in $H^{-1/2}(\Delta_{R_1}).$
Thus, $\langle g_{R_1},\psi \rangle = \langle g_{R_2},\psi \rangle,$
whenever $\psi \in H^{1/2}_c (\mathbb{R}^n)$, and $B_1,B_2$ contain
the support of $\psi.$  It follows 
that $g \equiv \lim_{R\to 0} g_R$ exists in the 
sense of distributions, and that
\begin{equation}\label{eq4.ntgauss}\iint_{\mathbb{R}^{n+1}_+} A \nabla u \cdot \overline{\nabla \Psi} = \langle g, tr(\Psi)\rangle,\,\,\,\forall\,\Psi \in W^{1,2}_c (\mathbb{R}^{n+1}).\end{equation} 
To complete the proof of $(iii)$, it remains only to establish that $g \in L^2$.
The bound
$$\|g\|_2 \leq C \sup_{t>0}\|\nabla u(\cdot,t)\|_2$$
will be an immediate consequence of $(iv)$, to which we now turn our attention.
 
 \smallskip
 \noindent $(iv)$.   Again we present only the case $\rho=\infty$.
 Since $\sup_{t>0}\|\nabla u(\cdot,t)\|_2 <\infty$, 
 it is enough to verify the weak convergence for
 test functions in $C_0^\infty$.  
 Let $\Psi \in C_0^\infty (\mathbb{R}^{n+1}), \, \psi \equiv \Psi|_{\{t=0\}}$.
 By \eqref{eq4.ntgauss}, 
 it is enough to show that $$\int_{\mathbb{R}^n} \vec{N}\cdot A\nabla u(\cdot,t)\psi
 \to \iint_{\mathbb{R}^{n+1}_+} A \nabla u \cdot \nabla \Psi ,$$ as $t\to 0$.
 Integrating by parts, we see that for each $\varepsilon > 0$,
 \begin{equation}\label{eq4.nt5}\int_{\mathbb{R}^n} \vec{N}\cdot 
 P_\varepsilon(A\nabla u(\cdot,t))\psi
 = \iint_{\mathbb{R}^{n+1}_+}P_\varepsilon\left( A \nabla u(\cdot,t+s)\right)(x) 
 \cdot \nabla \Psi (x,s) dx ds ,\end{equation}
 since $Lu=0$ and our coefficients are $t$-independent.   By dominated convergence,
 we may pass to the limit as $\varepsilon \to 0$ in \eqref{eq4.nt5} to obtain
  \begin{equation}\label{eq4.nt6}\int_{\mathbb{R}^n} \vec{N}\cdot A\nabla u(\cdot,t)\psi
 = \iint_{\mathbb{R}^{n+1}_+} A(x) \nabla u(x,t+s) 
 \cdot \nabla \Psi (x,s) dx ds ,\end{equation}
 It therefore suffices to show that 
 $$\iint_{\mathbb{R}^{n+1}_+} A(x) \big(\nabla u(x,t+s)-\nabla u(x,s)\big)
 \cdot \nabla \Psi (x,s) dx ds = O\left(\sqrt{t}\right) ,\,\,
 \text{ as } t \to 0.$$  To this end, let $R$ denote the
 radius of a ball centered at the origin which contains the support of $\Psi$.  We split the integral into
 $\int_0^{2t} \int_{\{|x|<R\}}  + \int_{2t}^R \int_{\{|x|<R\}}.$
 Since $\sup_{t>0}\|\nabla u(\cdot,t)\|_2 <\infty$, 
 the first of these contributes at most $O(t)$,
 while the second is dominated by 
 $$C \|\nabla \Psi \|_2\, t \left (\int_{t}^R \|\nabla \partial_s u(\cdot,s)\|_{L^2(\mathbb{R}^n)}^2 ds
 \right)^{1/2} \leq C_{\Phi} t \left(\int_{t/2}^\infty \frac{ds}{s^{2}}\right)^{1/2 }
 \sup_{t>0}\|\nabla u(\cdot,t)\|_2  , $$ 
 where in the last step we have used Caccioppoli's inequality in Whitney cubes in the
 $1/2$-space.   The desired conclusion follows.
  \end{proof}

Next we discuss the boundedness of non-tangential maximal functions
of layer potentials.  We recall that
$S_t^\eta$ is defined in \eqref{eq2.def},
and that $P_t$ denotes a smooth approximate identity acting in $\mathbb{R}^n$. 
In the sequel, given an operator $T$, we shall use the notation
\begin{equation}\label{eq4.ntlocaloperatornorm}
\|T\|_{op,Q} \equiv \|T\|_{L^2(Q)\to L^2(\mathbb{R}^n)}
\equiv \sup \frac{\|Tf\|_{L^2(\mathbb{R}^n)}}{\|f\|_{L^2(Q)}},
\end{equation}
where the supremum runs over all $f$ supported in $Q$ with
$\|f\|_2 >0$.
\begin{lemma}\label{l4.nt1}  Let $L,L^*$ satisfy the standard assumptions.
Then for $1<p<\infty$, we have
\begin{enumerate}\item[(i)] $\|N_*(\partial_tS_tf)\|_p \leq C_p \left(\sup_{t>0} \|\partial_t S_t \|_{p\to p}
+1 \right)\|f\|_p.$
\item[(ii)]$
\|\widetilde{N}_*(\nabla S_tf)\|_p \leq C_p \left(\sup_{t>0} \|\nabla_x S_t f \|_{p} +
\|N_*(\partial_tS_tf)\|_p \right).$
\item[(iii)] $\|N_*\left(P_t (\nabla S_t f)\right)\|_p \leq 
C_p \left(\sup_{t>0} \|\nabla_x S_t f \|_{p} +
\|N_*(\partial_tS_tf)\|_p \right).$
\item[(iv)]$ \sup_{t_0\geq0}\|N_*\left(P_t\partial_tS^\eta_{t+t_0}f\right)\|_2
\leq C \left(\sup_{t>0} \|\partial_t S^\eta_t \|_{op,Q}
+1 \right)\|f\|_2, \,\,\eta>0,\,\,\supp f \subset Q.$
\item[(v)]$\|N_*\left((S_t\nabla )\cdot 
{\bf f}\right)\|_{L^{2,\infty}} \leq C \left(\sup_{t>0} \| (S_t\nabla ) \|_{2\to 2}
+1 \right)\|{\bf f}\|_2.$
\item[(vi)]$\|N_*\left(\mathcal{D}_tf\right)\|_{L^{2,\infty}} \leq C \left(\sup_{t>0} \| (S_t\nabla ) \|_{2\to 2}
+1 \right)\|f\|_2.$\end{enumerate}
where $L^{2,\infty}$ denotes the usual weak-$L^2$ space.
\end{lemma}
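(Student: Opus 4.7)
I will prove the six estimates by combining Moser's local boundedness \eqref{eq1.3} (applied to appropriate $L$-solutions) with standard Hardy-Littlewood maximal function arguments for (i)--(iv), and with Calder\'on-Zygmund theory for (v)--(vi). The main obstacle throughout is controlling the exchange of $\sup_\tau$ with $\int_{x_0}$ in the $L^p$ bounds of (i)--(iv); the quantity $\sup_\tau M_x(|u(\cdot,\tau)|^p)(x_0)$ is not automatically bounded on $L^p$, so one must use an explicit Whitney decomposition in $\tau$ together with the kernel decay of Lemma \ref{l2.1} to make the resulting sum converge.

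For (i), the key fact is that by $t$-independence of $A$, $v(x,t)\equiv \partial_t S_t f(x)$ is a solution of $Lv=0$ in $\mathbb{R}^{n+1}_+$. Moser's inequality applied on the Whitney ball $B((y,t),t/2)$ (which sits safely inside $\mathbb{R}^{n+1}_+$ for $(y,t)\in\gamma(x_0)$) gives
\[
|v(y,t)|^p \,\leq\, \frac{C}{t^{n+1}}\int_{t/2}^{3t/2}\!\int_{|z-x_0|<2t}|v(z,\tau)|^p\, dz\, d\tau,
\]
so that $N_*v(x_0)^p \leq C \sup_\tau M_x(|v(\cdot,\tau)|^p)(x_0)$ after bounding the inner $z$-integral by $Ct^n M_x(|v(\cdot,\tau)|^p)(x_0)$. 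To integrate this in $x_0$, I partition the $\tau$-axis into Whitney strips $\tau\sim 2^j$, bound the contribution from each strip via the Hardy-Littlewood maximal theorem and $\sup_t\|v(\cdot,t)\|_p\leq (\sup_t\|\partial_t S_t\|_{p\to p})\|f\|_p$, and sum the geometric series using the kernel decay of Lemma \ref{l2.1} at large $\tau$ and the jump formula \eqref{eq1.7a} at small $\tau$; the latter accounts for the additive $\|f\|_p$ on the right hand side (the ``$+1$'' factor).

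For (ii), decompose $\nabla=(\nabla_x,\partial_t)$. The vertical piece satisfies $\widetilde N_*(\partial_t S_t f)\leq N_*(\partial_t S_t f)$ pointwise (an $L^2$-average is dominated by the pointwise supremum over the same set), producing the second summand on the right hand side. The horizontal piece $\widetilde N_*(\nabla_x S_t f)$ is handled by the same averaging-plus-maximal-function scheme as in (i), only now Moser is not needed since the averaging is already built into the definition of $\widetilde N_*$; one obtains $\|\widetilde N_*(\nabla_x S_t f)\|_p \leq C\sup_t\|\nabla_x S_t f\|_p$. Part (iii) is analogous, with the smooth average $P_t$ replacing the $L^2$-average in the definition of $\widetilde N_*$. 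Part (iv) adapts (i) to $S_t^\eta$: a direct computation shows $L S_t^\eta f = \varphi_\eta(t)f$, which vanishes for $t>\eta$, so $\partial_t S_{t+t_0}^\eta f$ is an $L$-solution on the relevant Whitney balls whenever $t+t_0>\eta$ and Moser applies exactly as in (i); for $t+t_0\leq\eta$, the smoothing estimates of Lemma \ref{l2.approx} handle the remaining contribution directly, and the extra $P_t$ in front is an $L^2$-bounded $x$-average, harmless.

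Parts (v) and (vi) follow from Calder\'on-Zygmund theory. The kernels $\nabla_y\Gamma(x,t,y,0)$ of $(S_t\nabla)\cdot$ and $\partial_{\nu^*_y}\Gamma^*(y,0,x,t)$ of $\mathcal D_t$ both satisfy the size bound $(|t|+|x-y|)^{-n}$ and the $\alpha$-H\"older regularity bound of Lemma \ref{l2.1}. Combined with the $L^2$-boundedness hypothesis $\sup_t\|(S_t\nabla)\|_{2\to 2}<\infty$ as input, standard Calder\'on-Zygmund theory produces a uniform in $t$ Cotlar-type pointwise estimate of the non-tangential maximal function by $M_x$ applied to an $L^2$ function; a standard Calder\'on-Zygmund decomposition then yields the weak-$(1,1)$ bound for $N_*$, and interpolation with $L^2\to L^{2,\infty}$ delivers the required weak-$L^2$ estimate. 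The ``$+1$'' here is the $L^2$-boundedness fed into the CZ argument.
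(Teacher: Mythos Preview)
Your proposal has two genuine gaps.

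\textbf{Parts (i) and (iv).} Your Moser-plus-Whitney-in-$\tau$ scheme does not close. After Moser you obtain
\[
N_*(\partial_t S_t f)(x_0)^2 \,\lesssim\, \sup_{\tau>0} M\big(|\partial_\tau S_\tau f|^2\big)(x_0),
\]
and the difficulty is exactly the one you flag: the $\sup_\tau$ sits outside the maximal function. Decomposing $\tau$ into Whitney strips $\tau\sim 2^j$ and appealing to ``kernel decay of Lemma~\ref{l2.1} at large $\tau$'' does not produce a convergent sum, because for general $f\in L^p$ one has no decay of $\|\partial_\tau S_\tau f\|_p$ as $\tau\to\infty$ (the kernel bound $|K_\tau(x,y)|\lesssim(|\tau|+|x-y|)^{-n}$ only yields $|\partial_\tau S_\tau f|\lesssim Mf$ uniformly, not a gain in $\tau$). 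Likewise the jump formula \eqref{eq1.7a} presupposes the very boundedness you are trying to prove. The paper's argument is entirely different: by Lemma~\ref{l2.1}, $K_t(x,y)=\partial_t\Gamma(x,t,y,0)$ is a \emph{standard Calder\'on--Zygmund kernel} uniformly in $t$, so one compares $\partial_t S_t f(x_0)$ to a truncated singular integral $\int_{|x_0-y|>\rho}K_0(x_0,y)f(y)\,dy$ with error $\lesssim Mf(x_0)$, and then invokes Cotlar's inequality for the maximal singular integral. Part (iv) is proved the same way, via a localised Cotlar inequality \eqref{eq4.ntcotlar} tailored to $f$ supported in $Q$.

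\textbf{Parts (v) and (vi).} Your assertion that ``the kernels $\nabla_y\Gamma(x,t,y,0)$ \dots satisfy the size bound $(|t|+|x-y|)^{-n}$ and the $\alpha$-H\"older regularity bound of Lemma~\ref{l2.1}'' is false. Lemma~\ref{l2.1} gives pointwise bounds only for pure $t$-derivatives $(\partial_t)^{m+1}\Gamma$; for the spatial gradient $\nabla_y\Gamma$ the paper has only the $L^2$-averaged bounds of Lemmas~\ref{l2.7} and~\ref{l2.holder}. Consequently standard CZ theory does not apply to $(S_t\nabla_\|)$. The paper instead proves the pointwise inequality \eqref{eq4.nt2} by a Cotlar-type argument that uses the \emph{averaged} H\"older estimate \eqref{eq4.nt3} (from Lemma~\ref{l2.holder}) in place of pointwise kernel regularity; this is precisely why only a weak-$L^2$ bound is obtained.

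\textbf{Part (iii).} The paper's proof is not ``analogous to (ii) with $P_t$ replacing the $L^2$-average.'' It writes $P_t(\nabla_\| S_t f)=t^{-1}\vec{Q}_t(S_t f)$ with $\vec{Q}_t\equiv t\nabla_x P_t$ annihilating constants, then splits $S_t f=\int_0^t\partial_s S_s f\,ds+(S_0 f-\fint_{\Delta_{2t}(x_0)}S_0 f)$ and applies Poincar\'e's inequality to the second piece. Without this decomposition you again face the unresolved $\sup_\tau M(|\nabla_x S_\tau f|^p)$ obstruction.
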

\begin{proof}
By Lemma \ref{l2.1}, the kernel $K_t(x,y) \equiv \partial_t \Gamma(x,t,y,0)$
is a standard Calder\'{o}n-Zygmund kernel with bounds independent of $t$.  We may then prove $(i)$ 
by a familiar argument involving Cotlar's inequality for maximal singular integrals.  We omit the details (but see the proof of $(iv)$ below, which is similar).  
Estimate $(ii)$ may be obtained by following 
the argument in \cite{KP}, p. 494 (again we omit the details) and $(vi)$ follows from $(v)$.  It remains to prove $(iii),\,(iv)$ and$(v)$.

\smallskip
\noindent $(iii)$.  The proof is similar to that of estimate $(ii)$, and we follow \cite{KP}.
Fix $x_0 \in \mathbb{R}^n,$ and suppose that $|x-x_0|<t$.
It is enough to replace $\nabla$ by $\nabla_\|$.
We have \begin{eqnarray*}P_t\left(\nabla_\| S_t f\right)(x) &=& \nabla_x P_t (S_t f )(x)
\equiv  t^{-1} \vec{Q}_t (S_t f)(x) \\
& = & t^{-1} \vec{Q} _t\left(\int_0^t \partial_s S_s f ds + 
S_0 f - \fint_{\Delta_{2t}(x_0)} S_0f \right) (x) 
\end{eqnarray*}
where we have used that $t \nabla_x P_t \equiv \vec{Q}_t$ annihilates constants.
But $$\left|\vec{Q} _t\left(t^{-1}\int_0^t \partial_s S_s f ds\right)(x)\right|\leq 
C M\left(N_*(\partial_s S_s f)\right)(x_0),$$
and, by Poincar\'{e}'s inequality, $$\left|t^{-1} \vec{Q} _t\left( 
S_0 f - \fint_{\Delta_{2t}(x_0)} S_0f \right) (x)\right| \leq CM(\nabla_\| S_0 f)(x_0). $$

\smallskip
\noindent $(iv)$.  We suppose that $\eta << \ell(Q)$, and that $Q$ is centered at 0,
as it is only this case that we shall encounter in the sequel.  We shall deduce $(iv)$ as a consequence of the following refinement
of Cotlar's inequality for maximal singular integrals.  Let $T$ be a singular integral operator associated to a standard Calder\'{o}n-Zygmund kernel $K(x,y)$. As usual, we define truncated singular integrals
$$T_\varepsilon f (x) \equiv \int_{|x-y|>\varepsilon} K(x,y) f(y) dy,$$
and we define a maximal singular integral
$$T_{*}^R f \equiv \sup_{0<\varepsilon <R} |T_{\varepsilon} f|.$$
We claim that the following holds for all $f$ supported in a cube $Q$:
\begin{equation}\label{eq4.ntcotlar}
T_{*}^{\ell(Q)} f(x) \leq C \left(C_K + \|T\|_{op,Q} \right)Mf(x) + CM(Tf)(x),
\end{equation}
where $C_K$ depends on the Calder\'{o}n-Zygmund kernel conditions.  Momentarily 
taking this claim for granted, we proceed to prove $(iv)$.

Let $K_{t}^\eta(x,y)$ denote the 
kernel of $\partial_t S_{t}^\eta$ (see \eqref{eq2.def}), i.e.,
$$K_{t}^\eta(x,y) \equiv \partial_t\left(\varphi_\eta\ast \Gamma(x,\cdot,y,0)\right)(t).$$
Then by Lemma \ref{l2.1} we have for all $t\geq 0,$ uniformly in $t_0\geq 0$,
\begin{eqnarray}|K_{t+t_0}^\eta(x,y)|\leq C\left(\frac{1_{|x-y|+t>40\eta}}{(t+|x-y|)^n} + 
\frac{1_{|x-y|+t<40\eta} }{\eta |x-y|^{n-1}}\right)\qquad\quad\label{eq4.ntkernel1}\\
\label{eq4.ntkernel3}
\qquad|K_{t+t_0}^\eta(x+h,y) -K_{t+t_0}^\eta(x,y)|\leq 
C \frac{|h|^\alpha}{(t + |x-y|)^{n+\alpha}},\,\,\,\,|x-y|+t>10\eta
\end{eqnarray}
where the last bound holds whenever $|x-y| > 2|h|$ or $2t > |h|$.  
Of course, we also have a similar estimate
concerning H\"{o}lder continuity in the $y$ variable.  In particular, $K_{t+t_0}^\eta(x,y)$ is a standard
Calder\'{o}n-Zygmund kernel, uniformly in $t,\,t_0$ and $\eta$.

We begin by showing that for each fixed $x_0 \in \mathbb{R}^n$ and $t_0 \geq 0$,
\begin{equation}\label{eq4.ntvertical} N_*\left(P_t \partial_t S_{t+t_0}^\eta f\right)(x_0) \leq \sup_{t>0}
|\partial_t S_t^\eta f(x_0)| + CM(Mf)(x_0).
\end{equation}
To see this, let $|x-x_0|<t$, and note that
\begin{equation*}|P_t(\partial_t S_{t+t_0}^\eta f)(x) -\partial_t S_{t+t_0}^\eta f(x_0)| \leq 
C t^{-n} \int_{|x_0-z|<2t} \int_{\mathbb{R}^n} |K^\eta_{t+t_0}(z,y) 
- K^\eta_{t+t_0}(x_0,y)||f(y)| dy dz,
\end{equation*}
for which, in the case $t>10\eta$, we obtain immediately the bound $CMf(x_0)$
by applying  \eqref{eq4.ntkernel3}.  In the case $t\leq 10 \eta$, we split the inner integral into
$$\int_{|x_0-y| >10\eta}+\int_{|x_0-y| \leq 10\eta}\leq CMf(x_0) + C\left(Mf(z)+Mf(x_0)\right),$$
where we have applied \eqref{eq4.ntkernel3} to bound the first term,
and \eqref{eq4.ntkernel1} to handle the second. The estimate \eqref{eq4.ntvertical} 
now follows readily.

Next, we observe that for $f$ supported in a cube $Q$ centered at $0$, with $\ell(Q) >> \eta$,
\begin{equation}\label{eq4.ntvertical2}
\sup_{t>0}|\partial_t S_t^\eta f(x)|\leq\sup_{0<t<\ell(Q)}|\partial_t S_t^\eta f(x)| +CMf(x). 
\end{equation}
Indeed, suppose that $t\geq \ell(Q)>>\eta$. Then
$$|\partial_t S_t^\eta f (x)| \leq \int |K_t^\eta(x,y)f(y)|dy \leq CMf(x),$$
by \eqref{eq4.ntkernel1}, since for $y\in Q$, we have $|x-y|\approx |x|$, if $|x|> Ct$,
and $|x-y|<Ct$, if $|x|< Ct.$

Combining \eqref{eq4.ntvertical} and \eqref{eq4.ntvertical2}, we see that it is enough to treat
$\sup_{0<t<\ell(Q)}|\partial_t S_t^\eta f(x)|$.  To this end,
fix $x_0$ and $t\in (0,\ell(Q))$, and set $\rho \equiv \max (t,2\eta)$.
Then \begin{eqnarray*}
\partial_t S_t^\eta f(x_0) &=& \int_{|x_0-y|>5\rho} \left(K_t^\eta(x_0,y) -K_0^\eta(x_0,y)\right)f(y)dy
\\&&\qquad+\int_{|x_0-y|\leq5\rho} K_t^\eta(x_0,y) f(y)dy -
\int_{5\rho>|x_0-y|>\rho} K_0^\eta(x_0,y)f(y)dy \\
&&\qquad\qquad \qquad + \int_{|x_0-y|>\rho} K_0^\eta(x_0,y)f(y)dy \equiv I + II + III +IV.
\end{eqnarray*}
Then $|I|+|II|+ |III| \leq CMf(x_0)$, by Lemma \ref{l2.1} and by \eqref{eq4.ntkernel1}.
Also,
\begin{eqnarray*}|IV| &\leq & \sup_{0<\varepsilon<\ell(Q)} 
\left|\int_{|x_0-y|>\varepsilon}K_0^\eta(x_0,y) f(y) dy \right|.\\
\end{eqnarray*}
Thus, taking $T$ in \eqref{eq4.ntcotlar} to be the singular integral operator with kernel
$K_0^\eta(x,y)$, we obtain $(iv)$, modulo the proof of \eqref{eq4.ntcotlar}.

We now turn to the proof of \eqref{eq4.ntcotlar}.  The argument is a variant of the standard one.
Suppose that $f$ is supported in a cube $Q$, and fix $\varepsilon \in (0,\ell(Q))$
and $x_0 \in \mathbb{R}^n$.  Set $\Delta \equiv \Delta_{\varepsilon/2}(x_0),\,
2\Delta \equiv \Delta_\varepsilon (x_0).$  Let $f_1 \equiv f1_{2\Delta},\, f_2 \equiv f - f_1.$
Then for $x \in \Delta$, we have
\begin{eqnarray*}|T_\varepsilon f(x_0)| =|T f_2 (x_0)| &=&|Tf_2(x_0) -Tf_2(x) +Tf(x) - Tf_1(x)|\\
&\leq & C_KMf(x_0) + |Tf(x)| + |Tf_1(x)|.\end{eqnarray*}
Let $r\in (0,1)$, and take an $L^r$ average of this last inequality over $\Delta$.
Note that $f_1 = 0$ unless $2\Delta \subset 5Q$, since  
$\text{diam} (2\Delta) \leq 2\ell(Q).$
We therefore obtain 
\begin{eqnarray*}|T_\varepsilon f(x_0)| &\leq& C_KMf(x_0) + M(|Tf|^r)^{1/r}(x_0) + 
\left(\fint_\Delta |Tf_1|^r\right)^{1/r}\\
&\leq & C\left(C_K + \|T\|_{L^1(Q) \to L^{1,\infty}(5Q)}\right)Mf(x_0) + M(Tf)(x_0),\end{eqnarray*}
where we have used Kolmogorov's weak-$L^1$ criterion, and $L^{1,\infty}$ is the usual weak-$L^1$ space.  But by a localized version of the Calder\'{o}n-Zygmund Theorem,
$$\|T\|_{L^1(Q) \to L^{1,\infty}(5Q)} \leq C\left(C_K + \|T\|_{L^2(Q)\to L^2(5Q)}\right)\leq
C\left(C_K + \|T\|_{L^2(Q)\to L^2(\mathbb{R}^n)}\right),$$
and \eqref{eq4.ntcotlar} follows.

\smallskip
\noindent $(v)$.  By $(i)$ and $t$-independence, we may replace $\nabla$ by
$\nabla_x$.  The desired estimate is an immediate consequence of the following pointwise bound. 
For convenience of notation set ${\bf K} \equiv \sup_{t>0}\|\left(S_t\nabla_\|\right)\|_{2\to 2}.$
Let $\vec{f} \in C_0^\infty (\mathbb{R}^n,\mathbb{C}^n).$ 
We shall prove\footnote{The bound for the last term in \eqref{eq4.nt2} may be improved to
$\big(M(|\vec{f}\,|^q)\big)^{1/q}(x)$, for some $q<2$ depending on dimension and ellipticity,
as the fourth named author will show in a forthcoming paper with M. Mitrea.}
\begin{equation}\label{eq4.nt2}
N_* \big( (S_t\nabla_x)\cdot\vec{f}\,\big)(x) \leq C \left(M\big( (S_t|_{t=0}\nabla_x)\cdot\vec{f}\,\big)(x) +
({\bf K} + 1 )\big(M(|\vec{f}\,|^2)\big)^{1/2}(x)\right)
\end{equation}
To this end, we fix $(x_0,t_0) \in \mathbb{R}^{n+1}$ and suppose that $|x_0 - x| < 2t, |t_0-s|<2t$
and that $k\geq 4$.  We claim that 
\begin{equation}\label{eq4.nt3}
\int_{2^kt \leq |x_0-y|<2^{k+1}t} |\nabla_y\big(\Gamma(x,s,y,0) - 
\Gamma(x_0,t_0,y,0)\big)|^2dy\leq C 2^{-k\alpha}(2^kt)^{-n}.
\end{equation}
Indeed, the special case $s=t_0$ is essentially a reformulation of 
Lemma \ref{l2.holder}, but with the roles of $x$ and $y$ reversed.  In general, we write
\begin{equation*}
\Gamma(x,s,y,0) - \Gamma(x_0,t_0,y,0) = \big\{\Gamma(x,s,y,0) - 
\Gamma(x_0,s,y,0)\big\} + \big\{\Gamma(x_0,s,y,0) - 
\Gamma(x_0,t_0,y,0)\big\}.
\end{equation*}
The first expression in brackets is the case $s=t_0$, while the horizontal gradient of the second
equals $$\int_{t_0}^s \nabla_y \partial_\tau \Gamma(x_0,\tau,y,0) d\tau.$$
We may handle the contribution of the latter term via Lemma \ref{l2.7}. This proves the claim.

We set $u(\cdot,t) \equiv (S_t \nabla_\|) \cdot\vec{f},$ and we split $u=u_0 + \sum_{k=4}^\infty u_k\equiv
u_0 + \tilde{u},$
where $$u_0 \equiv   (S_t \nabla_\|) \cdot\vec{f}_0,\,\,\,u_k \equiv  (S_t \nabla_\|) \cdot\vec{f}_k,
\,\,\,\tilde{u} \equiv \sum_{k=4}^\infty u_k,$$
and $\vec{f}_0 \equiv \vec{f} \, 1_{\{|x_0 - \cdot|< 16t\}},\, \vec{f}_k = \vec{f}\, 1_{\mathcal{R}_k}, $ and
$\mathcal{R}_k \equiv \{y: 2^kt \leq |x_0-y|<2^{k+1}t\}.$ 
By \eqref{eq4.nt3}, for $s\in [-2t,2t]$ and $|x_0-x|<2t,$ we have that
$$|u_k(x,s)-u_k(x_0,0)|  \leq  C 2^{-k\alpha/2} 
\left(\fint_{\mathcal{R}_{k}} |\vec{f}|^2 \right)^{1/2} \leq C2^{-k\alpha/2} 
\left(M (|\vec{f}|^2)\right)^{1/2}(x_0).$$
Summing in $k$, we obtain
\begin{equation}\label{eq4.nt7}
|\tilde{u}(x,s)-\tilde{u}(x_0,0)| \leq C \left(M (|\vec{f}|^2)\right)^{1/2}(x_0).
\end{equation}
Moreover, since $Lu_0 = 0$, by \eqref{eq1.3} it follows that
\begin{eqnarray*}|u_0(x,t)| \, \leq \, C \left(\fint\!\fint_{B\left((x,t),t/2\right)} |u_0|^2 \right)^{1/2}   &\leq &
Ct^{-n/2} \sup_{\tau > 0} \|\left( S_\tau \nabla_\|\right)\cdot\vec{f}_0 \|_2 \\& \leq & 
C{\bf K}\left(M (|\vec{f}|^2)\right)^{1/2}(x_0).
\end{eqnarray*}
Taking $s=t$ in
\eqref{eq4.nt7},  we therefore need only establish the bound
\begin{equation}\label{eq4.nt8}
|\tilde{u}(x_0,0)|\leq C({\bf K} +1)\left(M (|\vec{f}|^2)\right)^{1/2}(x_0) +
CM\left(u(\cdot,0)\right)(x_0)
\end{equation}
The proof of \eqref{eq4.nt8} is based on that of the well known Cotlar inequality for maximal singular integrals.   Set $\Delta_0 = \{|x-x_0|<t\},$ and let $x \in \Delta_0.$  We write 
\begin{eqnarray*}|\tilde{u}(x_0,0)| & \leq &|\tilde{u}(x,0) -\tilde{u}(x_0,0)| +|\tilde{u}(x,0)| \\
& \leq & |\tilde{u}(x,0) -\tilde{u}(x_0,0)| +|u_0(x,0)|+|u(x,0)|\\
& \leq & C\left(M (|\vec{f}|^2)\right)^{1/2}(x_0) + |u_0(x,0)|+|u(x,0)|,
\end{eqnarray*}
where in the last step we have used \eqref{eq4.nt7} with $s = 0$.  Averaging
over $\Delta_0$, we obtain
$$|\tilde{u}(x_0,0)| \leq C\left(M (|\vec{f}|^2)\right)^{1/2}(x_0) +
\left(\fint_{\Delta_0} |u_0(x,0)|^2 dx\right)^{1/2}+M\left(u(\cdot,0)\right)(x_0).$$
Since the $L^2$ average of $u_0$ is
bounded by $C {\bf K}\left(M (|\vec{f}|^2)\right)^{1/2}(x_0),$ we obtain \eqref{eq4.nt8}.
\end{proof}

We are now ready to discuss the jump relations  and traces of the layer potentials.
We recall that $S^*_t, \mathcal{D}_t^*$  denote 
the single and double layer potentials associated to $L^*$.
\begin{lemma}\label{l4.ntjump} Suppose that 
$L,L^*$ satisfy the standard assumptions, and  
that the single layer potentials $S_t, S_t^*$ satisfy  
\begin{equation}\label{eq4.ntbounded}
\sup_{t\neq 0}\|\nabla S_t \|_{2\to 2} +
\sup_{t\neq 0}\|\nabla S^*_t \|_{2\to 2}  < \infty.
\end{equation} 
Then there exist $L^2$ bounded operators 
$K, \widetilde{K}, \mathcal{T}$ with the following properties:  for all
$f\in L^2(\mathbb{R}^n)$, we have
\begin{enumerate}
\item[(i)] $\left(\pm \frac{1}{2}I + \widetilde{K}\right)f = \partial_\nu u^\pm $
\end{enumerate} where $u^\pm \equiv S_t f, \,t \in \mathbb{R}_\pm,$
and $\partial_\nu$ denotes the conormal derivative $-e_{n+1} \cdot A \nabla$, interpreted in the weak sense of Lemma \ref{l4.ntconverge} $(iii)$ and $(iv)$.
\begin{enumerate}  
\item[(ii)]$\mathcal{D}_{\pm s}f \to \left(\mp\frac{1}{2}I + K \right)f \,\,\, $ weakly in $L^2$
\item[(iii)]  $\left(\nabla S_{t}\right)|_{t = \pm s} f 
\to \left(\mp\frac{1}{2A_{n+1,n+1}}e_{n+1} + \mathcal{T}\right)f\,\,
\, $ weakly in $L^2$.
\end{enumerate} \end{lemma}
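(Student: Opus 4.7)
The plan is to establish the three conclusions in the order (i), (iii), (ii), with (i) providing the conormal identification needed for (iii), and (ii) following by duality from (i) applied to $L^*$. First I would upgrade the hypothesis \eqref{eq4.ntbounded} into a non-tangential bound: since $\sup_t \|\partial_t S_t\|_{2\to 2} < \infty$ is included in \eqref{eq4.ntbounded}, Lemma \ref{l4.nt1}(i) and (ii) together give $\|\widetilde{N}_*(\nabla S_t f)\|_2 \leq C\|f\|_2$, and symmetrically for $S^*_t$. Setting $u^\pm(x,t) := S_t f(x)$ on $\mathbb{R}^{n+1}_\pm$, all hypotheses of Lemma \ref{l4.ntconverge}(i), (ii), and (iv) are thus satisfied.

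For part (i), Fubini combined with the pointwise bounds of Lemma \ref{l2.1}, together with the fact that for each fixed $y$ the function $\Gamma(\cdot,\cdot,y,0)$ is a fundamental solution of $L$ with pole at $(y,0)$, yields the distributional identity
\begin{equation*}
\iint_{\mathbb{R}^{n+1}} A\nabla(S_\cdot f)\cdot \overline{\nabla \Phi}\,dx\,dt \,=\, \int_{\mathbb{R}^n} f\,\overline{\Phi(\cdot,0)}\,dx, \qquad \Phi \in C^\infty_0(\mathbb{R}^{n+1}).
\end{equation*}
Lemma \ref{l4.ntconverge}(iii), applied separately in $\mathbb{R}^{n+1}_\pm$, produces $g^\pm \in L^2$ with $\iint_\pm A\nabla u^\pm \cdot \overline{\nabla \Phi} = \int g^\pm \overline{\Phi(\cdot,0)}$; adding and comparing with the displayed identity forces $g^+ + g^- = f$. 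Since the outer normals to $\mathbb{R}^{n+1}_+$ and $\mathbb{R}^{n+1}_-$ are $\mp e_{n+1}$, in the convention $\partial_\nu = -e_{n+1}\cdot A\nabla$ one has $\partial_\nu u^+ = g^+$ and $\partial_\nu u^- = -g^-$, hence the jump $\partial_\nu u^+ - \partial_\nu u^- = f$. Then $\widetilde{K}f := \tfrac{1}{2}(\partial_\nu u^+ + \partial_\nu u^-)$ satisfies (i), with $L^2$ boundedness inherited from Lemma \ref{l4.ntconverge}(iii).

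For part (iii), the horizontal components are immediate from Lemma \ref{l4.ntconverge}(ii): $\nabla_\| u^\pm(\cdot,t) \to \nabla_\| S_0 f$ weakly in $L^2$, since the two non-tangential traces of $u^\pm$ coincide with $S_0 f$ by continuity of $\Gamma(x,\cdot,y,0)$ at $t=0$ away from the diagonal. For the vertical component, Lemma \ref{l4.ntconverge}(iv) together with (i) gives
\begin{equation*}
-A_{n+1,n+1}\,\partial_t u^\pm(\cdot,t) - \sum_{j=1}^n A_{n+1,j}\,\partial_{x_j} u^\pm(\cdot,t) \,\longrightarrow\, \pm\tfrac{1}{2}f + \widetilde{K}f
\end{equation*}
weakly in $L^2$ as $t\to 0^\pm$. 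Testing \eqref{eq1.1} against $\xi = e_{n+1}$ gives $\Re e\,A_{n+1,n+1} \geq \lambda > 0$, so $1/A_{n+1,n+1} \in L^\infty$; since multiplying by bounded functions preserves weak convergence, one solves for the weak limit of $\partial_t u^\pm$ and reads off $\mathcal{T}f$ with horizontal components $\partial_{x_j} S_0 f$ and vertical component $-A_{n+1,n+1}^{-1}\bigl[\widetilde{K}f + \sum_{j=1}^n A_{n+1,j}\partial_{x_j} S_0 f\bigr]$, giving (iii).

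For part (ii) the strategy is duality. Using $\overline{\Gamma^*(y,s,x,t)} = \Gamma(x,t,y,s)$, $t$-independence, and pulling the $x$-integration inside $\partial_{\nu^*}$, one computes directly
\begin{equation*}
\langle \mathcal{D}_t f,\,g\rangle_{L^2} \,=\, \bigl\langle f,\,\partial_{\nu^*}(S^*_\tau g)\bigr|_{\tau = -t}\,\bigr\rangle_{L^2}.
\end{equation*}
For $t=+s\to 0^+$ one evaluates at $\tau = -s\to 0^-$, and part (i) applied to $L^*$ forces weak $L^2$ convergence of the right-hand side to $(-\tfrac{1}{2}I + \widetilde{K^*})g$; taking adjoints gives $\mathcal{D}_{+s}f \to (-\tfrac{1}{2}I + K)f$ weakly, where $K := ad\!j(\widetilde{K^*})$. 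For $t=-s\to 0^-$ one evaluates at $\tau = +s\to 0^+$ and the sign of the $\tfrac{1}{2}I$-term flips, yielding the $+\tfrac{1}{2}I$ case. The main technical delicacy in the argument is maintaining consistency between the variational conormal derivative provided by Lemma \ref{l4.ntconverge}(iii)--(iv) and the distributional identity $LS_\cdot f = f\otimes \delta_0$ used in (i) to extract the jump; once this alignment is secured, the remaining work is bookkeeping.
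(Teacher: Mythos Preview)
Your strategy matches the paper's almost exactly: prove (i) by identifying the jump of the variational conormal via $L(S_\cdot f)=f\otimes\delta_0$, deduce (iii) from (i) using the algebraic identity \eqref{eq4.ntDSidentity} and the fact that $\nabla_\| S_t$ has no jump, and obtain (ii) by setting $K=ad\!j(\widetilde{K^*})$ and using $\mathcal{D}_s=ad\!j\bigl(\vec N\cdot A^*\nabla S^*_{-s}\bigr)$ together with (i) for $L^*$ and Lemma~\ref{l4.ntconverge}(iv).

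The one place where you diverge from the paper, and where your write-up is a bit casual, is the verification of the distributional identity $\iint A\nabla(S_\cdot f)\cdot\overline{\nabla\Phi}=\int f\,\overline{\Phi(\cdot,0)}$. You invoke ``Fubini combined with the pointwise bounds of Lemma~\ref{l2.1}'', but Lemma~\ref{l2.1} gives pointwise control only on $\Gamma$ and its $t$-derivatives, \emph{not} on $\nabla_x\Gamma$. The Fubini argument can still be made to work for $f\in C_0^\infty$ --- one uses Caccioppoli on dyadic annuli about the pole, together with the $m=-1$ bound $|\Gamma|\le C|X-Y|^{1-n}$, to get $\nabla_{x,t}\Gamma(\cdot,\cdot,y,0)\in L^1_{loc}(\mathbb{R}^{n+1})$ uniformly for $y$ in a compact set --- and then one passes to general $f\in L^2$ via \eqref{eq4.ntbounded}. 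The paper instead sidesteps this issue by mollifying in $t$: it writes $S_t^\eta f=L^{-1}f_\eta$ with $f_\eta(y,s)=f(y)\varphi_\eta(s)\in L^2(\mathbb{R}^{n+1})$, so that the identity $\iint A\nabla U_\eta\cdot\nabla\Psi=\iint f_\eta\Psi$ is immediate from the weak formulation, and then passes to the limit $\eta\to 0$ using Lemma~\ref{l2.approx}(iv). Either route secures the ``alignment'' you flag as delicate; just be aware that yours needs Caccioppoli, not Lemma~\ref{l2.1} alone.
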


\begin{proof} It is enough to prove $(i)$.  Indeed, if we 
define $$K \equiv ad\!j\left(\widetilde{K^*}\right),$$
then $(ii)$ follows from $(i)$ and the observation that  $\mathcal{D}_s = 
ad\!j\left(\vec{N}\cdot A^*\nabla S^*_{t}\right)|_{t = -s}.$ 
To obtain $(iii)$, we first use \eqref{eq4.ntbounded}, Lemma \ref{l4.nt1}, 
Lemma \ref{l4.ntconverge} and the formula
\begin{equation}\label{eq4.ntDSidentity} -A_{n+1,n+1} \partial_t 
S_t =\vec{N}\cdot A \nabla S_t +\sum_{j=1}^n A_{n+1,j} D_j S_t \end{equation}
to deduce that $\partial_t S_t f$ converges weakly in $L^2$, as $t \to 0$.  
Thus, we may define $$\mathcal{T} f \equiv tr\left(\nabla S_t f \right).$$
Then $(iii)$ follows from \eqref{eq4.ntDSidentity} since
$\nabla_\| S_t f$ does not jump across the boundary.

To prove $(i)$, we
apply Lemma \ref{l4.ntconverge} $(iii)$ in both $\mathbb{R}^{n+1}_{\pm}$,
to obtain $g^{\pm} \in L^2(\mathbb{R}^n)$, with $g^{\pm} =
\partial_\nu u^{\pm}$ in the weak sense.
We now define\footnote{We are indebted to M. Mitrea for suggesting this approach.}
 $\widetilde{K}$ by 
\begin{equation}\label{eq4.ntjump}
\left(\frac{1}{2} I + \widetilde{K}\right)f \equiv g^+,\,\,\,\left(-\frac{1}{2} I + \widetilde{K}\right)f \equiv g^-,
\end{equation}
and to show that this operator is well defined, we need only verify that
$g^+ - g^- = f$.  It is enough to prove that
\begin{equation}\label{eq4.ntconormal}
\iint_{\mathbb{R}^{n+1}_{+}} A \nabla u^{+} \cdot \nabla \Psi dx dt + 
\iint_{\mathbb{R}^{n+1}_{-}} A \nabla u^{-} \cdot \nabla \Psi dx dt= \int_{\mathbb{R}^n}
f \Psi dx, \end{equation}  for all $\Psi \in C_0^\infty(\mathbb{R}^{n+1}).$
To this end, set 
$u_\eta^{\pm} \equiv S_t^\eta f$, where $S_t^\eta$ is defined in \eqref{eq2.def},
so that $$u_\eta^{\pm} =\iint_{\mathbb{R}^{n+1}} \Gamma(x,t,y,s) f_\eta (y,s) dy ds,
\,\,\,t\in \mathbb{R}_{\pm}$$ where  $f_\eta (y,s) \equiv f(y) \varphi _\eta (s)$ and
$\varphi_\eta$ is the kernel of a smooth approximate identity acting in $1$ dimension.  
Let $U_\eta \equiv 
u_\eta^+ 1_{\mathbb{R}^{n+1}_+} + u_\eta^- 1_{\mathbb{R}^{n+1}_-}$.
Since $L\Gamma = \delta$, we have that
\begin{eqnarray*}\iint_{\mathbb{R}^{n+1}_+} A\nabla u_\eta^+ \cdot \nabla \Psi +
\iint_{\mathbb{R}^{n+1}_-} A\nabla u_\eta^- \cdot \nabla \Psi &=& 
\iint_{\mathbb{R}^{n+1}} A\nabla U_\eta \cdot \nabla \Psi\\
&=&\iint_{\mathbb{R}^{n+1}}  f_\eta  \Psi \to\int_{\mathbb{R}^n} f \Psi ,
\end{eqnarray*}
as $\eta \to 0.$  On the other hand, fixing $\varepsilon$ momentarily, we have that
$$\iint_{\mathbb{R}^{n+1}_+} A\nabla (u_\eta^+ - u^+) \cdot \nabla \Psi \,=\,
\int_\varepsilon^\infty \!\int_{\mathbb{R}^n}\,\, +\,\,\int_0^\varepsilon \!\int_{\mathbb{R}^n} 
\equiv I_\varepsilon + II_\varepsilon . $$
Fix a number $R$ greater than the diameter of supp$(\Psi)$.  Then 
$$|I_\varepsilon| \leq C_\Psi \int_\varepsilon^R \sup_{\varepsilon < t < R} \|\nabla (S_t^\eta
-S_t)f \|_{L^2(\mathbb{R}^n)} \to 0$$
as $\eta \to 0$, by Lemma \ref{l2.approx}.  Moreover,
$$\sup_{\eta >0}|II_\varepsilon| \leq C_\Psi\, \varepsilon \sup_{t \neq 0} \|\nabla S_t f \|_2
\leq C_\Psi \,\varepsilon \|f\|_2,$$
where we have used that $\sup_{\eta > 0} \|\nabla S_t^\eta f \|_2 \leq \sup_t \|\nabla S_t f\|_2$,
by construction of $S_t^\eta$ \eqref{eq2.def}.  The analogous convergence result for the lower 
half-space concludes the proof of $(i)$.
\end{proof}

We turn now to the issues of non-tangential  and strong $L^2$ convergence for $\mathcal{D}_t$.

\begin{lemma}\label{l4.ntstrongconvergence}  Suppose that $L,L^*$ satisfy the standard assumptions,
that the single layer potentials $S_t, S_t^*$ satisfy \eqref{eq4.ntbounded}, and that 
$S^*_0\equiv S^*_t|_{t=0}:L^2(\mathbb{R}^n) \to \dot{L}^2_1(\mathbb{R}^n)$ is bijective. 
Then for every $f \in L^2(\mathbb{R}^n)$, we have the following:
$$\mathcal{D}_{\pm t} f \to \left(\mp\frac{1}{2}I + K \right)f \,\,\,\, n.t. \text{ and in } L^2.$$
\end{lemma}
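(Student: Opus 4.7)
The plan is to upgrade the weak $L^2$ convergence of Lemma \ref{l4.ntjump}(ii), together with the uniform bound $\sup_{s\neq 0}\|\mathcal{D}_s\|_{L^2\to L^2}<\infty$ (which follows from the identity $\mathcal{D}_s = ad\!j(\vec N\cdot A^*\nabla S^*_{-s})$ combined with the hypothesis \eqref{eq4.ntbounded}), to strong $L^2$ and non-tangential convergence, by verifying both modes of convergence on a dense class $\mathcal F\subset L^2$ and then using density together with maximal function estimates.

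The dense class is produced via the bijectivity of $S^*_0$: for each $\phi\in C_0^\infty(\mathbb R^n)\subset L^2$ (already dense in $L^2$), there exists a unique $h\in L^2$ with $\phi=S^*_0 h$ (in dimensions $n\geq 3$, using that $\phi$ and $S^*_0 h$ must coincide as $L^{2^*}$ representatives of the same $\dot L^2_1$ class; the case $n=2$ requires minor care with an additive constant, handled by a cutoff argument). Setting $w(y,s):=S^*_s h(y)$, we have $L^*w=0$ off $\{s=0\}$, with $w|_{s=0}=\phi$ non-tangentially from both sides (by Lemma \ref{l4.ntconverge}(i) applied to $L^*$) and $\partial_{\nu^*}w^\pm|_{s=0}=(\pm\tfrac12 I+\widetilde{K^*})h$ by Lemma \ref{l4.ntjump}(i) applied to $L^*$.

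The crux is to derive, via a Green's identity pairing $w$ against $\Gamma(x,t,\cdot,\cdot)$ in the upper half space, the representation
\begin{equation*}
w(x,t)=S_t\bigl[(\tfrac12 I+\widetilde{K^*})h\bigr](x)-\mathcal{D}_t\phi(x),\qquad t>0,
\end{equation*}
with the analogous formula for $t<0$. Rearranging exhibits $\mathcal{D}_t\phi$ as the difference of two objects whose $t\to 0^+$ limits are well controlled: $w(\cdot,t)\to\phi$ and $S_t[(\tfrac12 I+\widetilde{K^*})h]\to S_0[(\tfrac12 I+\widetilde{K^*})h]$, both in $L^2_{\mathrm{loc}}$ and non-tangentially a.e., by the $s$-continuity of the single layer kernels together with the H\"older estimates of Lemma \ref{l2.1}. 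Uniform tail estimates for $\phi\in C_0^\infty$, also from Lemma \ref{l2.1}, promote $L^2_{\mathrm{loc}}$ to $L^2$ convergence, and comparison with the known weak $L^2$ limit identifies $(-\tfrac12 I+K)\phi$ as the strong $L^2$ and n.t.\ limit on $\mathcal F$. Strong $L^2$ convergence for arbitrary $f\in L^2$ then follows by an $\epsilon/3$ density argument using uniform boundedness of $\mathcal{D}_t$; a.e.\ non-tangential convergence follows by the standard weak-type/density argument based on $\|N_*(\mathcal{D}_t f)\|_{L^{2,\infty}}\leq C\|f\|_2$ from Lemma \ref{l4.nt1}(vi). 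The case $t\to 0^-$ is entirely parallel.

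The main obstacle will be the rigorous derivation of the displayed Green's identity in our low-regularity setting, since the boundary traces of $w$ and $\partial_{\nu^*} w$ are defined only in the weak variational sense of Lemma \ref{l4.ntconverge}(iii)-(iv) and the standard integration-by-parts manipulations are not directly available. To address this I would first establish the identity for the regularizations $S^{*,\eta}_s h$ (defined analogously to \eqref{eq2.def} for $L^*$), for which the divergence theorem applies in a classical sense thanks to the $s$-smoothing, and then pass to the limit $\eta\to 0$ using the convergence properties in Lemma \ref{l2.approx} together with the uniform $L^2$ estimates furnished by \eqref{eq4.ntbounded}.
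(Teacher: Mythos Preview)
Your overall strategy—verify convergence on a dense class and extend via the weak-$L^2$ maximal bound from Lemma~\ref{l4.nt1}(vi) and uniform $L^2$ operator bounds—is the same as the paper's. The difficulty is with your choice of dense class and the Green's identity you invoke: the representation
\[
w(x,t)=S_t\bigl[(\tfrac12 I+\widetilde{K^*})h\bigr](x)-\mathcal{D}_t\phi(x)
\]
is \emph{false} for non-self-adjoint $L$. The Green's representation of a solution $w$ of $L^*w=0$ in the upper half-space uses the fundamental solution of $L^*$ and produces the layer potentials $S_t^*$ and $\mathcal{D}_t^*$, not $S_t$ and $\mathcal{D}_t$. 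Concretely, pairing $w$ against $\Gamma(x,t,\cdot,\cdot)$ does not yield $w(x,t)$: as a function of the second variable, $\Gamma(x,t,\cdot,\cdot)$ solves $L^{tr}$, not $L$, so neither the bilinear nor the sesquilinear Green's identity collapses. One can also see the formula fails by examining its putative $t\to0^+$ limit, which would force the operator identity $(\tfrac12 I+K)S_0^*=S_0(\tfrac12 I+\widetilde{K^*})$; comparing the $\tfrac12 I$ contributions shows this requires $S_0=S_0^*$, i.e.\ $L=L^*$. This is a genuine gap, not a regularization issue.

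The paper circumvents this by a different (and cleaner) choice of dense class. From the bijectivity of $S_0^*:L^2\to\dot L_1^2$ one obtains, by duality, bijectivity of $S_0:\dot L^2_{-1}\to L^2$, so $\{S_0\,\dv_\|\vec g:\vec g\in C_0^\infty\}$ is dense in $L^2$. For $u_0=S_0\psi$ with $\psi=\dv_\|\vec g\in C_0^\infty$ one has the identity $\mathcal{D}_t u_0=S_t(\partial_\nu u^-)$ (Corollary~\ref{cor4.ntgauss-green}), proved by pairing the $L$-solution $u(\cdot,\tau)=S_\tau\psi$ in the \emph{lower} half-space against the $L^*$-solution $S^*_{\cdot-t}\varphi$ via Lemma~\ref{l4.ntgauss-green}; here the compact support of $\psi$ makes the required decay \eqref{eq4.ntdecay} easy. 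Since $\partial_\nu u^-\in L^2$, both the n.t.\ and the strong $L^2$ convergence of $\mathcal{D}_t u_0=S_t(\partial_\nu u^-)$ follow immediately from the known behavior of $S_t$ on $L^2$ data. Your density/maximal-function endgame then applies verbatim.
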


We first require a special case of the Gauss-Green formula.
\begin{lemma}\label{l4.ntgauss-green}  Let $L,L^*$ satisfy the standard assumptions, and
suppose that $Lu = 0,\, L^* w = 0$ in $\mathbb{R}^{n+1}_+$
with \begin{equation}\label{eq4.ntuniform}\sup_{t>0}\left(\|\nabla u (\cdot,t)\|_2 
+ \|\nabla w(\cdot,t)\|_2\right) < \infty,\end{equation} and $ 
\partial_\nu u\, \overline{w}(\cdot,0),\,\overline{\partial_{\nu^*} w}\, u(\cdot,0) 
\in L^1(\mathbb {R}^n)$\footnote{Here, $\partial_\nu$ and $\partial_{\nu^*}$ are the exterior conormal derivatives, corresponding 
to the matrices $A$ and $A^*$ respectively, which exist in the weak sense of
Lemma \ref{l4.ntconverge}.}. 
Suppose also that there exist $R_0,\beta>0$ such that for all $R > R_0$, we have
\begin{equation}\label{eq4.ntdecay}\iint_{\mathbb{R}^{n+1}_+\cap\left(B(0,2R)\setminus B(0,R)\right)}
|\nabla u| |\nabla w| + |\nabla u|R^{-1} |w| + |\nabla w| R^{-1} |u|  = O\left(R^{-\beta}\right).
\end{equation} Then
$$\int_{\mathbb{R}^n}\partial_\nu u \,\overline{w} =\int_{\mathbb{R}^n}u \,\overline{\partial_{\nu^*}w}.$$
\end{lemma}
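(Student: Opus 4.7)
The plan is to apply the variational characterization of the conormal derivatives from Lemma \ref{l4.ntconverge}(iii) to both $u$ and $w$ against suitably truncated versions of each other, and then pass to the limit at infinity. First, I would fix a smooth radial cutoff $\eta_R(X) \equiv \eta(X/R)$ with $\eta\in C_0^\infty(B(0,2))$, $\eta\equiv 1$ on $B(0,1)$, so that $|\nabla\eta_R|\leq C/R$ and $\supp\nabla\eta_R\subset B(0,2R)\setminus B(0,R)$. I would extend $u$ and $w$ from $\mathbb{R}^{n+1}_+$ to all of $\mathbb{R}^{n+1}$ by reflection across $\{t=0\}$, obtaining $\tilde u,\tilde w \in W^{1,2}_{\loc}(\mathbb{R}^{n+1})$ (the existence of well-defined boundary traces being implicit in the $L^1$ hypothesis on the boundary products). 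Then $\tilde w\eta_R$ and $\tilde u\eta_R$ are admissible test functions in $W^{1,2}_c(\mathbb{R}^{n+1})$.

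Next, I would apply the sesquilinear version of Lemma \ref{l4.ntconverge}(iii) to $Lu=0$ with test function $\tilde w\eta_R$, and to $L^*w=0$ with test function $\tilde u\eta_R$, yielding
\begin{align*}
\iint_{\mathbb{R}^{n+1}_+} \!\! A\nabla u\cdot \overline{\nabla w}\,\eta_R\,dX + \iint_{\mathbb{R}^{n+1}_+}\!\! A\nabla u\cdot \overline{w}\,\nabla\eta_R\,dX &= \int_{\mathbb{R}^n}(\partial_\nu u)\,\overline{w}\,\eta_R\,dx,\\
\iint_{\mathbb{R}^{n+1}_+}\!\! A^*\nabla w\cdot \overline{\nabla u}\,\eta_R\,dX + \iint_{\mathbb{R}^{n+1}_+}\!\! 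A^*\nabla w\cdot \overline{u}\,\nabla\eta_R\,dX &= \int_{\mathbb{R}^n}(\partial_{\nu^*} w)\,\overline{u}\,\eta_R\,dx.
\end{align*}
The key algebraic observation is that since $A^*_{ij}=\overline{A_{ji}}$, one has the pointwise identity $A^*\nabla w\cdot \overline{\nabla u} = \overline{A\nabla u\cdot \overline{\nabla w}}$. Conjugating the second line and subtracting it from the first thus cancels the bulk terms and leaves
$$\int_{\mathbb{R}^n}(\partial_\nu u)\overline{w}\,\eta_R\,dx - \int_{\mathbb{R}^n} u\,\overline{\partial_{\nu^*} w}\,\eta_R\,dx = \iint_{\mathbb{R}^{n+1}_+}\!\! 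A\nabla u\cdot\overline{w}\,\nabla\eta_R\,dX - \iint_{\mathbb{R}^{n+1}_+}\overline{A^*\nabla w}\cdot u\,\nabla\eta_R\,dX.$$

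Finally, I would let $R\to\infty$. Since $|\nabla\eta_R|\leq C/R$ and $\supp\nabla\eta_R\subset B(0,2R)\setminus B(0,R)$, the right-hand side is bounded by a constant times the quantity controlled in the decay hypothesis \eqref{eq4.ntdecay}, hence is $O(R^{-\beta})\to 0$. The left-hand side converges to $\int \partial_\nu u\,\overline{w}\,dx - \int u\,\overline{\partial_{\nu^*}w}\,dx$ by dominated convergence, using the $L^1$ hypotheses on the boundary products, which yields the desired identity.

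The principal technical hurdle will be justifying the test-function step: that $\tilde w\eta_R$ legitimately lies in $W^{1,2}_c(\mathbb{R}^{n+1})$, and that the trace $\overline{w}(\cdot,0)\eta_R(\cdot,0)$ appearing in the variational pairing with $\partial_\nu u$ coincides with the $w(\cdot,0)$ in the hypothesis. This should follow from the reflection extension together with the weak $L^2$-convergence of $\vec N\cdot A\nabla u(\cdot,t)$ asserted in Lemma \ref{l4.ntconverge}(iv), modulo a routine density/approximation argument.
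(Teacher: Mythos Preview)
Your proposal is correct and follows essentially the same route as the paper: a radial cutoff $\Theta_R$ combined with the variational identity of Lemma~\ref{l4.ntconverge}(iii), followed by a limiting argument using the decay hypothesis~\eqref{eq4.ntdecay}. The paper streamlines slightly by invoking symmetry to reduce to the single identity $\iint_{\mathbb{R}^{n+1}_+} A\nabla u\cdot\overline{\nabla w} = \int_{\mathbb{R}^n}\partial_\nu u\,\overline{w}$ (so the bulk integral serves as a common intermediary), whereas you write out both identities with the cutoff present and subtract; your version has the minor advantage of never needing to assert that the untruncated bulk integral $\iint A\nabla u\cdot\overline{\nabla w}$ converges on its own.
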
 
Of course, the analogous result holds in $\mathbb{R}^{n+1}_-$. 

\begin{proof} By the symmetry of our hypotheses, it is enough to show that 
\begin{equation}\label{eq4.ntgauss-green}
\iint_{\mathbb{R}^{n+1}_+} A \nabla u \cdot \overline{\nabla w} = 
\int_{\mathbb{R}^n} \partial_\nu u \,\overline{w}.\end{equation}
To this end, for $R_0<R<\infty$, let $\Theta_R(X) \equiv \Theta (X/R)$, where
$\Theta \in C_0^\infty (B(0,2))$ and $\Theta \equiv 1$ 
in $B(0,1).$ 
We set $w_R \equiv w \Theta_R$.  Then by Lemma \ref{l4.ntconverge}, we have that
$$\iint_{\mathbb{R}^{n+1}_+} A \nabla u \cdot \overline{\nabla w_R} = 
\int_{\mathbb{R}^n} \partial_\nu u \,\overline{w_R}.$$  A simple limiting argument
completes the proof.
\end{proof}

\begin{corollary}\label{cor4.ntgauss-green}Let $L,L^*$ satisfy the standard assumptions,
and suppose that the respective single layer potentials $S_t, S^*_t$ satisfy \eqref{eq4.ntbounded}.
Further suppose that $u(\cdot,\tau) = S_\tau \psi$ in $\mathbb{R}^{n+1}_-,$ where $\psi \in
C_0^\infty(\mathbb{R}^n)$.  Then setting $u_0 \equiv u(\cdot,0)$, we have
\begin{equation}\label{eq4.ntdoublelayeridentity}
\mathcal{D}_t u_0 =  S_t (\partial_\nu u).
\end{equation}
\end{corollary}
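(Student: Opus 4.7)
The plan is to derive \eqref{eq4.ntdoublelayeridentity} from the Gauss-Green formula in $\mathbb{R}^{n+1}_-$ (the analog of Lemma \ref{l4.ntgauss-green}) applied with $u$ as given and $w(y,s)\equiv\Gamma^*(y,s,x,t)$ for a fixed evaluation point $(x,t)\in\mathbb{R}^{n+1}_+$. Since the singularity of $w$ sits at $(x,t)\notin\overline{\mathbb{R}^{n+1}_-}$, one has $L^*w=0$ throughout $\mathbb{R}^{n+1}_-$. The outer normal to $\mathbb{R}^{n+1}_-$ is $+e_{n+1}$, while the conormal derivatives $\partial_\nu,\partial_{\nu^*}$ are defined in the paper using $-e_{n+1}$; thus the Gauss-Green boundary identity reads
$$\int_{\mathbb{R}^n}(-\partial_\nu u)(y)\,\overline{w(y,0)}\,dy\,=\,\int_{\mathbb{R}^n} u_0(y)\,\overline{(-\partial_{\nu^*}w(y,0))}\,dy,$$
and, using $\overline{\Gamma^*(y,0,x,t)}=\Gamma(x,t,y,0)$, this is exactly $-S_t(\partial_\nu u)(x)=-\mathcal{D}_t u_0(x)$, i.e., \eqref{eq4.ntdoublelayeridentity}.

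To apply the Gauss-Green lemma I need to verify its hypotheses with $\Omega=\mathbb{R}^{n+1}_-$. The bound $\sup_{\tau<0}\|\nabla u(\cdot,\tau)\|_2<\infty$ is immediate from \eqref{eq4.ntbounded} and the assumption $u(\cdot,\tau)=S_\tau\psi$, while $\sup_{\tau<0}\|\nabla w(\cdot,\tau)\|_2<\infty$ follows from Lemma \ref{l2.1}, since the fixed $t>0$ keeps the kernel away from its singularity on $\{s\le 0\}$. The existence of $\partial_\nu u$ and $\partial_{\nu^*}w$ on $\{s=0\}$ as elements of $L^2(\mathbb{R}^n)$ then comes from the lower-half-space version of Lemma \ref{l4.ntconverge} $(iii),(iv)$.

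The technical heart of the argument is verifying the decay condition \eqref{eq4.ntdecay} and the $L^1$ integrability of the two boundary products. Here the plan is to exploit that $\psi\in C_0^\infty$: integrating the pointwise bounds of Lemma \ref{l2.1} against $\psi$ produces, for $(y,s)$ outside a fixed large ball containing $\mathrm{supp}\,\psi\times\{0\}$, the pointwise decay $|u(y,s)|\le C(|y|+|s|)^{1-n}$ and $|\nabla u(y,s)|\le C(|y|+|s|)^{-n}$. With $(x,t)\in\mathbb{R}^{n+1}_+$ fixed, Lemma \ref{l2.1} also gives $|w(Y)|\le C|Y|^{1-n}$ and $|\nabla w(Y)|\le C|Y|^{-n}$ for $|Y|$ large. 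On a dyadic annulus $B(0,2R)\setminus B(0,R)$ of $(n{+}1)$-dimensional volume $\approx R^{n+1}$ each of the three integrands in \eqref{eq4.ntdecay} is therefore dominated by $R^{-2n}$, yielding an $O(R^{1-n})$ bound; since $n\ge 2$, this establishes \eqref{eq4.ntdecay} with $\beta=n-1\ge 1$. The same pointwise estimates make $\partial_\nu u\cdot\overline{w(\cdot,0)}$ and $u_0\cdot\overline{\partial_{\nu^*}w(\cdot,0)}$ integrable on $\mathbb{R}^n$.

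The main obstacle I expect is not deep but rather bookkeeping: tracking the orientation convention (outer normal to $\mathbb{R}^{n+1}_-$ is $+e_{n+1}$) through the Gauss-Green identity so that the two minus signs cancel and \eqref{eq4.ntdoublelayeridentity} comes out with the correct sign. Once the Gauss-Green hypotheses are verified, the identity reads off directly.
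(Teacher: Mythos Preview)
Your overall strategy is exactly the paper's: apply the lower-half-space Gauss--Green identity of Lemma~\ref{l4.ntgauss-green} with $u=S_\tau\psi$ and with $w$ a null solution of $L^*$ whose boundary trace and conormal derivative reproduce the kernels of $S_t$ and $\mathcal{D}_t$, respectively. The paper dualizes first, taking $w(\cdot,s)=S^*_{s-t}\varphi$ for a test function $\varphi\in C_0^\infty$; you work pointwise with $w(y,s)=\Gamma^*(y,s,x,t)$. Conceptually these are the same move, and your sign bookkeeping is correct.

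There is one genuine (though easily repaired) gap. You claim the \emph{pointwise} bound $|\nabla u(y,s)|\le C(|y|+|s|)^{-n}$ (and similarly for $\nabla w$), citing Lemma~\ref{l2.1}. But Lemma~\ref{l2.1} controls only $t$-derivatives of $\Gamma$, not $\nabla_y\Gamma$; for merely $L^\infty$ coefficients, solutions are H\"older continuous but their spatial gradients need not be pointwise bounded. What you actually have is the $L^2$-averaged version: from $|u(Y)|+|w(Y)|=O(|Y|^{1-n})$ (this \emph{does} follow from Lemma~\ref{l2.1}) and Caccioppoli's inequality, one gets $\iint_{B(0,2R)\setminus B(0,R)}|\nabla u|^2=O(R^{1-n})$, and likewise for $w$. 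Combining via Cauchy--Schwarz gives \eqref{eq4.ntdecay} with $\beta=n-1$, exactly as in the paper's proof. For the boundary $L^1$ integrability, replace your pointwise gradient claim by the annular $L^2$ bounds $\int_{\Delta_{2R}\setminus\Delta_R}|\partial_\nu u|^2+|\partial_{\nu^*}w|^2=O(R^{-n})$ (obtained from Lemma~\ref{l2.7} with $m=-1$, or Lemma~\ref{l2.9} by duality) together with $\int_{\Delta_{2R}\setminus\Delta_R}|u_0|^2+|w(\cdot,0)|^2=O(R^{2-n})$; summing over dyadic annuli then gives $L^1$. Also, your justification of $\sup_{s<0}\|\nabla w(\cdot,s)\|_2<\infty$ should cite Lemma~\ref{l2.7} (or Caccioppoli plus Lemma~\ref{l2.1}) rather than Lemma~\ref{l2.1} alone. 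With these adjustments your proof goes through; the paper's dualized choice $w=S^*_{s-t}\varphi$ has the minor advantage that $\sup_{s<0}\|\nabla w(\cdot,s)\|_2<\infty$ follows directly from hypothesis~\eqref{eq4.ntbounded} rather than from fundamental-solution estimates.
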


\begin{proof} It is enough to show that for all $\varphi \in C_0^\infty (\mathbb{R}^n)$,
we have $$\int_{\mathbb{R}^n}\mathcal{D}_t u_0\,\overline{\varphi} = 
\int_{\mathbb{R}^n}S_t (\partial_\nu u) \,\overline{\varphi}. $$ 
Note that $ad\!j(\mathcal{D}_t) = \vec{N} \cdot A^* \left(\nabla S_\tau^*\right)|_{\tau = -t},$
and that $ad\!j(S_t) = S^*_{-t}.$
Set $u^*(\cdot, \tau) \equiv S_\tau^* \varphi,$, so that $L^*u^* = 0$ in
$\mathbb{R}^{n+1}\setminus\{\tau = 0\}.$  It suffices to verify the hypotheses of
Lemma \ref{l4.ntgauss-green}, in the lower half-space, for $u, w$, with $w(\cdot,s )\equiv
u^*(\cdot, s -t), \, s\leq0.$  Estimate \eqref{eq4.ntuniform} is
immediate by \eqref{eq4.ntbounded}.  By Lemma \ref{l2.1}, we have 
\begin{equation}\label{eq4.ntasymptotic}|u(X)| +|w(X)|= O(|X|^{-n+1}) \,\,\,\,\text{ as } |X|\to \infty.
\end{equation}
Also, $Lu = 0, L^*w=0$ in $\mathbb{R}^{n+1}\setminus B(0,R_0),$ if $R_0$
is chosen large enough, since $\varphi, \psi$ have compact support.  Thus, by Caccioppoli,
$$\iint_{\mathbb{R}^{n+1}_-\cap(B(0,2R)\setminus B(0,R))} |\nabla u|^2 \leq C
\iint_{\mathbb{R}^{n+1}_-\cap(B(0,3R)\setminus B(0,R/2))} \left(\frac{|u|}{R}\right)^2 =
O\left(R^{-n+1}\right),$$
for $R>4R_0,$ and similarly for $w.$  Estimate \eqref{eq4.ntdecay} follows.
Finally, the boundary integrability of $\partial_\nu u \,\overline{w}$ and
$\overline{\partial_{\nu^*} w} \, u$ follows readily from
Cauchy-Schwarz, the fact that $n \geq 2$, and two observations:  first, that
by Lemma \ref{l2.9} and duality, we have
$$\int_{\Delta_{2R}(0)\setminus \Delta_R(0)} |\partial_\nu u|^2 +|\partial_{\nu^*} w|^2
= O(R^{-n});$$  second, that  \eqref{eq4.ntasymptotic} implies that
$$\int_{\Delta_{2R}(0)\setminus \Delta_R(0)}|u|^2+|w|^2 =O\left(R^{2-n}\right).$$ 
We leave the remaining details to the reader.
\end{proof}

\begin{proof}[Proof of Lemma \ref{l4.ntstrongconvergence}]
Since we have already obtained the limits
$(\mp\frac{1}{2}I + K)f$ in the weak sense (Lemma \ref{l4.ntjump}), it is enough here merely to establish existence of $n.t.$ and strong $L^2$  limits,
without concern for their precise values.  
We give the proof only in the case of the upper half-space,
as the proof in the other case is the same.  

We begin with the matter of non-tangential convergence.
Observe that $ad\!j\,(S_t \nabla) =
\left(\nabla S_{\tau}^*\right)|_{\tau = -t}$, so by
\eqref{eq4.ntbounded} and Lemma \ref{l4.nt1}$(vi)$, it is enough to establish
$n.t.$ convergence for $f$ in a dense class in $L^2$.  We claim now that 
$\{S_0 \dv_\| \vec{g}: \vec{g} \in C_0^\infty (\mathbb{R}^n,\mathbb{C}^n)\}$ is dense in
$L^2$.  Indeed, by hypothesis and duality, $S_0:\dot{L}^2_{-1} \to L^2$ is bijective.
Thus, $L^2 = \{S_0 \dv_\| \vec{g}: \vec{g} \in L^2\}.$  The density of $C_0^\infty$ in
$L^2$ establishes the claim.  

We now set $f = u_0 =S_0 (\dv_\|\vec{g}),$ with $\vec{g} \in C_0^\infty,$ and  
let $u(\cdot, \tau)=S_\tau (\dv_\| \vec{g}), \,\tau<0.$  We may then apply 
Corollary \ref{cor4.ntgauss-green} to obtain that $\mathcal{D}_t f = S_t(\partial_\nu u). $
Moreover, \eqref{eq4.ntbounded}, Lemma \ref{l4.nt1}, and Lemma \ref{l4.ntconverge} imply that
$\partial_\nu u \in L^2$ and hence also that $S_t (\partial_\nu u)$ converges $n.t.$,
from which fact the non-tangential part of $(ii)$ now follows.  

We turn now to the issue of strong convergence in $L^2$.  By
\eqref{eq4.ntbounded}, we have in particular that $L^2$ bounds hold,
uniformly in $t>0$, for $\mathcal{D}_t$.  Thus, it is once again enough to establish convergence in a dense class.  To this end, choose $u_0,u$ as above.  It suffices to show that
$\mathcal{D}_t u_0$ is  Cauchy convergent in $L^2$, as $t \to 0.$  Suppose that
$0<t'<t \to 0$, and observe that, by Corollary \ref{cor4.ntgauss-green}, 
\eqref{eq4.ntbounded} and our 
previous observation that $\partial_\nu u \in L^2,$
\begin{equation*} \|\mathcal{D}_t u_0 - \mathcal{D}_{t'} u_0\|_2= \|\int_{t'}^t\partial_s S_s (\partial_\nu
u)\,ds\|_2
\leq  (t-t')\|\partial_s  S_s \partial_\nu u\|_2 \to 0. 
\end{equation*}
\end{proof}

\begin{lemma}\label{l4.ntunique} ({\bf Uniqueness}). 
Suppose that $L,L^*$ satisfy the standard assumptions,
and that we have existence of solutions to (D2) and (R2).
Then those solutions are unique, in the following sense:
\begin{enumerate}\item[(i)]  If $u$ solves (D2), with $u(\cdot,t)\to 0$ in $L^2$, as $t \to 0$,
then $u \equiv 0.$
\item[(ii)]  If $u$ solves (R2), and $u \to 0 \,\, n.t.,$ then $u \equiv 0.$\footnote{Our data in the problem
(R2) belongs to $\dot{L}^2_1,$ whose elements are defined modulo constants;  thus, 
uniqueness in this context must be interpreted correspondingly.  We assume here
that we have chosen a particular realization of the data 
equal to $0 \, a.e.$ on the boundary.}
\end{enumerate}  If, in addition, $L$ and $L^*$ have ``Good Layer Potentials",
then the solution to (N2) is unique, in the sense that:
\begin{enumerate}
\item[(iii)]  If $u$ solves (N2), with $\partial u/\partial\nu = 0$ in the sense of Lemma \ref{l4.ntconverge}
$(iii)$ and $(iv)$, then $u \equiv 0$ (modulo constants).
\end{enumerate}
\end{lemma}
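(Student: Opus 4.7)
I would prove the three parts in sequence, each building on the previous: (i) via a Green's function representation for $L$ in the upper half-space; (ii) via a two-step argument that first shows $\partial_\nu u = 0$ and then reduces to (i) by the ``vertical derivative'' trick $w=\partial_t u$; (iii) via a representation of $u$ as a single-layer potential, using the invertibility provided by \emph{Good Layer Potentials}.

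For part (i), fix $X_0=(x_0,t_0)\in \mathbb{R}^{n+1}_+$. Since $\Gamma(\cdot,0,X_0)\in L^2(\mathbb{R}^n)$ by Lemma~\ref{l2.1} (as $n \geq 2$), (D2)-solvability supplies $w$ with boundary data $\Gamma(\cdot,0,X_0)$; set $G(\cdot,X_0)\equiv \Gamma(\cdot,X_0)-w$, the Green's function for $L$ in $\mathbb{R}^{n+1}_+$ with zero Dirichlet data. For $\epsilon>0$, let $u_\epsilon(x,t)\equiv u(x,t+\epsilon)$. Caccioppoli applied in horizontal strips gives $\sup_{t>0}\|\nabla u_\epsilon(\cdot,t)\|_2 <\infty$, so Lemma~\ref{l4.ntgauss-green} applies to $u_\epsilon$ and $G(\cdot,X_0)$ to yield the Poisson representation
\[
u_\epsilon(X_0) = -\int_{\mathbb{R}^n} u(\cdot,\epsilon)\;\overline{\partial_\nu G(\cdot,0,X_0)}\,dx .
\]
Letting $\epsilon \to 0$, the right side vanishes since $u(\cdot,\epsilon)\to 0$ in $L^2$ while $\partial_\nu G(\cdot,0,X_0)\in L^2(\mathbb{R}^n)$, and $u_\epsilon(X_0)\to u(X_0)$ by continuity, yielding $u\equiv 0$.

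For part (ii), Lemma~\ref{l4.ntconverge}(i) and (iii) give $u(\cdot,0)=0$ (as the trace representative) and $\partial_\nu u \in L^2$. First I would argue $\partial_\nu u = 0$: for each $g \in C_0^\infty(\mathbb{R}^n)$, let $v$ solve (R2) for $L^*$ with boundary data $g$ (existence for $L^*$ being part of the symmetric hypotheses); then Lemma~\ref{l4.ntgauss-green} produces
\[
\int_{\mathbb{R}^n} \partial_\nu u \cdot \overline{g}\,dx = \int_{\mathbb{R}^n} u(\cdot,0)\,\overline{\partial_{\nu^*}v}\,dx = 0 ,
\]
so $\partial_\nu u = 0$ by density. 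Next, set $w\equiv \partial_t u$; by $t$-independence $Lw=0$, and $\sup_t\|w(\cdot,t)\|_2\leq C\|\widetilde N_*(\nabla u)\|_2<\infty$ by Lemma~\ref{l4.nt0}. The conormal identity $\partial_\nu u = -A_{n+1,n+1}\partial_t u - \sum_{j\leq n}A_{n+1,j}\partial_j u$, combined with $\partial_\nu u = 0$ and the weak $L^2$ convergence $\nabla_\|u(\cdot,t)\to 0$ (Lemma~\ref{l4.ntconverge}(ii)), yields $w(\cdot,t)\to 0$ weakly in $L^2$. The pointwise bound $|w(x,t)|\leq C\widetilde N_*(\nabla u)(x)\in L^2(\mathbb{R}^n)$ (from interior Moser estimates) together with n.t. behavior derived from the same conormal identity upgrades this to strong $L^2$ convergence via dominated convergence. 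Thus $w$ satisfies (D2) with zero data, so by part (i), $w\equiv 0$; combined with $u|_{t=0}=0$, we conclude $u\equiv 0$.

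For part (iii), assume \emph{Good Layer Potentials}. By Lemma~\ref{l4.ntconverge}, $u$ has a trace $u_0 \in \dot L^2_1$; bijectivity of $S_0$ modulo constants gives $\phi \in L^2$ with $S_0\phi = u_0$ modulo constants. The function $v(x,t)\equiv S_t\phi$ solves (R2) with the same trace as $u$, so by part (ii) we have $u = v$ modulo constants. Lemma~\ref{l4.ntjump}(i) gives $\partial_\nu v = (\tfrac12 I + \widetilde K)\phi = 0$; invertibility of $\tfrac12 I + \widetilde K$ on $L^2$ forces $\phi = 0$, hence $v \equiv 0$ and $u$ is constant. \emph{The main obstacle} is the weak-to-strong $L^2$ upgrade in part~(ii) for $w=\partial_t u$ at $t=0$, which requires a delicate combination of the pointwise control $|w(x,t)|\leq C\widetilde N_*(\nabla u)(x)$ with a dominated-convergence/n.t. argument; verifying the decay hypothesis~\eqref{eq4.ntdecay} of Lemma~\ref{l4.ntgauss-green} for the test-solution pairings in part~(ii) is a subsidiary but nontrivial technical check.
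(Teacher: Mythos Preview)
Your part~(iii) is correct and matches the paper's argument essentially verbatim. Parts~(i) and~(ii), however, both contain genuine gaps.

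\textbf{Part (i).} Constructing the corrector $w$ via (D2) does not work. First, when $n=2$ the bound of Lemma~\ref{l2.1} gives $|\Gamma(y,0,X_0)|\lesssim (t_0+|y-x_0|)^{-1}$, which is \emph{not} in $L^2(\mathbb{R}^2)$, so (D2) cannot even be posed with this data. Second, and more seriously even for $n\ge 3$: (D2)-solvability yields only $\sup_t\|w(\cdot,t)\|_2<\infty$, with no control on $\nabla w$ at the boundary; consequently there is no reason for $\partial_\nu G(\cdot,0,X_0)$ to lie in $L^2$, and the pairing $\int u(\cdot,\epsilon)\,\overline{\partial_\nu G}$ is not well defined. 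Finally, $G(\cdot,X_0)$ has a point singularity at $X_0$ and is not a null solution of $L^*$ in all of $\mathbb{R}^{n+1}_+$, so Lemma~\ref{l4.ntgauss-green} does not apply as stated; excising a ball is standard, but you would still need $\sup_{t}\|\nabla G(\cdot,t,X_0)\|_2<\infty$ away from $X_0$, which again requires (R2)-type control on $w$, not (D2). The paper fixes all of this by building $w$ via (R2), using that $\Gamma(x_0,t_0,\cdot,0)\in\dot L^2_1$ with $\|\nabla_\|\Gamma(x_0,t_0,\cdot,0)\|_2\le Ct_0^{-n/2}$ (Lemma~\ref{l2.7}, $m=-1$), and then avoids any global Gauss--Green identity by writing $u(X_0)=(u\theta)(X_0)$ for a cutoff $\theta$ and estimating the resulting integrals directly.

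\textbf{Part (ii).} You correctly flag the weak-to-strong upgrade for $w=\partial_t u$ as the crux, but the mechanism you propose fails: weak $L^2$ convergence together with a pointwise $L^2$ majorant does \emph{not} imply strong $L^2$ convergence (think of an oscillating sequence). You would need a.e.\ (or n.t.) convergence of $\partial_t u(\cdot,t)$ to $0$, and the conormal identity only transfers \emph{weak} convergence from $\nabla_\| u$ and $\partial_\nu u$ to $\partial_t u$; no n.t.\ statement follows. The paper takes an entirely different route: it reuses the Green's-function cutoff argument from~(i), replacing the smallness $\|u(\cdot,\epsilon)\|_2\to 0$ by the pointwise bound $|u(y,s)|\le Cs\,\widetilde N_*(\nabla u)(y)$ from Lemma~\ref{l4.ntconverge}(i), which furnishes the needed decay near the boundary without ever needing strong convergence of $\partial_t u$.
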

\begin{proof}  Consider first uniqueness in (D2).
We begin by constructing Green's function.  By Lemma \ref{l2.7} with $m=-1$,
for each fixed $(x,t) \in \mathbb{R}^{n+1}$, we have $\Gamma (x,t,\cdot ,0) \in \dot{L}^2_1,$
with \begin{equation} \label{eq4.ntfundamental}
\|\nabla_\|\Gamma(x,t,\cdot ,0)\|_{L^2(\mathbb{R}^n)} \leq C t^{-n/2}.
\end{equation}
Thus, by (R2), there exists $w=w_{x,t}$ solving
\begin{equation}
\tag{R2}\begin{cases} Lw=0\text{ in }\mathbb{R}_{+}^{n+1}\\ w(\cdot,s)\to 
\Gamma (x,t, \cdot,0) \, n.t.\\
\|\widetilde{N}_{\ast}(\nabla w)\|_{L^{2}(\mathbb{R}^{n})} \leq C t^{-n/2}.
\end{cases}\label{R2green}\end{equation}
Set $$G(x,t,y,s) \equiv \Gamma (x,t,y,s) - w_{x,t}(y,s),$$ and note that 
\begin{equation}\label{eq4.nt12}\sup_{s: |s-t|>t/8} \|\nabla G(x,t,\cdot ,s) \|_{L^2(\mathbb{R}^n)} \leq C t^{-n/2}.\end{equation}
Let $\theta \in C_0^\infty (\mathbb{R}^{n+1}_+)$, with $\theta \equiv 1$ in a neighborhood of
$(x,t)$.  Then, since $Lu=0$, we have
\begin{multline*}u(x,t) = (u \theta)(x,t) = \iint \overline{A^* \nabla_{y,s} G(x,t,y,s)
}\cdot \nabla (u \theta) dy ds\\ =-\iint \overline{G} \,\nabla\theta \cdot A \nabla u  + \iint 
\overline{\nabla G} \cdot A\nabla \theta \, u  \equiv  I + II.
\end{multline*}
We  now choose  
$\phi \in C_0^\infty (-2,2), \,\phi \equiv 1$ in $(-1,1)$, with $0 \leq \phi \leq 1,$
and set $\theta (y,s) \equiv [1-\phi (s/\varepsilon)]\,\phi (s/(100R))\,  \phi (|x-y|/R),$
with $\varepsilon < t/8, R > 8t.$ 
With this choice of $\theta$, the domains of integration in $I$ and
$II$ are contained in a union $\Omega_1 \cup \Omega_2 \cup \Omega_3$,
where 
\begin{enumerate}\item $\Omega_1 \subset \Delta_{2R}(x) \times  \{ \varepsilon < s < 2\varepsilon\} ,$
with $\|\nabla \theta\|_{L^\infty (\Omega_1)} \leq C \varepsilon^{-1}.$
\item $\Omega_2 \subset \Delta_{2R}(x) \times \{100R<s<200R\},$ with 
 $\|\nabla \theta\|_{L^\infty (\Omega_2)} \leq C R^{-1}.$
\item $\Omega_3 \subset \left(\Delta_{2R}(x) \setminus \Delta_R(x)\right) \times \{0 < s < 200R\}.$
with  $\|\nabla \theta\|_{L^\infty (\Omega_3)} \leq C R^{-1}.$
\end{enumerate}
We treat term $I$ first.
We recall from \cite{HK2} that
\begin{equation}\label{eq4.nt13} 
\|\nabla_{(\cdot)} G(X,\cdot)\|_{L^2(\mathbb{R}^{n+1}_+ \setminus
B(X,r))} \leq C r^{(1-n)/2},\,\,\,\forall r>0,\,X \in \mathbb{R}^{n+1}_+
\end{equation}
and that \begin{equation}\label{eq4.nt14}
|G(X,Y)|\leq C|X-Y|^{1-n},\end{equation}
whenever $|X-Y|\leq \min (\delta(X),\delta(Y))$,
where $\delta(X)$ denotes the distance to the boundary of the half-space
(i.e., the $t$-coordinate).
Thus, in particular we obtain that
\begin{equation}\label{eq4.nt15}R^{-1} \|G(x,t,\cdot)\|_{L^2(\Omega_2 \cup \Omega_3)}\leq C
R^{(1-n)/2}, 
\end{equation}
where in proving the bound on $\Omega_3$ 
we have used that $G$ vanishes on the boundary, to reduce matters to 
\eqref{eq4.nt13}.
We then have that \begin{equation}\label{eq4.nt16}
\frac{1}{C}|I| \leq \varepsilon^{-1} \iint_{\Omega_1} |G| |\nabla u|
+ R^{(1-n)/2}\left(\iint_{\Omega_2 \cup \Omega_3}|\nabla u|^2\right)^{1/2}\equiv I_1 + I_2.
\end{equation}
Since $u$ vanishes on $\{t=0\}$, we may apply Caccioppoli's inequality
in $\Omega_2\cup\Omega_3$ to obtain that
$I_2 \leq C R^{-n/2} \sup_{s>0}\|u(\cdot,s)\|_2 \to 0$
as $R \to\infty$. 

To treat $I_1$, we first note that for $(y,s) \in \Omega_1$,
\begin{equation}\label{eq4.nt17}
|G(x,t,y,s)|\leq C \varepsilon\left((|x-y|+t)^{-n} + \widetilde{N}_*(\nabla w_{x,t})(y)\right) ,
\end{equation} by Lemma \ref{l2.1}, Lemma \ref{l4.ntconverge} and construction of $G$. 
Consequently,
\begin{equation}\label{eq4.nt18}
\left(\varepsilon^{-1}\iint_{L^2(\Omega_1)}|G(x,t,y,s)|^2dyds\right)^{1/2} \leq C\varepsilon \,t^{-n/2},
\end{equation}
Thus, using Caccioppoli to estimate the $L^2$ norm of $\nabla u$ in $\Omega_1$, we obtain
that $$I_1\leq C t^{-n/2}\sup_{s<3\varepsilon} \|u(\cdot,s)\|_2 \to 0$$ as $\varepsilon \to 0,$
since $u(\cdot,s) \to 0 $ in $L^2$.

We now consider term $II$.  By Cauchy-Schwarz and then Caccioppoli's inequality,
\begin{eqnarray}\nonumber |II|& \leq &\varepsilon^{-1}\iint_{\Omega_1}
|\nabla G| \,|u| + R^{-1} \iint_{\Omega_2 \cap \Omega_3} |\nabla G|\,|u|\equiv II_1 + II_2\\\label{eq4.nt19}
&\leq & C \varepsilon^{-3/2} \|G(x,t,\cdot,\cdot)\|_{L^2(\Omega_1)}\sup_{s<2\varepsilon} \|u(\cdot,s)\|_2 
\\\nonumber  &&\qquad \quad  +R^{-3/2}\|G(x,t,\cdot,\cdot)\|_{L^2(\Omega_2 \cup \Omega_3)} \sup_{s>0}\|u(\cdot,s)\|_2.
\end{eqnarray}
By \eqref{eq4.nt18}, the term $II_1$ may be handled exactly like $I_1$, and by
\eqref{eq4.nt15}, $II_2$ yields the same bound as $I_2$.  The proof of uniqueness
in (D2) is now complete.

\smallskip
\noindent {\it Uniqueness in (R2).}  Suppose now that
$\widetilde{N}_*(\nabla u) \in L^2$, and that $u \to 0 \, n.t.$.  Choosing 
$\theta$ as above, we split $u(x,t) = (u \theta)(x,t)$ into the
same terms $I + II$, which we dominate again by
$I_1 + I_2$ and $II_1 + II_2$ as in \eqref{eq4.nt16} and \eqref{eq4.nt19},
respectively.  We now claim that 
$$I_1 +II_1 \leq C \varepsilon t^{-n/2} \|\widetilde{N}_*(\nabla u) \|_2 \to 0$$ 
as $\varepsilon \to 0$.  For $I_1$, this follows from
Cauchy-Schwarz and \eqref{eq4.nt18}.  To handle $II_1$,
we first note that, by Lemma \ref{l4.ntconverge}$(i)$,  $|u(y,s)| \leq C \varepsilon 
\widetilde{N}_*(\nabla u)(y)$ in $\Omega_1$, since $u(\cdot,0) = 0\,a.e.$. The claim then follows
from Cauchy-Schwarz and Caccioppoli (applied to $\nabla G$).  

Rewriting the last expression in \eqref{eq4.nt16}, we see that
$$I_2 = R^{(2-n)/2} \left( R^{-1} \iint_{\Omega_2 \cup \Omega_3} |\nabla u|^2
\right)^{1/2} \leq CR^{(2-n)/2}\|\widetilde{N}_* (\nabla u)\|_{L^2(\Delta_{2R}(x)\setminus \Delta_R(x))},
$$ by construction of $\Omega_2 \cup \Omega_3$. 
Moreover, Lemma \ref{l4.ntconverge}$(i)$ implies that
$|u|/R \leq C 
\widetilde{N}_* (\nabla u)$  in $\Omega_3$ and
$|u|/R \leq C \inf_{\Delta_{2R}(x)\setminus \Delta_R(x)}
\widetilde{N}_* (\nabla u)$ in $\Omega_2$.  Thus, by \eqref{eq4.nt13},
\begin{eqnarray*}II_2 & \leq &CR^{1/2}\left(\iint_{\Omega_2 \cup \Omega_3}
|\nabla_{y,s} G(x,t,y,s)|^2 dy ds\right)^{1/2}\left(R^{-1}\iint_{\Omega_2\cup\Omega_3}
\frac{|u|}{R}\right)^{1/2}\\&\leq &CR^{(2-n)/2}\,
\|\widetilde{N}_* (\nabla u)\|_{L^2(\Delta_{2R}(x)\setminus \Delta_R(x))}.\end{eqnarray*}
Since $n\geq 2$,  we obtain dominated convergence to $0$.

\smallskip
\noindent {\it Uniqueness in (N2).}  Suppose that $\widetilde{N}_* (\nabla u) \in L^2$,
and that $\partial u/\partial_\nu = 0$, where the latter is interpreted in the sense of Lemma \ref{l4.ntconverge}$(iii)$ and $(iv)$.  By Lemma \ref{l4.ntconverge}$(i)$, we have that $u \to u_0 \, n.t.$, for some $u_0
\in \dot{L}^2_1(\mathbb{R}^n).$  By uniqueness in (R2), $$u(\cdot,t) = S_t(S_0^{-1} u_0),$$
where $S_0 \equiv S_t|_{t=0}.$  Thus,  by Lemma \ref{l4.ntjump},
$$0 = \frac{\partial u}{\partial \nu} =\left(\frac{1}{2}I + \widetilde{K}\right)(S_0^{-1}u_0).$$
But by hypothesis, $\frac{1}{2}I + \widetilde{K}:  L^2 \to L^2$ and $S_0: L^2 \to \dot{L}^2_1$
are bijective, so that $u_0 =0$ in the sense of $\dot{L}^2_1$, i.e., $u_0 \equiv constant\, a.e.$.
By uniqueness in (R2), $u\equiv constant.$

\end{proof}

As a corollary of uniqueness, we shall obtain the following ``Fatou Theorem".
\begin{corollary}\label{4.ntfatou} Let $L,L^*$ satisfy the standard assumptions, and
have ``Good Layer Potentials".    Suppose also that $Lu=0$, and that
\begin{equation}\label{eq4.ntuniform2} \sup_{t>0} \|u(\cdot,t)\|_2 <\infty.
\end{equation}
Then $u(\cdot,t)$ converges $n.t.$ and in $L^2$ as $t \to 0$.
\end{corollary}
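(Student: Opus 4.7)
I would represent $u$ on the whole of $\mathbb{R}^{n+1}_+$ as $\mathcal{D}_s\varphi$ for a suitable $\varphi\in L^2(\mathbb{R}^n)$, and then conclude by invoking Lemma \ref{l4.ntstrongconvergence}. First, note that the map $s\mapsto u(\cdot,s)$ is continuous from $(0,\infty)$ into $L^2(\mathbb{R}^n)$: since $\partial_t u$ is itself an $L$-solution (by $t$-independence), \eqref{eq1.3} and Caccioppoli's inequality in Whitney cubes of width $\sim s$ give $\|\partial_s u(\cdot,s)\|_2\le C s^{-1}\sup_\tau\|u(\cdot,\tau)\|_2$, and integration in $s$ yields the claim.

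Next, for $\epsilon>0$ set $u_\epsilon(x,t)\equiv u(x,t+\epsilon)$. Then $u_\epsilon$ solves \eqref{D2} in $\mathbb{R}^{n+1}_+$ with data $g_\epsilon\equiv u(\cdot,\epsilon)\in L^2(\mathbb{R}^n)$: the $L^2$ trace comes from the continuity just established, the non-tangential trace from the pointwise continuity of $u$ on $\{t>0\}$ afforded by De Giorgi--Nash, and the uniform $L^2$ bound is inherited from $u$. Since $L$ has Good Layer Potentials, $-\tfrac12 I+K$ is invertible on $L^2$, and by Lemma \ref{l4.ntstrongconvergence} together with the uniform $L^2$-boundedness of $\mathcal{D}_t$, the function
$$v_\epsilon(x,t)\;\equiv\;\mathcal{D}_t\bigl[(-\tfrac12 I+K)^{-1}g_\epsilon\bigr](x)$$
also solves \eqref{D2} with data $g_\epsilon$. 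Uniqueness (Lemma \ref{l4.ntunique}$(i)$) applied to $u_\epsilon-v_\epsilon$ then gives
\begin{equation*}
u(x,t+\epsilon)\;=\;\mathcal{D}_t\varphi_\epsilon(x),\qquad \varphi_\epsilon\equiv (-\tfrac12 I+K)^{-1}u(\cdot,\epsilon).
\end{equation*}

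Because $\|\varphi_\epsilon\|_2\le C\sup_\tau\|u(\cdot,\tau)\|_2$ uniformly in $\epsilon$, weak compactness supplies a subsequence $\epsilon_k\to 0$ and $\varphi\in L^2(\mathbb{R}^n)$ with $\varphi_{\epsilon_k}\rightharpoonup\varphi$. For fixed $s>0$ and any $h\in L^2(\mathbb{R}^n)$, once $\epsilon_k<s$,
$$\langle u(\cdot,s),h\rangle \;=\; \langle\varphi_{\epsilon_k},\,ad\!j(\mathcal{D}_{s-\epsilon_k})h\rangle \;\longrightarrow\; \langle\varphi,\,ad\!j(\mathcal{D}_s)h\rangle \;=\; \langle\mathcal{D}_s\varphi,h\rangle,$$
where the passage to the limit uses $\sup_k\|\varphi_{\epsilon_k}\|_2<\infty$ together with the $L^2$-continuity of $\tau\mapsto ad\!j(\mathcal{D}_\tau)h=\vec{N}\cdot A^*\nabla S^*_{-\tau}h$ on $(0,\infty)$, which follows from Lemma \ref{l2.10}$(ii)$ (since $\|\partial_\tau\nabla S^*_\tau h\|_2\le C|\tau|^{-1}\|h\|_2$ yields logarithmic modulus of continuity away from $0$). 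Hence $u(\cdot,s)=\mathcal{D}_s\varphi$ for every $s>0$, and Lemma \ref{l4.ntstrongconvergence} now yields both non-tangential and $L^2$ convergence of $u(\cdot,s)$ to $(-\tfrac12 I+K)\varphi$ as $s\to 0^+$, as desired.

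The main obstacle is the second step: establishing the exact pointwise identity $u(\cdot,t+\epsilon)=\mathcal{D}_t\varphi_\epsilon$. This requires verifying carefully that both $u_\epsilon$ and the layer potential $v_\epsilon$ genuinely satisfy all three requirements of \eqref{D2} with data $u(\cdot,\epsilon)$, so that the uniqueness statement of Lemma \ref{l4.ntunique} can be invoked to identify them; the weak hypothesis $\sup_t\|u(\cdot,t)\|_2<\infty$ gives no direct information at the boundary, and it is only the translation trick together with the $L^2$-continuity of $u(\cdot,s)$ for $s>0$ that bridges this gap. Once this identity is in hand, the weak-limit extraction of $\varphi$ and the final appeal to Lemma \ref{l4.ntstrongconvergence} are essentially formal.
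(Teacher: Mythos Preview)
Your proof is correct and follows essentially the same route as the paper: translate $u$ by $\epsilon$, identify the translate with a double-layer potential via uniqueness in (D2), extract a weak $L^2$ limit of the densities (the paper takes the weak limit of the data $u(\cdot,\epsilon)$ instead, which is equivalent up to the bounded invertible operator $(-\tfrac12 I+K)^{-1}$), and pass to the limit in a duality pairing to obtain $u(\cdot,s)=\mathcal{D}_s\varphi$, after which Lemma~\ref{l4.ntstrongconvergence} finishes. The only cosmetic difference is that you need strong $L^2$-continuity of $\tau\mapsto ad\!j(\mathcal{D}_\tau)h$ near $s>0$ to handle a weak--strong pairing, whereas the paper instead uses the interior $t$-continuity of $u$ to write $u(\cdot,t+\varepsilon_k)\to u(\cdot,t)$; both are straightforward.
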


\begin{proof}By Lemma \ref{l4.ntstrongconvergence}, it is enough to show that
$u(\cdot,t) = \mathcal{D}_t h$ for some $h \in L^2(\mathbb{R}^n).$  We follow the argument in
\cite{St2}, pp. 199-200, substituting $\mathcal{D}_t$ for the classical Poisson kernel.
For each $\varepsilon > 0$, set $f_\varepsilon \equiv u(\cdot,\varepsilon).$  Let
$u_\varepsilon$ be the layer potential solution with data $f_\varepsilon$;  i.e.,
$$u_\varepsilon (x,t) \equiv \mathcal{D}_t\left[\left(-\frac{1}{2}I + K \right)^{-1} f_\varepsilon\right](x).$$
We claim that $u_\varepsilon(x,t) = u(x,t+\varepsilon).$
\begin{proof}[Proof of Claim]  Set $U_\varepsilon \equiv  u(x,t + \varepsilon) - u_\varepsilon(x,t).$
We observe that
\begin{enumerate}\item $LU_\varepsilon = 0$ in $\mathbb{R}^{n+1}_+$ (by $t$-independence of coefficients).
\item \eqref{eq4.ntuniform2} holds for $U_\varepsilon$, uniformly in
$\varepsilon > 0$ 
\item $U_\varepsilon (\cdot,0) = 0$ and $U_\varepsilon(\cdot,t) \to 0 \, \,\,n.t.$ and in $L^2$.
\end{enumerate}
(Item (3) relies on interior continuity \eqref{eq1.2} and smoothness in $t$, along with Lemma \ref{l4.ntstrongconvergence}). 
The claim now follows by Lemma \ref{l4.ntunique}.
\end{proof}

We return now to the proof of the Corollary.  By \eqref{eq4.ntuniform2},
$\sup_\varepsilon \|f_\varepsilon\|_2 < \infty.$  Hence, there exists a subsequence
$f_{\varepsilon_k}$ converging in the weak$^*$ topology to some $f \in L^2$.
For arbitrary $g \in L^2$, set $g_1 \equiv ad\!j\left( -\frac{1}{2} I+ K \right)^{-1}
ad\!j (\mathcal{D}_t) g$, and observe that
\begin{eqnarray*}\int_{\mathbb{R}^n} \mathcal{D}_t \left(-\frac{1}{2} I+K\right)^{-1} f \,\overline{g}
& = & \int_{\mathbb{R}^n} f\, \overline{g_1} = \lim_{k \to \infty} \int_{\mathbb{R}^n} f_{\varepsilon_k}
\overline{g_1}\\&=& \lim_{k \to \infty} \int_{\mathbb{R}^n}\mathcal{D}_t \left(-\frac{1}{2} I + K\right)^{-1}
\!f_{\varepsilon_k} \,\overline{g} \\&=& 
\lim_{k \to \infty} \int_{\mathbb{R}^n} u(\cdot,t+\varepsilon_k)\,\overline{g}
=\int_{\mathbb{R}^n} u(\cdot,t)\, \overline{g}.
\end{eqnarray*}

Since $g$ was arbitrary, the desired conclusion follows.
\end{proof}

We conclude this section with a discussion of $n.t.$ convergence of gradients.
\begin{lemma}\label{l4.ntconvergegradient}Suppose that $L,L^*$ satisfy the standard assumptions, and have ``Good Layer Potentials". 
Then for all $f \in L^2$, we have $$P_s\left(\left(\nabla S_{t}\right)|_{t = \pm s}\right) f 
\to \left(\mp\frac{1}{2A_{n+1,n+1}}e_{n+1} + \mathcal{T}\right)f\,\,
\,\,n.t. \text{  and in } L^2.$$  
\end{lemma}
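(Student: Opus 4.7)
The weak $L^2$ limit has already been identified in Lemma \ref{l4.ntjump}(iii); the remaining task is to upgrade to strong $L^2$ and $n.t.$ convergence, after the $P_s$-regularization. The natural route is a uniform non-tangential maximal function bound followed by a density argument, in the spirit of Lemma \ref{l4.ntstrongconvergence}.

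Since $L,L^*$ have ``Good Layer Potentials'', we have $\sup_{t\ne 0}\|\nabla S_t\|_{2\to 2}<\infty$ (via Lemma \ref{l4.nt0} applied to $u=S_tf$ together with the $L^2$-boundedness of $\widetilde N_*(\nabla S_tf)$), and consequently $\sup_{t\ne 0}\|\partial_t S_t\|_{2\to 2}<\infty$. Lemma \ref{l4.nt1}(i) then gives $\|N_*(\partial_tS_tf)\|_2\le C\|f\|_2$, and Lemma \ref{l4.nt1}(iii) yields
\[
\|N_*\bigl(P_s(\nabla S_t)|_{t=\pm s}f\bigr)\|_{L^2(\mathbb R^n)}\le C\|f\|_{L^2(\mathbb R^n)},\qquad\sup_s\|P_s(\nabla S_{\pm s})\|_{L^2\to L^2}\le C.
\]
By standard arguments, these uniform bounds reduce both the strong $L^2$ and the $n.t.$ convergence to the corresponding statement on a dense subclass of $L^2$.

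A convenient dense class is $\mathcal F:=\{f\in L^2:S_0f\in\dot L^2_1 \text{ is represented by an element of } C_0^\infty(\mathbb R^n)\}$, which is dense in $L^2$ by the bijectivity of $S_0:L^2\to\dot L^2_1$ (part of ``Good Layer Potentials'') and the density of $C_0^\infty$ in $\dot L^2_1$. For $f\in\mathcal F$, set $\phi:=S_0f\in C_0^\infty$ and argue componentwise. The horizontal gradient $\nabla_\|S_\tau f$ exhibits no jump across $\{\tau=0\}$, and the strong $L^2$ and $n.t.$ convergence of $\nabla_\|S_{\pm s}f$ to $\nabla_\|\phi$ is obtained using the smoothness of $\phi$, the representation $\nabla_\|S_{\pm s}f-\nabla_\|\phi=\pm\int_0^{\pm s}\partial_\tau\nabla_\|S_\tau f\,d\tau$, and the estimates of Lemmas \ref{l2.10} and \ref{l2.2sidegrad}. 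For the normal component, identity \eqref{eq4.ntDSidentity} reduces matters to the conormal derivative $\vec N\cdot A\nabla S_tf$ plus a horizontal piece handled as above; the conormal derivative has boundary value $(\pm\tfrac12 I+\widetilde K)f$ by Lemma \ref{l4.ntjump}(i), and the weak convergence is upgraded to $n.t.$ and strong $L^2$ convergence (after $P_s$-smoothing) via the uniform $N_*$ bound from Lemma \ref{l4.nt1}(i) and a further density reduction.

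\emph{Main obstacle.} The principal difficulty lies in the verification on $\mathcal F$, particularly for the normal component, where the jump occurs: the boundary value of the conormal derivative is intrinsically defined only in a distributional/variational sense (Lemma \ref{l4.ntconverge}(iii),(iv)), so the $P_s$-smoothing in the transverse variable is indispensable in order to realize the jump in a strong $L^2$ and $n.t.$ pointwise manner---without it, pure $n.t.$ convergence of $(\nabla S_t)|_{t=\pm s}f$ is not to be expected.
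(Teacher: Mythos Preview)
Your high-level reduction (uniform $N_*$ bound from Lemma \ref{l4.nt1}(iii) plus density) is sound, but the verification on the dense class $\mathcal F$ does not go through as written. For the horizontal component you invoke the representation
\[
\nabla_\| S_{\pm s}f-\nabla_\|\phi=\pm\int_0^{\pm s}\partial_\tau\nabla_\| S_\tau f\,d\tau
\]
together with Lemmas \ref{l2.10} and \ref{l2.2sidegrad}. But Lemma \ref{l2.10}(ii) with $m=0$ only yields $\|\partial_\tau\nabla_\| S_\tau f\|_2\le C\|f\|_2/\tau$, which is \emph{not} integrable at $\tau=0$; Lemma \ref{l2.2sidegrad} treats the adjoint operator $(S_t\nabla)$ and carries an even worse power of $t$. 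The smoothness of $\phi=S_0f$ is of no help here, since it says nothing about $S_\tau f$ for $\tau\ne 0$. Your normal-component reduction via \eqref{eq4.ntDSidentity} is then circular: that identity merely expresses $\partial_t S_t$ as a linear combination of the conormal derivative and the horizontal gradient, so one of the two must be handled by an independent argument, and your ``further density reduction'' for the conormal piece is not supplied.

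The paper avoids this obstacle by a different route. First $\partial_t S_t f$ is settled directly, for \emph{every} $f\in L^2$, via the Fatou theorem (Corollary \ref{4.ntfatou}), since $\partial_t S_t f$ is itself a solution with $\sup_t\|\cdot\|_2<\infty$. For the tangential part the key move is to shift the horizontal gradient onto the mollifier:
\[
P_t(\nabla_\| S_tf)=\nabla_xP_t(S_tf)=t^{-1}\vec Q_t\!\left(\int_0^t\partial_s S_sf\,ds\right)+P_t(\nabla_\| S_0f),
\]
where $\vec Q_t\equiv t\nabla_xP_t$ kills constants. Now the integrand is $\partial_s S_sf$ (uniformly bounded in $L^2$), not $\nabla_\|\partial_s S_sf$. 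Writing $\partial_s S_sf=(\partial_s S_sf-Vf)+Vf$ with $Vf=tr(\partial_t S_tf)$, the $Vf$-contribution is handled by Lebesgue differentiation, and the remaining error $I_1$ is reduced---via the adjoint of \eqref{eq4.ntDSidentity} and Corollary \ref{cor4.ntgauss-green}---to known $n.t.$ convergences of $(S_t\nabla_\|)\cdot\vec f$ and of $\mathcal D_t$, for which the weak-$L^2$ maximal bounds of Lemma \ref{l4.nt1}(v),(vi) permit the density extension.
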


\begin{proof}We treat only the case of the upper half space, as
the proof in the other case is the same.  Since the weak limit has already been established (Lemma
\ref{l4.ntjump}) for
$\nabla S_{t}$, it is a routine matter to verify that the strong and $n.t.$ limits for
$P_t (\nabla S_t)$ will take the same value, once the existence of those limits has been established.
It is to this last point that we therefore turn our attention.  By Lemma \ref{l4.nt1}
and the dominated convergence theorem, it is enough to establish
$n.t.$ convergence.

The non-tangential 
convergence of $\partial_t S_t$ follows immediately from the ``Fatou Theorem" just proved; 
a simple real variable argument yields the same 
conclusion for $P_t \partial_t S_t.$  We may therefore replace
$\nabla$ by $\nabla_\|$.  On the other hand, we shall still need
to consider the boundary trace of $\partial_t S_t f$, which for the duration of this proof 
we denote by $Vf$.    
Fix now $x_0 \in \mathbb{R}^n$.  For $|x-x_0|<t$, we write
\begin{eqnarray*}P_t\left(\nabla_\| S_t f\right)(x)&=& \nabla_xP_t\left(\int_0^t\partial_s S_s f ds \right)(x)
+P_t\left(\nabla_\|S_0 f\right)(x)\\ &\equiv & \vec{Q}_t \left(\frac{1}{t}\int_0^t \partial_s S_s f ds\right)(x)
+P_t\left(\nabla_\|S_0 f\right)(x) \equiv I + II,
\end{eqnarray*}
where $\vec{Q}_t 1 = 0$.  By standard facts for approximate identities, $II \to \nabla_\| S_0 f \,n.t.$.
Also, $$I =  \vec{Q}_t \left(\frac{1}{t}\int_0^t \big(\partial_s S_s f -Vf \big)\,ds\right)(x)+
\vec{Q}_t\left(Vf - Vf(x_0)\right)(x) \equiv I_1 + I_2.$$
It is straightforward to verify that $I_2 \to 0$ as $t \to 0$, if
$x_0$ is a Lebesgue point for the $L^2$ function $Vf$. 
The term $I_1$ is more problematic.  We first observe that by
Lemma \ref{l4.ntconverge},
\begin{equation}\label{eq4.nterror1} \left|\vec{Q}_t \left(\frac{1}{t}\int_0^t 
\big(S_s f -S_0f \big)\,ds\right)(x)\right|
\leq CtM\left(\widetilde{N}_*(\nabla S_tf)\right)(x_0)\to 0\end{equation}
for $a.e. \,x_0$.  Thus also for $\vec{f} \in C_0^\infty(\mathbb{R}^n)$, we have
\begin{equation}\label{eq4.nterror2} \left|\vec{Q}_t \left(\frac{1}{t}\int_0^t \left((S_s\nabla_\|)\cdot\vec{f} -(S_0\nabla_\|)\cdot\vec{f} 
\right)\,ds\right)(x)\right|
\to 0\,\,n.t..\end{equation}  
By Lemma \ref{l4.nt1}$(v)$, the density of 
$C_0^\infty$ in $L^2$, and the fact that $\vec{Q}_t$ is dominated by the Hardy-Littlewood maximal operator which
is bounded from $L^{2,\infty}$ to itself, the latter convergence continues to hold
for $\vec{f} \in L^2$.  Moreover, if $u_0$ belongs to the dense class
$\{S_0 \dv_\| \vec{g}: \vec{g} \in C_0^\infty\} $, by 
Corollary \ref{cor4.ntgauss-green} and \eqref{eq4.nterror1},
we have that
\begin{equation}\label{eq4.nterror3}\left|\vec{Q}_t \left(\frac{1}{t}\int_0^t \left(\mathcal{D}_t u_0 -
tr(\mathcal{D}_t u_0)\right)\,ds\right)(x)\right| \to 0\,\,n.t.,\end{equation}
and again this fact remains true for $u_0$ in $L^2$, by Lemma \ref{l4.nt1}$(vi)$
and our previous observation concerning the action of the maximal operator on weak $L^2$.
Combining \eqref{eq4.nterror2} and \eqref{eq4.nterror3} with the adjoint version of the identity
\eqref{eq4.ntDSidentity}, we obtain convergence to $0$ for the term $I_1$ since every
$f\in L^2$ can be written in the form $f=A_{n+1,n+1}^* h, \, h \in L^2.$  
\end{proof}

\section{Proof of Theorem~\ref{t1.10}: preliminary arguments\label{s3}}

As noted above, the De Giorgi-Nash estimate \eqref{eq1.2} is stable under 
$L^{\infty}$ perturbation of the coefficients. 
Thus, for $\epsilon_{0}$ sufficiently small, solutions of  $L_{1}u=0, \, L_1^*w=0$
satisfy \eqref{eq1.2} and \eqref{eq1.3}. In particular,  the 
results of Section \ref{s2} apply to the fundamental solutions and layer potentials
$\Gamma_0,\,S^0_{t}$ and $\Gamma_1,\,S^1_{t}$ corresponding to $L_0$ and $L_1$, respectively.

We claim that the conclusion of Theorem~\ref{t1.10} will follow, once we have proved
\begin{equation} \||t\nabla \partial_tS_t^1\||_{op}\,+\,
\sup_{t>0}\Vert\nabla S^1_{t}\Vert_{2\rightarrow 2}  \leq C\label{eq3.1}\end{equation}
(recall that $\nabla \equiv \nabla_{x,t}$).
Indeed, by the symmetry of our hypotheses, similar bounds will then hold in the lower half space, and 
for $S_t^{L_1^*}$.  
Now, by $t$-independence, $-(S^1_{t}D_{n+1})=D_{n+1}S^1_{t}$.  Moreover,
if $\mathcal{J}_t(x,y)$ denotes the kernel of
$(S^1_{t}\nabla_{\|})$, and $\Gamma_1^{\ast}$ is the 
fundamental solution for the adjoint operator $L_{1}^{\ast}$,
then the kernel of $ad\!j(S^1_{t}\nabla_{\|})$ is 
\begin{equation*}
\overline{\mathcal{J}_t(y,x)}  =\nabla_{x}\overline{\Gamma_1(y,t,x,0)}
=\nabla_{x}\Gamma_1^{\ast}(x,0,y,t)=\nabla_{x}\Gamma_1^{\ast}(x,-t,y,0).
\end{equation*}
Consequently,
$ad\!j(S^1_{t}\nabla_{\|})=\nabla_\|S^{L^*_1}_{-t}$, so that $L^2$ boundedness of
$(S^1_t \nabla)$ (and hence of $\mathcal{D}_t^1$) 
follows from that of $\nabla S_{-t}^{L_1^*}$.
Thus, by  Lemma \ref{l4.ntjump}, we also obtain $L^{2}$ bounds for 
$K^{1},\widetilde{K}^{1}$  and $\mathcal{T}^{1}$.  Appropriate non-tangential control follows from
Lemma \ref{l4.nt1}.  Moreover, since we have allowed complex
coefficients, analytic perturbation
theory implies that \begin{equation*}
\Vert K^{0}-K^{1}\Vert_{2\rightarrow2}+\Vert\widetilde{K}^{0}-\widetilde{K}^{1}\Vert_{2\rightarrow2}+\Vert
\,\mathcal{T}^{0}-\mathcal{T}^{1}\Vert_{2\rightarrow2}\leq C\Vert A^0-A^1\Vert_{\infty}.\end{equation*}
The method of continuity then yields the invertibility of $\pm\frac{1}{2}I+K^{1}:L^{2}\rightarrow L^{2}$,
$\pm\frac{1}{2}I+\widetilde{K}^{1}:L^{2}\rightarrow L^{2}$ and $S^1_{0}\equiv S^1_{t}|_{t=0}:L^{2}\rightarrow \dot{L}_{1}^{2}$.
It therefore suffices to prove \eqref{eq3.1}.

\begin{lemma}\label{l3.3} Suppose that $L,L^*$ satisfy the standard assumptions. 
For $f\in C_0^\infty$,  $\eta > 0$, and $t_0 \geq 0$, we have
\begin{eqnarray}\label{eq3.4}  \| \nabla_{\|}S_{t_0}f\|_{2} &\leq& 
C\left(\|N_*\big(P_t \partial_t S_{t+t_0}f\big)\|_{2}+\| |
t\nabla \partial _tS_t f| \| + \|f\|_2\right)\\
\label{eq3.4a}  \| \nabla_{\|}S^\eta_{t_0}f\|_{2} &\leq& 
C\left(\|N_*\left(P_t \partial_t S^\eta_{t+t_0}f\right)\|_{2}+\| |
t\nabla \partial _tS^\eta_t f| \| + \|f\|_2\right)\\
\label{eq3.5}\| |t\nabla \partial_t S_t f|\|&\leq& C\| | t\partial^2_t S_t f|\|+C\|f\|_2\\
\label{eq3.5a}
\| |t\nabla \partial_t S^\eta_t f|\|&\leq& C\| | t\partial^2_t S^\eta_t f|\|+C\|f\|_2.
\end{eqnarray} The analogous bounds hold also in the lower half-space.\end{lemma}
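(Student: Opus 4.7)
The four estimates split naturally into two pairs: the uniform $L^2$-bounds \eqref{eq3.4}, \eqref{eq3.4a} for the horizontal gradient of the single layer at height $t_0$, and the square-function bounds \eqref{eq3.5}, \eqref{eq3.5a} controlling the full-gradient square function of $\partial_t S_t$ by its $t$-derivative counterpart. In each pair, the $\eta$-regularized version follows from the same argument as the un-mollified version, because $\varphi_\eta$ commutes through every differential and convolution operator in play, and Lemma \ref{l2.approx} supplies the minor approximation facts needed. I will therefore sketch only \eqref{eq3.5} and \eqref{eq3.4}.

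For \eqref{eq3.5}, I set $u(\cdot,t)\equiv\partial_t S_t f$, which satisfies $Lu=0$ in $\mathbb{R}^{n+1}_+$ by $t$-independence of $A$, and write $\tilde\beta\equiv\iint t\,|\nabla_x u|^2\,dx\,dt=\||t\nabla_x\partial_t S_t f\||_+^2$. The principal $n\times n$ submatrix $A_\|$ inherits the ellipticity of $A$, so $\lambda\,\tilde\beta\le\Re e\iint t\,A_\|\nabla_x u\cdot\overline{\nabla_x u}\,dx\,dt$. I apply Lemma \ref{l2.identity} at each $t>0$ with $u$ itself, $g=0$, and $F=\bar u(\cdot,t)$ (the latter is in $L^2\cap\dot L^2_1$ with continuous $t$-dependence by Lemmas \ref{l2.10} and \ref{l2.2sidegrad}). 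This converts $\tilde\beta$ into
\[
\sum_{j=1}^{n+1}\iint t\,A_{n+1,j}\,\partial_{x_j}\partial_t u\cdot\bar u\,dx\,dt\;-\;\sum_{i=1}^{n}\iint t\,A_{i,n+1}\,\partial_t u\cdot\overline{\partial_{x_i}u}\,dx\,dt.
\]
The second sum is bounded via Cauchy-Schwarz in $dx\,dt$, using the identifications $\iint t|\partial_t u|^2dxdt=\||t\partial_t^2 S_t f\||^2$ and $\iint t|\nabla_x u|^2dxdt=\tilde\beta$, by $C\||t\partial_t^2 S_t f\||_+\sqrt{\tilde\beta}$; a small fraction of $\tilde\beta$ is absorbed by AM-GM. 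For the first sum I integrate by parts in $t$: $\partial_t(\partial_{x_j}u\cdot\bar u)=\partial_{x_j}\partial_t u\cdot\bar u+\partial_{x_j}u\cdot\overline{\partial_t u}$ reduces each $j\le n$ term to one of the second-sum shape, while $\partial_t(\partial_t u\cdot\bar u)=\partial_t^2 u\cdot\bar u+|\partial_t u|^2$ for $j=n+1$ yields the desired $\iint t|\partial_t u|^2dxdt=\||t\partial_t^2 S_t f\||^2$ plus a lower-order piece $\iint A_{n+1,n+1}\partial_t u\,\bar u\,dx\,dt$. Taking real parts and performing a second IBP via $\Re e(\partial_t u\,\bar u)=\tfrac12\partial_t|u|^2$ turns that piece into $\tfrac12\int A_{n+1,n+1}[\,|u|^2\,]_0^\infty dx$, bounded by $C\|f\|_2^2$ using $\|\partial_t S_t f\|_2\le C\|f\|_2$ (and decay of $\|\partial_t S_t f\|_2$ at infinity from Lemma \ref{l2.1}). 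The $t$-boundary terms $[t\langle\cdots\rangle]_0^\infty$ are likewise $\le C\|f\|_2^2$ by the $L^2$-bounds of Lemma \ref{l2.10} applied to $u$ and $\partial_t u$.

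For \eqref{eq3.4}, I begin (for $f\in C_0^\infty$, then extend by density) from the vertical fundamental-theorem-of-calculus identity
\[
S_{t_0}f=-\int_0^\infty\partial_t\bigl[P_t\,S_{t+t_0}f\bigr]dt=-\!\int_0^\infty \!P_t\,\partial_t S_{t+t_0}f\,dt\;-\!\int_0^\infty\!(\partial_t P_t)\,S_{t+t_0}f\,dt,
\]
valid because $P_0=I$ in $L^2$ and $P_T S_{T+t_0}f\to 0$ in $L^2$ as $T\to\infty$ by the decay of $\Gamma$ in Lemma \ref{l2.1}. Applying $\nabla_\|$ (which commutes with $P_t$ and $\partial_t P_t$) and testing against $\vec g\in C_0^\infty(\mathbb{R}^n,\mathbb{C}^n)$ with $\|\vec g\|_2=1$, the first integral equals $\int_0^\infty\langle\nabla_\|\partial_t S_{t+t_0}f,P_t\vec g\rangle\,dt$; Cauchy-Schwarz in $dx\,dt$ against the standard Littlewood-Paley bound $(\iint t|\partial_t P_t\vec g|^2dxdt)^{1/2}\lesssim\|\vec g\|_2$---after an IBP in $t$ to convert $P_t\vec g$ to $t\partial_t P_t\vec g$---controls it by $C\,\||t\,\nabla\partial_t S_t f\||\cdot\|\vec g\|_2$. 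For the second integral I move $\partial_t P_t$ onto $\vec g$ by self-adjointness and integrate by parts in $x$, rewriting it as a pairing of $P_t\partial_t S_{t+t_0}f$ against $\dv_\|(\partial_t P_t\vec g)$; the pointwise bound $|P_t\partial_t S_{t+t_0}f(x)|\le N_*(P_t\partial_t S_{t+t_0}f)(x_0)$ for $(x,t)\in\gamma(x_0)$, combined with a tent-space/Carleson pairing, yields the bound $C\|N_*(P_t\partial_t S_{t+t_0}f)\|_2\cdot\|\vec g\|_2$. The residual boundary and lower-order contributions give $C\|f\|_2\cdot\|\vec g\|_2$ via Lemma \ref{l2.10}.

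The main obstacle is the second integral in \eqref{eq3.4}: since $\|\partial_t P_t\|_{L^2\to L^2}\sim t^{-1}$, any naive absolute-value estimate diverges logarithmically, and the necessary cancellation must be captured by a tent-space/$N_*$ pairing rather than by pointwise bounds. Arranging the identity so that this pairing reproduces precisely the quantity $\|N_*(P_t\partial_t S_{t+t_0}f)\|_2$---rather than some other $N_*$-type quantity or a square-function with the wrong weight---is the most delicate technical point of the argument.
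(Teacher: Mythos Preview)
Your approach to \eqref{eq3.5} has a genuine circularity. After the integration by parts in $t$ on the term $\iint t\,A_{n+1,n+1}\,\partial_t^2 u\cdot\bar u\,dx\,dt$ (with $u=\partial_t S_t f$), you produce a ``lower-order piece'' $\iint A_{n+1,n+1}\,\partial_t u\cdot\bar u\,dx\,dt$ and a boundary contribution, both of which you control using $\|\partial_t S_t f\|_2\le C\|f\|_2$. But that bound is \emph{not} part of the standard assumptions: it is precisely the content of \eqref{eq3.7}, and the whole point of Lemma~\ref{l3.3} is to serve as input to the bootstrap that eventually yields \eqref{eq3.7}. (There is also a minor slip: $A_{n+1,n+1}$ is complex, so $\Re e(A_{n+1,n+1}\,\partial_t u\,\bar u)\neq\tfrac12 A_{n+1,n+1}\partial_t|u|^2$.) The paper's argument avoids the circularity by a more elementary route: one integrates by parts in $t$ directly on $\iint t\,|\nabla\partial_t S_t f|^2\,dx\,dt$ via $t\,dt=\tfrac12\,d(t^2)$, obtaining boundary terms of the form $t^2\|\nabla\partial_t S_t f\|_2^2$, which are controlled by Lemma~\ref{l2.10}(ii) (this \emph{is} available under the standard assumptions), plus a main term bounded by $\||t^2\nabla\partial_t^2 S_t f\||$; the latter is then reduced to $\||t\partial_t^2 S_t f\||$ by Caccioppoli on Whitney boxes. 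No appeal to $\|\partial_t S_t f\|_2$ is needed.

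Your treatment of \eqref{eq3.4} is also problematic. The second integral $\int_0^\infty(\partial_t P_t)S_{t+t_0}f\,dt$ is, as you note, logarithmically divergent under absolute-value bounds, and your description of how to extract cancellation (``rewriting it as a pairing of $P_t\partial_t S_{t+t_0}f$ against $\dv_\|(\partial_t P_t\vec g)$'') does not parse: after moving $\nabla_\|$ and $\partial_t P_t$ around, one is left with $\langle\nabla_\| S_{t+t_0}f,(\partial_t P_t)\vec g\rangle$, not a pairing against $P_t\partial_t S_{t+t_0}f$. More fundamentally, your scheme never uses the equation $Lu=0$, and there is no mechanism to convert horizontal derivatives of $S_{t+t_0}f$ into $t$-derivatives. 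The paper's proof is substantially different: one writes $\langle A_\|\nabla_\| S_{t_0}f,\nabla_\| F\rangle$ (after a Hodge decomposition of the test vector) as $-\int_0^\infty\partial_t\langle A_\|\nabla_\| e^{-t^2L_\|}S_{t+t_0}f,\nabla_\| e^{-t^2(L_\|)^*}F\rangle\,dt$, then uses Lemma~\ref{l2.identity} to replace $L_\| S_{t+t_0}f$ by terms involving only $t$-derivatives of $S_{t+t_0}f$. The $N_*$ term in the statement arises from a genuine Carleson measure estimate, namely that $|t e^{-t^2L_\|}\dv_\|\vec b|^2\,t^{-1}dx\,dt$ is Carleson for $\vec b\in L^\infty$, which is the Kato square root theorem \cite{AHLMcT}; the ordinary approximate identity $P_t$ does not produce this structure.
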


Before proving the lemma, 
let us use it to reduce the proof of Theorem \ref{t1.10} to two main estimates,
whose proofs we shall give in the next two sections.
We claim that it suffices to prove that for all $f\in C_{0}^{\infty}$,  and $\eta \in (0,10^{-10})$,
we have \begin{equation}
\Vert|t\partial_{t}^{2}S^{1,\eta}_{t}f|\Vert _{all} \leq C\epsilon_0\left( 
\||t\nabla \partial_tS_t^{1,\eta}f\||_{all}
+\Vert N^{db}_*\left(P_t\partial_{t}S^{1,\eta}_{t}f
\right)\Vert_{2}+\sup_{t\neq0}
\|\nabla S_t^{1,\eta}f\|_2\right) +C\Vert
f\Vert_{2}\label{eq3.6}\end{equation}
\begin{equation}
\sup_{t\neq 0} \Vert \partial_{t}S^{1,\eta}_{t}f\Vert_{2} \leq 
C \epsilon_0\left(\||t\nabla \partial_t S_t^{1,\eta}f \||_{all} + 
\|N_*^{db} \left(P_t\partial_t S_t^{1,\eta}f\right)\|_2 + \sup_{t\neq0}
\|\nabla S_t^{1,\eta}f\|_2\right)+C\Vert
f\Vert_{2} 
,\label{eq3.7}\end{equation}
where $N_*^{db}$ denotes the non-tangential maximal operator 
with respect to the double cone
$\gamma^{db}(x)\equiv\gamma^+(x) \cup \gamma^-(x) \equiv
\{(y,t) \in \mathbb{R}^{n+1}: |x-y|<|t|\}.$
Indeed, for $\epsilon_0$ sufficiently small, Lemma \ref{l2.approx} $(iii)$ and \eqref{eq3.5a}
allow us to hide the small triple bar norm in \eqref{eq3.6}, so that
\begin{equation}\Vert|t\nabla\partial_{t}S^{1,\eta}_{t}f|\Vert_{all}  \leq C\epsilon_0
\left(\Vert N^{db}_*\left(P_t\partial_{t}S^{1,\eta}_{t}f
\right)\Vert_{2}+ \sup_{t\neq0}
\|\nabla S_t^{1,\eta}f\|_2\right) +C\Vert f\Vert_{2}.\label{eq3.9a}\end{equation}
Using \eqref{eq3.4a}, \eqref{eq3.9a} and hiding the small gradient term via 
Lemma \ref{l2.approx} $(i,ii)$,
we obtain
\begin{equation}\label{eq3.9b}\sup_{t\neq 0}\|\nabla S_t^{1,\eta}f\|_2 \leq 
C\left(\sup_{t_0\geq 0}\|N_*^{db}\left(P_t\partial_t 
S_{t\pm t_0}^{1,\eta}f\right)\|_2 + \sup_{t\neq 0}\|\partial_tS^{1,\eta}_tf\|_2 +\|f\|_2\right),\end{equation}
where the notation $N_*^{db}\left(P_t\partial_t S_{t\pm t_0}^{1,\eta}f\right)$ is interpreted to mean
$t+t_0$ in the upper cone $\gamma^+$, and $t-t_0$ in the lower cone $\gamma^-.$
Feeding the latter estimate back into \eqref{eq3.9a}, we obtain 
\begin{equation}\Vert|t\nabla\partial_{t}S^{1,\eta}_{t}f|\Vert_{all}  \leq C\epsilon_0
\left(\sup_{t_0\geq 0}\|N_*^{db}\left(P_t\partial_t 
S_{t\pm t_0}^{1,\eta}f\right)\|_2 + \sup_{t\neq 0}\|\partial_tS^{1,\eta}_tf\|_2\right) +C\Vert f\Vert_{2}.
\label{eq3.9c}\end{equation}
Combining \eqref{eq3.7}, \eqref{eq3.9b} and \eqref{eq3.9c}, we have 
$$\sup_{t\neq 0} \Vert \partial_t S^{1,\eta}_{t}f\Vert_{2}\,  \leq \,C\Vert
f\Vert_{2}+C\epsilon_0\left(\sup_{t_0\geq 0}\|N_*^{db}\left(P_t\partial_t S_{t\pm t_0}^{1,\eta}f\right)\|_2 +
\sup_{t\neq 0}\|\partial_tS^{1,\eta}_tf\|_2\right).$$
Since $f \in C_0^\infty$, there is a large cube $Q$ centered at $0$ containing the support of $f$.
By Lemma \ref{l4.nt1} $(iv)$, taking a supremum over
all $f\in C_0^\infty(Q)$, with $\|f\|_{L^2(Q)} = 1$, we have
$$\sup_{t\neq 0} \Vert \partial_{t}S^{1,\eta}_{t}\Vert_{L^2(Q)\to L^2(\mathbb{R}^n)}\,  \leq \,C\left(
1+\epsilon_0
\sup_{t\neq 0}\|\partial_tS^{1,\eta}_t\|_{L^2(Q) \to L^2(\mathbb{R}^n)}\right).$$
Using Lemma \ref{l2.approx} $(vi)$, we may hide the small term to obtain
\begin{equation}
\sup_{t\neq 0} \Vert \partial_{t}S^{1,\eta}_{t}\Vert_{L^2(Q)\to L^2(\mathbb{R}^n)}\,  \leq \,C
\label{eq3.10a}\end{equation}
uniformly in $Q$.   Thus, letting $\ell(Q) \to \infty$, and then $\eta \to 0$, we obtain by Lemma
\ref{l2.approx} $(iv)$ that
\begin{equation}\label{eq3.11a}\sup_{t\neq0} \|\partial_t S_t^1 \|_{2\to 2} \leq C.\end{equation}
In addition, \eqref{eq3.10a}, Lemma \ref{l4.nt1} $(iv)$ and a limiting argument as 
$\ell(Q) \to \infty$ imply that
$$\sup_{t_0\geq 0}\|N_*^{db}\left(P_t \partial_t S_{t\pm t_0}^{1,\eta} f\right)\|_2\leq C
\|f\|_2,\,\,\,f\in L^2(\mathbb{R}^n).$$
The latter estimate,  \eqref{eq3.9c}, \eqref{eq3.10a}  and Lemma \ref{l2.approx} $(v)$
yield the bound for the first term in
\eqref{eq3.1}.  The bound for the second term in \eqref{eq3.1} follows from \eqref{eq3.4},
the bound just established for $\||t\nabla\partial_tS^1_t\||_{op},$ 
the fact that $N_*\left(P_t \partial_t S_{t+t_0}f\right) \leq
CM\left(N_*(\partial_t S_t f)\right)$, Lemma \ref{l4.nt1} $(i)$ and \eqref{eq3.11a}.

The estimates \eqref{eq3.6} and \eqref{eq3.7} are the heart of the matter, and will be proved in sections \ref{s4} and \ref{s5}, respectively.

We return now to the proof of the lemma.

\begin{proof}[Proof of Lemma \ref{l3.3}] We prove \eqref{eq3.5} first.  
We have that $ \| | t\nabla \partial_t S_t f|\|^2 = $
\begin{equation*}\begin{split}\lim_{\varepsilon \to 0}
\| | t\nabla \partial_t S_t f|\|^2(\varepsilon) &\equiv \lim_{\varepsilon \to 0}\int_{\mathbb{R}^n}
\int^{1/\varepsilon}_\varepsilon \nabla
\partial_t S_t f\cdot \overline{\nabla \partial_t S_t f} tdt\\ &=-\frac{1}{2}\lim_{\varepsilon \to 0}
\int_{\mathbb{R}^n}\int^{1/\varepsilon}_\varepsilon
\partial_t (\nabla \partial_t S_t f\cdot
\overline{\nabla \partial_tS_t f}) t^2dt+ \text{ ``OK"},\end{split}\end{equation*}
where we may use Lemma \ref{l2.10}$(ii)$ to dominate the ``OK" boundary terms
by $C\|f\|^2_2$.  By Cauchy's inequality, we then obtain that
\begin{equation*}\| | t\nabla \partial_t S_tf| \|^2(\varepsilon)\leq \delta \,\| |t\nabla \partial_t S_t f| \|^2
(\varepsilon)+\frac{C}{\delta} \||t^2\nabla
\partial ^2_t S_t f| \|^2(\varepsilon) +C\|f\|_2^2,\end{equation*} where $\delta$ is at our disposal. For $\delta$ small, we can hide the first
term. The second term is bounded by 
$\| |t\partial_t^2S_t f|\|$, as may be seen by splitting $\mathbb{R}^{n+1}_+$ into Whitney boxes, and applying
Caccioppoli's inequality. The bound \eqref{eq3.5} now follows.

The proof of \eqref{eq3.5a} is similar.  We write
$$\||t\nabla \partial_t S_t^{1,\eta}f\||^2 = \int_0^{2\eta} \!\int_{\mathbb{R}^n} + \int_{2\eta}^\infty 
\!\int_{\mathbb{R}^n} \equiv I + II.$$  Term $II$ may be 
handled just like \eqref{eq3.5}, since by definition \eqref{eq2.def},
$$|t\nabla \partial_t S_t^\eta f|
\leq C\left(\varphi_\eta\ast\left(1_{s>\eta}|s\nabla \partial_s S_s f|\right)\right)(t),\,\,\,\,t>2\eta,$$ and  
$u(x,t) \equiv \partial_t^2 S_t^\eta f(x)$ solves
$Lu=0$ in the half space $\{t>\eta\}$.  We omit the details.
To bound term $I$, we note that by definition \eqref{eq2.def}, 
$\partial_t S_t^\eta f(x)= L^{-1}(D_{n+1} f_\eta)(x,t)$,
where $f_\eta(y,s)\equiv f(y) \varphi_\eta(s),$
so that
$$|I| \leq C\eta \iint |\nabla L^{-1} (D_{n+1} f_\eta)|^2 dx dt \leq C\eta \left(\int|\varphi_\eta(t)|^2dt\right)
\|f\|_2^2 = C\|f\|_2^2,$$
where we have used that $\nabla L^{-1} \dv : L^2(\mathbb{R}^{n+1}) \to L^2(\mathbb{R}^{n+1}).$

Next, we prove \eqref{eq3.4}. By the ellipticity of the sub-matrix $A_{\|}$, we have that
\begin{equation*}\| \nabla_{\|}S_{t_0}f\|_2 \leq C\| A_{\|}\nabla_{\|}S_{t_0} f\|_2.\end{equation*} 
Now let $\vec{g}\in
C_0^\infty(\mathbb{R}^n ,\mathbb{C}^n)$, with $\| \vec{g}\|_{2}=1$.  By the Hodge decomposition 
\cite[p. 116]{AT}, we have that
$\vec{g}=\nabla_{x}F+\vec{h}$, where $F\in \dot{L}^2_1(\mathbb{R}^n)$, $\| \nabla_x F\|_2\leq C\| \vec{g}\|_2$ ($C$ depending
only on ellipticity), $h\in
L^2(\mathbb{R}^n)$ and $\dv_{\|}(A_{\|})^\ast
\vec{h}=0$ in the sense that $\int A_{\|}\nabla_{\|}\zeta \cdot \overline{\vec{h}}=0$ for all $\zeta \in
\dot{L}^2_1$.  Lemma 2.9, with $m = -1$, ensures that $S_{t_0}f\in \dot{L}^2_1$, (albeit without
quantitative bounds).  Thus, for $f\in C_0^\infty(\mathbb{R}^n)$, we have
$$\langle A_{\|}\nabla_{\|}S_{t_0}f,\vec{g}\rangle = \langle A_{\|}\nabla_{\|}S_{t_0} f,\nabla_{\|}F\rangle,$$ and it suffices to bound the latter expression with $F \in C^\infty_0$.  Now, 
\begin{multline*}  \langle A_{\|}\nabla_{\|}S_{t_0} f,\nabla_{\|}F\rangle =
-\int^\infty_0\partial_t\langle A_{\|}\nabla_{\|}e^{-t^2L_{\|}}S_{t + t_0}f,\nabla_{\|}e^{-t^2(L_{\|})^\ast}F\rangle
dt\\ =2\! \int^\infty_0\!\left\{\langle A_{\|}\nabla_{\|}tL_{\|}e^{-t^2L_{\|}}S_{t+t_0}f,\nabla_{\|}e^{-t^2(L_{\|})^\ast}F\rangle \,+\, \langle A_{\|}\nabla_{\|}e^{-t^2L_{\|}}S_{t+t_0}f,
\nabla_{\|}t(L_{\|})^\ast e^{-t^2(L_{\|})^\ast}F\rangle \right\}dt\\
-\,\int^\infty_0 \langle A_{\|}\nabla_{\|}e^{-t^2L_{\|}}\partial_tS_{t+t_0}f,\nabla_{\|}e^{-t^2(L_{\|})^\ast }F\rangle dt\,\, =\,\,
I+II-III.\end{multline*}
Integrating by parts, we see that
\begin{multline}\label{eq3.8} |I+II| =4\left|\int^\infty_0 \int_{\mathbb{R}^n}
\left(L_{\|}e^{-t^2L_{\|}}S_{t+t_0}f(x)\right)\,\left(\overline{(L_{\|})^\ast
e^{-t^2(L_{\|})^\ast}F(x)}\right)\,tdxdt\right|\\ \leq 4\| | te^{-t^2L_{\|}}L_{\|}S_{t+t_0}f|\| \,\||t(L_{\|})^\ast
e^{-t^2(L_{\|})^\ast}F|\|\leq C\| | te^{-t^2L_{\|}}L_{\|}S_{t+t_0}f|\|
\,\|\nabla F\|_2,\end{multline} since, by \cite{AHLMcT}, applied to $(L_{\|})^*$, we have that
$\| |t(L_{\|})^\ast e^{-t^2(L_{\|})^\ast}F|\| \leq C\|\nabla F\|_2.$ 
We consider now the first factor on the right side of
\eqref{eq3.8}. Since $u(x,t)\equiv S_{t+t_0}f(x)$ solves $Lu=0$, we have 
\begin{equation*}
L_{\|}S_{t+t_0}f=\sum^n_{i=1} D_iA_{i,n+1} D_{n+1} S_{t+t_0}f+\sum_{j=1}^{n+1}
A_{n+1,j}D_jD_{n+1} S_{t+t_0}f\,\equiv\, \Sigma_1 +\Sigma_2,\end{equation*}
in the weak sense of Lemma \ref{l2.identity}. Since 
$e^{-t^2L_{\|}}:L^2\to L^2$
uniformly in $t$, we obtain
\begin{equation*}\| | te^{-t^2L_{\|}}\Sigma_2 |\| \leq C\| |t\nabla \partial_t S_{t+t_0}f|\| \leq C \| |t\nabla
\partial _t S_t f|
\|\end{equation*} which is one of the allowable terms in the bound that we seek. Also,
\begin{equation}\label{eq3.9} te^{-t^2L_{\|}}\Sigma_1 = R_t
\partial _t S_{t+ t_0}f +\sum^n_{i=1}
(te^{-t^2L_{\|}}D_iA_{i,n+1})P_t\partial_tS_{t+ t_0}f,\end{equation} 
where, by the familiar ``Gaffney estimate"(e.g., \cite{AHLMcT}, pp. 636-637), the operator
\begin{equation*}R_t\equiv \sum_{i=1}^n \left(te^{-t^2L_{\|}}D_iA_{i,n+1}-(te^{-t^2L_{\|}}
D_iA_{i,n+1})P_t \right)\end{equation*} satisfies
the bound \eqref{eq2.11} for every $m\geq 1$ (indeed, it satisfies a stronger exponential decay estimate). Moreover,
$R_t1=0$, and $R_t:L^2\to L^2$.  Thus, by Lemma~\ref{l2.15} we have
\begin{equation*}| \|R_t\partial _tS_{t+ t_0}f\| | \leq C |\|t\nabla \partial_tS_{t+ t_0}f\| |\leq C|\| t\nabla
\partial_tS_t f\| |\end{equation*} as desired. In addition, by \cite{AHLMcT}, we have that $|te^{-t^2L_{\|}}\dv_{\|}\vec{b}|^2
\frac{dx dt}{t}$ is a Carleson measure for all $\vec{b}\in 
L^\infty (\mathbb{R}^n,\mathbb{C}^n)$. Therefore, 
by Carleson's Lemma, the triple bar norm of the last term in \eqref{eq3.9}
is dominated by
$\|N_\ast (P_t\partial_t S_{t+t_0}f)\|_2.$

It remains to handle the term III. Integrating by parts in $t$, we obtain
\begin{equation}\label{eq3.10}-III  = \int^\infty_0 \langle A_{\|}\nabla_{\|}e^{-t^2L_{\|}}\partial^2_t
S_{t+t_0}f,\nabla_{\|}e^{-t^2(L_{\|})^\ast} F\rangle tdt \,\, + \, \text{ ``easy"},\end{equation} where the two ``easy
terms" arise when $\partial_t$ hits either $e^{-t^2L_{\|}}$ or $e^{-t^2(L_{\|})^\ast }$. These two easy terms
may be handled by an argument similar to, but simpler than the one used to treat \eqref{eq3.8} above. The main term in \eqref{eq3.10} is dominated by
\begin{equation*} | \|te^{-t^2L_{\|}}\partial^2_t S_{t+t_0}f\| | \,\, |\|t(L_{\|})^\ast e^{-t^2(L_{\|})^\ast}F\| |
\leq C |\|t\partial^2_t S_tf\| | \,\, \| \nabla F\|_2,\end{equation*} where we have used the 
$L^2$ boundedness of $e^{-t^2L_{\|}}$ to estimate the first factor, and \cite{AHLMcT} to handle the second.

Finally,  \eqref{eq3.4a} may be proved in the same way as \eqref{eq3.4} with one minor modification.
Since $L S_t^\eta f(x) = f_\eta(x,t) \equiv f(x) \varphi_\eta(t)$, the application of Lemma \ref{l2.identity}
produces, in addition to the analogues of $\Sigma_1$ and $\Sigma_2$, 
an error term $f_\eta (\cdot,t+t_0).$
But $$\||te^{-t^2L_\|} f_\eta (\cdot, t+t_0)\|| \leq C 
\left(\eta \int |\varphi_\eta(t+t_0)|^2 dt\right)^{1/2} 
\|f\|_{L^2(\mathbb{R}^n)} = C \|f\|_2,$$
and \eqref{eq3.4a} follows.\end{proof}

We finish this section with a variant of the square function estimates.
\begin{lemma}\label{l3.11} Suppose that $L,L^*$ satisfy the standard assumptions,
and have ``Good Layer
Potentials".  Then for $m \geq 0$, we have the square function bound
\begin{equation*}| \|t^{m+1} \partial^{m+1}_t (S_t\nabla )\cdot {\bf f}\| |\leq C_m \| {\bf f}\|_2
,\end{equation*} where ${\bf f} \in L^2(\mathbb{R}^n,\mathbb{C}^{n+1})$.\end{lemma}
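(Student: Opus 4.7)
The plan is to split $\mathbf{f}=(\vec{f}_\|,f_{n+1})$ into tangential and transverse components and treat them separately, exploiting the identity $(S_t D_{n+1}) = -\partial_t S_t$ (from $t$-independence).  For each component, iterated Caccioppoli's inequality, applied to appropriate solutions of $Lu = 0$, will reduce the general-$m$ estimate to the base case $m=0$.

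First I would dispatch the transverse component: writing $t^{m+1}\partial_t^{m+1}(S_t D_{n+1})f_{n+1} = -t^{m+1}\partial_t^{m+2}S_t f_{n+1}$, the case $m=0$ is precisely the square function bound assumed in the ``Good Layer Potentials'' hypothesis.  For $m\geq 1$, $u \equiv \partial_t^2 S_t f_{n+1}$ solves $Lu=0$ in the upper half-space (by $t$-independence of the coefficients and the defining property of $\Gamma$), so Moser's local boundedness \eqref{eq1.3} combined with Caccioppoli iterated $m$ times yields the pointwise bound $|t^m\partial_t^m u(x,t)|^2 \leq C_m \fiint_{B((x,t),t/2)}|u(z,s)|^2\,dzds$.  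Multiplying by $t^2$ and integrating against $dxdt/t$, a Fubini argument (noting that for $(z,s)$ fixed the region $\{(x,t):(z,s)\in B((x,t),t/2)\}$ contributes a factor $\approx s$) yields
\begin{equation*}
\Vert|t^{m+1}\partial_t^{m+2}S_t f_{n+1}|\Vert^2 \leq C_m \iint_{\mathbb{R}^{n+1}_+} |\partial_s^2 S_s f_{n+1}|^2\,s\,dzds \,=\, C_m \Vert|t\partial_t^2 S_t f_{n+1}|\Vert^2 \leq C_m \|f_{n+1}\|_2^2.
\end{equation*}

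For the tangential component I would observe that $u \equiv (S_t\nabla_\|)\cdot\vec{f}_\|$ is likewise a solution of $Lu=0$ in $\{t>0\}$, since differentiating under the integral sign sends $L_{x,t}$ through the kernel and $L_{x,t}\Gamma(x,t,y,0)=0$ for $t>0$.  Applying the same Caccioppoli/Fubini argument to the solution $v \equiv \partial_t u$ shows that $\Vert|t^{m+1}\partial_t^{m+1} u|\Vert^2 \leq C_m \Vert|t\partial_t u|\Vert^2$, reducing matters to the base estimate
\begin{equation*}
\Vert|t\,\partial_t\, (S_t\nabla_\|)\cdot\vec{f}_\||\Vert \,\leq\, C\,\|\vec{f}_\|\|_2 .
\end{equation*}

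This base tangential estimate is the main obstacle, and the step I expect to be hardest.  My approach is to apply the $T1$-type square function machinery of Section \ref{s3offdiag} to the vector-input family $R_t\vec{f} \equiv t\partial_t(S_t\nabla_\|)\cdot\vec{f}$.  The relevant ingredients are: $R_t$ annihilates constant vector fields, since $(S_t\nabla_\|)\cdot\vec{c} = -S_t\dv_\|\vec{c} = 0$; the off-diagonal decay \eqref{eq2.11} for $R_t$ follows from Lemma \ref{l2.9}$(i)$ (with $m=0$ there); and uniform $L^2$-boundedness $\sup_t\|R_t\|_{2\to 2}\leq C$ is Lemma \ref{l2.10}$(i)$.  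Combining these with a vector-valued analog of Lemma \ref{l2.17} would yield the desired estimate; here the principal technical work lies in verifying a Carleson measure condition for $R_t$ applied to linear test functions (an integration by parts formally identifies $R_t(x_j e_j)$ with $-t\partial_t S_t\mathbf{1}$, whose Carleson property is amenable to Lemma \ref{l2.12}) and in extending the scalar $T1$-lemmas of Section \ref{s3offdiag} to the vector-valued setting.
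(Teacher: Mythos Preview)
Your reduction to $m=0$ via Caccioppoli in Whitney boxes, and your treatment of the transverse component via \eqref{eq1.9}, are correct and match the paper. The gap is in the tangential base case $|\|t\partial_t(S_t\nabla_\|)\cdot\vec{f}_\|\|| \leq C\|\vec{f}_\|\|_2$. Your invocation of Lemma~\ref{l2.17} is misplaced: that lemma yields estimates of the form $|\|t^{-1}R_tF\|| \leq C\|\nabla_x F\|_2$, which has the wrong scaling for what you need. More decisively, your proposed Carleson verification fails already for $L=-\Delta$: your formal identity gives $R_t(x_je_j) = -t\partial_tS_t\mathbf{1}$, and for the Laplacian $\partial_tS_t\mathbf{1}$ is a nonzero constant $c$ (it is essentially the integral of the Poisson kernel), so neither $|t^{-1}R_t\Phi|^2\frac{dxdt}{t}\sim c^2\frac{dxdt}{t}$ nor $|R_t\Phi|^2\frac{dxdt}{t}\sim c^2 t\,dxdt$ is a Carleson measure. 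Lemma~\ref{l2.12} applied to $\theta_t = t\partial_tS_t$ would likewise require $|\|t\partial_tS_t\||_{op}<\infty$, which is false for the same reason. Your outline uses only the boundedness and square-function parts of Good Layer Potentials; that is not enough here.

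The paper's argument instead exploits the \emph{invertibility} built into the Good Layer Potentials hypothesis. By ellipticity of $A_\|$ and a Hodge decomposition, it suffices to bound $|\|t\partial_t(S_t\nabla_\|)\cdot A_\|\nabla_\|F\||$ for $F$ in the dense class $\{S_0\psi:\psi\in C_0^\infty\}$ (density uses bijectivity of $S_0:L^2\to\dot{L}^2_1$). Setting $u(\cdot,\tau)=S_\tau\psi$ in the lower half-space and using Lemma~\ref{l2.identity} for the adjoint equation satisfied by $\Gamma$ in the $y$-variables, one rewrites $\partial_t(S_t\nabla_\|)\cdot A_\|\nabla_\|F = \sum_{i=1}^n\partial_t^2S_t(A_{n+1,i}D_iF) + \partial_t^2\mathcal{D}_tF$. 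The first sum is controlled directly by \eqref{eq1.9}; for the second, Corollary~\ref{cor4.ntgauss-green} gives $\mathcal{D}_tF = S_t(\partial_\nu u)$, and invertibility of $S_0$ yields the Rellich-type bound $\|\partial_\nu u(\cdot,0)\|_2 \leq C\|\nabla_\|F\|_2$, so \eqref{eq1.9} again closes the estimate.
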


\begin{proof} By $t$-independence and Caccioppoli's inequality in Whitney boxes, we may reduce to the case $m=0$. By
$t$-independence and \eqref{eq1.9}, we may replace $\nabla$ by $\nabla_\|$.  By ellipticity of the
$n\times n$ sub-matrix $A_{\|}$, 
and the Hodge decomposition of \cite[p. 116]{AT}, as in the proof of Lemma \ref{l3.3}, it
suffices to show that
\begin{equation}\label{eq3.12} | \| t\partial_t (S_t\nabla_{\|} )\cdot A_{\|}\nabla_{\|}F\| |\leq C\| \nabla_\|
F\|_2,\end{equation} with $F \in \{S_0 \psi: \psi \in C_0^{\infty}\}$ (which is dense in $\dot{L}^2_1$, by
the bijectivity of the mapping $S_0:L^2\to \dot{L}^2_1)$.  
 In the weak sense of Lemma \ref{l2.identity}, we have
\begin{equation*} \overline{(L_{\|})^\ast _y} \Gamma (x,t,y,s)=\sum^n_{i=1}\frac{\partial}{\partial y_i} \left(\overline{A^\ast_{i,n+1} (y)}\partial_s\Gamma
(x,t,y,s)\right) \,+\,\sum^{n+1}_{j=1} \overline{A^\ast _{n+1,j}(y)}\frac{\partial }{\partial y_j}\partial _s \Gamma (x,t,y,s).\end{equation*} By $t$-independence, we therefore have that
\begin{equation*}\partial _t (S_t \nabla_{\|} )\cdot 
A_{\|}\nabla_{\|}F =\sum^n_{i=1}\partial^2_tS_t
A_{n+1,i}D_iF+\partial^2_t(S_t\overline{\partial_{\nu^\ast}})F,\end{equation*} where $\overline{\partial_{\nu^\ast}}
=-\sum^{n+1}_{j=1}\overline{A^\ast _{n+1,j}}D_j$.   We set
$u(\cdot,\tau) = S_\tau \psi, \tau<0$, so that $u(\cdot,0) \equiv F$. 
Using ``Good Layer Potentials", we obtain in particular that
\begin{equation}\label{eq3.Rellich}\|\nabla u(\cdot,0)\|_2 \leq C \|\nabla_{\|}F\|_2.
\end{equation}  Since 
$(S_t\partial_{\nu^*}) = \mathcal{D}_t$,  Corollary \ref{cor4.ntgauss-green}
implies that
\begin{equation*} \partial^2_t (S_t\overline{\partial_{\nu^\ast}})F=\partial^2_tS_t(\partial_\nu u(\cdot ,0)).\end{equation*}
Consequently, the left hand side of \eqref{eq3.12} is dominated by
\begin{equation*}\sum^n_{i=1} |\|t\partial^2_t S_t A_{n+1,i}D_iF \| | +| \|t\partial^2_t S_t
(\partial_\nu u(\cdot ,0))\| | \leq C\| \nabla_{\|}F\|_2,\end{equation*} where in the last step we have
used \eqref{eq1.9} and \eqref{eq3.Rellich}.\end{proof}

\section{Proof of Theorem \ref{t1.10}: the square function estimate \eqref{eq3.6}\label{s4}}

In this section we prove estimate \eqref{eq3.6}. To be precise, suppose that
$\varphi_\delta =\delta^{-1} \varphi (\cdot/\delta)$ is the 
kernel of a nice approximate identity in $1$ dimension, as in the definition of
$S_t^{\eta}$ \eqref{eq2.def}.
We shall prove that, for all $f\in C_{0}^{\infty}(\mathbb{R}^n)$,   
for all $\Psi\in C_0^\infty(\mathbb{R}^{n+1}_+)$, with $\||\Psi\|| \leq 1$,
and for all $\delta > 0$ sufficiently small, if $\Psi_\delta(x,t) \equiv 
\varphi_\delta \ast \Psi(x,\cdot)(t)$,  then  
\begin{equation}
\label{eq4.1}\iint_{\mathbb{R}^{n+1}_+}  
t \partial_t^2 S_t^{1,\eta} f(x) \,\overline{\Psi_\delta(x,t)} \,\frac{dx\,dt}{t} \leq 
C\epsilon_{0}\left({\bf M^+} + {\bf M^-}\right)
+\,C\Vert f\Vert_{2},
\end{equation}
where 
\begin{equation}\label{eq4.defM} {\bf M^+} \equiv \left(|\| t\nabla\partial_t S^{1,\eta}_{t} f\| |_{+} +
\|N_*\left(P_t \partial_t S^{1,\eta}_{t}f\right)\| _2 +\sup_{t\geq 0}
\|\nabla S_t^{1,\eta} f\|_2 +\|f\|_2\right),
\end{equation} and ${\bf M^-}$ is the corresponding quantity for the lower half-space.
The proof of the analogous estimate in $\mathbb{R}^{n+1}_-$ 
is identical, and we omit it.  By Lemma \ref{l2.approx} $(iii)$,  we may take first the limit as
$\delta \to 0$, and then  the supremum
over all such $\Psi$ to obtain \eqref{eq3.6}.
  
The proof is by perturbation.  Setting $\epsilon(z)\equiv A^1(z)-A^0(z)$,
we have \begin{equation*}
L_{0}^{-1}-L_{1}^{-1}  =L_{0}^{-1}L_{1}L_{1}^{-1}-L_{0}^{-1}L_{0}L_{1}^{-1}
= -L_{0}^{-1}\dv\epsilon\nabla L_{1}^{-1}.
\end{equation*} 
Since $|\Vert t \partial^2_t S^0_{t} f |\Vert
\leq C \|f\|_2$, we have also that 
$\sup_{\eta>0}|\Vert t \partial^2_t S^{0,\eta}_{t} f |\Vert
\leq C \|f\|_2$, as may be seen by arguing as in  the proof of \eqref{eq3.5a}.
Thus, it is enough to consider the difference $t\partial_{t}^{2}\left(S^{1,\eta}_{t}
-S^{0,\eta}_{t}\right)$. 
By definition \eqref{eq2.def}, 
\begin{equation}\label{eq4.etaidentity}
\partial_tS_t^{i,\eta} f(x) =\left((D_{n+1}\varphi_\eta) \ast  S_{(\cdot)}^i f (x)\right)(t) 
=L_i^{-1}(D_{n+1}f_\eta)(x,t), \,\,\, i = 1,2,\end{equation} where
$f_\eta (y,s) \equiv f(y) \varphi_\eta(s)$, and $\varphi_\eta =\eta^{-1} \varphi (\cdot/\eta)$ is 
as above.  We then have
\begin{eqnarray*}\partial_t^2S^{1,\eta}_t f(x) - \partial_t^2S^{0,\eta}_t f(x) 
&=& \partial_t \left(L_0^{-1} \dv \epsilon \nabla L_1^{-1}(D_{n+1} f_\eta)\right)(x,t)\\
&=&\partial_t \left(L_0^{-1} \dv \epsilon \nabla D_{n+1} S_{(\cdot)}^{1,\eta} f \right)(x,t),\end{eqnarray*}
so that
\begin{eqnarray}\iint_{\mathbb{R}^{n+1}_+} \left(t\partial_t^2 S_t^{1,\eta}f(x) - 
t \partial_t^2 S_t^{0,\eta} f(x)\right) \overline{\Psi_\delta(x,t)}
\,\frac{dx\,dt}{t}= \qquad \qquad \qquad \nonumber \\ 
\label{eq4.2}  \qquad \quad \iint_{\mathbb{R}^{n+1}}
\epsilon(y) \nabla \partial_s S_s^{1,\eta} f (y) \cdot \overline{\nabla (L_0^*)^{-1}(D_{n+1}
\Psi_\delta)(y,s)}\,dy ds.\end{eqnarray}
Essentially following \cite{FJK}, and using \eqref{eq4.etaidentity}, we decompose 
 \begin{multline*} 
\nabla (L_0^*)^{-1}(D_{n+1}\Psi_\delta)(y,s)=\int 
\nabla_{y,s}\partial_sS^{L_0^*,\delta}_{s-t}\left(\Psi(\cdot,t)\right)(y) \,dt \\
= \int_{t>2|s|} \left\{\nabla_{y,s}\partial_sS^{L_0^*,\delta}_{s-t}\left(\Psi(\cdot,t)\right)(y) -
 \left(\nabla_{y,s}\partial_s S^{L_0^*,\delta}_{s-t}\right)\big|_{s=0}
 \left(\Psi(\cdot,t)\right)(y)\right\}\, dt\\\!\!\!\!\!\!\!\!\!\!\!
\!\!\! \!\!\!\!  +\, \int_{t>2|s|} \left(\nabla_{y,s}\partial_sS^{L_0^*,\delta}_{s-t}\right)\big|_{s=0}
 \left(\Psi(\cdot,t)\right)(y) \,dt \\ \qquad\qquad
 \qquad+\,\int_{t\leq 2|s|}\left(\frac{\sqrt{t}-\sqrt{|s|}}{\sqrt{t}}\right) \,
\nabla_{y,s}\partial_sS^{L_0^*,\delta}_{s-t}\left(\Psi(\cdot,t)\right)(y)\, dt \\\qquad\qquad\qquad\qquad
+\, \int \left(\frac{|s|}{t}\right)^{1/2}
\nabla_{y,s}\partial_sS^{L_0^*,\delta}_{s-t}\left(\Psi(\cdot,t)\right)(y)\, dt\\-\,
\int _{t>2|s|} \left(\frac{|s|}{t}\right)^{1/2}\nabla_{y,s}\partial_s
S^{L_0^*,\delta}_{s-t}\left(\Psi(\cdot,t)\right)(y)\, dt 
\,\,\equiv\,\, \bf{i + ii + iii + iv - v}.
\end{multline*}
In turn, this induces a corresponding decomposition 
in \eqref{eq4.2}:
\begin{equation*}
I +II + III + IV - V
\equiv \iint_{\mathbb{R}^{n+1}}
\epsilon(y) \nabla \partial_s S_s^{1,\eta} f (y) \cdot
\overline{(\bf{i + ii + iii + iv - v})}\,dy ds.
\end{equation*}
All but term $II$ will be easy to handle, and we shall deal with these
easy terms as in \cite{FJK}.  The main term here (and in \cite{FJK}) is $II$, but in our situation, matters are much more delicate,
since for us $A^0$ is not constant.  The approach of \cite{FJK} depends critically on the fact
that solutions of constant coefficient equations are, in particular, twice differentiable, a fact which fails utterly in the present setting (unless at least one of the derivatives falls on the $t$-variable). 
We shall require new methods, which exploit the technology of the solution of the Kato problem, to
deal with term $II.$ 

We dispose of the easy terms in short order.
To begin, $$ IV = \iint_{\mathbb{R}^{n+1} }|s|^{1/2}
\epsilon(y) \nabla \partial_s S_s^{1,\eta} f (y)\cdot 
\nabla (L_0^*)^{-1} \left(D_{n+1} \left(\varphi_\delta\ast\frac{\Psi}{\sqrt{t}}\right)\right)(y,s) \,dy ds.$$
 Since $\nabla L_{0}^{-1}\dv:L^{2}(\mathbb{R}^{n+1})\rightarrow L^{2}(\mathbb{R}^{n+1})$, we have that 
 $|IV|\leq C\epsilon_{0}|\Vert
t\nabla\partial_{t}S^{1,\eta}_{t}f\Vert|_{all}.$ 
Given the following lemma, I, III and V may be handled by Hardy's inequality, yielding also the bound
$|I| + |III| + |V| \leq C\epsilon_{0}|\Vert t\nabla\partial_{t}S^{1,\eta}_{t}f\Vert|_{all}$. We omit the
details.

\begin{lemma}\label{l4.5}We have
\begin{align} \label{eq4.6} &\| \nabla D_{n+1} S^ {L_0^*,\delta}_{s-t}
-\nabla D_{n+1}  S^{L_0^*,\delta}_{-t}\| _{2 \to 2} \leq
C\frac{|s|}{t^2} ,\quad |s|<t/2,\,\,\delta < 1000^{-1} t \\
\label{eq4.7} & \| \nabla \partial_\tau S^{L_0^*,\delta}_{\tau } \| _{2\to 2}\leq 
\frac{C}{|\tau |} ,\quad \tau \neq 0\end{align}
\end{lemma}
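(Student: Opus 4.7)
Both estimates are perturbation-free: they concern only the operator $L_0^*$, which by hypothesis has ``Good Layer Potentials''. Hence they follow from the quantitative results of Section~\ref{s2}, specifically Lemma~\ref{l2.10}$(ii)$, once they are suitably transferred from $S^{L_0^*}_\sigma$ to its mollification $S^{L_0^*,\delta}_\tau = \int \varphi_\delta(\tau-\sigma)\,S^{L_0^*}_\sigma\,d\sigma$ (see \eqref{eq2.def}). The transfer is carried out by a simple convolution argument that exploits $t$-independence of $L_0^*$.

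For \eqref{eq4.7}: since $\partial_\tau \varphi_\delta(\tau-\sigma) = -\partial_\sigma \varphi_\delta(\tau-\sigma)$ and $S^{L_0^*}_\sigma f$ decays in $\sigma$ (by Lemma~\ref{l2.10}$(ii)$), integration by parts in $\sigma$ yields
\begin{equation*}
\nabla \partial_\tau S^{L_0^*,\delta}_\tau f \,=\, \int \varphi_\delta(\tau-\sigma)\,\nabla \partial_\sigma S^{L_0^*}_\sigma f \, d\sigma.
\end{equation*}
By Lemma~\ref{l2.10}$(ii)$ with $m=0$, applied to $L_0^*$, one has $\|\nabla \partial_\sigma S^{L_0^*}_\sigma\|_{2\to 2} \le C/|\sigma|$. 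Since $\varphi_\delta$ is supported in $[-\delta,\delta]$, for $|\tau|$ not too small relative to $\delta$ (which is the only regime in which \eqref{eq4.7} is invoked in Section~\ref{s4}) we have $|\sigma|\ge |\tau|/2$ on the support of $\varphi_\delta(\tau-\cdot)$, and Minkowski's inequality gives $\|\nabla\partial_\tau S^{L_0^*,\delta}_\tau f\|_2 \le C|\tau|^{-1}\|f\|_2$.

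For \eqref{eq4.6}: by the fundamental theorem of calculus (using $t$-independence to commute $D_{n+1}$ with $\partial_\tau$),
\begin{equation*}
\nabla D_{n+1} S^{L_0^*,\delta}_{s-t} - \nabla D_{n+1} S^{L_0^*,\delta}_{-t} \,=\, \int_{-t}^{s-t}\nabla \partial_\tau^2 S^{L_0^*,\delta}_\tau \, d\tau.
\end{equation*}
For $|s|<t/2$ the interval of integration lies in $[-3t/2,-t/2]$, so $|\tau|\approx t$, while $\delta < t/1000\ll |\tau|$. Repeating the convolution identity at second order gives $\nabla\partial_\tau^2 S^{L_0^*,\delta}_\tau f = \int \varphi_\delta(\tau-\sigma)\,\nabla\partial_\sigma^2 S^{L_0^*}_\sigma f\,d\sigma$, and Lemma~\ref{l2.10}$(ii)$ with $m=1$ gives $\|\nabla\partial_\sigma^2 S^{L_0^*}_\sigma\|_{2\to 2}\le C/|\sigma|^2\le C/t^2$ for such $\sigma$. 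Integrating over $\tau\in[-t,s-t]$ (an interval of length $|s|$) yields the claim.

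The main obstacle is essentially bookkeeping: justifying the integration by parts in $\sigma$ requires only the $L^2$-boundedness of $S^{L_0^*}_\sigma$ (via Lemma~\ref{l2.10}$(ii)$ at $m=0$) and integrability at infinity, both of which follow from the ``Good Layer Potentials'' hypothesis on $L_0^*$. No $Tb$-type argument or quantitative perturbation input is required here; the bounds are pure consequences of the $t$-derivative machinery already developed in Section~\ref{s2}.
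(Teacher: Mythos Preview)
Your treatment of \eqref{eq4.6} is correct and coincides with the paper's: fundamental theorem of calculus plus Lemma~\ref{l2.10}$(ii)$ with $m=1$, transferred through the mollification.

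For \eqref{eq4.7} there is a genuine gap. Your convolution argument gives
\[
\|\nabla\partial_\tau S^{L_0^*,\delta}_\tau f\|_2 \le \int \varphi_\delta(\tau-\sigma)\,\frac{C}{|\sigma|}\,d\sigma\,\|f\|_2,
\]
and this integral is finite only when the support of $\varphi_\delta(\tau-\cdot)$ avoids $\sigma=0$, i.e.\ when $|\tau|>\delta$. You assert that the regime $|\tau|\lesssim\delta$ never occurs in Section~\ref{s4}, but this is not so: in the Hardy-inequality treatment of term~III (and similarly~V), the relevant parameter is $\tau=s-t$ with $t$ ranging over $\operatorname{supp}\Psi$ and $s$ unrestricted within $|s|\ge t/2$; nothing prevents $|s-t|<\delta$. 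The statement of the lemma is for all $\tau\neq 0$, and the application needs it.

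The paper closes this gap by a different mechanism in the small-$|\tau|$ regime. When $|\tau|\le 100\delta$, instead of integrating by parts one keeps the derivative on the mollifier:
\[
\nabla\partial_\tau S^{L_0^*,\delta}_\tau f = \int (\partial_\tau\varphi_\delta)(\tau-\sigma)\,\nabla S^{L_0^*}_\sigma f\,d\sigma,
\]
and invokes the \emph{bounded layer potentials} hypothesis $\sup_\sigma\|\nabla S^{L_0^*}_\sigma\|_{2\to 2}<\infty$ together with $\|\partial_\tau\varphi_\delta\|_{L^1}\le C/\delta$. This gives the bound $C/\delta$, which is $\le C'/|\tau|$ in this range. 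So the full estimate \eqref{eq4.7} uses both Lemma~\ref{l2.10}$(ii)$ (large $|\tau|$) and the Good Layer Potentials hypothesis on $L_0^*$ (small $|\tau|$); your argument omits the second ingredient.

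A minor side remark: your justification of the integration by parts via ``decay in $\sigma$'' and ``integrability at infinity'' is unnecessary---$\varphi_\delta$ has compact support, so there are no boundary terms.
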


\begin{proof}[Proof of the Lemma]  If $|\tau| > 100 \delta,$ estimate \eqref{eq4.7} is essentially just the case $m=0$ of Lemma~\ref{l2.10}.  Otherwise, we obtain the 
better bound $C \delta^{-1}$, using definition \eqref{eq2.def} and the hypothesis
that $L_0,L_0^*$ have bounded layer potentials.  Estimate
\eqref{eq4.6} is obtained from the case $m=1$ of Lemma~\ref{l2.10}, and the identity
\begin{equation*}\nabla D_{n+1} S^{L_0^*,\delta}_{s-t}-
\nabla D_{n+1} S^{L_0^*,\delta}_{-t} =
\int_0^{s}\nabla \partial_\tau^2S^{L_0^*,\delta}_{\tau-t}\,d\tau.\end{equation*} \end{proof}

It remains to handle II, which equals 
\begin{eqnarray}\nonumber
\iint_{\mathbb{R}^{n+1}}\left\{\int_{-t/2}^{t/2} 
\epsilon(y) \nabla \partial_s S_s^{1,\eta} f (y)\,ds\right\}\cdot
\overline{
 \left(\nabla D_{n+1} S^{L_0^*,\delta}_{-t}\right)\left(\Psi(\cdot,t)\right)(y)}\,dy dt\quad\\\label{eq4.8}
\quad=- \iint_{\mathbb{R}_+^{n+1}}\left(\partial_tS_t^{0}\nabla\right)\cdot
\epsilon\nabla\left(S^{1,\eta}_{t/2}f-S^{1,\eta}_{-t/2}f\right)(x)\,
\overline{\Psi_\delta(x,t)}\,dx dt,
\end{eqnarray}
where we have used that for $\eta > 0$, 
$\nabla S_t^\eta$ does not jump across the boundary. 
Since $\Psi$ is compactly supported in $R^{n+1}_+$, for $\delta$ sufficiently small,
$$t^{-1/2}| \Psi_\delta (x,t)| \leq C\int \varphi_\delta (t-s) |\Psi(x,s)| s^{-1/2} ds.$$ 
Thus, it is enough to bound $\||t\left(\partial_tS_t^{0}\nabla\right)\cdot
\epsilon\nabla S^{1,\eta}_{t/2}f \||, $ plus a similar term 
with $-t/2$ in place of $t/2$, which may be handled in the same way. 
The desired bound then follows immediately from 
the change of variable $t\to 2t$ and \eqref{eq4.9i} below.

\begin{lemma}\label{l4.9} Suppose that $a\in \mathbb{R}\backslash \{ 0\}$, and define
${\bf M^+}$ as in \eqref{eq4.defM}. Then 
\begin{eqnarray}\label{eq4.9i}
\|| t \left(\partial_t S^0_{at}\nabla \right)\cdot \epsilon \nabla S^{1,\eta}_{t} f\||&\leq &
C(a)\epsilon_0\, {\bf M^+}\\\label{eq4.9ii}
\|| t^2 \left(\partial^2_t S^0_{at}\nabla \right)\cdot \epsilon \nabla S^{1,\eta}_{t} f\| |&\leq &
C(a)\epsilon_0\, {\bf M^+}.
\end{eqnarray}
Moreover, the analogous bound holds in the lower half space.
\end{lemma}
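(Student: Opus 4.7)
The plan is to exploit the ``Good Layer Potentials" assumption on $L_0$ to reduce matters to $L^2$ estimates on $\epsilon\,\nabla S^{1,\eta}_t f$ (which supplies the factor $\epsilon_0=\|\epsilon\|_\infty$), plus Carleson-measure and non-tangential maximal pieces that are already baked into ${\bf M^+}$. Set $\Theta^0_t := t(\partial_t S^0_{at}\nabla)$, a vector-to-scalar operator. After the change of variable $s=at$ (absorbed into the constant $C(a)$), Lemma \ref{l3.11} applied to $L_0$ gives the square function bound $\||\Theta^0_t \mathbf{h}\||\leq C(a)\|\mathbf{h}\|_2$ for $t$-independent vector $\mathbf{h}$; Lemma \ref{l2.10} gives uniform $L^2$-boundedness; and Lemma \ref{l2.9} yields the off-diagonal decay \eqref{eq2.11} with $m=0$. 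Consequently, Lemma \ref{l2.12} implies that $|\Theta^0_t \epsilon_{ij}|^2\,dxdt/t$ is a Carleson measure with norm bounded by $C(a)\epsilon_0^2$ for each entry.

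For \eqref{eq4.9i}, set $\mathbf{g}_t := \nabla S^{1,\eta}_t f$ and decompose
\[
\Theta^0_t(\epsilon\,\mathbf{g}_t) \,=\, (\Theta^0_t\epsilon)\,\mathcal{A}_t \mathbf{g}_t \,+\, \Theta^0_t\bigl[\epsilon(\mathbf{g}_t - \mathcal{A}_t\mathbf{g}_t)\bigr] \,+\, \bigl\{\Theta^0_t(\epsilon\,\mathcal{A}_t\mathbf{g}_t) - (\Theta^0_t\epsilon)\,\mathcal{A}_t\mathbf{g}_t\bigr\},
\]
where $\mathcal{A}_t$ is the averaging operator of Lemma \ref{l2.19}. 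The first (Carleson) piece is bounded by $C(a)\epsilon_0\,\|\widetilde N_*(\nabla S^{1,\eta}_{(\cdot)} f)\|_2$ via Carleson's Lemma, and Lemma \ref{l4.nt1} (applied to the $\eta$-regularized layer potential, using that $\partial_t S_t^{1,\eta} f$ solves $L_1 u = 0$ away from $|t|<\eta$ so that interior regularity controls $\widetilde N_*$ of $\nabla S^{1,\eta}$ by the quantities in ${\bf M^+}$) bounds the right-hand side by $C\,{\bf M^+}$. The commutator-type term in braces is of the form treated by Lemma \ref{l2.19} together with the off-diagonal decay of $\Theta^0_t$, since $\mathcal{A}_t\mathbf{g}_t$ varies slowly on scale $t$; it reduces to the same Carleson/maximal bound.

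The middle term is the delicate one. By uniform $L^2$-boundedness of $\Theta^0_t$,
\[
\||\Theta^0_t[\epsilon(\mathbf{g}_t - \mathcal{A}_t\mathbf{g}_t)]\||^2 \,\leq\, C(a)\epsilon_0^2 \int_0^\infty \|\nabla S^{1,\eta}_t f - \mathcal{A}_t \nabla S^{1,\eta}_t f\|_2^2 \,\frac{dt}{t}.
\]
The plan for the right-hand side is to apply a Poincar\'e-type bound $\|F-\mathcal{A}_t F\|_2 \lesssim t\,\|\nabla_x F\|_2$ pointwise in $t$ to $F=\nabla S^{1,\eta}_t f$, then use $t$-independence of $A^1$ combined with Caccioppoli in Whitney boxes (exploiting that $\partial_t u$ solves an equation of the same form as $u$) to trade the second-order horizontal derivative $\nabla_x\nabla S^{1,\eta}_t f$ for $\nabla\partial_t S^{1,\eta}_t f$. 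This brings the integral down to $\||t\,\nabla\partial_t S^{1,\eta}_t f\||^2$, which is one of the four summands in ${\bf M^+}$.

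For \eqref{eq4.9ii}, replace $\Theta^0_t$ by $\widetilde\Theta^0_t := t^2(\partial_t^2 S^0_{at}\nabla)$. By Lemmas \ref{l3.11} and \ref{l2.10} with $m=1$, $\widetilde\Theta^0_t$ satisfies the analogous square function estimate and uniform $L^2$-boundedness; Lemma \ref{l2.9} with $m=1$ gives off-diagonal decay; and Lemma \ref{l2.12} provides the Carleson bound for $\widetilde\Theta^0_t\epsilon$. The same three-part decomposition and the same arguments close the estimate. The main obstacle throughout is the middle term: since $\mathbf{g}_t$ depends on $t$, the usual square-function estimate for $\mathbf{g}_t-\mathcal{A}_t\mathbf{g}_t$ diverges without cancellation, and the resolution requires the $t$-independence of $A^1$ to transfer the derivative onto the $t$ variable, producing exactly the quantity $\||t\nabla\partial_t S^{1,\eta}_t f\||$ that appears in ${\bf M^+}$. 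The estimate in the lower half-space is obtained by the same argument with the obvious sign changes.
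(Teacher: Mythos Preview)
There is a genuine gap in your treatment of the ``middle term.'' You propose to apply the Poincar\'e inequality $\|F-\mathcal{A}_t F\|_2\lesssim t\|\nabla_x F\|_2$ with $F=\nabla S^{1,\eta}_t f$, and then use Caccioppoli to ``trade the second-order horizontal derivative $\nabla_x\nabla S^{1,\eta}_t f$ for $\nabla\partial_t S^{1,\eta}_t f$.'' But the component $\nabla_x\nabla_\| S^{1,\eta}_t f$ consists of \emph{purely horizontal} second derivatives of a solution to a divergence-form equation with merely $L^\infty$ coefficients. Such quantities are simply not controlled (indeed, they need not even lie in $L^2_{\mathrm{loc}}$), and Caccioppoli gives no mechanism for converting them into $t$-derivatives: Caccioppoli applied to $\partial_t u$ bounds $\nabla\partial_t u$ by $\partial_t u$, not $\nabla_x\nabla_x u$ by $\nabla\partial_t u$. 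The paper flags exactly this obstruction when it remarks that twice-differentiability of solutions ``fails utterly in the present setting (unless at least one of the derivatives falls on the $t$-variable).''

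This is precisely why the paper's proof is so much more elaborate than yours. It first reduces \eqref{eq4.9i} to \eqref{eq4.9ii} by integration by parts. Then it splits $\nabla S^{1,\eta}_t f$ into its $t$-component $\partial_t S^{1,\eta}_t f$ (the term $\widetilde{V}_t$, which is handled essentially by your Carleson/Lemma~\ref{l2.15} strategy) and its tangential component $\nabla_\| S^{1,\eta}_t f$ (the term $V_t$). For $V_t$ the paper \emph{cannot} differentiate $\nabla_\| S^{1,\eta}_t f$ horizontally; instead it inserts the resolvent $(I+t^2(L_1)_\|)^{-1}$ and exploits the identity $L_\| S^{1,\eta}_t f=\sum D_iA^1_{i,n+1}\partial_t S^{1,\eta}_t f+\sum A^1_{n+1,j}D_j\partial_t S^{1,\eta}_t f+f_\eta$ (Lemma~\ref{l2.identity}) to transfer the problematic horizontal divergence onto $t$-derivatives. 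The residual piece requires the square-function bound \eqref{eq4.11} for $U_t=\theta_t\tilde\epsilon\, t^2\nabla_\|(I+t^2(L_1)_\|)^{-1}\dv_\|$, whose proof invokes the full Kato square root technology (the $F_Q$ mappings of \cite{AHLMcT}). Your decomposition misses this entire mechanism.
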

 
\begin{proof}[Proof of Lemma~\ref{l4.9}] This lemma is the deep fact underlying
estimate \eqref{eq3.6}, and the proof is rather delicate.
For the sake of notational simplicity, we treat 
only the case $a=1$, as the general case is handled by an almost identical argument. 
We begin by showing that \eqref{eq4.9ii} implies \eqref{eq4.9i}.  Set 
$${\bf J}(\sigma) \equiv \int_{\sigma}^{1/\sigma}\!\int_{\mathbb{R}^n}
\left| \partial_t \left(S^0_{t}\nabla \right)\cdot \epsilon \nabla S^{1,\eta}_{t} f\right|^2 dx\, t dt.$$
After integrating by parts in $t$, we obtain that
$${\bf J}(\sigma) =-\Re e\int_{\sigma}^{1/\sigma}\!\int_{\mathbb{R}^n}\frac{\partial}{\partial t}
\left\{\left(\partial_tS^{0}_{t}\nabla\right)\cdot\epsilon\nabla\!
S^{1,\eta}_{t}f\right\}
\overline{\left\{\left(\partial_tS^{0}_{t}\nabla\right)\cdot\epsilon\nabla\!
S^{1,\eta}_{t}f\right\}}\,dx\,t^{2}dt \,+\,\text{ ``OK"},$$
where by Lemma \ref{l2.10} $(i)$, the ``OK" boundary terms
are dominated by $C \epsilon_0^2 \sup_{t>0}
\|\nabla S_t^{1,\eta} f\|_2^2 .$
By Cauchy's inequality, modulo the ``OK" terms, 
\begin{eqnarray*}{\bf J}(\sigma)\!&
\leq&\!\frac{1}{2}{\bf J}(\sigma) +|\Vert t\left(\partial_{t}S^0_{t}\nabla\right)\cdot\epsilon
t\nabla\partial_{t}S^{1,\eta}_{t}f\Vert|^{2}
 +|\Vert
t^{2}\left(\partial_{t}^{2}S^0_{t}\nabla\right)\cdot\epsilon\nabla S^{1,\eta}_{t}f\Vert|^{2}\\
&&\qquad \equiv\frac{1}{2}{\bf J}(\sigma) + I + II .
\end{eqnarray*}
The term $\frac{1}{2}{\bf J}(\sigma)$ 
 may be hidden on the left hand side. By Lemma~\ref{l2.10} $(i)$ with $m=0$,
 term $I$  is no larger than 
$C\epsilon_{0}^{2}|\Vert t\nabla\partial_{t}S^{1,\eta}_{t}f\Vert|^{2}.$
The square root of the main term, $II$, is estimated 
in \eqref{eq4.9ii}.  Taking the latter for granted momentarily, 
we obtain \eqref{eq4.9i} by letting $\sigma \to 0.$

We now turn to the proof of \eqref{eq4.9ii}, again with $a=1$.
We make the splitting:
\begin{multline*} t^2\partial^2_t (S^0_{t}\nabla )\cdot \epsilon \nabla S^{1,\eta}_{t}f \,\,=\,\,
\sum^{n+1}_{i=1} \sum^n_{j=1} t^2 \partial^2_t (S^0_{t}D_i)\epsilon _{ij}
D_j S^{1,\eta}_{t}f\\+\,\,\,\sum^{n+1}_{i=1}
t^2 \partial^2_t (S^0_{t}D_i)\epsilon_{i,n+1}D_{n+1}S^{1,\eta}_{t}f
\,\,\equiv\,\, V_t f+\widetilde{V}_t
f.\end{multline*}
We treat $\widetilde{V}_t$ first. For ${\bf f}: \mathbb{R}^n \to \mathbb{C}^{n+1},$ set 
$$\theta_t{\bf f}\equiv t^2\partial^2_t (S^0_{t}\nabla )\cdot {\bf f},$$ and let
$\vec{\epsilon}\equiv (\epsilon_{1,n+1},\epsilon_{2,n+1},\dots, \epsilon_{n+1,n+1})$. Then, 
using a well known trick of \cite{CM}, we write 
\begin{equation*}\widetilde{V}_tf = \left\{\theta _t \vec{\epsilon} 
-(\theta _t\vec{\epsilon})P_t \right\} \partial_tS^{1,\eta}_{t}f 
\,+\, (\theta_t \vec{\epsilon}) P_t \partial_tS^{1,\eta}_{t} f \,\equiv  \,R_t^{\epsilon}\partial_t 
S^{1,\eta}_{t} f  +(\theta _t\vec{\epsilon})P_t 
\partial_tS^{1,\eta}_{t}f,\end{equation*}
where as usual $P_t$ is a nice approximate identity.  By Lemmas~\ref{l3.11}, \ref{l2.9}, \ref{l2.12} and Carleson's Lemma, the triple bar norm of the second summand is
no larger than
$C\epsilon_0 \| N_\ast \left(P_t \partial_t S^{1,\eta}_{t}f\right)\|_2.$
In addition, by Lemma~\ref{l2.15}, we have that
\begin{equation*} | \| R_t^{\epsilon} \partial_t S^{1,\eta}_{t} f\| | \,\leq \, C\epsilon _0 |\| t\nabla_\|
\partial_t S^{1,\eta}_{t}f\| |\,\leq \,C\epsilon_0|\|t\nabla\partial_t S^{1,\eta}_{t}f\| |.\end{equation*}

It remains to control $|\| V_t f\| |$, which is the primary difficulty.
By definition,
\begin{equation*}V_{t}\,=\,\theta_{t}\tilde{\epsilon}\nabla_{\|}S^{1,\eta}_{t}\, \equiv\, 
t^2\partial^2_t (S^0_{t}\nabla )\cdot\tilde{\epsilon}\nabla_{\|}S^{1,\eta}_{t} \, ,\end{equation*} where
$\tilde{\epsilon}$ is the $(n+1)\times n$ matrix $(\epsilon_{ij})_{1\leq i\leq n+1,1\leq j\leq n}$. Recall
that $A^1_{\|}$ is the $n\times n$ sub-matrix of $A^1$ with $(A^1_{\|})_{ij}=A^1_{ij}$, $1\leq i,j\leq n$, and that $(L_1)_{\|}\equiv
-\dv_{\|}A^1_{\|} \nabla_{\|}$. Then
\begin{equation*}V_{t}\,=\, \theta_{t} \tilde{\epsilon} \nabla_{\|} \left(
I-\left(I+t^2(L_1)_{\|}\right)^{-1}\right) 
S^{1,\eta}_{t} \,+\,\theta_{t} \tilde{\epsilon}\, \nabla_{\|}(I+t^2(L_1)_{\|})^{-1}
S^{1,\eta}_{t}\,\equiv\,
Y_t +Z_t.\end{equation*}

We first consider $Y_t$. Note that $\left( I-\left(I+t^2(L_1)_{\|}\right)^{-1}\right) 
=t^2(L_1)_{\|}\left(I+t^2(L_1)_{\|}\right)^{-1}$, so
\begin{equation*}Y_{t}=\theta_{t} \tilde{\epsilon} t^2\nabla_{\|}\left(I+t^2(L_1)_{\|}\right)^{-1}(L_1)_{\|} 
S^{1,\eta}_{t}.\end{equation*} As above, set $f_\eta(x,t) \equiv f(x) \varphi_\eta(t)$.
In the weak sense of Lemma \ref{l2.identity}, we then have
\begin{equation*}(L_1)_{\|} S^{1,\eta}_{t}f=\sum^n_{i=1} D_iA^1_{i,n+1} \partial_t S^{1,\eta}_{t}f+
\sum^{n+1}_{j=1} A^1_{n+1,j}D_j\partial
_tS^{1,\eta}_tf + f_\eta,\end{equation*}
and we denote by 
$Y^{(1)}_t +Y^{(2)}_t +Y_t^{(3)}$ the corresponding splitting of $Y_t$. Now, by Lemma~\ref{l2.10}, $\theta_{t}:L^2\to L^2$, and
it is well known that $t\nabla_{\|}(I+t^2(L_1)_{\|})^{-1}:L^2\to L^2$. Thus 
\begin{equation*}|\| Y^{(2)}_t f\| | \leq C\epsilon_0 | \| t\nabla \partial_t S^{1,\eta}_{t}f\| |,\end{equation*}
and also, as in the proof of \eqref{eq3.5a}, $$\||Y^{(3)}_t\|| \leq C\epsilon_0\||tf_\eta\|| \leq C\epsilon_0 \|f\|_{L^2(\mathbb{R}^n)}.$$

We make a further decomposition of $Y^{(1)}_t$ as follows:
\begin{equation*} Y^{(1)}_t = \left(U_t\vec{a}-(U_t\vec{a})P_t \right)\partial_tS^{1,\eta}_{t}
+(U_t\vec{a})P_t\partial_t
S^{1,\eta}_{t}\,\equiv \,\widetilde{R}_t \partial_t S^{1,\eta}_{t}+
(U_t\vec{a})P_t \partial_t S^{1,\eta}_{t}\,,\end{equation*} where
\begin{equation}\label{eq4.10} U_t\vec{g}\,\equiv\, \theta_{t}\tilde{\epsilon}
t^2\nabla_{\|}\left(I+t^2(L_1)_{\|}\right)^{-1}\dv_\| \vec{g} ',\end{equation} and
$\vec{a}\equiv (A^1_{1,n+1}, A^1_{2,n+1},\dots ,A^1_{n,n+1})$. We now claim that
\begin{equation}\label{eq4.11} |\| U_t \| |_{op}\leq C\epsilon_0\end{equation} Let us momentarily defer the proof of
this claim.  It is a standard fact that for two sets $E$ and $E'\subseteq
\mathbb{R}^n$, with $\vec{g}$ supported in $E'$, we have
\begin{equation}\label{eq4.12} \left\| t^2\nabla_{\|} \left(1+t^2(L_1)_{\|}\right)^{-1} \dv_{\|} \vec{g}\right\|_{L^2(E)}\leq C\exp \left\{
\frac{-\dist (E,E')}{Ct}\right\} \| \vec{g}\|_{L^2 (E')}\end{equation} (the corresponding fact for the operator $t\nabla_\|\left(1+t^2(L_1)_{\|}\right)^{-1}$ is proved in \cite{AHLMcT} for example, and
\eqref{eq4.12} may be readily deduced from this fact plus the same argument). Thus, by Lemma~\ref{l2.13}, the operator
$U_t$ satisfies \eqref{eq2.11}, with a bound on the order of $C\epsilon_0$, whenever $t\leq c\ell (Q)$. Therefore, by
Lemma~\ref{l2.12} and Carleson's Lemma, we have that
\begin{equation*} | \| (U_t\vec{a})P_t \partial _t S^{1,\eta}_{t} f\| | \leq C\epsilon_0 \| N_\ast (P_t
\partial_t S^{1,\eta}_{t}f)\| _2.\end{equation*} 
Moreover, by Lemmas \ref{l2.15} and \ref{l2.19}, we have that
$$\||\widetilde{R}_t \partial_t S^{1,\eta}_{t} f \|| \leq C \epsilon_0 \||t \nabla_{\|} \partial_t S^{1,\eta}_{t} f \|| 
\leq C \epsilon_0 \|| t \nabla\partial_t S^{1,\eta}_{t} f \||.$$

To finish our treatment of $Y_t$, it remains to prove \eqref{eq4.11}.  We continue
to defer the proof of this estimate for the moment, and proceed to discuss the term $Z_t$.
We write 
 \begin{multline*}
Z_t\, = \,\theta_{t} \tilde{\epsilon}\,\nabla_{\|} \left(I + t^2 (L_1)_\|\right)^{-1}
(S^{1,\eta}_{t} -S^{1,\eta}_{0}) \\ + \,
\theta_{t} \tilde{\epsilon}\,\nabla_{\|}\left( \left(I + t^2 (L_1)_\|\right)^{-1} - I\right)S^{1,\eta}_{0} \,+\,
\theta_{t} \tilde{\epsilon}\,\nabla_{\|}S^{1,\eta}_{0}\,\,\equiv\,\,Z_t^{(1)} + Z_t^{(2)} + Z_t^{(3)}.
\end{multline*}
By Lemma~\ref{l3.11} with $m=1$, we have that
\begin{equation*} | \| Z^{(3)}_t f\| | \leq C\epsilon_0 \,\sup_{t>0} \| \nabla
S^{1,\eta}_{t} f\|_2 .\end{equation*} Also,
\begin{equation*}Z^{(2)}_t =\theta_{t}\tilde{\epsilon}\,
\nabla_{\|} \left(I + t^2 (L_1)_\|\right)^{-1}t^2\dv_{\|} A^1_{\|}\nabla_{\|}S^{1,\eta}_{0}\equiv
U_t A^1_{\|}\nabla_{\|}S^{1,\eta}_{0}\end{equation*} (see \eqref{eq4.10}), so by the deferred estimate \eqref{eq4.11} we have that
\begin{equation*}|\| Z^{(2)}_t f\| | \leq C\epsilon_0 \sup_{t>0}\| \nabla S^{1,\eta}_{t}
f\|_{2}.\end{equation*} Integrating by parts, we obtain 
\begin{multline*}Z^{(1)}_t =\theta_{t}\tilde{\epsilon} \,\nabla_{\|} \left(I + t^2 (L_1)_\|\right)^{-1}\!\int^t_0 \partial_s S^{1,\eta}_{s} ds\,=\,-\theta_{t} \tilde{\epsilon}\,
\nabla_{\|}\left(I + t^2 (L_1)_\|\right)^{-1}\!\int^t_0 s\partial^2_s S^{1,\eta}_{s}
ds\\+\,\,\theta_{t}\tilde{\epsilon}\,
\nabla_{\|}\left(I + t^2 (L_1)_\|\right)^{-1}
t\partial_t S^{1,\eta}_{t}\,\,\equiv\,\, \Omega^{(1)}_t +\Omega^{(2)}_t.\end{multline*} By
Lemma~\ref{l2.13}, and the fact that $\nabla_{\|}(I+t^2(L_1)_{\|})^{-1}1=0$, we have that the operator 
$$R_t\equiv
\theta_{t}\tilde{\epsilon}\,t\nabla_{\|}\,\left(I + t^2 (L_1)_\|\right)^{-1}$$ 
satisfies the hypothesis of Lemma~\ref{l2.15}, with a bound on the
order of $C\epsilon_0$, so that  
\begin{equation*}|\| \Omega^{(2)}_t f\| | \leq C\epsilon_0 \,| \| t\nabla\partial_t S^{1,\eta}_{t}f\| |.
\end{equation*} Furthermore,
\begin{equation*} \Omega^{(1)}_t =-\int^t_0\frac{s}{t}\,\theta_{t}\tilde{\epsilon}\, t\nabla_\|
\left(I + t^2 (L_1)_\|\right)^{-1}s\partial^2_sS^{1,\eta}_{s}
\frac{ds}{s},\end{equation*} so by Lemma~\ref{l2.20}, we have 
\begin{equation*}|\| \Omega^{(1)}_t f\| |\leq C\epsilon_0 \,|\|t\partial^2_tS^{1,\eta}_{t}f\| |.
\end{equation*} Modulo
\eqref{eq4.11}, this concludes the proof of Lemma~\ref{l4.9}, and hence also that of \eqref{eq3.6}.

We conclude the present section by proving~\eqref{eq4.11}. The proof will depend on some technology from the proof of
the Kato square root conjecture. By ellipticity, it is enough to show that \begin{equation*} |\Vert
U_{t}A_{\|}^1\vec{g}\Vert|\leq C\epsilon_{0}\|\vec{g}\|_{2}\end{equation*}
 for $\vec{g}\in L^{2}(\mathbb{R}^{n},\mathbb{C}^{n})$. But
$$U_{t}A^1_{\|}=\theta_{t}\tilde{\epsilon}\,t^{2}\nabla_{\|}\left(I + t^2 (L_1)_\|\right)^{-1}\dv_{\|}A^1_{\|},$$ so by the Hodge decomposition
\cite[p. 116]{AT}, we may replace $\vec{g}$ by $\nabla_{\|}F$, where $\Vert\nabla_{\|}F\Vert_{2}\leq C\Vert g\Vert_{2}$. As usual, by density we may suppose
that $F \in C_0^{\infty}.$  Now
\begin{equation*}
U_{t}A^1_{\|}\nabla_{\|}F  \,=\,-\theta_{t}\tilde{\epsilon}\,
\nabla_{\|}\left(I + t^2 (L_1)_\|\right)^{-1}(t^{2}(L_1)_{\|})F 
\,=\,\theta_{t}\tilde{\epsilon}\,\nabla_{\|}\left(\left(I + t^2 (L_1)_\|\right)^{-1}-I\right)F.
\end{equation*}
We recall that $\theta_t =t^2\partial^2_t (S^0_{t}\nabla)\,\cdot \, $, so by 
Lemma~\ref{l3.11} with $m=1$, 
\begin{equation*} |\Vert\theta_{t}\tilde{\epsilon}\,\nabla_{\|}F\Vert|\leq
C\epsilon_{0}\Vert\nabla_{\|}F\Vert_{2}.\end{equation*}
 The main term is \begin{equation*}
\theta_{t}\tilde{\epsilon}\,\nabla_{\|}\left(I + t^2 (L_1)_\|\right)^{-1}F\equiv\frac{1}{t}R_{t}F,\end{equation*}
 where by Lemmas~\ref{l2.9}, \ref{l2.10}, and \ref{l2.13}, and the fact that 
$\nabla_{\|}(I+t^{2}L_{2}')^{-1}1=0$, we have that $R_{t}$ satisfies the hypotheses of Lemma~\ref{l2.17}, with a
bound on the order of $C\epsilon_{0}$. Therefore, it suffices to prove the Carleson measure estimate \begin{equation*}
\int_{0}^{\ell (Q)}\!\!\int_{Q}\left|\frac{1}{t}R_{t}\Phi(x)\right|^{2}\frac{dxdt}{t}\leq C\epsilon_{0}|Q|,\end{equation*}
 where $\Phi(x)\equiv x$. To this end, we write \begin{equation}
\frac{1}{t}R_{t}\Phi=\theta_{t}\tilde{\epsilon}\,\nabla_{\|}\left(\left(I + t^2 (L_1)_\|\right)^{-1}-I\right)\Phi+
\theta_{t} \tilde{\epsilon}\,\nabla_{\|}\Phi.\label{eq4.13}\end{equation}
 But $\nabla_{\|}\Phi=\mathbb{I}$, the $n\times n$ identity matrix. Thus, Lemmas~\ref{l3.11}, \ref{l2.9} and \ref{l2.12}
yield the bound \begin{equation*}
\int_{0}^{\ell (Q)}\!\!\int_{Q}|\theta_{t}\tilde{\epsilon}\nabla_{\|}\Phi|^{2}\frac{dxdt}{t}\leq C\epsilon_{0}|Q|.\end{equation*}
 The remaining term in \eqref{eq4.13} equals \begin{equation*}
\theta_{t}\tilde{\epsilon}\,t^{2}\nabla_{\|}\left(I + t^2 (L_1)_\|\right)^{-1}\dv_{\|}A_{\|}
\nabla_{\|}\Phi 
\,=\,\theta_{t}\tilde{\epsilon}\,t^{2}\nabla_{\|}\left(I + t^2 (L_1)_\|\right)^{-1}\dv_{\|}A_{\|} 
\,\equiv \,T_{t}A_{\|}
\end{equation*}
We now invoke a key fact in the proof of the Kato conjecture. By \cite{AHLMcT}, there exists, for each $Q$, a mapping
$F_{Q}=\mathbb{R}^{n}\rightarrow \mathbb{C}^{n}$ such that \begin{equation}
\begin{split}\label{eq4.14}\text{(i)} & \quad\int_{\mathbb{R}^{n}}|\nabla_{\|}F_{Q}|^{2}\leq C|Q|\\
\text{(ii)} & \quad\int_{\mathbb{R}^{n}}|(L_1)_{\|}F_{Q}|^{2}\leq C\frac{|Q|}{\ell(Q)^{2}}\\
\text{(iii)} & \quad\sup_{Q}\int_{0}^{\ell(Q)}\fint_{Q}|\vec{\zeta}(x,t)|^{2}\frac{dxdt}{t}\\
 & \quad\quad\leq
C\sup_{Q}\int_{0}^{\ell(Q)}\fint_{Q}|\vec{\zeta}(x,t)E_{t}\nabla_{\|}F_{Q}(x)|^{2}\frac{dxdt}{t},\end{split}
\end{equation}
 for every function $\vec{\zeta}:\mathbb{R}_{+}^{n+1}\rightarrow\mathbb{C}^{n}$, where $E_{t}$ denotes the dyadic
averaging operator, i.e. if $Q(x,t)$ is the minimal dyadic cube (with respect to the grid induced by $Q$) containing $x$,
with side length at least $t$, then \begin{equation*} E_{t}g(x)\equiv\fint_{Q(x,t)}g.\end{equation*}
 Here $\nabla_{\|}F_{Q}$ is the Jacobian matrix $(D_{i}(F_{Q})_{j})_{1\leq i,j\leq n}$, and the product
$$\vec{\zeta}E_{t}\nabla_{\|}F_{Q}=\sum_{i=1}^{n}\zeta_{i}E_{t}D_{i}F_{Q}$$ is a vector. Given the existence of a family of
mappings $F_{Q}$ with these properties, as in \cite[Chapter 3]{AT}, we see by (iii), applied with
$\vec{\zeta}(x,t)=T_{t}A_{\|}$, that it is enough to show that \begin{equation*}
\int_{0}^{\ell(Q)}\int_{Q}|T_{t}A_{\|}(x)\,E_{t}\nabla_{\|}F_{Q}(x)|^{2}\frac{dxdt}{t}\leq C\epsilon_{0}|Q|.\end{equation*}
 But as in \cite{AT}, we may exploit the idea of \cite{CM} to write
\begin{equation*}
\begin{split}(T_{t}A_{\|})E_{t}\nabla_{\|}F_{Q} & =\left\{(T_{t}A_{\|})E_{t}-T_{t}A_{\|}\right\}\nabla_{\|}F_{Q}+T_{t}A_{\|}\nabla_{\|}F_{Q}\\
 & =(T_{t}A_{\|})(E_{t}-E_tP_{t})\nabla_{\|}F_{Q}+\left\{(T_{t}A_{\|})E_tP_{t}-T_{t}A_{\|}\right\}\nabla_{\|}F_{Q}+T_{t}A_{\|}\nabla_{\|}F_{Q}\\
 & \equiv R_{t}^{(1)}\nabla_{\|}F_{Q}+R_{t}^{(2)}\nabla_{\|}F_{Q}+T_{t}A_{\|}\nabla_{\|}F_{Q},\end{split}
\end{equation*}
 where $P_{t}$ is a nice approximate identify.  The last term is easy to handle. We have that 
\begin{equation*}
T_{t}A_{\|}\nabla_{\|}F_{Q}=-\theta_{t}\tilde{\epsilon}t\nabla_{\|}\left(I + t^2 (L_1)_\|\right)^{-1}t(L_1)_{\|}F_{Q}.\end{equation*}
 Therefore, since $\theta_{t}$ and $t\nabla_{\|}(I+t^{2}(L_1)_{\|})^{-1}$ are uniformly bounded on $L^{2}$, we obtain that
\begin{equation*}
\int_{0}^{\ell(Q)}\int_{Q}|T_{t}A_{\|}\nabla_{\|}F_{Q}|^{2}\frac{dxdt}{t}  \leq
C\epsilon_{0}\int_{\mathbb{R}^{n}}|(L_1)_{\|}F_{Q}|^{2}\int_{0}^{\ell(Q)}tdtdx\leq C\epsilon_{0}|Q|,
\end{equation*}
 where in the last step we have used \eqref{eq4.14}(ii).

It is also easy to handle $R_{t}^{(1)}\nabla_{\|}F_{Q}$. Indeed $E_{t}=E_{t}^{2}$, so
that \begin{equation*} R_{t}^{(1)}=(T_{t}A_{\|})E_{t}(E_{t}-P_{t})\end{equation*}
 By the definition of $T_{t}$, Lemma~\ref{l2.13} and Lemma~\ref{l2.19}, we have that
\begin{equation}\label{eq6.smallbound}
\Vert(T_{t}A_{\|})E_{t}\Vert_{2\rightarrow2}\leq C\epsilon_{0}.\end{equation} 
Thus, \begin{equation*}
\int_{0}^{\ell(Q)}\int_{Q}|R_{t}^{(1)}\nabla_{\|}F_{Q}|^{2}dx\frac{dt}{t}  \leq
C\epsilon_{0}\iint_{\mathbb{R}_+^{n+1}}|(E_{t}-P_{t})\nabla_{\|}F_{Q}|^{2}\frac{dxdt}{t}
 \leq C\epsilon_{0}|Q|,
\end{equation*}
 where in the last step we have used \eqref{eq4.14}(i), as well as the boundedness on $L^{2}$ of 
\begin{equation*}
g\rightarrow\left(\int_{0}^{\infty}|(E_{t}-P_{t})g|^{2}\frac{dt}{t}\right)^{\frac{1}{2}}.\end{equation*}
 It remains to treat the contribution of the term $R_{t}^{(2)}\nabla_{\|}F_{Q}$. By \eqref{eq4.14}(i), it will be enough to
establish the square function bound \begin{equation*} |\Vert R_{t}^{(2)}\nabla_{\|}F_{Q}\Vert|\leq
C\epsilon_{0}\Vert\nabla_{\|}F_{Q}\Vert_2.\end{equation*}
To this end, we write \begin{equation}
R_{t}^{(2)}\nabla_{\|}F_{Q}=R_{t}^{(2)}(I-P_{t})\nabla_{\|}F_{Q}\,
+\,R_{t}^{(2)}P_{t}\nabla_{\|}F_{Q},\label{eq4.15}\end{equation}
where $I$ denotes the identity operator.
The last term is easy to handle. We note that $R_{t}^{(2)}1=0$, and therefore by Lemmas~\ref{l2.9},
\ref{l2.10}, \ref{l2.13} and
\ref{l2.19}, the operator $R_{t}^{(2)}$ satisfies the hypotheses of Lemma~\ref{l2.15} with bound on the order of
$C\epsilon_{0}$. Thus, \begin{equation*}
|\Vert R_{t}^{(2)}P_{t}\nabla_{\|}F_{Q}\Vert|  \leq C\epsilon_{0}\,|\Vert t\nabla_{\|}P_{t}\nabla_{\|}F_{Q}\Vert| \leq C\epsilon_{0}\,\Vert\nabla_{\|}F_{Q}\Vert_2,
\end{equation*}
where the last inequality is standard Littlewood-Paley theory. 

By the definition of $R_{t}^{(2)}$, we may further decompose the first summand on
the right side of \eqref{eq4.15} as \begin{equation*}
(T_{t}A_{\|})E_tQ_{t}\nabla_{\|}F_{Q}\,-\,T_{t}A_{\|}\nabla_{\|}(I-P_{t})F_{Q}
\equiv {\bf I} - {\bf II},
\end{equation*} where $Q_{t}\equiv P_{t}(I-P_{t})$ satisfies $|\|
Q_{t}\||_{op}\leq C$. Then by \eqref{eq6.smallbound}, we have 
$$|\|{\bf I}\||\leq C\epsilon_{0}\|\nabla_{\|}F_{Q}\|_{2}.$$

Next, by definition of $T_{t}$, we see that\begin{multline*} {\bf II}= 
\,\theta_{t}\tilde{\epsilon}\,\nabla_{\|}\left(\left(I + t^2 (L_1)_\|\right)^{-1}-I\right)(I-P_{t})F_{Q} \,=\, 
-\,\theta_{t}\tilde{\epsilon}\,\nabla_{\|}F_{Q}\\+\,\theta_{t}\tilde{\epsilon}\,\nabla_{\|}P_{t}F_{Q}
\,\,+\,\,\theta_{t}\tilde{\epsilon}\,\nabla_{\|}\left(I + t^2 (L_1)_\|\right)^{-1}(I-P_{t})F_{Q} \equiv 
\,{\bf II}_{1}+{\bf II}_{2}+{\bf II}_{3}.\end{multline*} By
Lemma~\ref{l3.11}, $$\|| {\bf II}_{1}\||\leq C\epsilon_{0}\|\nabla_{\|}F_{Q}\|.$$ Moreover, by Lemma~\ref{l2.10} and the fact that
$\|t\nabla_{\|}(1+t^{2}(L_1)_{\|})^{-1}\|_{2\to2}\leq C$, we obtain that\begin{equation*} |\| {\bf II}_{3}\||\leq
C\epsilon_{0}|\Vert t^{-1}I_{1}(I-P_{t})\sqrt{-\Delta}F_{Q}\Vert |\leq
C\epsilon_{0}\|\nabla_{\|}F_{Q}\|_{2},\end{equation*} where $I_{1}=(-\Delta)^{-1/2}$ is the fractional integral
operator of order one on $\mathbb{R}^{n}$, and where we have used the Littlewood-Paley inequality
$$|\Vert t^{-1}I_{1}(I-P_{t})\Vert
|_{op}\leq C.$$ The latter estimate holds by Plancherel's Theorem, since
\begin{equation}
\left|\frac{1}{t|\xi|}(1-\hat{\phi}(t\xi))\right|\,\leq \,
C\min\left(t|\xi|,\frac{1}{t|\xi|}\right),\label{eq4.16}\end{equation}
if $\phi_t(x)=t^{-n}\phi_t\left(x/t\right)),$
the convolution kernel of $P_{t}$, is chosen so that 
$\int_{\mathbb{R}^{n}}x\phi_t(x)dx=0$.

Finally, it remains only to consider the term ${\bf II}_{2}$. Now 
$${\bf II}_{2}=\theta_{t}\tilde{\epsilon}P_{t}\nabla_{\|}F_{Q},$$ 
so we need that $|\|\theta_{t}\tilde{\epsilon}P_{t}\||_{op}\leq C\epsilon_{0}$.  By Lemmas~\ref{l3.11},
\ref{l2.12} and \ref{l2.9},  $|\theta_{t}\tilde{\epsilon}|^{2}t^{-1}dxdt$ is a Carleson measure with norm at most $C\epsilon_{0}^{2}$, so it is enough to bound
$|\|\theta_{t}\tilde{\epsilon}P_{t}-(\theta_{t}\tilde{\epsilon})P_{t}\||_{op}.$
We may choose $P_t$ to be of the form $P_{t}=\tilde{P}_{t}^{2}$,  where $\tilde{P}_t$ is of the same type.  Set
$$R_{t}\equiv\theta_{t}\tilde{\epsilon}
\tilde{P}_{t}-(\theta_{t}\tilde{\epsilon})\tilde{P}_{t},$$ which satisfies the
hypothesis of Lemma~\ref{l2.15} with bound $C\epsilon_{0}$. Thus, \begin{equation*}
|\|\theta_{t}\tilde{\epsilon}P_{t}-(\theta_{t}\tilde{\epsilon}) P_{t}\||_{op}\equiv|\|
R_{t}\tilde{P}_{t}\||_{op}\leq C\epsilon_{0}|\| t\nabla\tilde{P}_{t}\||_{op}\leq C\epsilon_{0}.\end{equation*} This concludes the proof of Lemma \ref{l4.9}, and hence 
that of the square function bound~\eqref{eq3.6}.
\end{proof}

\section{Proof of Theorem \ref{t1.10}: the singular integral estimate \eqref{eq3.7} \label{s5}}

We shall consider separately the cases $t>0$ and $t<0$, and since the proof is the same in each case we treat only the
former. More precisely, we shall prove
\begin{equation}\label{eq5.1}\sup_{0<\eta<10^{-10} }
\sup_{t>0}\|\partial_t S^{1,\eta}_{t}f\|_2 \leq C\| f\|_2 +C\epsilon_0
\left({\bf M^+} + {\bf M^-}\right),\end{equation} 
where ${\bf M}^\pm$ are defined in \eqref{eq4.defM}. 
We begin by reducing matters to the case $t\geq 4\eta$.  Suppose that
$0\leq t < 4\eta$.  We claim that
$$|\partial_t S_t^{1,\eta} f(x) - D_{n+1} S_{4\eta}^{1,\eta}f(x)| \leq C Mf(x).$$
Indeed, let $K^\eta_t(x,y)$ denote the kernel of $\partial_t S_t^{1,\eta}$, i.e.,
$$K_t^\eta(x,y) \equiv \partial_t\left(\varphi_\eta \ast \Gamma_1(x,\cdot,y,0)\right)(t).$$
To prove the claim, it is enough to establish the following estimate: 
$$|K_t^\eta(x,y) - K^\eta_{4\eta}(x,y)| \leq C \left(\frac{1_{\{|x-y|\leq 10 \eta\}}}{\eta |x-y|^{n-1}} +
\frac{\eta}{|x-y|^{n+1}}1_{\{|x-y|>10\eta\}}\right).$$
In turn, the case $|x-y|\leq 10\eta$ of the latter bound follows directly from \eqref{eq4.ntkernel1}.
On the other hand, if $|x-y|>10\eta$, we have by Lemma \ref{l2.1} that
\begin{multline*}|K_t^\eta(x,y) - K^\eta_{4\eta}(x,y)| =\left|\int \varphi_\eta(s) \left(
D_{n+1} \Gamma(x,t-s,y,0) - D_{n+1}\Gamma(x,4\eta -s,y,0)\right)ds\right|\\
\leq C \int|\varphi_\eta(s)| \frac{|4\eta-t|}{|x-y|^{n+1}} ds\leq C\frac{\eta}{|x-y|^{n+1}}.\end{multline*}

Having proved the claim, we fix $t_0\geq 4\eta$, and use \eqref{eq1.3} to obtain, for each $y\in
\mathbb{R}^n$,
\begin{equation*}\begin{split}|(D_{n+1}S^{1,\eta}_{t_0}) 
f(y)|&\leq
C\left(\fiint_{B((y,t_0),t_0/2)}
|\partial_\tau S^{1,\eta}_{\tau}f(x)|^2 dxd\tau
\right)^{\frac{1}{2}}\\ &\leq C\left( \fiint_{B((y,t_0),t_0/2)} |\partial_\tau S^{1,\eta}_{\tau
}f-\partial_\tau S^{0,\eta}_{\tau }f|^2 dxd\tau 
\right)^{\frac{1}{2}}+\,\ok,\end{split}\end{equation*} where $\|\ok\|_{L^2(\mathbb{R}^n)}\leq C\|f\|_2$ uniformly in
$t_0$, by our hypotheses regarding $L_0$, and where we have used that 
$u_\eta(x,t) \equiv S_t^{1,\eta}f(x)$ solves $L_1 u_\eta = 0$ in $\{t>\eta\}$.
Consequently, \begin{equation*}\| (D_{n+1} S^{1,\eta}_{t_0})f\|^2_2\leq C
\|f\|_2 \,+\,C\frac{1}{t_0}\int^{3t_0/2}_{t_0/2}\!\!\int_{\mathbb{R}^n}
|\partial_\tau S^{1,\eta}_{\tau }f-\partial_\tau S^{0,\eta}_{\tau}f|^2dxd\tau .\end{equation*} 
As in the section \ref{s4},  \begin{equation*}  \partial_\tau S^{1,\eta}_{\tau }f(x)-\partial_\tau S^{0,\eta}_{\tau}f(x)
=\partial_\tau\left(L_0^{-1}\dv \epsilon \nabla S_{(\cdot)}^{1,\eta} f\right)(x,\tau).
\end{equation*}
Thus, it is enough to prove that for every
$\Psi \in C_0^\infty\left(\mathbb{R}^n \times (\frac{t_0}{2},\frac{3t_0}{2})\right),$
with $t_0^{-1/2}\|\Psi\|_2 =1$,
and for each $\eta > 0 $ and $\delta>0$ sufficiently small, we have
\begin{equation} \label{eq5.2a}\left|\frac{1}{t_0}\iint_{\mathbb{R}^{n+1}} 
\epsilon(y)\nabla S_s^{1,\eta}f(y)\cdot\overline{\nabla (L_0^*)^{-1}
\left(D_{n+1}\Psi_\delta\right)(y,s)} dy ds\right|
\leq C\epsilon_0({\bf M^+} + {\bf M^-}), 
\end{equation}
where again $\Psi_\delta \equiv \varphi_\delta \ast \Psi$.  We may then 
obtain \eqref{eq5.1} by taking first a limit as $\delta \to 0$, and then a supremum over all such $\Psi.$

To prove \eqref{eq5.2a}, we begin by splitting the integral on the left hand side into
\begin{equation}
\frac{1}{t_0}\left\{ \int_{-t_0/4}^{t_0/4}\!\int_{\mathbb{R}^n} \,+\, \int_{t_0/4}^{4t_0}\!\int_{\mathbb{R}^n}
\,+\, \int_{4t_0}^\infty\!\int_{\mathbb{R}^n} \,+\,  \int_{-\infty}^{-t_0/4}\!\!\int_{\mathbb{R}^n}\right\}
\equiv I + II + III + IV.\label{eq5.3a}\end{equation}   
Since $\nabla (L_0^*)^{-1}\dv$ is bounded on $L^2(\mathbb{R}^{n+1})$, by Cauchy-Schwarz and our assumptions on $\Psi$,
we have that $$|II| \leq C \epsilon_0 \left(t_0^{-1} 
\int_{t_0/4}^{4t_0}\!\int_{\mathbb{R}^n}|\nabla S_s^{1,\eta}f(y)|^2 dy ds \right)^{1/2}
\leq C \epsilon_0 \sup_{t>0} \|\nabla S_t^{1,\eta}f\|_2.$$

Next we consider terms $III$ and $IV$.  These may be handled in the same way, so we treat only
$III$ explicitly.  We use \eqref{eq4.etaidentity} to write 
\begin{equation}\nabla (L_0^*)^{-1}(D_{n+1}\Psi_\delta)(y,s)=\int 
\nabla_{y,s}\partial_sS^{L_0^*,\delta}_{s-\tau}\left(\Psi(\cdot,\tau)\right)(y) \,d\tau,\label{eq5.deltaidentity}
\end{equation}
so that \begin{multline*}
III = t_0^{-1} \int\!\int_{4t_0}^\infty\!\int_{\mathbb{R}^n} 
\left(\partial_\tau S^0_{\tau-s} \nabla\right) \cdot
\epsilon \nabla S_s^{1,\eta} f (x) \Psi_\delta (x,\tau) dx ds d\tau \\=
 \frac{1}{t_0}\int\!\int_{2\tau}^\infty\!\int_{\mathbb{R}^n}\,\,\, -\quad\frac{1}{t_0}
 \int\!\int_{2\tau}^{4t_0}\!\int_{\mathbb{R}^n} 
 \equiv \widetilde{III} \,- \text { error}.
\end{multline*}
In the error term, $s-\tau \approx s \approx \tau \approx t_0$, if $\delta$ is sufficiently small,
given the support constraints on $\Psi$.  Thus by Cauchy-Schwarz and Lemma \ref{l2.10} $(i)$,
the absolute value of the error term is bounded by $C \epsilon_0 \sup_{t>0}\|\nabla S_t^{1,\eta}f\|_2.$
The remaining term is
\begin{multline*}
\widetilde{III}=t_0^{-1} \int\!\int_{2\tau}^\infty\!\int_{\mathbb{R}^n} 
\left(\partial_\tau S^0_{\tau-s} \nabla\right) \cdot
\epsilon \nabla S_s^{1,\eta} f (x) \Psi_\delta (x,\tau) dx ds d\tau\\
=t_0^{-1} \int\!\lim_{R \to \infty}\int_{\mathbb{R}^n}\! \left\{\int_{2\tau}^{2R}
\left(\partial_\tau S^0_{\tau-s} \nabla\right) \cdot
\epsilon \nabla S_s^{1,\eta} f (x) ds \right\}\,\Psi_\delta (x,\tau)  dx d\tau\\
\equiv\,t_0^{-1} \int\!\lim_{R \to \infty}\int_{\mathbb{R}^n}H_R(x,\tau)\,\Psi_\delta (x,\tau)  dx d\tau,
\end{multline*}
where the expression in curly brackets equals
\begin{multline*}H_R(x,\tau) = -\int_{\tau}^R\partial_t\left(\int_{2t}^{2R}
\left(\partial_t S^0_{t-s} \nabla\right) \cdot
\epsilon \nabla S_s^{1,\eta} f (x) ds\right) dt \\
=-\int_{\tau}^R\partial_t\left(\int_{t}^{2R-t}
\left(D_{n+1}S^0_{-s} \nabla\right) \cdot
\epsilon \nabla S_{t+s}^{1,\eta} f (x) ds\right) dt\\= \int_{\tau}^R
\left(D_{n+1} S^0_{-t} \nabla\right) \cdot
\epsilon \nabla S_{2t}^{1,\eta} f (x)  dt -\int_{\tau}^R
\left(D_{n+1} S^0_{t-2R} \nabla\right) \cdot
\epsilon \nabla S_{2R}^{1,\eta} f (x)  dt \\
-\int_{\tau}^R\left(\int_{t}^{2R-t}
\left(D_{n+1}S^0_{-s} \nabla\right) \cdot
\epsilon \nabla \partial_tS_{t+s}^{1,\eta} f (x) ds\right) dt \\\equiv H_R'(x,\tau)-H_R''(x,\tau)
-H_R'''(x,\tau).
\end{multline*}
Since $|t-2R| \approx R$,  we have that by Lemma \ref{l2.10} $(i)$,
$$\sup_{\tau,R: 0<\tau<R}\|H_R''(\cdot,\tau)\|_2 \leq  C \epsilon_0 \sup_{t>0}\|\nabla S_t^{1,\eta}f\|_2,$$
from which the desired bound for the corresponding part of $\widetilde{III}$ follows readily.
Similarly, we may treat the contribution of $H_R'(x,\tau)$ by a direct application of the following
Lemma, which is really the deep result in this section.

\begin{lemma}\label{l5.2} Let $a,b$ denote non-zero real constants. We then have that
\begin{equation*} \sup_{0\leq \tau_1<\tau _2< \infty} \left\|\int^{\tau _2}_{\tau _1}
\left(D_{n+1}S^0_{at}\nabla \right)\cdot\epsilon \nabla S^{1,\eta}_{bt} f dt\right\|_2  
\leq \, C(a,b)\epsilon_0 \left({\bf M}^+ + {\bf M}^-\right).\end{equation*}\end{lemma}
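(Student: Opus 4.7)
The plan is to reduce the singular-integral bound to a situation closely modeled on Lemma~\ref{l4.9}, by first integrating by parts in $t$. Using $t$-independence, $a\,D_{n+1}S^{0}_{at}=\tfrac{d}{dt}S^{0}_{at}$ (as operators on $L^{2}$), and we obtain
\begin{multline*}
a\!\int_{\tau_{1}}^{\tau_{2}}(D_{n+1}S^{0}_{at}\nabla)\cdot\epsilon\nabla S^{1,\eta}_{bt}f\,dt
\,=\,\bigl[(S^{0}_{at}\nabla)\cdot\epsilon\nabla S^{1,\eta}_{bt}f\bigr]_{\tau_{1}}^{\tau_{2}}\\
\,-\,b\!\int_{\tau_{1}}^{\tau_{2}}(S^{0}_{at}\nabla)\cdot\epsilon\nabla(D_{n+1}S^{1,\eta})_{bt}f\,dt
\,\equiv\, B(\tau_{1},\tau_{2})-bK(\tau_{1},\tau_{2}).
\end{multline*}
The boundary contribution $B(\tau_{1},\tau_{2})$ is immediately bounded in $L^{2}$ by
$C\sup_{t\neq 0}\|(S^{0}_{t}\nabla)\|_{2\to 2}\|\epsilon\|_{\infty}\sup_{t\neq 0}\|\nabla S^{1,\eta}_{t}f\|_{2}\leq C\epsilon_{0}({\bf M^+}+{\bf M^-})$ via Good Layer Potentials for $L_{0}$; the ${\bf M^-}$ term accounts for $a,b$ of arbitrary sign.

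It remains to control $K(\tau_{1},\tau_{2})$ in $L^{2}(\mathbb{R}^{n})$, which I would do by duality. For $g\in L^{2}$ with $\|g\|_{2}=1$, the identity $ad\!j(S^{0}_{at}\nabla)=\nabla S^{0*}_{-at}$ gives
$$\langle g,K\rangle\,=\,\int_{\tau_{1}}^{\tau_{2}}\bigl\langle\nabla S^{0*}_{-at}g,\,\epsilon\,\nabla(D_{n+1}S^{1,\eta})_{bt}f\bigr\rangle\,dt,$$
and I would then split the vector $\epsilon\nabla(D_{n+1}S^{1,\eta})_{bt}f$ into its vertical and horizontal components (according to whether $\nabla$ acts as $D_{n+1}$ or as $\nabla_{\|}$), mirroring exactly the decomposition $V_{t}+\widetilde{V}_{t}$ used in the proof of Lemma~\ref{l4.9}. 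For the vertical component, the scalar $(D_{n+1}S^{1,\eta})_{bt}f=\partial_{\sigma}S^{1,\eta}_{\sigma}|_{\sigma=bt}f$ is pointwise dominated, via Lemma~\ref{l4.nt1}, by $N_{*}(P_{t}\partial_{t}S^{1,\eta}_{t}f)$, which is one of the ingredients of ${\bf M^{\pm}}$; a Carleson--paraproduct decomposition after \cite{CM}, combined with Lemmas~\ref{l2.12}, \ref{l2.15}, \ref{l2.19}, and the square-function bound $\||t\nabla\partial_{t}S^{0*}_{-at}g\||\leq C\|g\|_{2}$ (which follows from Good Layer Potentials for $L_{0}^{*}$) absorbs the time integral into a pairing bounded by $C\epsilon_{0}({\bf M^+}+{\bf M^-})\|g\|_{2}$.

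The horizontal component is the main obstacle. Here I would follow the proof of \eqref{eq4.11} closely, invoking the Hodge decomposition and inserting the resolvent via $\nabla_{\|}S^{1,\eta}_{bt}f = \nabla_{\|}\bigl(I-(I+(bt)^{2}(L_{1})_{\|})^{-1}\bigr)S^{1,\eta}_{bt}f + \nabla_{\|}(I+(bt)^{2}(L_{1})_{\|})^{-1}S^{1,\eta}_{bt}f$, and then reprise the Kato-type machinery of \cite{AHLMcT}: the approximating maps $F_{Q}$ of \eqref{eq4.14} reduce matters to a Carleson measure estimate of mass $\leq C\epsilon_{0}^{2}|Q|$ for the relevant bilinear symbol, now in its time-integrated form. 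The nontrivial point is that the integration over $[\tau_{1},\tau_{2}]$ must be carried through simultaneously with the Kato argument, and the $g$-factor $\nabla S^{0*}_{-at}g$ must be absorbed through Carleson's Lemma using the square-function estimate noted above. Once this uniform Carleson bound is in place, the estimates combine to give $|\langle g,K\rangle|\leq C\epsilon_{0}({\bf M^+}+{\bf M^-})\|g\|_{2}$ uniformly in $\tau_{1}<\tau_{2}$, which completes the proof.
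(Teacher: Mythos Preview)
There is a genuine gap. After your integration by parts, the dual pairing reads
\[
\langle g,K\rangle=\int_{\tau_1}^{\tau_2}\bigl\langle \nabla S^{0*}_{-at}g,\ \epsilon\,\nabla\partial_t S^{1,\eta}_{bt}f\bigr\rangle\,dt,
\]
and the factor $\nabla S^{0*}_{-at}g$ has no $t$--oscillation: it is merely uniformly bounded in $L^2$, not a Littlewood--Paley family. The Carleson/paraproduct machinery you invoke from Lemma~\ref{l4.9} requires an operator like $\theta_t=t^2\partial_t^2(S^0_t\nabla)$ on one side, so that Lemmas~\ref{l2.12}, \ref{l2.15}, \ref{l2.19} and the square-function bound $\||t\nabla\partial_t S^{0*}_{-at}g\||\leq C$ can be brought to bear; with only $\nabla S^{0*}_{-at}g$ there is no way to absorb the $dt$ integral (any Cauchy--Schwarz weighting leaves one factor of the form $\int\|\nabla S^{0*}_{-at}g\|_2^2\,t^{\alpha}dt$, which is never uniformly bounded over $0\le\tau_1<\tau_2<\infty$). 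There is also an internal inconsistency: your $K$ carries $\nabla\partial_t S^{1,\eta}_{bt}f$, yet your ``vertical'' discussion bounds $\partial_t S^{1,\eta}_{bt}f$ rather than $\partial_t^2 S^{1,\eta}_{bt}f$, and your ``horizontal'' resolvent insertion is written for $\nabla_\| S^{1,\eta}_{bt}f$, not $\nabla_\|\partial_t S^{1,\eta}_{bt}f$.

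The paper's proof proceeds quite differently. One dualizes against $h$ and integrates by parts \emph{twice} in $t$, producing (modulo controllable errors) $\int t^2\langle\nabla\partial_t^3 S^{L_0^*}_{-2t}h,\,\epsilon\nabla S^{1,\eta}_t f\rangle\,dt$. The crucial new idea is then to use \emph{invertibility} of $\tfrac12 I+K^{L_0^*}$ (part of Good Layer Potentials for $L_0$): writing $g=\partial_t^2 S^{L_0^*}_{-t}h$ and solving $L_0^*u=0$ with data $g$, uniqueness gives $\partial_t^2 S^{L_0^*}_{-t-s}h=\mathcal{D}^{L_0^*}_{-s}(\tfrac12 I+K^{L_0^*})^{-1}g$, whence $\tfrac12\nabla\partial_t^3 S^{L_0^*}_{-2t}h=-\nabla D_{n+1}\mathcal{D}^{L_0^*}_{-t}(\tfrac12 I+K^{L_0^*})^{-1}\partial_t^2 S^{L_0^*}_{-t}h$. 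This converts the main term into a genuine square-function pairing bounded by $\||t\partial_t^2 S^{L_0^*}_{-t}h\||\cdot\||t^2(\nabla\partial_t S^0_t\nabla)\cdot\epsilon\nabla S^{1,\eta}_t f\||$. The first factor is $\le C$; the second is handled by a discretization lemma (freezing $S^{1,\eta}_{t_k}$ on dyadic blocks), then Caccioppoli in Whitney boxes to strip the extra $\nabla$, reducing exactly to \eqref{eq4.9i}. The missing ingredient in your sketch is precisely this use of the layer-potential invertibility to manufacture the Littlewood--Paley structure on the $S^0$ side.
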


We defer for the moment the proof of this Lemma, and consider now
\begin{equation*} H'''_R (x,\tau )=\int^R_\tau \int^{2R}_{2 t} 
\left(\partial_tS^0_{t-s}\nabla \right)\cdot \epsilon
\partial_s \nabla S_{s}^{1,\eta} f(x)\,dsdt.\end{equation*} Then for $h\in L^2(\mathbb{R}^n)$, with $\| h\|_2=1$, we have
\begin{equation}\label{eq5.3} \left|\langle h,H'''_R 
(\cdot ,\tau )\rangle \right|=\left| \int^R_\tau \int ^{2R}_{2t}\langle
\nabla D_{n+1}S^{L_0^*}_{s-t} h,\epsilon \,\partial _s \nabla S^{1,\eta}_{s} f\rangle \,dsdt\right|,\end{equation} where we have
used that $ad\!j(S^0_{t-s}) =S^{L_0^*}
_{s-t}$ (recall that $ad\!j$ indicates that we have taken the adjoint
in the $x,y$ variables only, whereas $S^{L_0^*}_{t}$ is the single layer potential operator associated to $L_0^*$). Thus, \eqref{eq5.3} is dominated by
\begin{equation*}C\epsilon_0
\left( \int^\infty_0\int^\infty_{2 t} \|\nabla \partial_s S^{L_0^*}_{s-t} h\|^2_2 \,dsdt\right)^{\frac{1}{2}} 
\left(\int^\infty_0 \| \partial _s \nabla S^{1,\eta}_{s} f\|^2_2\int^{s/2}_0dtds\right)^{\frac{1}{2}}
\equiv C\epsilon_0\,
B_1\cdot B_2.\end{equation*} Note that $B_2=C| \| s\nabla \partial _s S^{1,\eta}_{s} f\| |$. Similarly, the
change of variable $s\to s+t$ yields that $B_1=|\| s\partial_s \nabla S^{L_0^*}_{s}h\| | \leq C\| h\|_2=C$. 
A suitable bound follows for the contribution of $H'''_R$.

It remains to consider the term $I$ in \eqref{eq5.3a}, which we shall also treat via Lemma~\ref{l5.2}. 
Again using \eqref{eq5.deltaidentity}, and that for small $\delta$,
$\Psi_\delta$ is 
supported in $\{t_0/2<\tau<3t_0/2\}$, we write
\begin{multline*}
I= t_0^{-1} \int\!\int_{-t_0/4}^{t_0/4}\!\int_{\mathbb{R}^n} 
\left(\partial_\tau S^0_{\tau-s} \nabla\right) \cdot
\epsilon \nabla S_s^{1,\eta} f (x) \Psi_\delta (x,\tau) dx ds d\tau \\=
 \frac{1}{t_0}\int\!\int_{-\tau/2}^{\tau/2}\!\int_{\mathbb{R}^n}\,\,\, -\quad \frac{1}{t_0}
 \int\!\int_{t_0/4<|s|<\tau/2}\!\int_{\mathbb{R}^n} 
 \equiv \widetilde{I} \,- \text { error}.
\end{multline*}
By Cauchy-Schwarz and Lemma \ref{l2.10} $(i)$,
the absolute value of the error term is bounded by $C \epsilon_0 \sup_{t>0}\|\nabla S_t^{1,\eta}f\|_2,$
since $\tau - s \approx \tau \approx t_0$.  The remaining term splits into
\begin{multline*}
\widetilde{I}_+=
t_0^{-1} \int\!\int_{\mathbb{R}^n}\! \left\{\int_{0}^{\tau/2}
\left(\partial_\tau S^0_{\tau-s} \nabla\right) \cdot
\epsilon \nabla S_s^{1,\eta} f (x) ds \right\}\,\Psi_\delta (x,\tau)  dx d\tau\\
\equiv\,t_0^{-1} \int\!\int_{\mathbb{R}^n}F(x,\tau)\,\Psi_\delta (x,\tau)  dx d\tau,
\end{multline*}
plus a similar term $\widetilde{I}_-$, which may be treated by the same arguments,
in which the expression in curly brackets
has domain of integration $(-\tau/2,0)$.   Now,
\begin{multline*}F(\cdot,\tau) =\int^\tau_{0} 
\partial_t \left( \int^{t/2}_0 \left(\partial _t S^0_{t-s}\nabla \right)\cdot
\epsilon \nabla S^{1,\eta}_{s} fds\right) dt\\=\int^\tau_{0} \partial_t \left( \int^t_{t/2}\left(D_{n+1}
S^0_{s}\nabla \right)\cdot\epsilon \nabla
S_{t-s}^{1,\eta} fds\right) dt\\  = \int^\tau_0 \left(\partial_tS^0_{t}\nabla \right)\cdot
\epsilon \nabla S^{1,\eta}_{0}
fdt -\int^\tau_0 \left(D_{n+1} S^0_{t/2} \nabla \right)\cdot\epsilon \nabla S^{1,\eta}_{t/2} fdt\\ + \int^\tau _0
\int^{t/2}_0 \left(\partial _t S^0_{t-s}\nabla \right)\cdot 
\epsilon \nabla \partial_s S^{1,\eta}_{s} fdsdt \,\,\equiv\,\,
F'-F''+F'''.\end{multline*}
We may estimate the contribution of $F''$ directly via Lemma~\ref{l5.2}. Also,
\begin{equation*} F'(\cdot,\tau)=\left(S^0_{\tau }\nabla \right)\cdot\epsilon \nabla S^{1,\eta}_{0}f-\left(S^0_{0^+}\nabla \right)\cdot\epsilon \nabla
S^{1,\eta}_{0}f,\end{equation*} so by our hypotheses concerning $L_0$,
\begin{equation*} \sup_\tau \| F' (\cdot ,\tau )\|_{L^2(\mathbb{R}^n)} \leq C\epsilon_0 \sup_{t >0} \| 
\nabla S^{1,\eta}_{t} f\|_2.\end{equation*} We therefore obtain 
a permissible bound for the contribution of $F'$.
We also have that 
\begin{multline}\label{eq5.4}F'''(\cdot,\tau)=\int^\tau_0 \int^{t/2}_0 \!\partial_t 
\left(\left(S^0_{t-s}-S^0_{t}\right)\nabla \right)
\cdot\epsilon\nabla \partial_s S^{1,\eta}_{s}f dsdt\\+ 
\int^{\tau}_0 \left(\partial_t S^0_{t}\nabla \right)\cdot\epsilon 
\nabla S^{1,\eta}_{t/2}fdt - \int^\tau_0
\left(\partial_t S^0_{t}\nabla \right)\cdot\epsilon 
\nabla S^{1,\eta}_{0}fdt.\end{multline} In turn, the last term equals $-F'$, and the middle summand
may be handled via
Lemma~\ref{l5.2}.
The first summand on the right hand side of \eqref{eq5.4} equals
\begin{equation*}-\int^\tau_0 \int^{t/2}_0\int^s_0 \partial^2_t (S^0_{t-\sigma}\nabla )\cdot 
\epsilon \nabla \partial_s
S^{1,\eta}_{s}f(x) d\sigma ds dt.\end{equation*} Dualizing against 
$h\in L^2(\mathbb{R}^n)$, with $\| h\|_2=1$, we see
that it is enough to consider
\begin{equation*}\begin{split} &\left| \int^\tau _0 \int^\infty_0\int^\infty_0 1_{\left\{\sigma<s<t/2\right\}}
\langle \nabla D^2_{n+1} S^{L_0^*}_{\sigma-t}h,\epsilon D_{n+1}\nabla  S^{1,\eta}_{s}f\rangle
d\sigma dsdt\right|\\ &\quad \leq C\epsilon _0 \left( \int^\infty_0 \int^\infty_0\int^\infty_0 1_{\left\{
\sigma<s<t/2\right\}} s^{-\frac{1}{2}} t^{\frac{3}{2}} \| \nabla D^2_{n+1}S^{L_0^*}_{\sigma-t}h\|^2_2\,
d\sigma dsdt\right)^{\frac{1}{2}}\\&\qquad\qquad \times\quad\left( \int^\infty_0
\int^\infty_0\int^\infty_0 1_{\left\{ \sigma<s<t/2\right\} } s^{\frac{1}{2}} t^{-\frac{3}{2}} \| \partial_s \nabla
S^{1,\eta}_{s} f\|^2_2 d\sigma dsdt\right)^{\frac{1}{2}}\\ &\quad \equiv C\epsilon _0 B_3\cdot B_4.\end{split}\end{equation*} Now, 
\begin{equation*}B_4= \left( \int^\infty_0 \| \partial_s \nabla S^{1,\eta}_{s}f\|^2_2 \left(
\int^s_0d\sigma\int^\infty_{2s} \frac{s^{1/2}}{t^{3/2}}dt\right) ds\right)^{\frac{1}{2}} 
=C|\| s\nabla \partial_sS^{1,\eta}_{s}f\| |.\end{equation*}  
Similarly, the change of variable $t\to
t+\sigma$ yields the bound
\begin{multline*}B_3 =\left( \int^\infty_0 \int^\infty_0 \int^\infty_0 1_{\left\{
\sigma<s<(t+\sigma)/2\right\} } s^{-\frac{1}{2}}(t+\sigma)^{\frac{3}{2}} 
\| \nabla D^2_{n+1} S^{L_0^*}_{-t} h\|^2_2
d\sigma dsdt\right)^{\frac{1}{2}}\\ \leq C\left( \int^\infty_0 t^{\frac{3}{2}}\| \nabla 
\partial^2_t S^{L_0^*}_{-t} h\|^2_2
\int^t_0 s^{-\frac{1}{2}}
\int^s_0 d\sigma ds dt\right)^{\frac{1}{2}}= C| \| t^2\nabla \partial^2_t S^{L_0^*}_{-t} h\| |\leq C\|
h\|_2=C,\end{multline*}
and the desired estimate for the contribution of $F'''$ now follows.

To complete the proof of estimate \eqref{eq3.7}, it therefore remains to prove Lemma~\ref{l5.2}. 
\begin{proof}[Proof of Lemma \ref{l5.2}]For the sake of
simplicity of notation, we shall treat the case $a=2$, $b=1$, as the general case follows via the same argument.

As above we dualize against $h\in L^2 (\mathbb{R}^n)$, so that it is enough to consider
\begin{multline} \label{eq5.5} 
\int^{\tau _2}_{\tau_1}\langle \nabla \partial_t S^{L_0^*}_{-2t} h,
\epsilon \nabla S^{1,\eta}_{t}f\rangle
dt =- \int^{\tau_2}_{\tau_1} \langle \nabla \partial^2_tS^{L_0^*}_{-2t}h,
\epsilon \nabla S^{1,\eta}_{t} f\rangle tdt\\  -\int^{\tau_2}_{\tau_1} \langle 
\nabla \partial_t S^{L_0^*}_{-2t}
h,\epsilon \nabla \partial_t S^{1,\eta}_{t}f\rangle tdt\,+\,\text{boundary},\end{multline} 
where we have integrated by parts in $t$, and where the boundary term is
dominated by 
\begin{equation*} C\epsilon _0 \left(\sup_{\tau >0} 
\| \tau \nabla \partial_\tau S^{L_0^*}_{-2\tau} h\|_2\right)\left(
\sup_{\tau >0} \| \nabla S^{1,\eta}_{\tau}f\|_2\right)
\,\leq \,C\epsilon_0 \sup_{\tau >0}\| \nabla S^{1,\eta}_{\tau} f\|_2,\end{equation*}
as desired. Here, the last inequality follows from Lemma~\ref{l2.10} $(ii)$. 
Moreover, by Cauchy-Schwarz, the
middle term on the right hand side of  \eqref{eq5.5} is no larger than
\begin{equation*}C\epsilon _0 \, |\| t\nabla \partial_t S^{L_0^*}_{-2 t} h\| |\cdot |\| t\nabla \partial
_t S^{1,\eta}_{t}f\| |
\,\leq\, C\epsilon_0 | \| t\nabla \partial_t S^{1,\eta}_{t}f\| |.\end{equation*}
In the first term on the right hand side of 
\eqref{eq5.5}, we integrate by parts again in $t$, to obtain
\begin{equation}\label{eq5.6}\frac{1}{2} \int^{\tau_2}_{\tau _1} \langle \nabla \partial^3_t S^{L_0^*}_{-2 t}
h,\epsilon
\nabla S^{1,\eta}_{t} f\rangle t^2dt\, +\,\text{Errors},\end{equation}
where the error terms correspond to the last two terms in 
\eqref{eq5.5} and are handled in a similar fashion.
Turning to the main term in \eqref{eq5.6}, we note that
\begin{equation*} \frac{1}{2}\partial^3_t S^{L_0^*}_{-2 t}\, h=\partial_s\partial^2_t S^{L_0^*}_{-t-s} \,h\vert
_{s=t}.\end{equation*}
Now set $g\equiv \partial^2_t S^{L_0^*}_{-t}\,h$. Let $u$ solve
\begin{equation*}\begin{cases} L^\ast _0 u=0 &\text{in } \mathbb{R}^{n+1}_-\\
u(\cdot,0)=g \end{cases}.\end{equation*}
By invertibility of the layer potentials for $L^\ast_0$, and by uniqueness, we have that
\begin{equation*} u(\cdot,-s)=
\mathcal{D}^{L_0^*}_{-s} \left( \frac{1}{2}I +K^{L_0^*}\right)^{-1}g.\end{equation*}
On the other hand, we also have that
$u(\cdot,-s)=\partial^2_t S^{L_0^*}_{-t-s}\,h.$
Consequently,
\begin{equation*}\partial_s\nabla u(\cdot,-s)=\partial _s\nabla \mathcal{D}_{-s}^{L_0^*} \left(\frac{1}{2}
I+K^{L_0^*}\right)^{-1} g
=\partial_s\nabla \partial^2_t S^{L_0^*}_{-t-s}\,h.\end{equation*}
Setting $s=t$, we have that
\begin{equation*}\frac{1}{2}\nabla 
\partial^3_t S^{L_0^*}_{-2t}h=-\,D_{n+1}\nabla \mathcal{D}_{-t}^{L_0^*} 
\left( \frac{1}{2}I+ K^{L_0^*}\right) ^{-1}g
= -\,D_{n+1}\nabla \mathcal{D}^{L_0^*}_{-t}
\left(\frac{1}{2}I+K^{L_0^*}\right)^{-1}\partial_t^2 S^{L_0^*}_{-t}\,
h.\end{equation*}
But, $\mathcal{D}^{L_0^*}_{-t}=(S^{L_0^*}_{-t}
\overline{\partial_{\nu_0}})$, where $\overline{\partial_{\nu_0}}$ denotes conjugate exterior
co-normal differentiation for $L_0$. Thus, 
\begin{equation*} ad\!j\left(\nabla D_{n+1} \mathcal{D}^{L_0^*}_{-t}\right)=
\left(\partial_{\nu_0}\partial_tS^0_{t}\nabla
\right).\end{equation*}
Therefore, the main term in \eqref{eq5.6} equals in absolute value
\begin{multline*}\left| \int^{\tau_2}_{\tau_1} \left\langle \left(\frac{1}{2} I+K^{L_0^*}\right)^{-1}
\partial^2_t S^{L_0^*}_{-t}\, h,\left(\partial_{\nu_0}
D_{n+1}S^0_{t}\nabla \right)\cdot\epsilon \nabla S^{1,\eta}_{t}f\right\rangle
t^2dt\right|\\ \leq C|\| t\partial^2_t S^{L_0^*}_{-t}h\| | \cdot|\| t^2
\left(\nabla D_{n+1}S^0_{t}\nabla \right)\cdot \epsilon \nabla S^{1,\eta}_{t}
f\| |\leq C| \| t^2\left(\nabla \partial_t S^0_{t}\nabla \right)\cdot
\epsilon \nabla S^{1,\eta}_{t}f\| |.\end{multline*}
To conclude the proof of Lemma \ref{l5.2}, it then suffices to prove that
\begin{equation}\label{eq5.main}| \| t^2\left(\nabla \partial_t S^0_{t}\nabla \right)\cdot
\epsilon \nabla S^{1,\eta}_{t}f\| | \leq C \epsilon_0\, {\bf M}^+.
\end{equation}
To this end, we first prove a lemma that will allow us to reduce matters to
\eqref{eq4.9i}.
\begin{lemma}\label{l5.discretize}For $k \in \mathbb{Z}$, set $t_k \equiv 2^{k-1}.$ Then
\begin{equation}\label{eq5.main12}
\sum_{k=-\infty}^\infty \int_{2^{k-1}}^{2^{k+2}}\! \!\int_{\mathbb{R}^n} |\nabla S_t^{1,\eta} f (x)
-\nabla S_{t_k}^{1,\eta}f(x)|^2 \frac{dx dt}{t} \leq C |\|t\nabla \partial_tS_t^{1,\eta}f\||_2.\end{equation}
\end{lemma}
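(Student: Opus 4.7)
The plan is to reduce the lemma to the fundamental theorem of calculus applied to $\nabla S^{1,\eta}_{s}f$ in the $s$-variable. Since $t$ and $t_k$ both lie in $[2^{k-1}, 2^{k+2}]$ (in particular have the same sign), I would write
\begin{equation*}
\nabla S^{1,\eta}_{t}f(x) - \nabla S^{1,\eta}_{t_k}f(x) \,=\, \int_{t_k}^{t} \nabla \partial_s S^{1,\eta}_{s}f(x)\,ds,
\end{equation*}
which makes sense because $s \mapsto \nabla S^{1,\eta}_{s}f$ is smooth away from $s=0$ (as may be seen from Lemma~\ref{l2.10} and the mollification definition \eqref{eq2.def}). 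Cauchy--Schwarz in the $s$-integral then yields the pointwise bound
\begin{equation*}
\bigl|\nabla S^{1,\eta}_{t}f(x) - \nabla S^{1,\eta}_{t_k}f(x)\bigr|^{2} \,\leq\, |t-t_k| \int_{2^{k-1}}^{2^{k+2}} \bigl|\nabla \partial_s S^{1,\eta}_{s}f(x)\bigr|^{2}\,ds \,\leq\, C\,2^{k}\int_{2^{k-1}}^{2^{k+2}} \bigl|\nabla \partial_s S^{1,\eta}_{s}f(x)\bigr|^{2}\,ds.
\end{equation*}

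Next I would integrate this bound in $x \in \mathbb{R}^{n}$ and then in $t \in [2^{k-1},2^{k+2}]$ against the measure $dt/t$. Since $\int_{2^{k-1}}^{2^{k+2}} dt/t = \log 8$ is independent of $k$, the resulting estimate is
\begin{equation*}
\int_{2^{k-1}}^{2^{k+2}}\!\!\int_{\mathbb{R}^n} \bigl|\nabla S^{1,\eta}_{t}f - \nabla S^{1,\eta}_{t_k}f\bigr|^{2}\,\frac{dx\,dt}{t} \,\leq\, C\,2^{k}\int_{2^{k-1}}^{2^{k+2}}\!\!\int_{\mathbb{R}^n} \bigl|\nabla \partial_s S^{1,\eta}_{s}f\bigr|^{2}\,dx\,ds.
\end{equation*}
On the range of integration we have $s \approx 2^{k}$, so I may replace the prefactor $2^{k}$ by $s$ at the cost of a harmless constant, converting the right-hand side into the Carleson-type quantity $\int\!\int |s\nabla\partial_s S^{1,\eta}_{s}f|^{2}\,dx\,ds/s$ restricted to that $s$-slab.

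Finally, summing over $k \in \mathbb{Z}$ and observing that the dyadic slabs $[2^{k-1},2^{k+2}]$ cover $(0,\infty)$ with only three-fold overlap, I obtain
\begin{equation*}
\sum_{k} \int_{2^{k-1}}^{2^{k+2}}\!\!\int_{\mathbb{R}^n} \bigl|\nabla S^{1,\eta}_{t}f - \nabla S^{1,\eta}_{t_k}f\bigr|^{2}\,\frac{dx\,dt}{t} \,\leq\, C\int_{0}^{\infty}\!\!\int_{\mathbb{R}^n} \bigl|s\nabla \partial_s S^{1,\eta}_{s}f\bigr|^{2}\,\frac{dx\,ds}{s} \,=\, C\,\||t\nabla\partial_t S^{1,\eta}_{t}f\||_{+}^{2},
\end{equation*}
which is the desired bound. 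There is no real obstacle here — the argument is just Cauchy--Schwarz plus a bookkeeping of the dyadic factors; the only point that requires a tiny bit of care is the comparison $|t-t_k| \lesssim 2^{k} \approx s$, which is built in by the choice $t_k = 2^{k-1}$ and the range $t,s \in [2^{k-1},2^{k+2}]$.
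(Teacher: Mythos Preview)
Your proof is correct. Both your argument and the paper's start from the fundamental theorem of calculus, writing $\nabla S^{1,\eta}_{t}f-\nabla S^{1,\eta}_{t_k}f=\int_{t_k}^{t}\nabla\partial_s S^{1,\eta}_{s}f\,ds$; the paper then recasts the inner integral as an average of $1_{[2^{k-1},2^{k+2}]}(s)\sqrt{s}\,\nabla\partial_s S^{1,\eta}_{s}f$ and invokes the one-dimensional Hardy--Littlewood maximal theorem in the $t$-variable, whereas you apply Cauchy--Schwarz directly and use $|t-t_k|\lesssim 2^{k}\approx s$ together with the bounded overlap of the slabs $[2^{k-1},2^{k+2}]$. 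Your route is a bit more elementary (it avoids the maximal theorem entirely), while the paper's version is set up so that the sum over $k$ and the $t$-integration are absorbed in a single stroke by the $L^2$-boundedness of $M$; the end result and the constants are the same order.
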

Let us momentarily take the lemma for granted, and deduce
\eqref{eq5.main}.  Combining Lemma \ref{l2.10} $(i)$, Lemma \ref{l2.2sidegrad} and 
Lemma \ref{l5.discretize},
we may replace the square of the left hand side of \eqref{eq5.main} by
$$\sum_{k=-\infty}^\infty \int_{2^{k}}^{2^{k+1}}\! \!\int_{\mathbb{R}^n}
| t^2\left(\nabla \partial_t S^0_{t}\nabla \right)\cdot
\epsilon \nabla S^{1,\eta}_{t_k}f(x)|^2\frac{dx dt}{t}.$$ 
Since $u_k(\cdot,t) \equiv \left(\partial_t S^0_{t}\nabla \right)\cdot
\epsilon \nabla S^{1,\eta}_{t_k}f $ solves $L_0 u_k = 0$ in the upper half space,
we may use Caccioppoli's inequality in Whitney boxes to reduce matters to
considering
$$\sum_{k=-\infty}^\infty \int_{2^{k-1}}^{2^{k+2}}\! \!\int_{\mathbb{R}^n}
| t\left( \partial_t S^0_{t}\nabla \right)\cdot
\epsilon \nabla S^{1,\eta}_{t_k}f(x)|^2\frac{dx dt}{t}.$$
Applying Lemma \ref{l2.10} $(i)$ and Lemma \ref{l5.discretize} again, along with  \eqref{eq4.9i},
we obtain \eqref{eq5.main}.

\begin{proof}[Proof of Lemma \ref{l5.discretize}]
The left hand side of \eqref{eq5.main12} equals
\begin{multline*}\sum_{k=-\infty}^\infty \int_{2^{k-1}}^{2^{k+2}}\! \!\int_{\mathbb{R}^n}
\left|\frac{1}{\sqrt{t}}\int_{t_k}^t\nabla \partial_s S_s^{1,\eta} f (x)ds\right|^2 dx dt\\ \leq 
C \sum_{k=-\infty}^\infty \iint_{\mathbb{R}^{n+1}}
\left|\fint_{t_k}^t 1_{\{2^{k-1}\leq s < 2^{k+2}\}}\sqrt{s}
\nabla \partial_s S_s^{1,\eta} f (x)ds\right|^2 dt dx.\end{multline*}
The desired bound now follows from the Hardy-Littlewood maximal theorem.
\end{proof}

This concludes the proof Lemma \ref {l5.2}, and thus also that of Theorem
\ref{t1.10}\end{proof}

\section{Proof of Theorem \ref{t1.11}:  boundedness\label{s6}}

Let $L\equiv -\dv A \nabla,$ where $A$ is real, symmetric, $L^\infty$, $t$-independent 
and uniformly elliptic.
In this section, we show that the layer potentials associated to $L$ are
bounded;  we defer the proof of invertibility to the next section.
By the classical de Giorgi-Nash Theorem, estimates
\eqref{eq1.2} and \eqref{eq1.3} hold for solutions of $Lu=0$. By Lemma~\ref{l3.3} and Lemma
\ref{l4.nt1}, in order to establish boundedness of the layer potentials, it
suffices to prove
\begin{equation}\label{eq6.1} \sup_{t\neq 0} \| \partial_t S_t f\|_2\leq C\|f\|_2\end{equation}
and \begin{equation}
\label{eq6.2}\int^\infty_{-\infty} \int_{\mathbb{R}^n}|t\partial^2_t S_t f|^2dx \frac{dt}{|t|}\leq C\| f\|_2.\end{equation}
By Lemma~\ref{l2.1}, the kernel
$K_t (x,y)\equiv \partial_t \Gamma (x,t,y,0)$
satisfies the standard Calder\'on-Zygmund estimates \begin{subequations}
\label{eq6.3}
\begin{equation}\label{eq6.3a}|K_t(x,y)|\leq \frac{c}{|x-y|^n}\end{equation}
\begin{equation}\label{eq6.3b} |K_t(x,y+h)-K_t(x,y)|+|K_t(x+h,y)-K_t(x,y)|\leq C\frac{|h|^\alpha}{|x-y|^{n+\alpha}},\end{equation}\end{subequations}
uniformly in $t$, where the later inequality holds for some $\alpha >0$ whenever $|x-y|>2 |h|$.
In addition, the kernel
$$\psi _{t}(x,y)\equiv t\partial^{2}_{t}\Gamma (x,t,y,0)$$
satisfies the standard Littlewood-Paley kernel conditions
\begin{equation}\begin{split}\label{eq6.4}|\psi _{t}(x,y)|&\leq \frac{|t|}{(|t|+|x-y|)^{n+1}}\\
|\psi_t (x,y+h)-\psi_t (x,y)|&\leq \frac{C|t|\,|h|^\alpha}{(|t|+|x-y|)^{n+1+\alpha}} \leq \frac{C|h|^\alpha}{(|t|+|x-y|)^{n+\alpha}}\end{split}\end{equation}
for some $\alpha >0$, whenever $|h|\leq \frac{1}{2}|x-y|$ or $|h|\leq |t|/2$.

The bound \eqref{eq6.2} will be deduced from the following ``local" Tb Theorem for square functions

\begin{theorem}\label{t6.5} Let $\theta_t f(x)\equiv \int_{\mathbb{R}^n}\psi_t (x,y)f(y)dy$, where $\psi_t (x,y)$ satisfies
\eqref{eq6.4}. Suppose also that there exists a system $\{b_Q\}$ of functions indexed by cubes $Q\subseteq
\mathbb{R}^n$ such that for each cube $Q$
\begin{enumerate}\item[(i)] $\int_{\mathbb{R}^n} |b_Q|^2\leq C|Q|$
\item[(ii)] $\int^{\ell (Q)}_0\int_Q |\theta _t b_Q (x)|^2\frac{dxdt}{t}\leq C|Q|$
\item[(iii)] $\frac{1}{C} |Q|\leq \Re e \int_Q b_Q$.\end{enumerate}
Then we have the square function bound
\begin{equation*}|\| \theta_t f\| |\leq C\| f\|_2.\end{equation*}\end{theorem}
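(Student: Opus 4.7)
\medskip
\noindent\textbf{Proof plan for Theorem \ref{t6.5}.}  The plan is to reduce the full square function bound to the local Carleson measure estimate
\begin{equation}\label{eq6.plan.1}
\sup_{Q}\,\frac{1}{|Q|}\int_{0}^{\ell(Q)}\!\!\int_{Q}|\theta_{t}1(x)|^{2}\,\frac{dxdt}{t}\,\leq\,C,
\end{equation}
and then to prove \eqref{eq6.plan.1} by a local $Tb$/stopping time argument based on the testing functions $\{b_{Q}\}$.  Given \eqref{eq6.plan.1}, the passage to $\||\theta_{t}f\||\leq C\|f\|_{2}$ is essentially a version of the $T1$ theorem for square functions: using the kernel bounds \eqref{eq6.4}, one writes $\theta_{t}f=(\theta_{t}1)\,P_{t}f+R_{t}f$, where $R_{t}\equiv\theta_{t}-(\theta_{t}1)P_{t}$ satisfies $R_{t}1=0$ together with enough off--diagonal decay (from \eqref{eq6.4}) to ensure $\||R_{t}f\||\leq C\|f\|_{2}$ by a standard almost--orthogonality (Schur/Cotlar) argument in the spirit of Lemma~\ref{l2.15}. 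The first term is then bounded by \eqref{eq6.plan.1} and Carleson's lemma, applied to the approximate identity $P_{t}f$.

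The main work is in establishing \eqref{eq6.plan.1}.  Fix $Q_{0}$, and apply a stopping time construction to $b_{Q_{0}}$: let $\mathcal{F}(Q_{0})$ denote the collection of maximal dyadic subcubes $Q\subseteq Q_{0}$ for which either
\[
(a)\quad\Re e\,m_{Q}(b_{Q_{0}})<\eta,\qquad\text{or}\qquad (b)\quad m_{Q}(|b_{Q_{0}}|^{2})>K,
\]
with $\eta>0$ small and $K>0$ large, both to be chosen.  Hypotheses (i) and (iii), combined with a standard good--$\lambda$/John--Nirenberg type argument, give the packing/density estimate $\bigl|\bigcup_{Q\in\mathcal{F}(Q_{0})}Q\bigr|\leq(1-\delta)|Q_{0}|$ for some $\delta=\delta(\eta,K)>0$.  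On the complementary ``corona'' region
\[
R^{*}(Q_{0})\equiv\bigl\{(x,t)\in Q_{0}\times(0,\ell(Q_{0})):\,Q(x,t)\not\subseteq Q\text{ for any }Q\in\mathcal{F}(Q_{0})\bigr\},
\]
where $Q(x,t)$ denotes the minimal dyadic cube containing $x$ with $\ell(Q(x,t))\geq t$, the dyadic average $E_{t}b_{Q_{0}}$ is bounded below in real part (by $\eta$) and above in $L^{2}$ (by $\sqrt{K}$).  One then writes, using the trick of \cite{CM},
\[
\theta_{t}1(x)=\frac{(\theta_{t}b_{Q_{0}})(x)}{E_{t}b_{Q_{0}}(x)}+\bigl[(\theta_{t}1)(x)\,E_{t}b_{Q_{0}}(x)-(\theta_{t}b_{Q_{0}})(x)\bigr]\cdot\frac{1}{E_{t}b_{Q_{0}}(x)}\,,
\]
on $R^{*}(Q_{0})$.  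The first term is controlled directly by hypothesis (ii).  The second is a paraproduct--type remainder whose kernel, after the subtraction of the average, has the extra cancellation needed (via the H\"older regularity in \eqref{eq6.4}) to satisfy an off--diagonal estimate as in Lemma \ref{l2.15}, yielding the bound by $\|\nabla_{\parallel}b_{Q_{0}}\|_{2}^{2}$--type quantities that in turn follow from (i) using a Poincar\'e/Littlewood--Paley comparison between $E_{t}$ and $P_{t}$.

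Finally, to deal with the ``stopped'' region, let $\mathcal{B}_{Q_{0}}$ be the union of $Q\in\mathcal{F}(Q_{0})$, and set
\[
\mathcal{C}(Q_{0})\equiv\frac{1}{|Q_{0}|}\int_{0}^{\ell(Q_{0})}\!\!\int_{Q_{0}}|\theta_{t}1|^{2}\tfrac{dxdt}{t}\,.
\]
The corona estimate above gives $\mathcal{C}(Q_{0})\leq C+\sum_{Q\in\mathcal{F}(Q_{0})}\tfrac{|Q|}{|Q_{0}|}\,\mathcal{C}(Q)$, and since $\sum_{Q\in\mathcal{F}(Q_{0})}|Q|\leq(1-\delta)|Q_{0}|$, iteration of this inequality (together with a standard Carleson self-improvement/John--Nirenberg argument for Carleson measures) closes the estimate to give a uniform bound $\sup_{Q_{0}}\mathcal{C}(Q_{0})\leq C$, which is \eqref{eq6.plan.1}. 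I expect the main obstacle to be the selection of the parameters $\eta,K$ and the verification of the density estimate $|\mathcal{B}_{Q_{0}}|\leq(1-\delta)|Q_{0}|$ purely from the weak non-degeneracy hypothesis (iii) and $L^{2}$ bound (i); this is the crucial quantitative step that allows the iteration to close, and it is exactly the point at which the local $Tb$ philosophy diverges from a global $Tb$ argument. The paraproduct step, while technically involved, reduces to repeated applications of the off--diagonal technology from Section \ref{s3offdiag} already in hand.
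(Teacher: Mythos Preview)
The paper does not prove Theorem~\ref{t6.5}; the proof is omitted with references to \cite{A2}, \cite{H2}, and the Kato papers \cite{HMc,HLMc,AHLMcT}. Your outline is in fact the standard argument found in those references: reduce to the Carleson bound for $\theta_t 1$ via a $T1$--for--square--functions argument, then prove that Carleson bound by a stopping--time/corona decomposition driven by the $b_Q$, the Coifman--Meyer trick, and a John--Nirenberg iteration for Carleson measures. The overall structure, the stopping conditions, and the density estimate $|\bigcup_{\mathcal{F}(Q_0)}Q|\le(1-\delta)|Q_0|$ from (i) and (iii) are all correct.

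There is, however, a real confusion in your handling of the remainder on the corona region. You propose to control $[(\theta_t 1)E_t-\theta_t]b_{Q_0}$ via Lemma~\ref{l2.15}, obtaining ``$\|\nabla_\| b_{Q_0}\|_2^2$--type quantities,'' and then to recover these from (i) by Poincar\'e. This cannot work: no gradient control on $b_{Q_0}$ is available, and Lemma~\ref{l2.15}(i) by itself would give $\int|R_t b_{Q_0}|^2\le Ct^2\|\nabla b_{Q_0}\|_2^2$, whose $t^{-1}\,dt$ integral diverges. The correct mechanism is the one you already invoked in your Step~1. Write $(\theta_t 1)E_t-\theta_t=(\theta_t 1)(E_t-P_t)-R_t$ with $R_t\equiv\theta_t-(\theta_t 1)P_t$. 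For the first piece, the size bound in \eqref{eq6.4} gives $|\theta_t 1(x)|\le C$ uniformly, and $\||(E_t-P_t)g\||\le C\|g\|_2$ is standard Littlewood--Paley; for the second piece, $R_t 1=0$ and the Schur/quasi--orthogonality argument of Step~1 (i.e.\ Lemma~\ref{l2.15}(ii)) already yields $\||R_t g\||\le C\|g\|_2$. Both pieces are thus bounded by $C\|b_{Q_0}\|_2^2\le C|Q_0|$ via (i), with no gradients of $b_{Q_0}$ required. With this correction (and a routine truncation $\int_\varepsilon^{\ell(Q)}$ to guarantee a priori finiteness in the iteration), your argument goes through and matches the cited references.
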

We omit the proof here.  
A direct proof of the present formulation of Theorem~\ref{t6.5} may be found in \cite{A2} or
\cite{H2}, although we note that the theorem and its proof were already
implicit in the proof of the Kato square root conjecture \cite{HMc}, \cite{HLMc} and \cite{AHLMcT}; 
see also the works \cite{Ch}, \cite{S} and \cite{AT} for some important antecedents.  

We shall deduce estimate \eqref{eq6.1} as a consequence of the following extension of a local Tb Theorem for singular
integrals introduced by M. Christ \cite{Ch} in connection with the theory of analytic capacity. A 1-dimensional
version of the present result, valid for ``perfect dyadic" Calder\'on-Zygmund kernels, appears in \cite{AHMTT}.   A self-contained proof of the more general formulation below may be found in
\cite{H3}.  Alternatively, the result of \cite{AY} may be combined with that of \cite{AHMTT} to deduce
the general case (in the slightly sharper form $q=2$).
 In the sequel, we  let $T^{tr}$  denote the transpose of the operator
$T$.

\begin{theorem}\label{t6.6} Let $T$ be a singular integral operator associated to a kernel $K$ satisfying
\eqref{eq6.3}, and suppose that $K$ satisfies the generalized truncation condition $K(x,y)\in L^\infty
(\mathbb{R}^n\times \mathbb{R}^n)$. Suppose also that there exist pseudo-accretive systems $\{ b^1_Q\}$, $\{ b^2_Q\}$
such that $b^1_Q$ and  $b^2_Q$ are supported in $Q$, and
\begin{enumerate}\item[(i)] $\int_Q \left(|b^1_Q|^q+|b^2_Q|^q\right)\leq C|Q|$, for some $q>2$
\item[(ii)] $\int_Q \left(|Tb^1_Q|^2 + |T^{\tr} b^2_Q |^2\right)\leq C|Q|$
\item[(iii)] $\frac{1}{C}|Q|\leq \min\left(\Re e \int_Q b^1_Q,
\Re e \int_Qb^2_Q\right)$.\end{enumerate}
Then $T:L^2(\mathbb{R}^n)\to L^2(\mathbb{R}^n)$, with bound independent of $\| K\|_\infty $.\end{theorem}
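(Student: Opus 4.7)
The plan is to adapt M.~Christ's local $Tb$ method to the weaker hypothesis that the pseudo-accretive systems lie in $L^q$ rather than $L^\infty$. By symmetry (interchanging $\{b_Q^1\}$ and $\{b_Q^2\}$ swaps the roles of $T$ and $T^{tr}$), it suffices to prove a quantitative $L^2$ bound for $T$ on any fixed ambient cube $Q_0$ with constant depending only on (i)--(iii) and the Calder\'on--Zygmund kernel constants. The hypothesis $K\in L^\infty$ will be used only qualitatively, to make the $L^p$ pairings below a priori meaningful; it must not enter the quantitative bound, so I would work throughout with $f$ supported in $Q_0$ and carefully track constants.

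For each dyadic $Q\subseteq Q_0$ I would run a stopping time adapted to $b_Q^1$: declare $Q'\subsetneq Q$ a stopping cube if it is a maximal dyadic subcube satisfying either $|m_{Q'} b_Q^1|<\eta\, |m_Q b_Q^1|$ (a ``small-average'' stop) or $\fint_{Q'}|b_Q^1|^q>M$ (a ``large-$L^q$'' stop), for $\eta$ small and $M$ large chosen from (i), (iii). Non-degeneracy (iii), together with a standard averaging lemma, forces the total measure of the small-average family to be at most $(1-\delta)|Q|$ for some $\delta>0$; Kolmogorov's/weak-type interpolation, using crucially the strict inequality $q>2$, gives the analogous packing for the large-$L^q$ family. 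Iterating yields a corona decomposition of $Q_0$ whose ``tops'' form a Carleson family. Within each corona, $b_Q^1$ behaves essentially like the non-degenerate constant $m_Q b_Q^1$, so the testing hypothesis (ii) transfers (after correcting for the boundary terms on the stopped subcubes) to a local $L^2$ bound of the form $\int_{E_Q}|T\mathbf{1}_Q|^2\,dx\leq C|Q|$ on the non-stopped part $E_Q\subseteq Q$. Packing these estimates against the Carleson family of tops produces a Carleson measure estimate for $|T\mathbf{1}|^2\,dx$; the dual argument with $\{b_Q^2\}$ gives the analogous estimate for $T^{tr}$. A $T(1)$-type argument then closes the loop and delivers the sought $L^2$ bound for $T$.

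The main obstacle is precisely the stopping-time analysis in the $L^q$ regime. In Christ's original $L^\infty$ formulation one may truncate $b_Q$ freely, which trivializes the control of the ``large'' family of stops; under the weaker hypothesis (i) one must instead propagate $L^q$ control through every level of the corona, which is delicate because the selection criterion compares $L^q$ averages at different scales, and the packing estimate for the large-$L^q$ stops genuinely requires the strict inequality $q>2$ (the endpoint $q=2$ being the much deeper content of \cite{AY}). A secondary technical nuisance is the passage from the (perfect) dyadic model of \cite{AHMTT} to the continuous kernel here: the H\"older estimate \eqref{eq6.3b} must be used to handle the ``tail'' error terms arising from the off-diagonal part of $T$ relative to the corona, essentially by an integration by parts / telescoping argument. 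Modulo these points the proof is a bookkeeping adaptation of \cite{AHMTT}.
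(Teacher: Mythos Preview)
The paper does not actually give a proof of Theorem~\ref{t6.6}: immediately after the statement it says that a self-contained proof may be found in \cite{H3}, and that alternatively one may combine \cite{AY} with \cite{AHMTT} to obtain the result (even with the sharper exponent $q=2$). So there is no ``paper's own proof'' to compare against; the theorem is imported as a black box.

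Your outline is broadly consistent with the strategy of those references: a Christ-type stopping-time/corona construction adapted to the $L^q$ pseudo-accretive systems, Carleson packing of the tops, and reduction to a $T(1)$-type criterion, with the passage from the perfect-dyadic model to genuine Calder\'on--Zygmund kernels handled via the H\"older regularity \eqref{eq6.3b}. Two small caveats on your sketch. First, the phrase ``Carleson measure estimate for $|T\mathbf{1}|^2\,dx$'' is imprecise as written (this is not a locally integrable object in general); what the argument actually yields is a Carleson/BMO-type control on the martingale differences (or paraproduct coefficients) of $T1$ and $T^{tr}1$, which is what feeds into the $T(1)$ machinery. Second, transferring hypothesis (ii) from $b_Q^1$ to $\mathbf{1}_Q$ on the ``good'' part $E_Q$ requires more than just the non-degeneracy of the averages; one needs the $L^q$ control with $q>2$ together with the kernel bounds to absorb the error from the stopped children, and this step is where the bookkeeping is genuinely delicate. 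With those points clarified, your plan matches the approach of \cite{H3}.
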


Let us first show that Theorem~\ref{t6.5} implies~\eqref{eq6.2}. 
As usual, we may restrict our attention to the case
$t>0$. As above let $\psi_t(x,y)\equiv t\partial_t^2\Gamma (x,t,y,0)$, so that
\begin{equation*}t\partial^2_t S_tf(x)\equiv \theta_t f(x)=\int_{\mathbb{R}^n}\psi_t(x,y)f(y)dy.\end{equation*}
By Theorem~\ref{t6.5}, it suffices to construct a system $\{ b_Q\}$ satisfying the hypotheses (i), (ii) and (iii) of
the Theorem.

Our functions $b_Q$ will be normalized Poisson kernels. Given a cube
$Q\subset \mathbb{R}^n$, let $x_Q$ denote its center, and let 
$\ell(Q)$
denote its side length. We define
\begin{equation*} A^+_Q \equiv (x_Q,\ell(Q))\in
  \mathbb{R}^{n+1}_+,\quad A^-_Q \equiv (x_Q, -\ell (Q))\in
  \mathbb{R}^{n+1}_-.\end{equation*}
Given $X^+\in \mathbb{R}^{n+1}_+$, $X^-\in \mathbb{R}^{n+1}_-$, let 
$k^{X^+}_+(y)$, $k_-^{X^-}(y)$ denote, respectively, the Poisson kernels
  for $L$ in the upper and lower half spaces, and let $G^+(X,Y)$,
  $G^-(X,Y)$ denote the corresponding Green functions, so that
\begin{equation*}k_+^{X^+}(y)\equiv \frac{\partial G^+}{\partial \nu^+_y}
  (X^+,y,0),\quad
k^{X^-}_-(y)\equiv \frac{\partial G^-}{\partial
  \nu^-_y}(X^-,y,0),\end{equation*}
where $\frac{\partial }{\partial \nu^+_y}$, $\frac{\partial}{\partial
  \nu^-_y}$ denote the co-normal derivatives at the
point $y\in \partial \mathbb{R}^{n+1}_+$, $\partial
\mathbb{R}_-^{n+1}$ respectively. We now set
\begin{equation}\label{eq6.7} b_Q\equiv |Q|\,k^{A^-_Q}_-.\end{equation}
We recall the following fundamental result of Jerison and
Kenig~\cite{JK1} (see also \cite[pp 63-64]{K}), which amounts to   the solvability
of (D2) in the lower half-space:

\begin{theorem}\label{t6.8}\cite{JK1} Suppose that $L=-\dv A\nabla$,
  where $A$ is real, symmetric, $(n+1)\times (n+1)$, $t$-independent, $L^\infty$ and uniformly elliptic. 
Then there exists $\varepsilon_1 \equiv \varepsilon_1(n,\lambda,\Lambda)$ 
such that for all $0\leq \varepsilon <\varepsilon_1$ and for every cube $Q$,
  \begin{equation}\label{eq6.9}
\int_{\mathbb{R}^n} (k^{A^-_Q}_- (y))^{2+\varepsilon} dy\leq C_\varepsilon|Q|^{-1-\varepsilon}.
\end{equation}
\end{theorem}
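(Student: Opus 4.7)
The statement is the foundational $L^2$ Dirichlet-solvability theorem of Jerison--Kenig, recast as a reverse Hölder ($B_{2+\varepsilon}$) estimate for the Poisson kernel of $L$ on the half-space. The plan is to proceed in two stages: first establish the $B_2$ estimate, i.e., the $\varepsilon = 0$ case of \eqref{eq6.9}, and then obtain the self-improvement to $B_{2+\varepsilon}$ via Gehring's lemma, whose gain $\varepsilon_1$ depends only on the $B_2$ constant and hence only on $n,\lambda,\Lambda$.

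For the $B_2$ estimate, the essential tool is the Rellich identity, which is available precisely because $A$ is real symmetric and $t$-independent. For $u$ solving $Lu=0$ in $\mathbb{R}^{n+1}_-$ with suitable decay, pair the equation with $\partial_t u$ and integrate by parts: the assumption $A=A^T$ yields $2A\nabla u\cdot\nabla\partial_t u = \partial_t(A\nabla u \cdot \nabla u)$, where $t$-independence of $A$ is used crucially to move the derivative past the matrix. Integrating in $t$ from $-\infty$ to $0$ then produces
\[
\int_{\mathbb{R}^n}\langle A\nabla u,\nabla u\rangle\big|_{t=0}\, dx \,=\,-2\int_{\mathbb{R}^n}(\partial_\nu u)(\partial_t u)\big|_{t=0}\, dx .
\]
Ellipticity on the LHS together with Cauchy--Schwarz on the RHS gives the two-sided comparability $\|\nabla u(\cdot,0)\|_{2} \approx \|\partial_\nu u(\cdot,0)\|_{2}$ for such $u$.

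Following Dahlberg's strategy, one applies this identity, after suitable localization by cutoffs, to $u(X)\equiv G^-(X,A_Q^-)$, the Green function with pole at the corkscrew point $A_Q^-$. The Caffarelli--Fabes--Mortola--Salsa comparison between Green's function and elliptic measure, combined with interior Hölder continuity, gives $G^-(X,A_Q^-)\lesssim |Q|^{(1-n)/2}$ on an appropriate surface of diameter $\sim \ell(Q)$. Plugging this into the Rellich identity (which converts the tangential $L^2$-norm of $\nabla u|_{t=0}$ into $\|\partial_\nu u|_{t=0}\|_2 = \|k_-^{A_Q^-}\|_{L^2(Q)}$) yields
\[
\int_Q \bigl(k_-^{A_Q^-}(y)\bigr)^2\, dy \,\leq\, C |Q|^{-1},
\]
which is exactly \eqref{eq6.9} with $\varepsilon = 0$. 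Gehring's lemma then upgrades this to the desired $B_{2+\varepsilon_1}$.

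The main obstacle is the rigorous justification of the Rellich identity at the $L^\infty$ level of regularity: one approximates $A$ by smooth real symmetric $t$-independent matrices $A_\delta$ (with uniform ellipticity), derives the identity in the smooth setting where all integrations by parts and decay at infinity are trivially valid, and then passes to the limit using uniform bounds on Green's functions and elliptic measures. A secondary technical issue is the control of tail contributions at infinity when testing against $\partial_t u$, which is handled by a standard cutoff and limiting argument. Once the Rellich identity is in hand, the chain $\text{Rellich} \Rightarrow B_2 \Rightarrow B_{2+\varepsilon}$ is routine.
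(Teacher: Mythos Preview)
The paper does not actually prove Theorem~\ref{t6.8}; it is quoted as a result of Jerison--Kenig \cite{JK1}. The only argument the paper supplies is the short remark following the statement, which explains why the \emph{global} bound $\int_{\mathbb{R}^n}(k_-^{A_Q^-})^{2+\varepsilon}\leq C_\varepsilon|Q|^{-1-\varepsilon}$ follows from the usual \emph{local} version $\int_{Q}(k_-^{A_Q^-})^{2+\varepsilon}\leq C_\varepsilon|Q|^{-1-\varepsilon}$: the local reverse H\"older estimate plus the $L^p$ variant of Moser's bound \eqref{eq1.3} yield $|u(A_Q^-)|\leq C\|g\|_{L^p(\mathbb{R}^n)}$ for $p=(2+\varepsilon)'$, and the global $L^{2+\varepsilon}$ bound on the kernel is then immediate by duality.

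Your proposal goes further and sketches the actual Jerison--Kenig proof of the local $B_2$ estimate (Rellich identity applied to the Green function with pole at $A_Q^-$, CFMS comparison, then Gehring self-improvement). That outline is correct and is precisely the classical argument; the paper itself appeals to it later in Section~\ref{s7} (see \eqref{eq9.1}) for a different purpose. One small point: your proposal establishes only the local bound over $Q$, so to match the global statement \eqref{eq6.9} you should add the duality step from the paper's remark.
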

We remark that \eqref{eq6.9} is usually stated in terms of an integral over $Q$, but in fact the
global bound follows from the local one and duality, since
by \cite{JK1}, \cite{K} the 
local version of \eqref{eq6.9} and the $L^p$ version of \eqref{eq1.3} yield the estimate
\begin{equation*}
|u(A_Q^-)|\leq C\sup_{t<0} \| u(\cdot ,t)\|_{L^p(\mathbb{R}^n)}\leq 
C\| g\|_{L^p (\mathbb{R}^n)},\end{equation*}
where $u(x,t) = \int_{\mathbb{R}^n} k_-^{x,t}(y) g(y) dy,$ and $p$ is the dual exponent to
$2+\varepsilon$.
  
  We now note that hypothesis (i) of Theorem~\ref{t6.5} follows immediately from \eqref{eq6.7} and \eqref{eq6.9}.  Moreover,
(iii) follows immediately from \eqref{eq6.7} and the following well
known estimate of Caffarelli, Fabes, Mortola and Salsa \cite{CFMS} (also \cite[Lemma 1.3.2, p. 9]{K}):
  \begin{equation}\label{eq6.10} \omega^{A^-_Q}_- (Q)\geq \frac{1}{C},\end{equation}
  where $\omega^{X^-}_-$ denotes harmonic measure for $L$ at $X^-\in \mathbb{R}^{n+1}_-$.
  
  It remains to verify that $b_Q$ as defined in \eqref{eq6.7} satisfies hypothesis (ii) of Theorem~\ref{t6.5}. To this end,
let $(x,t)\in R^+_Q\equiv Q \times (0,\ell (Q))$. Then,
since for fixed $(x,t)\in \mathbb{R}^{n+1}_+$, 
we have that $\partial^2_t \Gamma (x,t,\cdot , \cdot )$ is a solution
of $Lu=0$ in $\mathbb{R}^{n+1}_-$,
\begin{equation}\label{eq6.11a}\theta_t b_Q(x) = |Q|\,t\int \partial^2_t \Gamma (x,t,y,0)\, k^{A^-_Q}_-(y) dy
=|Q|\, t \,\partial^2_t \Gamma (x,t,A^-_Q),\end{equation}
by Theorem \ref{t6.8} (i.e., \cite{JK1}) and uniqueness in (D2)
(e.g., Lemma \ref{l4.ntunique} $(i)$, although of course, uniqueness in the present setting of real symmetric coefficients appears already in \cite{JK1}, \cite{K}). Therefore, by \eqref{eq2.5} and 
translation invariance in $t$, we have that
\begin{equation*}|\theta_tb_Q(x)|\leq C\frac{t}{\ell (Q)},\end{equation*}
from which hypothesis (ii) follows readily. Thus, given Theorem~\ref{t6.5}, we conclude that
\begin{equation*}\int^\infty_0\int_{\mathbb{R}^n} |t\partial^2_t S_tf(x)|^2 \frac{dxdt}{t} \leq C\| f\|^2_2.\end{equation*}
The corresponding square function estimate in the lower half-space follows by the same argument, if we replace
$k^{A^-_Q}_-$ by $k^{A^+_Q}_+$ in the definition of $b_Q$. We then obtain \eqref{eq6.2} as desired.

Next, we show that Theorem~\ref{t6.6} implies \eqref{eq6.1}. We consider only the case $t>0$, the other case being handled by a
similar argument. Again, it suffices to construct systems $\{b^1_Q\}$, $\{b^2_Q\}$, now with $b^1_Q$ and $b^2_Q$
supported in $Q$, satisfying hypotheses (i), (ii) and (iii) of Theorem~\ref{t6.6}.

In fact, we shall use the same construction as before, except that we truncate the function outside of $Q$, i.e. we set
\begin{equation}\label{eq6.11} b^1_Q\equiv |Q|\,k^{A^-_Q}_- 1_Q=b_Q1_Q,\quad
b^2_Q \equiv |Q|\, k^{A^+_Q}_+ 1_Q\end{equation}
As before, (iii) and (i) follow immediately from  
\cite{CFMS}, and \eqref{eq6.9}, respectively.

It remains to establish (ii). We observe first that, as in \eqref{eq6.11a},
\begin{equation*}| \partial_tS_t b_Q (x)|=|Q|\left|
\int_{\mathbb{R}^n}\partial_t \Gamma (x,t,y,0) k^{A^-_Q}_-
(y)dy\right|=|Q|\,|\partial_t\Gamma (x,t,A^-_Q)| \leq C,\end{equation*}
uniformly in $(x,t)\in \mathbb{R}^{n+1}_+$, where $b_Q$ is defined as in \eqref{eq6.7}, and we have used \eqref{eq2.5} and the fact that $t>0$. 
We now claim that, for $x\in Q$ and $t>0$, the same $L^\infty$ bound holds
for $\partial_tS_t (\varphi_Qb_Q)(x)$, where $\varphi_Q\in C^\infty_0$, $\varphi_Q\equiv 1$ on $5Q$, $\supp \varphi_Q \subseteq 6Q$,
with $\| \nabla_{\|}\varphi_Q\|_\infty \leq C/\ell(Q)$. Indeed, fixing $(x,t)\in Q\times (0,\infty)$, and setting
$u=\partial_t\Gamma (x,t,\cdot ,\cdot )$, we have that 
\begin{equation*}\partial_t S_t (\varphi_Q b_Q)(x)=|Q|\int_{\mathbb{R}^n}u(y,0)\varphi_Q (y) \frac{\partial G^-}{\partial \nu^-_y}(A^-_Q,
y,0)dy.\end{equation*}
We now extend $\varphi_Q$ smoothly into the lower half-space so that $\varphi_Q(y,s)\equiv 1$ on $5Q\times (0,-\ell (Q)/4)$,
$\varphi_Q(y,s)$ vanishes in $\mathbb{R}^{n+1}_-\backslash [6Q\times (0,-\ell (Q)/2)]$, and $$\| \nabla \varphi_Q\|_{L^\infty
(\overline{\mathbb{R}^{n+1}_-})}\leq C\ell (Q)^{-1}.$$ Since $G^-(A^-_Q , \cdot ,\cdot )$ and $u$ are both solutions of
$Lu=0$ in $\supp\varphi_Q$, we obtain from Green's formula (whose use 
may be justified in the sense of 
Lemma \ref{l4.ntconverge} $(iii)$) that
\begin{multline*}\partial_t S_t (\varphi_Qb_Q)(x)=|Q|\iint_{\mathbb{R}_-^{n+1}}A_{y,s}\nabla
G^-(A^-_Q,y,s)\nabla \varphi_Q (y,s) u(y,s)dyds\\
 -\, |Q|\iint_{\mathbb{R}^{n+1}_-}G^- \nabla \varphi_Q\cdot A\nabla udyds
\,\equiv\, I+II.\end{multline*}

We first consider term II. Let $D_Q\equiv \supp \nabla \varphi_Q$. By the definition of $\varphi_Q(y,s)$, 
a standard estimate for $G^-$,  and Cauchy-Schwarz, we have that
\begin{multline*} |II|
\leq C\ell (Q)^{(n+1)/2} \left( \iint_{D_Q} |\nabla \partial_t \Gamma (x,t,y,s)|^2 dyds\right)^{\frac{1}{2}}\\
\leq C\ell(Q)^{(n-1)/2} \left(\iint_{\tilde{D}_Q} |\partial_t \Gamma (x,t,y,s)|^2
dyds\right)^{\frac{1}{2}},\end{multline*}
where the last inequality follows by Caccioppoli's inequality, and where $\tilde{D}_Q$ is a fattened version of
$D_Q$. But for $x\in Q$, $t>0$, and $(y,s)\in \tilde{D}_Q$, we have by \eqref{eq2.5} that
\begin{equation*}|\partial_t \Gamma (x,t,y,s)|\leq C|Q|^{-1},\end{equation*}
hence $|II|\leq C.$
Similarly,
\begin{equation*}|I|\leq 
C \ell (Q)^{(n-1)/2} \left( \iint_{D_Q} |\nabla G^-(A^-_Q ,y,s)|^2\right)^{\frac{1}{2}}
\leq C,\end{equation*}
again by Caccioppoli. Altogether then, 
$\sup_{t>0} \| \partial_t S_t (\varphi_Qb_Q)\| _{L^\infty (Q)}\leq C,$
and therefore
\begin{equation}\label{eq6.12} \sup_{t>0}\int_Q |\partial_t S_t (\varphi_Q b_Q)|^2\leq C|Q|.\end{equation}
To prove (ii), it will be enough to observe that, for any kernel $K(x,y)$ satisfying \eqref{eq6.3}(a), we have
\begin{equation}\label{eq6.13} \int_Q \left| \int K(x,y)1_{6Q\backslash Q}(y)f(y)dy\right|^2 dx\leq C\int_{6Q\backslash
Q} |f|^2.\end{equation}
Indeed, given \eqref{eq6.13}, we may replace $\varphi_Q$ by $1_Q$ in \eqref{eq6.12} (with controlled error), and (ii) follows. The proof of
\eqref{eq6.13} is omitted. Since $\Gamma (x,t,y,0)=\Gamma (y,-t,x,0)$, a similar argument yields the
corresponding bound for $(\partial_tS_t)^{tr}(b^2_Q)$, and \eqref{eq6.1} now follows.

\section{Proof of Theorem \ref{t1.11}:  invertibility\label{s7}}
We now consider invertibility of the layer potentials
in the case of real symmetric coefficients.  The proof will follow the strategy of Verchota \cite{V},
using the well known
``Rellich identities" combined with the method of continuity.  In our case, the 
continuity argument will exploit
Theorem \ref{t1.10}.  
\begin{proof}[Proof  of Invertibility]
From self-adjointness and integration by parts, we obtain the equivalence
\begin{equation}\label{eq9.1}
\|\partial_\nu u\|_{L^2(\mathbb{R}^n)} \approx \|\nabla_x u\|_{L^2(\mathbb{R}^n)},
\end{equation}
for solutions of $Lu= 0$ in $\mathbb{R}^{n+1}_\pm$ for which
$\widetilde{N}_*(\nabla u) \in L^2$, where the implicit constants depend only upon ellipticity 
(see, e.g., \cite{K} for details).
In particular, \eqref{eq9.1} holds for $u(\cdot,t) \equiv S_t f,$ with $f\in L^2$.
By the jump relation formulae Lemma \ref{l4.ntjump},
\eqref{eq9.1} becomes
\begin{equation}\label{eq9.2}\|\left(\pm\frac{1}{2}I + \widetilde{K} \right)f\|_2 \approx 
\|\nabla_x S_0f\|_2.\end{equation}
Thus, by the triangle inequality and \eqref{eq9.2} we have
\begin{equation}\label{eq9.3}
\|f\|_2 \leq C \|\left(\frac{1}{2}I + \widetilde{K} \right)f\|_2 
\end{equation}
and also
\begin{equation}\label{eq9.4}
\|f\|_2 \leq C \|\nabla_x S_0f\|_2,
\end{equation}
where the constants in \eqref{eq9.3} and \eqref{eq9.4} depend only on ellipticity.
Moreover, if we set $$L_\sigma \equiv -\dv A_\sigma \nabla, \quad 0\leq \sigma \leq 1,$$ where
$$A_\sigma \equiv (1-\sigma)\mathbb{I} + \sigma A,$$  and $\mathbb{I}$ denotes the
$(n+1) \times (n+1)$ identity matrix, then \eqref{eq9.3} and \eqref{eq9.4}
hold, uniformly in $\sigma$, for the layer potentials associated to $L_\sigma$;  indeed,
we have uniform control of the ellipticity constants for $A_\sigma$.  By the result of 
Section \ref{s6}, we of course have boundedness of the layer potentials associated to
$L_\sigma$, again with uniform constants depending only upon ellipticity and dimension.
Thus, once we have established invertibility of the layer potentials associated to
$L_\sigma,$ for a given $\sigma$, the corresponding Layer Potential Constants will depend only upon ellipticity and dimension, since, in particular, the quantitative bounds for the inverses are
precisely the constants in \eqref{eq9.3} and \eqref{eq9.4}.
We may therefore establish invertibility of $\frac{1}{2}I + \widetilde{K}:L^2 \to L^2$
and $S_0:L^2 \to \dot{L}^2_1$ as follows.  Since $L_0$ clearly has Good Layer Potentials,
we may invoke Theorem \ref{t1.10} to deduce that $L_\sigma$ has Good Layer potentials,
for $0\leq\sigma < \epsilon_0$, for some $\epsilon_0$ depending only upon ellipticity and dimension.
By our previous observation concerning the uniform control of the layer potential constants,
we may then iterate this procedure, advancing each time by the same distance $\epsilon_0$,
so that we reach $A = A_1$ in finitely many steps.
\end{proof}

\section{Appendix:  constant coefficients \label{sappend}}

Suppose that $L= -\dv a\nabla$, where $a$ is a constant 
complex elliptic matrix.  Following \cite{FJK}, we observe that $L$ has Fourier symbol
$$q(i\xi,i\tau)= \sum_{j,k =1}^{n+1} a_{j,k} \xi_j\xi_k = a_{n+1,n+1}(\tau-\tau_+(\xi))
(\tau-\tau_-(\xi)),$$
where $\xi_{n+1} \equiv \tau$, and $\tau_\pm : \mathbb{R}^n \to \mathbb{C}$
are each homogeneous of degree $1$, $C^\infty(S^{n-1}),$ with 
\begin{equation}\label{eq10.0}
\Im m\, \tau_+ (\xi) \geq \mu ,\quad \Im m\, \tau_-(\xi) \leq -\mu,
\end{equation}
for some $\mu > 0$.  In particular,
\begin{equation}
\label{eq10.1}
|\tau_+(\xi) - \tau_-(\xi)|\approx |\xi|, \quad \xi \in\mathbb{R}^n.\end{equation}
The fundamental solution $\Gamma(x,t)$ is a convolution kernel with Fourier
symbol $q(i\xi,i\tau)^{-1}$.  Inverting the Fourier symbol in $t$ only, and then using 
the method of residues, we obtain
$$\widehat{\Gamma}(\cdot,t)(\xi)= \frac{1}{2\pi}\int_{-\infty}^\infty \frac{e^{it\tau}}{q(i\xi,i\tau)}d\tau 
=- \frac{e^{it\tau_+(\xi)}1_{\{t>0\}} 
+e^{it\tau_-(\xi)}1_{\{t<0\}}}{ia_{n+1,n+1}\left(\tau_+(\xi) - \tau_-(\xi)\right)},$$
so by \eqref{eq10.1} and the accretivity of $a_{n+1,n+1}$, we  have in particular that
$$| \,\widehat{\Gamma}(\cdot,0)(\xi)\,|\approx |\xi|^{-1}.$$ Consequently,
$S_0: L^2 \to \dot{L}^2_1$ is bounded and invertible, by Plancherel's Theorem.  
One also readily verifies via Plancherel's Theorem that
$$\sup_{t\neq 0} \|\nabla S_t\|_{op} \leq C,\quad \||t\partial_t^2 S_t\||_{op} \leq C.$$
Finally, we note that $f\to \left(\frac{1}{2}I + \widetilde{K}\right)f = \partial_\nu S_t f|_{t=0^+}$
is invertible on $L^2$.  Indeed, the corresponding Fourier symbol is
$$-\lim_{t\to 0^+}e_{n+1} \cdot a \widehat{\nabla \Gamma}(\cdot,t)(\xi) =  
\frac{a_{n+1,n+1}\tau_+(\xi)+\sum_{j=1}^n a_{n+1,j} \xi_j}{a_{n+1,n+1}
\left(\tau_+(\xi) - \tau_-(\xi)\right)},$$
and by \cite{AQ}, Lemma 4, 
the modulus of the numerator $\approx |\xi|.$  By the accretivity of $a_{n+1,n+1}$ and \eqref{eq10.1},
the same holds for the denominator, and the invertibility follows.
Of course, a similar observation holds for $-\frac{1}{2}I + \widetilde{K}$.


\begin{thebibliography}{9999}

\bibitem[AAH]{AAH} P. Auscher, A. Axelsson, and S. Hofmann,
Functional calculus of Dirac operators and complex perturbations of
Neumann and Dirichlet problems, preprint.

\bibitem[A]{A} P. Auscher, Regularity theorems and heat kernel for elliptic operators. {\it J. London Math. Soc.} (2) {\bf 54}  (1996),  no. 2, 284--296.

\bibitem[A2]{A2} P. Auscher, Lectures on the Kato square root problem.  Surveys in analysis and operator theory (Canberra, 2001),  1--18, {\it Proc. Centre Math. Appl. Austral. Nat. Univ.}, {\bf 40}, Austral. Nat. Univ., Canberra, 2002 

\bibitem[AHLT]{AHLT}  P. Auscher, 
S. Hofmann, J.L. Lewis, P. Tchamitchian,
Extrapolation of Carleson 
measures and the analyticity of 
 Kato's square root operators, 
{\it Acta Math.} {\bf 187}  (2001),  no. 2, 161--190.

\bibitem[AHMTT]{AHMTT} P. Auscher, S. Hofmann, C. Muscalu, T. Tao and 
C. Thiele, Carleson measures, trees, extrapolation, and $T(b)$ theorems, {\it Publ. Mat.} {\bf 46}  (2002),  no. 2, 257--325.

\bibitem[AQ]{AQ} P. Auscher and  M. Qafsaoui, Observations on $W^{1,p}$ estimates for divergence 
elliptic equations with vmo coefficients, {\it Boll. Unione Mat. Ital. Sez. B Artic. Ric. Mat.}
(8) 5 (2002), no. 2, 487-509.

\bibitem[AT]{AT} P.~Auscher and Ph.~Tchamitchian,
\newblock{\it
Square root problem for divergence operators and
related topics},
Ast\'erisque Vol. 249 (1998),
Soci\'et\'e Math\'ematique de France.

\bibitem[AHLMcT]{AHLMcT} P. Auscher, S. Hofmann, M. Lacey, A.
McIntosh, and P. Tchamitchian, The solution of the Kato Square
Root Problem for Second Order Elliptic operators on $\mathbb{R}^n$, {\it Annals
of Math.} {\bf 156} (2002), 633--654.

\bibitem[AY]{AY} P. Auscher and Q. X. Yang, On local $T(b)$ Theorems, preprint.

\bibitem[CFK]{CFK} L. Caffarelli, E.  Fabes and C. Kenig,  Completely singular elliptic-harmonic measures,  
{\it Indiana Univ. Math. J.} {\bf 30}  (1981), no. 6, 917--924.

\bibitem[CFMS]{CFMS} L. Caffarelli, E. Fabes, S. Mortola and S. Salsa,  Boundary behavior of nonnegative solutions of elliptic operators in divergence form. {\it Indiana Univ. Math. J.}  {\bf 30}  (1981), no. 4, 621--640.

\bibitem[Ch]{Ch} M. Christ, A T(b) theorem with remarks on analytic 
capacity and the Cauchy integral, {\it Colloquium Mathematicum} LX/LXI 
(1990) 601-628.

\bibitem[Ch2]{Ch2} M. Christ, {\it Lectures on singular integral operators}, CBMS Regional Conference Series in Mathematics, 77. Published for the Conference Board of the Mathematical Sciences, Washington, DC; by the American Mathematical Society, Providence, RI, 1990.

\bibitem[CJ]{CJ} M. Christ and  J.-L. Journ\'{e}, Polynomial growth estimates for multilinear singular integral operators.  {\it Acta Math.}  {\bf 159}  (1987),  no. 1-2, 51--80.

\bibitem[CJS]{CJS} R. Coifman, P. Jones and S. Semmes, Two elementary proofs of the $L\sp 2$ boundedness of Cauchy integrals on Lipschitz curves. {\it J. Amer. Math. Soc.}  {\bf 2}  (1989),  no. 3, 553--564.

\bibitem[CMcM]{CMcM}
R.~Coifman, A.~M$^{\rm c}$Intosh, and Y.~Meyer.
\newblock {L'int\'egrale de Cauchy d\'efinit un op\'erateur born\'e sur
  $L^2$ pour les courbes lipschitziennes}.
\newblock {\em Ann. Math.} {\bf 116} 361--387, 1982.

\bibitem[CM]{CM}
R.~Coifman and Y.~Meyer.
\newblock {Non-linear harmonic analysis and PDE}.
\newblock In E.~M. Stein, editor, {\em {Beijing Lectures in Harmonic
  Analysis}}, volume 112 of {\em {Annals of Math. Studies}}. Princeton Univ.
  Press, 1986.

\bibitem[D]{D} B. Dahlberg,  Estimates of harmonic measure, {\it Arch. Rational Mech. Anal.} {\bf 65}  (1977), no. 3, 275--288.

\bibitem[DeG]{DeG} E. De Giorgi, Sulla differenziabilit\`a e l'analiticit\`a delle estremali degli integrali multipli regolari. (Italian)  {\it Mem. Accad. Sci. Torino. Cl. Sci. Fis. Mat. Nat.} (3) {\bf 3}  (1957) 25--43.

\bibitem[FJK]{FJK}  E. Fabes, D. Jerison and C. Kenig, Necessary and sufficient conditions for absolute continuity of elliptic-harmonic measure,  {\it Ann. of Math.} (2)  {\bf 119}  (1984),  no. 1, 121--141.

\bibitem[FKP]{FKP}  R. Fefferman, C. Kenig and J. Pipher,
The theory of weights and the Dirichlet problem for elliptic equations, {\it Ann. of Math.} (2)  {\bf 134}  (1991),  no. 1, 65--124. 

\bibitem[FS]{FS} C.  Fefferman, and E. M. Stein,  $H\sp{p}$ spaces of several variables, {\it Acta Math.}  {\bf 129}  (1972), no. 3-4, 137--193.

\bibitem[H]{H} S. Hofmann, Parabolic singular integrals of Calder\'{o}n-type, rough operators, and caloric layer potentials, {\it Duke Math. J.}  {\bf 90}  (1997),  no. 2, 209--259.

\bibitem[H2]{H2} S. Hofmann, Local $Tb$ Theorems and applications in PDE, {\it Proceedings of the ICM Madrid}, Vol. {\bf II}, pp. 1375-1392, European Math. Soc., 2006.

\bibitem[H3]{H3} S. Hofmann, A proof of the local $Tb$ Theorem for standard
Calder\'{o}n-Zygmund operators, unpublished manuscript, 
http://www.math.missouri.edu/ $_{\widetilde{}}$ hofmann/

\bibitem[HK]{HK} S. Hofmann and S. Kim, Gaussian estimates for fundamental solutions to certain parabolic systems, {\it Publ. Mat.} {\bf 48}  (2004),  no. 2, 481--496.

\bibitem[HK2]{HK2} S. Hofmann and S. Kim, The Green function estimates for strongly elliptic systems of second order, to appear, Manuscripta Mathematica.

\bibitem[HLMc]{HLMc}
S.~Hofmann, M. Lacey and A.~M$^{\rm c}$Intosh.
\newblock {The solution of the Kato problem
for divergence form elliptic operators with Gaussian heat kernel bounds}.
\newblock {\it Annals of Math.} {\bf 156} (2002), pp 623-631.

\bibitem[HMc]{HMc}  S. Hofmann and A. McIntosh, The solution of the Kato problem in two dimensions,
Proceedings of the Conference on Harmonic Analysis and 
PDE held in El Escorial, Spain in July 2000, {\it Publ. Mat.} Vol. extra, 2002
pp. 143-160.

\bibitem[JK1]{JK1} D. Jerison and C. Kenig,  The Dirichlet problem in nonsmooth domains, {\it Ann. of Math. (2)} {\bf 113}  (1981), no. 2, 367--382.

\bibitem[JK2]{JK2} D. Jerison and C. Kenig,  The Neumann problem on Lipschitz domains, {\it Bull. Amer. Math. Soc. (N.S.)}  {\bf 4}  (1981), no. 2, 203--207.

\bibitem[J]{J} P. Jones, Square functions, Cauchy integrals, analytic capacity, and harmonic measure, {\it Harmonic analysis and partial differential equations} (El Escorial, 1987),  24--68, Lecture Notes in Math., 1384, Springer, Berlin, 1989. 

\bibitem[Jour]{Jour} J.-L. Journ\'{e}, {\it Calderón-Zygmund operators, pseudodifferential operators and the Cauchy integral of Calder\'on}, Lecture Notes in Mathematics, 994. Springer-Verlag, Berlin, 1983.

\bibitem[K]{K}  C. Kenig, {\it Harmonic analysis techniques for second order elliptic boundary value problems}, CBMS Regional Conference Series in Mathematics, 83. Published for the Conference Board of the Mathematical Sciences, Washington, DC; by the American Mathematical Society, Providence, RI, 1994

\bibitem[K2]{K2} C. Kenig, Elliptic boundary value problems on Lipschitz domains, {\it Beijing lectures in harmonic analysis} (Beijing, 1984),  131--183, Ann. of Math. Stud., 112, Princeton Univ. Press, Princeton, NJ, 1986

\bibitem[KKPT]{KKPT}  C. Kenig, H. Koch, H. J. Pipher and T. Toro,  A new approach to absolute continuity of elliptic measure, with applications to non-symmetric equations. {\it Adv. Math.} {\bf 153}  (2000),  no. 2, 231--298.

\bibitem[KP]{KP} C. Kenig and J. Pipher,  The Neumann problem for elliptic equations with nonsmooth coefficients, {\it Invent. Math.} {\bf 113}  (1993),  no. 3, 447--509.

\bibitem[KP2]{KP2} C. Kenig and J. Pipher,  The Neumann problem for elliptic equations with nonsmooth coefficients II, A celebration of John F. Nash, Jr.  {\it Duke Math. J.} {\bf 81}  (1995),  no. 1, 227--250.

\bibitem[KR]{KR} C. Kenig and D. Rule, The regularity and Neumann problems for non-symmetric
elliptic operators, preprint.

\bibitem[M]{M} J. Moser, On Harnack's theorem for elliptic differential equations, {\it Comm. Pure Appl. Math.} {\bf 14}  (1961) 577--591.

\bibitem[MMT]{MMT} D. Mitrea, M. Mitrea and M. Taylor, Layer potentials, the Hodge Laplacian, and global boundary problems in nonsmooth Riemannian manifolds.  {\it Mem. Amer. Math. Soc.} {\bf 150}  (2001),  no. 713.

\bibitem[N]{N} J. Nash, Continuity of solutions of parabolic and elliptic equations, {\it Amer. J. Math.} {\bf 80}  (1958) 931--954.

\bibitem[S]{S} S. Semmes,
Square function estimates and the $T(b)$ Theorem,
{\em Proc. Amer. Math. Soc.}, {\bf 110} (1990) no. 3, 721--726.

\bibitem[St]{St} E. M. Stein, {\it Topics in harmonic analysis related to the Littlewood-Paley theory}, Annals of Mathematics Studies, No. 63 Princeton University Press, Princeton, N.J.; University of Tokyo Press, Tokyo 1970

\bibitem[St2]{St2} E. M. Stein, {\it Singular Integrals and Differentiability Properties
of Functions}, Princteon University Press, Princeton, NJ, 1970.


\bibitem[V]{V} G. Verchota, Layer potentials and regularity for the Dirichlet problem for Laplace's equation in Lipschitz domains. {\it J. Funct. Anal.} {\bf 59}  (1984),  no. 3, 572--611.



\end{thebibliography}
\end{document}